\documentclass[11pt,reqno]{amsart}

\usepackage[utf8]{inputenc}
\usepackage[T1]{fontenc}
\usepackage{lmodern}
\usepackage{microtype}
\usepackage{xcolor}

\usepackage{newunicodechar}
\newunicodechar{，}{,}

\usepackage{amsmath,amssymb,amsthm,mathtools}
\usepackage{mathrsfs}

\usepackage{enumitem}
\usepackage[margin=1.15in]{geometry}

\usepackage[
    colorlinks=true,
    linkcolor=blue,
    citecolor=blue,
    urlcolor=blue
]{hyperref}

\usepackage[
    nameinlink,
    capitalise,
    noabbrev
]{cleveref}

\numberwithin{equation}{section}

\newtheorem{theorem}{Theorem}[section]
\newtheorem{proposition}[theorem]{Proposition}
\newtheorem{lemma}[theorem]{Lemma}
\newtheorem{corollary}[theorem]{Corollary}
\newtheorem{claim}[theorem]{Claim}

\theoremstyle{definition}
\newtheorem{definition}[theorem]{Definition}

\theoremstyle{remark}
\newtheorem{remark}[theorem]{Remark}

\newcommand{\R}{\mathbb{R}}
\newcommand{\C}{\mathbb{C}}
\newcommand{\N}{\mathbb{N}}
\newcommand{\Z}{\mathbb{Z}}

\title[Fourier Decay of SRB Measures]
{Fourier Decay of SRB Measures for  Skew Products}

\author{Gaétan Leclerc}

\address{
Department of Mathematics and Statistics,
University of Helsinki,
Helsinki, Finland
}

\email{gaetan.leclerc@helsinki.fi}

\author{Haojie Ren}

\address{
Department of Mathematics,
Technion--Israel Institute of Technology,
Haifa, Israel
}

\email{haojie.ren@sjtu.edu.cn}

\date{}

\begin{document}

\begin{abstract}
We study Fourier decay of natural invariant measures for real analytic
skew products
$T(x,y)=(f(x),g(x,y)) $
on $\mathbb{S}^1\times U$, where $f$ is expanding and
$0<|\partial_y g(x,y)|<1$.
Let $\mathcal S$ denote the associated solenoidal attractor.
We prove a dichotomy: either $\mathcal S$ is a single horizontal graph,
or both the SRB measure and the measure of maximal entropy have
polynomial Fourier decay. No transversality assumption is imposed.
\end{abstract}

\maketitle

%\tableofcontents

% ------------------------------------------------------------

\section{Introduction}\label{sec:introduction}
Let $f:\mathbb{S}^1\to\mathbb{S}^1$ be a real analytic expanding map, and let $U=[-1,1]$. We consider the skew product dynamical system
$$
T:\mathbb{S}^1\times U\to\mathbb{S}^1\times U,
\qquad
T(x,y)=(f(x),g(x,y)),
$$
where $g$ is a real analytic function on $\mathbb{S}^1\times U$ and satisfies
$$
0<\left|\partial_y g(x,y)\right|<1
\qquad
\text{for all }(x,y)\in\mathbb{S}^1\times U.
$$

{ 
There exists a unique SRB measure $\omega=\omega_T$ on
$\mathbb{S}^1\times U$; see \cite[Section~2]{Rams03}.
Indeed, the unique absolutely continuous invariant probability measure of
$f$ has a unique $T$-invariant lift, whose basin has full Lebesgue measure;
see \cite[Theorem~3.1, Corollary~3.5]{Kloeckner2021}.
Consequently, }
 for Lebesgue-almost every $z\in\mathbb{S}^1\times U$, we have
$$
\frac{1}{n}\sum_{k=0}^{n-1}\delta_{T^k(z)}
\longrightarrow \omega
\qquad\text{weakly as }n\to\infty.
$$

 We denote its support
$$ \mathcal{S}:= \bigcap_{n=0}^\infty T^n(\mathbb{S}^1 \times U)   \subset \mathbb{S}^1 \times U, $$
and call $\mathcal{S}$ the {\em solenoidal attractor} associated with $T$.
{
Geometrically, $\mathcal{S}$ can be described as the union of unstable curves corresponding to the different backward itineraries of the expanding map $f$; see Corollary \ref{cor:S}.
Indeed, under iteration, $T$ stretches $\mathbb{S}^1\times U$ in the base direction and contracts it in the fiber direction, while the successive images wind around $\mathbb{S}^1\times U$. Their nested intersection is the solenoidal attractor $\mathcal{S}$.
}

We say that $\mathcal{S}$ is a single graph if there exists an analytic function
$ \theta:\mathbb{S}^1\to U$ such that
$\mathcal{S}=\{(x,\theta(x)):x\in\mathbb{S}^1\}.$ This graph is horizontal if $\theta$ is constant.

The projection of $\omega$ onto $\mathbb{S}^1$ is the $SRB$ measure for $(f,\mathbb{S}^1)$. This holds more generally for a wider class of measures, called Gibbs measures. 
{Let $\psi:\mathbb{S}^1\to\mathbb{R}$ be H\"older continuous, and let
$\mu_\psi$ be the corresponding Gibbs measure for $f$. We denote by
$\omega_\psi$ its unique $T$-invariant lift to $\mathcal{S}$ and call it the
Gibbs measure on $\mathcal{S}$ associated with $\psi$.
That is,
$$
\int_{\mathcal{S}} F(x)\,d\omega_\psi(x,y)
=
\int_{\mathbb{S}^1} F(x)\,d\mu_\psi(x)
\qquad\text{for every }F\in C(\mathbb{S}^1).
$$
 See \S \ref{sec:Gibbs} for more details.
}
 The SRB measure is an example of Gibbs measure; another example is the measure of maximal entropy (MME) for $(T,\mathcal{S})$, which is associated to constant potential $\psi$.

For the SRB measure and the MME supported on the solenoidal attractor $\mathcal S$, a natural question is whether the geometry of the unstable curves produces sufficient oscillation to yield Fourier decay in every frequency direction.
The following theorem is our main result.

\begin{theorem}\label{th:main}
Let $f:\mathbb{S}^1\to\mathbb{S}^1$ be a real analytic expanding map, let $g$ be a real analytic function on $\mathbb{S}^1\times U$ such that $T(\mathbb{S}^1\times U)\subset \mathbb{S}^1\times U$ and
$0<\left|\partial_y g(x,y)\right|<1$
for all $(x,y)\in\mathbb{S}^1\times U$. Suppose that $\mathcal{S}$
is not a single horizontal graph, and denote by $\omega_{\text{SRB}}$ the SRB measure and by $\omega_{\text{MME}}$ the measure of maximal entropy. \\
For all $* \in \{\text{SRB,MME}\}$, there exist constants $C>1$ and $\gamma\in(0,1)$ such that
$$
\left|\widehat{\omega_{*}}(k,\xi)\right|
\leq
C\left(1+|k|+|\xi|\right)^{-\gamma}
\qquad
\text{for all }(k,\xi)\in\mathbb{Z}\times\mathbb{R}.
$$
\end{theorem}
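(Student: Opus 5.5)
Our plan is to reduce the two-dimensional Fourier transform to one-dimensional oscillatory sums along unstable curves, and then obtain decay from a nonlinearity property of the curves together with a sum--product / large-deviation argument for the base Gibbs measure.

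\textbf{Representation.} Each point of $\mathcal S$ lies on an unstable curve $\{(x,\theta_{\mathbf a}(x))\}$ indexed by a backward itinerary $\mathbf a$ (Corollary~\ref{cor:S}). The $\theta_{\mathbf a}$ are analytic and uniformly bounded in a complex neighbourhood. Since $\omega_\psi$ is the $T$-invariant lift of $\mu_\psi$, we can write
\[
\widehat{\omega_\psi}(k,\xi)=\int e^{-2\pi i(kx+\xi y)}\,d\omega_\psi
=\int\!\!\int e^{-2\pi i (k x+\xi\,\theta_{\mathbf a}(x))}\,d\mu_\psi(x)\,d\mathbb P_x(\mathbf a),
\]
where $\mathbb P_x$ is the conditional distribution of backward itineraries given by the transfer operator. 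Invariance lets us iterate $n$ times with $n\approx \epsilon\log(|k|+|\xi|)$. For $F(x,y)=e^{-2\pi i(kx+\xi y)}$ we use
\[
\int F\,d\omega_\psi=\int F\circ T^n\,d\omega_\psi.
\]
Expanding via the transfer operator $\mathcal L_\psi^n$ of $f$ then gives a sum over inverse branches $h_{\mathbf w}$ of $f^n$ with weights $e^{S_n\psi\circ h_{\mathbf w}}$. The phase along a branch is $k\,h_{\mathbf w}(x)+\xi\,G_{\mathbf w}(x)$, where $G_{\mathbf w}$ is the fibre coordinate of $T^n$ restricted to the curve.

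\textbf{Case split.}
\begin{itemize}
\item If $|k|\gtrsim|\xi|$, the fibre contribution has derivative bounded by $|\xi|\lambda^n$ with $\lambda<1$, while the base part oscillates like $k\,h_{\mathbf w}'$. Here we invoke Fourier decay for Gibbs measures of real analytic expanding maps of the circle. This holds since $f$ is not analytically conjugate to a linear map, or otherwise holds trivially for $k\neq0$ by a separate argument. It follows via the Dolgopyat/sum-product approach.
\item If $|\xi|\gg|k|$, the phase is dominated by $\xi\,\theta_{\mathbf a}(x)$. Here we need the non-concentration (nonlinearity) input: the derivatives $\theta_{\mathbf a}'$, after renormalisation by the branch contractions, do not concentrate near any single value.
\end{itemize}

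\textbf{Main obstacle.} The key step is to show that if $\mathcal S$ is not a single horizontal graph, then the family of quantities $\theta_{\mathbf a}'$ (or $\theta_{\mathbf a}''$) is not degenerate. Concretely, we want either that two curves have different slopes at some point, or that some curve is non-affine. We would proceed by analytic rigidity. Suppose the non-concentration fails at all scales. By analyticity and the invariance equation
\[
\theta_{\mathbf a}(f x)=g(x,\theta_{\mathbf a'}(x)),
\]
all $\theta_{\mathbf a}$ must then share a common derivative. Two cases follow:
\begin{itemize}
\item If the curves are distinct parallel translates, we derive a contradiction by exploiting the contraction $|\partial_y g|<1$, which forces them to coincide.
\item Otherwise $\mathcal S$ is a single graph. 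If that graph is not horizontal, the derivative $\theta'$ is a nonconstant analytic function. Then $\xi\theta(x)+kx$ has only finitely many degenerate critical points, and $\theta''\neq0$ away from finitely many points, which yields decay by a van der Corput type estimate against the Gibbs measure.
\end{itemize}

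The hardest part is turning non-concentration into quantitative decay in the non-graph case. For this we would apply the Bourgain sum-product estimate to products of derivatives along $k$ blocks of branches, as in the Sahlsten--Stevens / Leclerc framework. The required non-concentration comes from a Gibbs large-deviation estimate on $\theta_{\mathbf a}'$ combined with the analytic non-degeneracy just described. Finally, we choose $\epsilon$ and the block number so that the errors are polynomial in $1+|k|+|\xi|$. Together the two regimes yield the exponent $\gamma$ for both $\psi=$ SRB potential and $\psi=0$.
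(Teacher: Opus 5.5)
\textbf{There is a genuine gap at the step you flag as the main obstacle.} Your outer frame matches the paper: disintegration along the curves $\theta_{\mathbf a}$, a split into $k$-dominant and $\xi$-dominant regimes, one-dimensional Fourier decay of $\mu_\psi$ as a black box, and the single-graph case by van der Corput. But your argument for the key step does not work.

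A compactness/analyticity argument of the kind you sketch only gives separation at one fixed top scale; this is the top-scale separation lemma in Section~\ref{sec:Rigidity}. A bound $\sigma^\gamma$ for the mass of $\{\mathbf a:\Theta_{\mathbf a,K}(x)\in\mathbf B(t,\sigma)\}$ needs separation \emph{below every common prefix}. For all $x$ and all $\mathbf u$ you need $\mathbf a,\mathbf b\in\mathcal A^N$ with $\max_{k\le K}|\Delta^{(k)}_{\mathbf u\mathbf a\mathbf c,\mathbf u\mathbf b\mathbf d}(x)|\ge c\,(r_{\min}\kappa_{\min})^{|\mathbf u|}$, uniformly in the continuations $\mathbf c,\mathbf d$ (Proposition~\ref{pro:Separation}). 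Only then can the tree argument discard a fixed fraction $p_{\min}$ of mass in each block of $N$ symbols.

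Write $\Delta_{\mathbf u\mathbf a,\mathbf u\mathbf b}(x)=\Lambda_{\mathbf u;\mathbf a,\mathbf b}(x)\,\Delta_{\mathbf a,\mathbf b}(\mathbf u[x])$. All derivatives of $\mathbf u[\cdot]$ are exponentially small, so separation at depth is governed by the derivatives of the normalized prefix factor, i.e.\ by the limit function $H_{\mathbf u}$. The degeneracy to exclude is therefore $H_{\mathbf u}'\equiv 0$, not ``all $\theta_{\mathbf a}'$ coincide''. These are different: for $g(x,y)=\lambda_0 y+\phi(x)$ one has $H_{\mathbf u}\equiv 1$, yet the curves are not parallel unless $\mathcal S$ is a single graph.

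Excluding $H_{\mathbf u}'\equiv0$ when $\partial_y^2 g\not\equiv 0$ goes as follows. The symbolic-addition identity $H_{\mathbf u+n}(x)=H_{\mathbf u}(x+n)/H_{\mathbf u}(n)$ and density of $\{\mathbf u+n\}$ give $\partial_y g(x,\theta_{\mathbf v}(x))\equiv c$ for all $\mathbf v$. Hence there are only finitely many distinct curves, contradicting Lemma~\ref{lem:curve-splitting-formal}. When $\partial_y^2 g\equiv 0$ the degeneracy genuinely occurs (e.g.\ constant $\lambda$), and a separate first-order argument is needed: projections of $(\Delta_{\mathbf a,\mathbf b},\Delta_{\mathbf a,\mathbf b}')$, the cohomological equation for $r$, and reduction to constant $\lambda_0$. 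Nothing in your plan supplies this. A Gibbs large-deviation estimate cannot replace it, since it controls typical behaviour, not worst-case concentration in every ball at every scale.

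\textbf{The tool you propose for the $\xi$-regime is also misplaced.} Nothing forces the fibre measures $m_x$ to have Fourier decay; in the single-graph case they are Dirac masses. In the case $g=\lambda_0y+\phi(x)$ every fibre derivative $\partial_y\theta_{\mathbf u}$ equals $\lambda_0^{|\mathbf u|}$, so a sum--product estimate on products of such derivatives has nothing to act on. The decay in $\xi$ comes from integrating in $x$.

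The $kx$ term is not negligible unless $|\xi|\ge C|k|$ with a large constant. When $|k|\asymp|\xi|$, the quantity $k+\xi\theta_{\mathbf a}'(x)$ can vanish, and your split does not say what to do there. The paper removes $kx$ by Cauchy--Schwarz and expands $\int|\sum_{\mathbf u}p_{\mathbf u}(x)e^{2\pi i\xi\theta_{\mathbf u}(x)}|^2\,d\mu$ into pairs with phases $\xi\Delta_{\mathbf u,\mathbf v}(x)$. It then applies a van der Corput bound for $\mu$ involving the first $K$ derivatives (Lemma~\ref{lem:vdc-gibbs}); sum--product enters only through the one-dimensional decay of $\mu$. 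This is why non-concentration of the whole vector $\Theta_{\mathbf a,K}=(\theta_{\mathbf a}^{(j)})_{j\le K}$ is required, not just of $\theta_{\mathbf a}'$: for nonlinear $g$, first-order transversality fails in general.
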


{
The exceptional case is necessary: if
$\mathcal S=\mathbb S^1\times\{y_0\}$ is a single horizontal graph, then
$|\widehat{\omega}(0,\xi)|=1$ for all $\xi\in\mathbb R$.
Thus Theorem \ref{th:main} gives a sharp dichotomy.
}

\begin{remark}
In fact, as soon as $f : \mathbb{S}^1\rightarrow \mathbb{S}^1$ is not smoothly conjugated to a linear map of the form $f(x) := b x \text{ mod } 1$, 
{
then the same conclusion holds for every Gibbs lift $\omega_\psi$
associated with a H\"older continuous potential
$\psi:\mathbb S^1\to\mathbb R$.
The argument in \S~\ref{sec:Reduction-Fourier-decay} already proves this more general statement.
}
This situation is generic in the choice of $f$. Notice further that the SRB measure and the MME are equal if and only if $f$ is smoothly conjugated to a linear map. 
Notice that when $f(x)=bx \text{ mod  } 1$, then the SRB measure (equal to the MME) is the only $T$-invariant measure exhibiting Fourier decay. Indeed, its projection onto $\mathbb{S}^1$ would then yield a $f$-invariant Rachjman measure on the circle, and the only such measure is the Lebesgue measure. This can be checked easily by noticing that all the Fourier coefficients of such a measure should vanish (except the zero-th coefficient). More generally, when $f$ is smoothly conjugated to $bx \text{ mod }1$ but not equal, the situation is more complicated, since Fourier decay of equilibrium states is expected to hold, but not in every local coordinates.
\end{remark}

\begin{remark}
In the setting where $\mathcal{S}$ is not a single graph, then our result immediatly yields power Fourier decay for every pushforward $\varphi_*\omega$, where $\varphi$ is an analytic diffeomorphism of the cylinder $\mathbb{S}^1 \times U$, adapted to the skew-product structure: $\varphi(x,y) := (\varphi_1(x),\varphi_2(x,y))$. Indeed, "being a single graph" is invariant under such changes of coordinates. Notice that "being a single horizontal graph" is not invariant under such changes of coordinates. Unfortunately, it is not immediate from our method that the exponent of decay can be uniformly bounded from below under such coordinate changes, even thought we might expect a uniform decay rate to hold.
\end{remark}

\subsection{Background and motivation}
{
The study of skew products has a long history in both dynamical systems
and fractal geometry, particularly in connection with solenoidal
attractors and SRB measures.
A classical example is the fat baker's transformation,
a piecewise affine skew product studied by Alexander and Yorke
\cite{AlexanderYorke84}. In this setting, absolute continuity of the SRB
measure is equivalent to absolute continuity of the corresponding
Bernoulli convolution.
}

{
Tsujii \cite{Tsujii_2001} studied the nonlinear skew product family
\begin{equation}\nonumber
T:\mathbb{S}^1\times\mathbb{R}\to\mathbb{S}^1\times\mathbb{R},
\qquad
T(x,y)
=
\bigl(bx,\lambda y+\phi(x)\bigr),
\end{equation}
where $b>1$ is an integer, $0<\lambda<1$, and
$\phi:\mathbb{S}^1\to\mathbb{R}$ is a $C^2$ function.
When $b\lambda>1$, the map is area expanding. Tsujii proved that  the associated SRB measure is absolutely continuous with respect to the two-dimensional Lebesgue measure for $C^2$-generic $\phi$.
Related transversality techniques were later used in the study of the Hausdorff dimension of graphs of classical Weierstrass functions; see
\cite{BaranskiBaranyRomanowska,Shen2018}. In particular, Shen
\cite{Shen2018} completed Mandelbrot's conjecture in the integer case.
}

{
Avila, Gou\"ezel and Tsujii \cite{AGT06} further studied the regularity of the density of the SRB measure  for $C^r$-generic $\phi$ under suitable
conditions on the parameters.
Bam\'on, Kiwi, Rivera-Letelier and Urz\'ua
\cite{BaJanJuan} studied the topology of these solenoidal attractors and proved that, for fixed $b$ and non-constant Lipschitz $\phi$, the attractor $\mathcal{S}$ is homeomorphic to a closed topological annulus when $\lambda$ is sufficiently close to $1$.
More recently, for this family in the real-analytic setting, the first
author \cite{REN24ADV} obtained a complete classification of the
dimension of the associated SRB measure.
}

{
For nonlinear baker maps, Rams \cite{Rams03} proved absolute continuity
of the SRB measure under area-expansion and transversality assumptions.
Higher-dimensional analogues and related questions concerning absolute continuity, regularity and dimension were studied in
\cite{Simon99,SiSo99,Rams04,MU08,BBR2018,BS22,BRE2022,MPR22,BockerBortolottiCastro2024,Ren2024CMP,GEA2026}.
}

{
For a general real analytic skew product
$T(x,y)=(f(x),g(x,y))$, a natural question is whether absolute
continuity holds under volume expansion without an additional
transversality assumption. More precisely, if the attractor is not
a single graph and
$|f'(x)\partial_y g(x,y)|>1$
everywhere, must the SRB measure be absolutely continuous with respect
to the two-dimensional Lebesgue measure?
}

{
We turn instead to Fourier decay, which is meaningful even for singular SRB measures.
For the real analytic skew products considered here, we prove that
a single horizontal graph is the only obstruction to polynomial
Fourier decay of the SRB measure and the measure of maximal entropy.
The result requires neither volume expansion nor any additional
transversality assumption.
}

{
Fourier decay bounds for fractal measures arising from chaotic dynamical
systems form a very active topic and have many applications; see, for
example, Sahlsten's survey \cite{Sah25} for an overview. In dimension
one, the Davenport--Erd\H{o}s--LeVeque criterion gives applications related to 
normal numbers \cite{DEL63}. Fourier decay also plays a role in the
study of sets of uniqueness
\cite{Salem1943SetsOU,KS64,Lai2026OnCO}, in Fourier restriction problems
for fractal measures, which are closely related to Kakeya-type questions
\cite{Ma15,LW18}, and in the fractal uncertainty principle
\cite{BD17,BS23}. In some dynamical settings, it is also related to
exponential mixing for dynamical extensions
\cite{Li2018FourierDR} or dispersion estimates for discrete quasiperiodic Schrödinger operators \cite{Le25}.
} \\
Fourier decay bounds in a dynamical setting have a long history but is getting renewed attention recently. Recent setting of interest includes Patterson-Sullivan measures \cite{BD17,BKS24,LequenSahlsten}, self-conformal measures \cite{KaufmanCF,QueffelecRamare,JS16,AlgomHertzWangLog,AHW23,BS23,SahlstenStevens,BKS24}, self-affine and stationary measures \cite{LindenstraussVarju,Li,Li2018FourierDR,LiSahlstenAffine,LiSahlsten,Rapaport}, and Gibbs measures invariant by hyperbolic dynamics \cite{LeclercJulia,LeclercBunched,BKS24,Le25}. Different approaches were developed to establish Fourier decay for fractal measures, some of them using additive combinatorics in some way to find cancellations in exponential sums, such as Jean Bourgain's \emph{sum-product phenomenon} \cite{Bo10} and Osama Khalil's \emph{$L^2$-flattening} method \cite{Kh23}, others using renewal theory \cite{AHW23,BKS24} or comparison to the Lebesgue measure in a large dimension regime \cite{KaufmanCF,QueffelecRamare,LequenSahlsten}. In our case, the main responsible for 2D Fourier decay is transversality, allowing us to upgrade a 1D-dimensional Fourier decay statement to a 2D-dimensional one. This is related to another setting getting a lot of attention, "nonlinear pushforwards" of measures. Such work in this direction include \cite{MosqueraShmerkin,ACWW,BakerBanaji,BanajiYu,BanajiYu2,AlgomBenOvadiaHertzShannon} for smooth pushforward, and \cite{Le22,LSS26} for pushforward by Hölder autosimilar functions, such as Weierstrass maps. A gap in the litterature is the case of pushforwards of $f$-invariant Gibbs measures when the analytic expanding map $f:\mathbb{S}^1 \rightarrow \mathbb{S}^1$ is smoothly conjugated, but not equal to, $f(x)=bx \text{ mod } 1$.

\subsection{Outline of the proof}
{
The main novelty of our paper is to establish the tree theorem,
Theorem \ref{thm:tree-theorem-general-case}, without any additional
transversality assumption. We write the unstable curves as graphs of
analytic functions $\theta_{\mathbf a}$; see
\S~\ref{subsec:Unstable-curves}. The key ingredient is the following
dichotomy: either $\mathcal S$ consists of a single graph, or, at every suitable scale, one can find suitable descendants whose
corresponding functions are uniformly separated in some derivative
of order at most $K$; see Proposition \ref{pro:Separation}.
Iterating this separation along the symbolic tree yields the polynomial
non-concentration estimate in the tree theorem.
}

\medskip

\noindent\textbf{Reduction to the tree theorem.}
{
To keep the notation simple, we explain the argument for the SRB measure
in the  case where
$f(x)=bx \bmod 1,$
assuming that $\mathcal S$ is not a single graph. See \S~\ref{sec:Reduction-Fourier-decay} for the general case.
}

{
The SRB measure $\omega$ admits a disintegration
$\omega=\int_{\mathbb S^1} \delta_x \otimes m_x\,dx,$
where $m_x$ is a probability measure supported on
$\mathcal S_x:=\mathcal S\cap(\{x\}\times U).$
Hence
$$
\widehat\omega(k,\xi)
=
\int_{\mathbb S^1}
e^{2\pi i kx}\widehat m_x(\xi)\,dx.
$$
By the Cauchy--Schwarz inequality, we have
\begin{equation}\label{eq:intro-cauchy-schwarz}
|\widehat\omega(k,\xi)|^2
\leq
\int_{\mathbb S^1}|\widehat m_x(\xi)|^2\,dx.
\end{equation}
The regime where $|k|$ dominates $|\xi|$ can be treated by standard
integration by parts in $x$. It remains to consider the regime
$|\xi|\gtrsim |k|$.
By \eqref{eq:intro-cauchy-schwarz}, it suffices to estimate
$$
\int_{\mathbb S^1}|\widehat m_x(\xi)|^2\,dx.
$$
}

{
In this case, let $m$ denote the uniform Bernoulli measure on the coding space of the unstable curves. Then
$m_x=(\theta_\bullet(x))_*m.$
By Fubini's theorem, 
$$
\int_{\mathbb S^1}|\widehat m_x(\xi)|^2\,dx
=
\iint
\left(
\int_{\mathbb S^1}
e^{2\pi i\xi(\theta_{\mathbf a}(x)-\theta_{\mathbf b}(x))}\,dx
\right)
dm^{\otimes 2}(\mathbf a,\mathbf b).
$$
Therefore the problem is reduced to estimating oscillatory integrals with
phases $\theta_{\mathbf a}-\theta_{\mathbf b}$.
}

{
For a general nonlinear fiber map $g$, uniform first-order
transversality cannot be expected for the phases
$\theta_{\mathbf a}-\theta_{\mathbf b}$. We therefore use higher-order
transversality. Following the formulation in \cite{Le22}, the tree theorem
gives a quantitative non-concentration estimate for the vectors
$$
\Theta_{\mathbf a}(x)
:=
\bigl(\theta_{\mathbf a}^{(k)}(x)\bigr)_{1\leq k\leq K}.
$$
This non-concentration estimate is the input for the van der Corput
argument.  Roughly speaking, there exists $\gamma\in(0,1)$ such that, uniformly
in $x$ and $t$,
\begin{equation}\label{eq:proba-decay}
m\left\{
\mathbf a:
\Theta_{\mathbf a}(x)\in B(t,\sigma)
\right\}
\lesssim
\sigma^\gamma.
\end{equation}
The precise discretized statement is given in Theorem
\ref{thm:tree-theorem-general-case}.
}

\medskip

\noindent\textbf{Reduction to transversality and the tree argument.}
{
The polynomial decay in \eqref{eq:proba-decay} is reduced to a
quantitative transversality estimate. For two symbolic sequences
$\mathbf a,\mathbf b$, write
$$
\Delta_{\mathbf a,\mathbf b}(x)
:=
\theta_{\mathbf a}(x)-\theta_{\mathbf b}(x).
$$
Since
$\Theta_{\mathbf a}(x)-\Theta_{\mathbf b}(x)
=
\bigl(\Delta_{\mathbf a,\mathbf b}^{(j)}(x)\bigr)_{1\leq j\leq K},$
if
$$
\max_{1\leq j\leq K}
\left|\Delta_{\mathbf a,\mathbf b}^{(j)}(x)\right|
>2\sigma,
$$
then $\Theta_{\mathbf a}(x)$ and $\Theta_{\mathbf b}(x)$ cannot both
belong to the same ball $B(t,\sigma)$.}

{
Proposition \ref{pro:Separation} allows us to iterate this observation along
the symbolic tree. For each $x$ and each prefix $\mathbf u$ at the relevant
scales, we can choose two words $\mathbf a,\mathbf b$ of a fixed length $N$
such that, for all continuations $\mathbf c,\mathbf d$, at least one of
the first $K$ derivatives of
$\theta_{\mathbf u\mathbf a\mathbf c}
-\theta_{\mathbf u\mathbf b\mathbf d}$
has absolute value greater than $2\sigma$ at $x$.
Hence at least one of the two corresponding subtrees is disjoint from
the event in \eqref{eq:proba-decay}. Since $m$ is the uniform Bernoulli
measure, each such subtree carries a fraction $b^{-N}$ of the mass of the
cylinder determined by $\mathbf u$. Thus a fixed proportion of the mass
is discarded at each step. After a number of iterations comparable to
$\log \sigma^{-1}$, this gives the required polynomial decay; see
\S~\ref{sec:proof-tree-theorem} for details.
} 

\medskip

\noindent\textbf{Transversality.}
{
The proof of Proposition \ref{pro:Separation} splits into two cases.
When $\partial_y^2 g\not\equiv 0$, the argument requires the
higher-order transversality developed above; see Proposition
\ref{pro:Separation-nozero-case}. When $\partial_y^2 g\equiv 0$, the
system is affine in the fiber variable, and a separate argument based
only on first-order transversality is sufficient; see Proposition
\ref{pro:Separation-zero-case}.
}

\medskip
\noindent\textbf{Case I: $\partial_y^2g\not\equiv0$.}
{
We now explain the main ideas in the proof of Proposition
\ref{pro:Separation-nozero-case}. The main difficulty is to understand how a long
common prefix $\mathbf u$ affects the derivatives of
$\Delta_{\mathbf u\mathbf a\mathbf c,\mathbf u\mathbf b\mathbf d}$.
To isolate this dependence, we write
\begin{equation}\label{eq:Intro-Lambda}
\Delta_{\mathbf u\mathbf a\mathbf c,\mathbf u\mathbf b\mathbf d}(x)
=
\Lambda_{\mathbf u;\mathbf a\mathbf c,\mathbf b\mathbf d}(x)
\Delta_{\mathbf a\mathbf c,\mathbf b\mathbf d}(\mathbf u[x]),
\end{equation}
where
$$
\Lambda_{\mathbf u;\mathbf a\mathbf c,\mathbf b\mathbf d}(x)
:=
\frac{
\Delta_{\mathbf u\mathbf a\mathbf c,\mathbf u\mathbf b\mathbf d}(x)
}{
\Delta_{\mathbf a\mathbf c,\mathbf b\mathbf d}(\mathbf u[x])
}.
$$
The second factor describes the difference between the two continuations at $\mathbf u[x]$, while $\Lambda_{\mathbf u;\mathbf a\mathbf c,\mathbf b\mathbf d}$ records the effect of the common prefix $\mathbf u$.
Here $\mathbf u[x]$ denotes the image of $x$ under the inverse branch
determined by $\mathbf u$.
We only consider
$\Delta_{\mathbf a\mathbf c,\mathbf b\mathbf d}\not\equiv 0$; in this
case the quotient extends analytically across the zeros of the denominator.
See \S~\ref{sub:def-normalization} for the precise definitions.
}

{
Differentiating \eqref{eq:Intro-Lambda} $k$ times, the leading term is
$$
\Lambda_{\mathbf u;\mathbf a\mathbf c,\mathbf b\mathbf d}^{(k)}(x)
\Delta_{\mathbf a\mathbf c,\mathbf b\mathbf d}(\mathbf u[x]),
$$
since all remaining terms contain derivatives of $\mathbf u[x]$ and
decay exponentially with $|\mathbf u|$; see
Lemma \ref{lem:higher-order-product-composition-formal}. The second factor is controlled separately by the choice of the
descendants, so the main point is to  control the higher-order derivatives of
$\Lambda_{\mathbf u;\mathbf a\mathbf c,\mathbf b\mathbf d}$.
After normalization and a suitable correction, the resulting function
is approximated by an analytic function $H_{\mathbf u}$ depending only on $\mathbf u$;
see Proposition \ref{pro:approximate-H-formal}. The required
transversality therefore follows from the corresponding transversality
of the family $\{H_{\mathbf u}\}_{\mathbf u}$.
}

{
The functions $H_{\mathbf u}$ satisfy an addition formula on the symbolic
space; see Lemma \ref{lem:addtion-H_u-formal}. In this case, this
implies that, if $\mathcal S$ is not a single graph, then
$
H_{\mathbf u}'\not\equiv 0
$
for every word $\mathbf u$; see Proposition
\ref{pro:non-degenecay-H_u-formal}. Indeed, if $H_{\mathbf v}'\equiv 0$ for some $\mathbf v$, the addition
formula, together with the assumption $\partial_y^2 g\not\equiv 0$,
implies that there are only finitely many distinct unstable curves.
This contradicts the fact that the family of unstable curves is infinite
whenever $\mathcal S$ is not a single graph.
Finally, the convergence of $H_{\mathbf u}$, together with the above
non-degeneracy and analyticity, yields the uniform finite-order
transversality required in Proposition \ref{pro:Separation}; see
\S~\ref{sec:transversality-I}.
}

\medskip
\noindent\textbf{Case II: $\partial_y^2 g\equiv 0$.}
{
In this case,
$g(x,y)=\lambda(x)y+\phi(x).$
The unstable curves $\theta_{\mathbf u}$ admit explicit convergent series
representations; see \eqref{eq:theta-series-lambda(x)}. Using this
summation structure, after a suitable normalization the first derivative
of $\Delta_{\mathbf u\mathbf a,\mathbf u\mathbf b}$ can be written in the
form
$$
t_1(\mathbf u,x)\,
\Delta_{\mathbf a,\mathbf b}(\mathbf u[x])
+
t_2(\mathbf u,x)\,
\Delta_{\mathbf a,\mathbf b}'(\mathbf u[x]),
\qquad
t_1(\mathbf u,x)^2+t_2(\mathbf u,x)^2=1.
$$
The problem is therefore reduced to controlling the projections of the
vectors
$$
\bigl(
\Delta_{\mathbf a,\mathbf b}(\mathbf u[x]),
\Delta_{\mathbf a,\mathbf b}'(\mathbf u[x])
\bigr)
$$
in different directions. An analysis of these projections gives the
required first-order transversality in the affine case; see
Section~\ref{sec:transversality-II}.
}

\subsection{Further discussion.}
{
A related question concerns the family
$$
T_N(x,y)=(f^N(x),g(x,y))
$$
and its solenoidal attractors $\mathcal S_N$.
Outside the single-graph case, does $\mathcal S_N$ have positive
Lebesgue measure for all sufficiently large $N$?
Does its dimension tend to $2$ as $N\to\infty$? It seems possible to adapt the strategy of \cite{LSS26} in this setting, to deal with "accelerated dynamics" in the base.

Another related question concerns exponential mixing of circle extensions over $\mathcal{S}$, of the form $ T_\tau : \mathcal{S} \times \mathbb{S}^1 \rightarrow \mathcal{S} \times \mathbb{S}^1$, $T_\tau(q,\theta) := (T(q),\theta+\tau(q))$, for some $\tau \in C^\omega(\mathcal{S},\mathbb{R})$.
The rate of mixing of $T_\tau$ is nontrivial to study because of the neutral direction in the $\theta$ variable. The extension $T_\tau$ leaves invariant the measure $\omega \otimes d\theta$. By exponential mixing, we mean that there exists $\rho \in (0,1)$ such that $$ \iint h_1 \circ T_\tau^n \ {h_2} \ d\omega \otimes d\theta = \iint h_1 d\omega \otimes d\theta \iint h_2 d\omega \otimes d\theta + \mathcal{O}(\rho^n) $$
for any smooth $h_1$ and $h_2$. Are such circle extensions exponentially mixing ? \\
The link with Fourier decay is given by Plancherel formula, which gives, denoting Birkhoff sums $S_n \tau(q) := \sum_{k=0}^{n-1} \tau(T^k(q))$ and $\theta$-Fourier coefficients $c_\xi(h)(q) := \int_0^1 h(q,\theta) \exp(-2 i \pi \theta \xi) d\theta$:
$$ \iint h_1 \circ T_\tau^n \ {h_2} \ d\omega \otimes d\theta = \iint h_1( T^n(q),\theta+S_n \tau(q)) h_2(q,\theta) \ d\theta d\omega(q) $$
$$ = \int_{\mathcal{S}} c_0(h_1)(T^n q) c_0(h_2)(q) d\omega(q) + \sum_{\xi \in \mathbb{Z} \setminus \{0\}} \int_{\mathcal{S}} c_\xi(h_1)(T^n q) c_\xi(h_2)(q) e^{2 i \pi \xi S_n \tau(q)} d\omega(q). $$
The first integral converge exponentially fast to $\iint h_1 \iint h_2$ as soon as $T$ is exponentially mixing, and the main difficulty is to control the sum. Let us denote $\mathcal{U}:L^2(\omega) \rightarrow L^2(\omega)$, $\mathcal{U} h := h \circ T$, the Koopman operator associated to $(T,\mathcal{S})$, and denote by $\mathcal{U}^*$ its adjoint. We can bound
$$ \Big| \int_{\mathcal{S}} c_\xi(h_1)(T^n q) c_\xi(h_2)(q) e^{2 i \pi \xi S_n \tau(q)} d\omega(q) \Big|^2
\lesssim \int_{\mathcal{S}} \Big| (\mathcal{U}^*)^{n}\Big( c_\xi(h_2) e^{2 i \pi \xi S_n \tau} \Big) \Big|^2 d\omega.$$
In the setting where $T$ is a hyperbolic dynamical system, these $L^2$-norms are bounded by Dolgopat's estimates, in a process that can be considerably simplified under the knowledge that $\omega$ has power Fourier decay in all local charts, see for example \cite{Li2018FourierDR,Le25}. Can these arguments be adapted in our setting ? \\

\noindent\textbf{Structure of the paper.}
{
The paper is organized as follows.
Section~\ref{sec:preliminaries} contains the basic notation and
preliminary results. In \S~\ref{sec:Gibbs}, we introduce the Gibbs
measures considered in the paper and state the tree theorem.
Section~\ref{sec:Rigidity} establishes the rigidity properties of the
unstable curves used later in the transversality arguments.}

{
In \S~\ref{sec:Reduction-Fourier-decay}, the Fourier decay problem is
reduced to the tree theorem, which is derived in
\S~\ref{sec:proof-tree-theorem} from the quantitative separation
property in Proposition \ref{pro:Separation}.
Section~\ref{Renormalization} contains the renormalization estimates and
the approximation by the functions $H_{\mathbf u}$ needed in the
nonlinear case. The proof of Proposition \ref{pro:Separation} is completed
in \S~\ref{sec:transversality-I} and
\S~\ref{sec:transversality-II}: the former treats
$\partial_y^2 g\not\equiv 0$ by higher-order transversality, while the
latter treats the affine case $\partial_y^2 g\equiv 0$ by a first-order
argument.
}

% ------------------------------------------------------------
\section{Preliminaries}\label{sec:preliminaries}

\subsection{Setup}
Without loss of generality, we may assume that $f$ is orientation preserving and that $f(0)=0$. Indeed, if $f$ is orientation-reversing, we replace $T$ by its second iterate $T^2$, {while the SRB and MME measures remains unchanged.} After this reduction, we conjugate the system by a rotation of $\mathbb{S}^1$ sending the fixed point of $f$ to $0$.  Since this conjugacy is a translation in the base variable, it changes the Fourier transform only by a unimodular factor and hence does not affect Fourier decay.

Let $b>1$ be the degree of $f$, and set
$
\mathcal{A}:=\{0,1,\ldots,b-1\}
$ and $\mathcal{A}^* = \bigcup_{n \geq 0} \mathcal{A}^n$, {where $\mathcal{A}^0:=\{\emptyset\}$ and $\emptyset$ denotes the empty word.}

{Let $\widetilde{f}:\mathbb{R}\to\mathbb{R}$ be the unique lift of $f$ satisfying $\widetilde{f}(0)=0$. Then}
$$
\widetilde{f}(x+1)=\widetilde{f}(x)+b
\qquad
\text{for all }x\in\mathbb{R}.
$$
For each $i\in\mathcal{A}$, define
\begin{equation}\nonumber%\label{def:f_i-Formal}
  f_i(x):=\widetilde{f}^{-1}(x+i)\qquad \text{for }x\in\R.  
\end{equation}
Since $f$ is a real analytic expanding map, the lift $\widetilde{f}$ is a real analytic diffeomorphism of $\mathbb{R}$. Consequently, each $f_i$ is a real analytic contraction on $\mathbb{R}$.

Throughout this paper, we identify $\mathbb{S}^1$ with $\mathbb{R}/\mathbb{Z}$ and use $[0,1)$ as its fundamental domain.
{We also regard $g$ as its $\Z$-periodic lift to $\mathbb{R}\times U$, still denoted by $g$. Thus $g$ is real analytic on $\mathbb{R}\times U$ and satisfies}
$
g(x+1,y)=g(x,y)
$
for all $(x,y)\in\mathbb{R}\times U$. 

Throughout the rest of the paper, we assume that
$$
0<\left|\partial_y g(x,y)\right|<1
\qquad
\text{for all }(x,y)\in\mathbb{R}\times U.
$$

\subsection{Unstable curves and finite approximations} \label{subsec:Unstable-curves}
{The solenoidal attractor $\mathcal{S}$ is described in terms of the unstable curves $\{(x,\theta_{\mathbf{u}}(x))\}_{x\in\mathbb{S}^1}$, with $\mathbf{u}\in\mathcal{A}^{\infty}$. In this subsection, we study the analytic properties of these curves and quantify their approximation by finite iterates, together with the corresponding convergence rates for derivatives of arbitrary order.}
{To obtain uniform estimates for higher-order derivatives, we use holomorphic extensions and Cauchy's integral formula. The estimates established in this subsection will be used repeatedly in the subsequent sections.}

Given $\mathbf{u}=u_1\ldots u_n\in\mathcal{A}^*$, define the function
\begin{equation}\label{def:u[x]}
     \mathbf{u}[x]:= f_{u_n}\circ\cdots\circ f_{u_1}(x)\qquad\text{for }x\in\R.
\end{equation}
(Notice the letter reversal.) Let $\theta_{\mathbf{u}}:\mathbb{S}^1\times U\to U$ be the function such that \begin{equation}{\label{def:theta}}(x,\theta_{\mathbf{u}}(x,y))=T^{|\mathbf{u}|}(\mathbf{u}[x],y)\qquad\text{for all }(x,y)\in \mathbb{S}^1\times U. \end{equation}

Using the chosen lifts of the inverse branches, we extend $\theta_{\mathbf{u}}$ to a real analytic function on  $\mathbb{R}\times U$, still denoted by $\theta_{\mathbf{u}}$.
More precisely, 
{for $\mathbf{u}=u_1\ldots u_n\in\mathcal{A}^n$, we define}
\begin{equation}\label{def:G_u-formal} 
 \theta_{\mathbf{u}}(x,y):
=
g\left( (\mathbf{u}|_1)[x], g\left( (\mathbf{u}|_2)[x], \ldots, g\left( (\mathbf{u}|_n)[x],y \right) \ldots \right) \right),
\end{equation}
where
$
\mathbf{u}|_k:=u_1\ldots u_k
$
for $1\leq k\leq n$.

Given $\mathbf{v}=v_1\cdots v_m\in\mathcal{A}^*$, set $\mathbf{u}\mathbf{v}:=u_1\ldots u_nv_1\cdots v_m\in\mathcal{A}^*$. 
{Then \eqref{def:theta} and \eqref{def:u[x]} imply that}
\begin{equation}\label{eq:theta-concatenation}
\theta_{\mathbf{u}\mathbf{v}}(x,y)
=
\theta_{\mathbf{u}}\bigl(x,\theta_{\mathbf{v}}(\mathbf{u}[x],y)\bigr).
\end{equation}
{Indeed, after the iterates corresponding to $\mathbf{u}$, the base point is $\mathbf{u}[x]$, and the remaining iterates are precisely those corresponding to $\mathbf{v}$.}

Since 
$
0<\left|\partial_y g(x,y)\right|<1
$
for all $(x,y)\in\mathbb{S}^1\times U$, we obtain
\begin{equation}\label{eq:r_min-r_max}
    r_{\min}:=
\min_{(x,y)\in\mathbb{S}^1\times U}
\left|\partial_y g(x,y)\right|
>0\qquad\text{and}\qquad
r_{\max}:=
\max_{(x,y)\in\mathbb{S}^1\times U}
\left|\partial_y g(x,y)\right|
<1.
\end{equation}

By \eqref{def:G_u-formal}, \eqref{eq:theta-concatenation}, and the chain's rule, we have
\begin{equation}
\label{eq:partial-G-formula-Formal}
\partial_y\theta_{\mathbf{u}}(x,y)
=
\prod_{k=1}^{|\mathbf{u}|}
\partial_y g\left(
(\mathbf{u}|_k)[x],
\theta_{\mathbf{u}|_{k+1}^{|\mathbf{u}|}}
\left((\mathbf{u}|_k)[x],y\right)
\right),
\end{equation}
where
$
\mathbf{u}|_{k+1}^{|\mathbf{u}|}
:=
u_{k+1}\cdots u_{|\mathbf{u}|}
$
for $1\leq k<|\mathbf{u}|$, and we adopt the convention
$
\theta_{\mathbf{u}|_{|\mathbf{u}|+1}^{|\mathbf{u}|}}(x,y):=y.
$

Therefore, given $\mathbf{w}\in\mathcal{A}^*$, for any $(x,y)\in\mathbb{R}\times U$, we have
\begin{equation}
\label{eq:common-prefix-difference}
\begin{split}
\theta_{\mathbf{u}\mathbf{v}}(x,y)-\theta_{\mathbf{u}\mathbf{w}}(x,y)
&=
\theta_{\mathbf{u}}\bigl(x,\theta_{\mathbf{v}}(\mathbf{u}[x],y)\bigr)
-
\theta_{\mathbf{u}}\bigl(x,\theta_{\mathbf{w}}(\mathbf{u}[x],y)\bigr)
\\
&=
\Bigl(
\theta_{\mathbf{v}}(\mathbf{u}[x],y)
-
\theta_{\mathbf{w}}(\mathbf{u}[x],y)
\Bigr)
\\
&\quad\times
\int_0^1
\partial_y\theta_{\mathbf{u}}
\left(
x,
s\theta_{\mathbf{v}}(\mathbf{u}[x],y)
+
(1-s)\theta_{\mathbf{w}}(\mathbf{u}[x],y)
\right)
\,ds.
\end{split}
\end{equation}

Combining this with \eqref{eq:partial-G-formula-Formal}, { the bounds
$|\theta_{\mathbf{v}}|\le 1$ and $|\theta_{\mathbf{w}}|\le 1$,} and 
$r_{\max}<1$, we obtain
\begin{equation}
\label{eq:theta-common-prefix-distance}
\left|\theta_{\mathbf{u}\mathbf{v}}(x,y)-\theta_{\mathbf{u}\mathbf{w}}(x,y)\right|
\leq 2(r_{\max})^{|\mathbf{u}|}
\qquad
\text{for all }(x,y)\in [0,1]\times U.
\end{equation}

Since $f:\mathbb{S}^1\to\mathbb{S}^1$ is an expanding map, its inverse branches are uniformly contracting. 
{Moreover, their derivatives are bounded away from zero. Hence, by \eqref{def:u[x]}, we have  }
\begin{equation}\label{eq:kappa_min-max}
    0<
\kappa_{\min}
:=
\min_{\substack{\mathbf{u}\in\mathcal{A}\\ x\in[0,1]}}
\left|\mathbf{u}[x]'\right|
\qquad\text{and}\qquad
\kappa_{\max}
:=
\max_{\substack{\mathbf{u}\in\mathcal{A}\\ x\in[0,1]}}
\left|\mathbf{u}[x]'\right|
<1.
\end{equation}

{
To study the holomorphic extensions of the maps introduced above, we define the following complex neighborhoods.} Given $\delta>0$, set
\begin{equation}\label{eq:def-D-delta}
\mathcal{D}_{\delta}
:=
\left\{
z\in\mathbb{C}:
\operatorname{dist}(z,[0,1])<\delta
\right\},
\end{equation}
and
\begin{equation}\label{eq:def-U-delta}
\mathcal{U}_{\delta}
:=
\left\{
(z_1,z_2)\in\mathbb{C}^2:
\operatorname{dist}(z_1,[0,1])<\delta
\text{ and }
\operatorname{dist}(z_2,U)<\delta
\right\}.
\end{equation}

To study the analyticity of the unstable curves, we need the following lemma and corollary.

\begin{lemma}
\label{lem:uniform-holomorphic-extension-theta-u}
For any $\epsilon>0$, there exists $\delta\in (0,\epsilon)$ such that, for every
$\mathbf{u}\in\mathcal{A}^*$, the functions
$\mathbf{u}[x]$ and
$\theta_{\mathbf{u}}(x,y)$ admit holomorphic
extensions to $\mathcal{D}_{\delta}$ and
$\mathcal{U}_{\delta}$, respectively. Moreover,
for each $(z_1,z_2)\in \mathcal{U}_{\delta}$, we have
$$
\operatorname{dist}(\mathbf{u}[z_1],[0,1])<\delta
\qquad\text{and}\qquad
\operatorname{dist}(\theta_{\mathbf{u}}(z_1,z_2),U)<\epsilon.$$ 
\end{lemma}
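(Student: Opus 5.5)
We argue by induction on the length $n=|\mathbf{u}|$, using the composition structure $\mathbf{u}[x]=f_{u_n}\circ\cdots\circ f_{u_1}(x)$ and the recursion obtained from \eqref{def:G_u-formal}: for $\mathbf{u}=u_1\mathbf{v}$ with $|\mathbf{v}|=n-1$,
$$
\theta_{\mathbf{u}}(z_1,z_2)=g\bigl(f_{u_1}(z_1),\,\theta_{\mathbf{v}}(f_{u_1}(z_1),z_2)\bigr).
$$
This is \eqref{eq:theta-concatenation} with a prefix of length one. The point is to find neighbourhoods that are mapped into themselves, or into a slightly larger fixed domain, by each single step. Then the estimates hold uniformly in $n$, with no accumulation of errors.

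The first step treats the base maps. Each $f_i$ is real analytic on $\mathbb{R}$ with $|f_i'|\le\kappa_{\max}<1$ on $[0,1]$, and $f_i([0,1])\subset[0,1]$ since $f_i(x)=\widetilde f^{-1}(x+i)$ and $\widetilde f$ maps $[0,1]$ onto $[0,b]$. By continuity of the holomorphic extension of $f_i'$, for small $\delta_0$ every $f_i$ extends holomorphically to $\mathcal{D}_{\delta_0}$ with $|f_i'|\le \kappa<1$ there. For $z\in\mathcal{D}_\delta$ with $\delta\le\delta_0$, choose $x\in[0,1]$ with $|z-x|<\delta$. Integrating along the segment $[x,z]\subset\mathcal{D}_\delta$ gives
$$
|f_i(z)-f_i(x)|\le\kappa|z-x|<\delta,
\qquad f_i(x)\in[0,1].
$$
So $f_i(\mathcal{D}_\delta)\subset\mathcal{D}_\delta$. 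By induction every $\mathbf{u}[\cdot]$ extends holomorphically to $\mathcal{D}_\delta$ and maps it into $\mathcal{D}_\delta$, which is the first claimed inclusion.

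The second step treats the fibre maps and is the main one. Since $g$ is real analytic with $|\partial_y g|\le r_{\max}<1$ on $[0,1]\times U$ and $g([0,1]\times U)\subset U$, shrinking $\delta_0$ gives a holomorphic extension of $g$ to $\mathcal{U}_{\delta_0}$ with $|\partial_{z_2}g|\le r<1$ and $|\partial_{z_1}g|\le M$ there. Fix $\epsilon>0$; we may assume $\epsilon\le\delta_0$. Choose $\delta<\epsilon$ small enough that $M\delta+r\epsilon<\epsilon$. We then claim by induction that $\operatorname{dist}(\theta_{\mathbf{u}}(z_1,z_2),U)<\epsilon$ for all $(z_1,z_2)\in\mathcal{U}_\delta$.

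For $n=0$ we have $\theta_\emptyset(z_1,z_2)=z_2$, whose distance to $U$ is less than $\delta<\epsilon$. For the inductive step, put $w_1=f_{u_1}(z_1)\in\mathcal{D}_\delta$ and $w_2=\theta_{\mathbf{v}}(w_1,z_2)$, which by hypothesis lies within $\epsilon$ of $U$. Pick $x\in[0,1]$ and $y\in U$ with $|w_1-x|<\delta$ and $|w_2-y|<\epsilon$. The segment from $(x,y)$ to $(w_1,w_2)$ lies in $\mathcal{U}_{\epsilon}\subset\mathcal{U}_{\delta_0}$, because both domains are convex in each variable. Integrating along it gives
$$
|g(w_1,w_2)-g(x,y)|\le M\delta+r\epsilon<\epsilon,
\qquad g(x,y)\in U.
$$
This closes the induction. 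Holomorphy of $\theta_{\mathbf{u}}$ on $\mathcal{U}_\delta$ follows at the same time, since it is a composition of holomorphic maps whose intermediate values stay in the domains of holomorphy.

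The main obstacle is this self-mapping step. The fibre domain has to be taken at the larger scale $\epsilon$, while the base domain is kept at the smaller scale $\delta$. The base error $M\delta$ is then absorbed by the contraction gap $(1-r)\epsilon$, and this is exactly why the lemma is stated with two different scales.
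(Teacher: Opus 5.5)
Your proof is correct and follows essentially the same route as the paper: you show that the contracting inverse branches map $\mathcal{D}_\delta$ into itself, then run an induction through the nested compositions of $g$, where the base error $M\delta$ is absorbed by the contraction gap of $\partial_{z_2}g$ at a larger fibre scale. The differences are cosmetic. You take the fibre scale to be $\epsilon$ itself (after reducing to $\epsilon\le\delta_0$) instead of an auxiliary $\delta_1<\epsilon$, and you bound $\partial_{z_1}g$ on the whole region where the estimate is used, which is slightly cleaner than the paper's supremum over $\mathcal{U}_{\delta_1/2}$.
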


\begin{proof}
Let $\epsilon>0$.
Set
$
\kappa:=\frac{1+\kappa_{\max}}{2}<1
\text{ and }
r:=\frac{1+r_{\max}}{2}<1.
$
Since $g$ is real analytic on $[0,1]\times U$, and since
$
\left|\partial_y g(x,y)\right|\leq r_{\max}
$
for all $(x,y)\in[0,1]\times U$, while
$
\left|\mathbf{v}[x]'\right|\leq\kappa_{\max}
$
for all $x\in[0,1]$ and $\mathbf{v}\in\mathcal{A}$, there exists $\delta_1\in(0,\epsilon)$ such that $g$ admits a holomorphic extension to $\mathcal{U}_{\delta_1}$ and every inverse branch $\mathbf{v}\in\mathcal{A}$ admits a holomorphic extension to
$
\mathcal{D}_{\delta_1}.
$
Moreover, for every $(z_1,z_2)\in\mathcal{U}_{\delta_1}$ and every $\mathbf{v}\in\mathcal{A}$, we have
\begin{equation}\label{eq:distance-decrease-Formual}
    \operatorname{dist}\bigl(\mathbf{v}[z_1],[0,1]\bigr)<\epsilon,
\qquad
\left|\mathbf{v}[z_1]'\right|\leq\kappa\qquad \left|\partial_{z_2} g(z_1,z_2)\right|\leq r.
\end{equation}
Set
$$
M_{\delta_1}
:=
\sup_{(z_1,z_2)\in \mathcal{U}_{\delta_1/2}}
\left|\partial_{z_1}g(z_1,z_2)\right|\qquad\text{and}\qquad \delta:=\frac{(1-r)\delta_1}{4(1+M_{\delta_1})}.
$$

Let $z_1\in \mathcal{D}_{\delta}$ and
$\mathbf{v}\in\mathcal{A}$. Let $x_1\in [0,1]$ with $|z_1-x_1|<\delta$.
By \eqref{eq:distance-decrease-Formual} and
Leibniz's rule,  
$$\left|\mathbf{v}[z_1]-\mathbf{v}[x_1]\right|
=
\left|\int_{x_1}^{z_1}\mathbf{v}[s]'\,ds\right|
\leq
\kappa |z_1-x_1|
<
\delta.$$
Thus, by induction, for every $\mathbf{u}\in\mathcal{A}^*$ and every $1\leq k\leq|\mathbf{u}|$, we have
$(\mathbf{u}|_k)[z_1]\in\mathcal{D}_{\delta}$.
{
Using the same argument as above, together with the chain rule, \eqref{def:u[x]}, and \eqref{eq:distance-decrease-Formual},}  we obtain
\begin{equation}
\label{eq:complex-displacement-u}
\left|\mathbf{u}[z_1]-\mathbf{u}[x_1]\right|
< \delta.
\end{equation}
Thus  $\mathbf{u}[z_1]$ is holomorphic on
$\mathcal{D}_{\delta}$ and satisfies
$\operatorname{dist}(\mathbf{u}[z_1],[0,1])< \delta $ for every $z_1\in\mathcal{D}_{\delta}$.

Let $z_2\in\C$ with $ \operatorname{dist}\bigl(z_2,U\bigr)<\delta_1$.
  Thus there exists $x_2\in U$ with $|z_2-x_2|<\delta_1$.
 Let $\mathbf{w}\in\mathcal{A}^*$.
 Then we have
\begin{equation}\label{eq:triangle-inequality-g}
\begin{split}
     &| g(\mathbf{w}[z_1],z_2)-g(\mathbf{w}[x_1],x_2) |\\&\le | g(\mathbf{w}[z_1],z_2)-g(\mathbf{w}[x_1],z_2) |+| g(\mathbf{w}[x_1],z_2)-g(\mathbf{w}[x_1],x_2) |.
\end{split}
\end{equation}
{
Hence, by the definition of $M_{\delta_1}$, together with
\eqref{eq:complex-displacement-u} and the choice of $\delta$, we obtain
}
$$  | g(\mathbf{w}[z_1],z_2)-g(\mathbf{w}[x_1],z_2) | \le M_{\delta_1}|\mathbf{w}[z_1]-\mathbf{w}[x_1]|<M_{\delta_1}\delta<(1-r)\delta_1.$$
Similarly, by  \eqref{eq:distance-decrease-Formual}, we have
$$  | g(\mathbf{w}[x_1],z_2)-g(\mathbf{w}[x_1],x_2) |\le r|z_2-x_2|< r\delta_1. $$
Combining these two inequalities with \eqref{eq:triangle-inequality-g}, we obtain
$$| g(\mathbf{w}[z_1],z_2)-g(\mathbf{w}[x_1],x_2) |<\delta_1.$$
Note that $g(\mathbf{w}[x_1],x_2)\in U$.
{Therefore, proceeding inductively from the innermost composition to the outermost one,} we obtain that, for each $\mathbf{u}\in\mathcal{A}^*$, the function 
$$ \theta_{\mathbf{u}}(z_1,z_2)
=
g\left(
(\mathbf{u}|_1)[z_1],
g\left(
(\mathbf{u}|_2)[z_1],
\ldots,
g\left(
(\mathbf{u}|_n)[z_1],z_2
\right)
\ldots
\right)
\right) $$
 is holomorphic on   $\mathcal{U}_{\delta}$ satisfying $\operatorname{dist}(\theta_{\mathbf{u}}(z_1,z_2),U)<\delta_1<\epsilon$.
Thus our claim holds.
\end{proof}

\begin{corollary}\label{cor:uniform-holomorphic-extensions}
There exist $\eta>0$ and $r\in (0,1)$ such that for any $\mathbf{w},\mathbf{u},\mathbf{v}\in\mathcal{A}^*$, the functions $\theta_{\mathbf{w}\mathbf{u}}$  and $\theta_{\mathbf{w}\mathbf{v}}$ admit holomorphic
extensions to  $\mathcal{U}_{\eta}$ satisfying
$$  | \theta_{\mathbf{w}\mathbf{u}}(z_1,z_2)-\theta_{\mathbf{w}\mathbf{v}}(z_1,z_2) |\le 4r^{|\mathbf{w}|}\qquad\text{for all }(z_1,z_2)\in \mathcal{U}_{\eta}.  $$
\end{corollary}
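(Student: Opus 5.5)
The plan is to apply Lemma \ref{lem:uniform-holomorphic-extension-theta-u} with a suitable $\epsilon$, and then redo the estimate \eqref{eq:common-prefix-difference} on the complex domain. The real-variable bound \eqref{eq:theta-common-prefix-distance} relied on $r_{\max}<1$; here we use instead the complex contraction bound $|\partial_{z_2}g|\le r$ from \eqref{eq:distance-decrease-Formual}.

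First fix $\delta_1$ as in the proof of Lemma \ref{lem:uniform-holomorphic-extension-theta-u}, so that $g$ is holomorphic on $\mathcal{U}_{\delta_1}$ and $|\partial_{z_2}g|\le r:=\frac{1+r_{\max}}{2}$ there. Next apply the lemma with $\epsilon=\delta_1$ to obtain $\delta<\delta_1$, and set $\eta:=\delta$. Then every $\mathbf{w}[z_1]$ and every $(\mathbf{w}|_k)[z_1]$ lies in $\mathcal{D}_\eta$ for $z_1\in\mathcal{D}_\eta$. Moreover, the proof of the lemma shows that for every $z_2$ with $\operatorname{dist}(z_2,U)<\delta_1$ and every word, all intermediate compositions $g((\mathbf{w}|_k)[z_1],\cdot)$ take values within $\delta_1$ of $U$. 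In particular, all $\theta_{\mathbf{u}}(z_1',z_2)$ with $(z_1',z_2)\in\mathcal U_\eta$ land in the $\delta_1$-neighborhood of $U$.

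Now write, using \eqref{eq:theta-concatenation} holomorphically (it persists by analytic continuation from the real identity),
\[
\theta_{\mathbf{w}\mathbf{u}}(z_1,z_2)-\theta_{\mathbf{w}\mathbf{v}}(z_1,z_2)=\theta_{\mathbf{w}}(z_1,p)-\theta_{\mathbf{w}}(z_1,q),
\]
where $p=\theta_{\mathbf{u}}(\mathbf{w}[z_1],z_2)$ and $q=\theta_{\mathbf{v}}(\mathbf{w}[z_1],z_2)$. Join $p$ and $q$ by a path inside the $\delta_1$-neighborhood of $U$; the straight segment works since this neighborhood is convex. Along the path, $\partial_{z_2}\theta_{\mathbf{w}}$ is by the chain rule a product of $|\mathbf{w}|$ factors $\partial_{z_2}g$ evaluated at points of $\mathcal{U}_{\delta_1}$, because each intermediate value stays $\delta_1$-close to $U$ and each base point lies in $\mathcal{D}_\eta$. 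Hence $|\partial_{z_2}\theta_{\mathbf{w}}|\le r^{|\mathbf{w}|}$. Finally, $|p-q|\le \operatorname{diam}U+2\delta_1\le 2+2\delta_1\le 4$ provided $\delta_1\le 1$. This gives the bound $4r^{|\mathbf{w}|}$.

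The main point requiring care is the chain-rule step. We must verify that every intermediate argument along the segment, and not just at its endpoints, remains in the region where $|\partial_{z_2}g|\le r$. This is exactly the inductive invariance argument of the lemma: a point $\delta_1$-close to $U$ is mapped by $g(\mathbf{w}'[z_1],\cdot)$ to a point $\delta_1$-close to $U$ whenever $z_1\in\mathcal{D}_\eta$. Since that argument applies to arbitrary starting $z_2$ in the $\delta_1$-neighborhood, it covers all segment points, and the estimate is uniform in $\mathbf{w},\mathbf{u},\mathbf{v}$.
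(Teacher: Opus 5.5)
Your proposal is correct and follows the paper's argument: the holomorphic concatenation identity, the integral mean-value formula along the segment joining $q$ to $p$, the chain-rule product of $|\mathbf{w}|$ factors $\partial_{z_2}g$ each bounded by $r$, and $|p-q|\le 4$. The only difference is bookkeeping. The paper applies Lemma~\ref{lem:uniform-holomorphic-extension-theta-u} twice (with $\epsilon=\delta_1$ to get $\delta_2$, then with $\epsilon=\delta_2$ to get $\eta$), so the segment lies in $\mathcal{U}_{\delta_2}$ and only the lemma's statement is needed. You instead use the forward-invariant complex neighbourhood from inside the lemma's proof; this is fine provided you take the neighbourhood attached to the same application that produced $\eta$, which may be smaller than the $\delta_1$-neighbourhood but still contains $p$ and $q$.
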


\begin{proof}
Since 
$
\left|\partial_y g(x,y)\right|<1
$
for all $(x,y)\in[0,1]\times U$, there exists
$\delta_1,r\in (0,1)$ such that
\begin{equation}\label{eq:uniform-complex-contraction-g_y}
  \left|\partial_{z_2} g(z_1,z_2)\right|\leq r\qquad\text{for every }(z_1,z_2)\in\mathcal{U}_{\delta_1}.
\end{equation}
{
To accommodate two successive compositions, we apply
Lemma~\ref{lem:uniform-holomorphic-extension-theta-u} twice.
First, choose $\delta_2>0$ such that the conclusion of the lemma holds
with $\epsilon=\delta_1$. Then choose $\eta>0$ such that the conclusion
holds with $\delta=\eta$ and $\epsilon=\delta_2$.
}

Combining \eqref{eq:partial-G-formula-Formal}
with \eqref{eq:common-prefix-difference} and applying
Lemma \ref{lem:uniform-holomorphic-extension-theta-u}, we have
\begin{equation}\nonumber
\begin{split}
    &\left|
    \theta_{\mathbf{w}\mathbf{u}}(z_1,z_2)
    -
    \theta_{\mathbf{w}\mathbf{v}}(z_1,z_2)
    \right|
    \\
    &\le
    \left|
    \theta_{\mathbf{u}}(\mathbf{w}[z_1],z_2)
    -
    \theta_{\mathbf{v}}(\mathbf{w}[z_1],z_2)
    \right|
    \sup_{s\in[0,1]}
    \left|
    \partial_{z_2}\theta_{\mathbf{w}}
    \left(
        z_1,
        s\theta_{\mathbf{u}}(\mathbf{w}[z_1],z_2)
        +
        (1-s)\theta_{\mathbf{v}}(\mathbf{w}[z_1],z_2)
    \right)
    \right|
    \\
    &\le
    4
    \sup_{\substack{(z_1',z_2')\in\mathcal{U}_{\delta_2}}}
    \left|
    \prod_{k=1}^{|\mathbf{w}|}
    \partial_{z_2}g
    \left(
        (\mathbf{w}|_k)[z_1'],
        \theta_{\mathbf{w}|_{k+1}^{|\mathbf{w}|}}
        \left(
            (\mathbf{w}|_k)[z_1'],
            z_2'
        \right)
    \right)
    \right|
    \\
    &\le
    4r^{|\mathbf{w}|}.
\end{split}
\end{equation}
Here, in the second inequality, we used
$|\theta_{\mathbf{u}}(\mathbf{w}[z_1],z_2)|\le 2$ and
$|\theta_{\mathbf{v}}(\mathbf{w}[z_1],z_2)|\le 2$.
Thus the claim follows.
\end{proof}

For convenience, given $\mathbf{u}\in\mathcal{A}^*$, whenever no confusion can arise, we write
$$ \theta_{\mathbf{u}}(x):= \theta_{\mathbf{u}}(x,0)\qquad\text{for all }x\in \R. $$
Let $\mathbf{w}\in\mathcal{A}^{\infty}$.
By the Weierstrass convergence theorem and
Corollary \ref{cor:uniform-holomorphic-extensions}, the following function
$$ \theta_{\mathbf{w}}(x):=\lim_{n\to\infty}\theta_{\mathbf{w}|_n}(x) $$
is real analytic on $[0,1]$.

We shall also need the following lemma, which provides uniform estimates for derivatives of all orders of the approximations to the unstable curves.

\begin{lemma}\label{lem:uniform-bound-k-derivative}
  There exist  $r,\delta\in(0,1)$ such that, for every $k\in\mathbb{Z}_{>0}$, there exists  $C_k>1$ for which the following holds.
  
   {
For every $\mathbf{u}\in\mathcal{A}^*$ and
$\mathbf{a},\mathbf{b}\in\mathcal{A}^*\cup\mathcal{A}^{\infty}$, the functions
$\theta_{\mathbf{a}}$, $\mathbf{u}[\cdot]$,
$\theta_{\mathbf{u}\mathbf{a}}$, and
$\theta_{\mathbf{u}\mathbf{b}}$ admit holomorphic extensions to
$\mathcal{D}_{\delta}$. Moreover, for every $z\in\mathcal{D}_{\delta}$, we have}
   $$ | \theta_{\mathbf{a}}^{(k)}(z)  | \le C_k,\qquad | \mathbf{u}[z]^{(k)}  | \le C_k r^{|\mathbf{u}|},\qquad| (\theta_{\mathbf{u}\mathbf{a}} -\theta_{\mathbf{u}\mathbf{b}})^{(k)}(z)  | \le C_k r^{|\mathbf{u}|}.$$
\end{lemma}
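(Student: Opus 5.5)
The plan is to take the uniform holomorphic extensions from Lemma~\ref{lem:uniform-holomorphic-extension-theta-u} and Corollary~\ref{cor:uniform-holomorphic-extensions} on a fixed complex neighbourhood. All three derivative bounds then follow from Cauchy's integral formula, applied on a slightly smaller neighbourhood.

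\emph{Step 1: a common domain.} Let $\eta>0$ and $r\in(0,1)$ be as in Corollary~\ref{cor:uniform-holomorphic-extensions}, and set $\delta:=\eta/2$. Write $\theta_{\mathbf a}(z)=\theta_{\mathbf a}(z,0)$. Since $0\in U$, the point $(z,0)$ lies in $\mathcal U_\eta$ whenever $z\in\mathcal D_\eta$. Hence for finite words $\mathbf a$ the functions $\theta_{\mathbf a}$, $\theta_{\mathbf u\mathbf a}$, $\theta_{\mathbf u\mathbf b}$ are holomorphic on $\mathcal D_\eta$. By Lemma~\ref{lem:uniform-holomorphic-extension-theta-u} (taking $\epsilon=1$ and shrinking $\eta$ if necessary), they satisfy the uniform bound $|\theta_{\mathbf a}(z)|\le 2$ there. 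The same lemma gives that $\mathbf u[\cdot]$ is holomorphic on $\mathcal D_\eta$ and maps it into $\mathcal D_\eta$, so $|\mathbf u[z]|\le 1+\eta$. For infinite words $\mathbf a\in\mathcal A^\infty$, Corollary~\ref{cor:uniform-holomorphic-extensions} shows that $\theta_{\mathbf a|_n}$ is uniformly Cauchy on $\mathcal D_\eta$ with rate $4r^n$. By Weierstrass, the limit $\theta_{\mathbf a}$ is holomorphic on $\mathcal D_\eta$ with the same bounds. For the same reason, for finite $\mathbf u$ and infinite $\mathbf a,\mathbf b$ the bound $|\theta_{\mathbf u\mathbf a}-\theta_{\mathbf u\mathbf b}|\le 4r^{|\mathbf u|}$ survives the limit.

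\emph{Step 2: Cauchy estimates.} For $z\in\mathcal D_\delta$, the closed disc of radius $\delta$ around $z$ lies in $\mathcal D_\eta$. Cauchy's formula then gives $|F^{(k)}(z)|\le k!\,\delta^{-k}\sup_{\mathcal D_\eta}|F|$ for any $F$ holomorphic on $\mathcal D_\eta$. Applied to $\theta_{\mathbf a}$, this yields $|\theta_{\mathbf a}^{(k)}|\le 2k!\delta^{-k}$. Applied to $\theta_{\mathbf u\mathbf a}-\theta_{\mathbf u\mathbf b}$, it yields $4k!\delta^{-k}r^{|\mathbf u|}$.

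\emph{Step 3: derivatives of $\mathbf u[\cdot]$.} This is the only place where a sup bound alone is insufficient, since $\mathbf u[z]$ itself does not decay. One option is to apply Cauchy to $\mathbf u[z]-\mathbf u[x_0]$ for a fixed real $x_0$, but that requires a uniform contraction of $\mathbf u[\cdot]$ on a complex domain. The cleaner route is to estimate $\mathbf u[\cdot]'$ directly. The chain rule gives $\mathbf u[z]'=\prod_{j=1}^{n}f_{u_j}'\bigl((\mathbf u|_{j-1})[z]\bigr)$. Each point $(\mathbf u|_{j-1})[z]$ stays in $\mathcal D_\eta$, where $|f_i'|\le\kappa<1$ by \eqref{eq:distance-decrease-Formual} (after shrinking $\eta$). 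Hence $|\mathbf u[z]'|\le\kappa^{|\mathbf u|}$ on $\mathcal D_\eta$. Cauchy estimates applied to $\mathbf u[\cdot]'$ then give $|\mathbf u[z]^{(k)}|\le (k-1)!\delta^{-(k-1)}\kappa^{|\mathbf u|}$ on $\mathcal D_\delta$.

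Replacing $r$ by $\max(r,\kappa)$ and taking $C_k:=4k!\delta^{-k}+1$ yields all three bounds simultaneously. The main obstacle is bookkeeping: one must ensure that the single domain $\mathcal D_\eta$ simultaneously supports the holomorphic extensions, is invariant under the inverse branches, and carries the contraction constants. This is exactly what Lemma~\ref{lem:uniform-holomorphic-extension-theta-u} arranges, with $\epsilon$ chosen small enough that \eqref{eq:distance-decrease-Formual} holds.
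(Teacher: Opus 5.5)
Your proposal is correct and follows essentially the paper's proof: uniform holomorphic extensions from Lemma~\ref{lem:uniform-holomorphic-extension-theta-u} and Corollary~\ref{cor:uniform-holomorphic-extensions} (with the Weierstrass theorem for infinite words), the chain-rule bound $|\mathbf u[z]'|\le r^{|\mathbf u|}$ for the inverse branches, and Cauchy estimates on a smaller neighbourhood, with the $\mathbf u[\cdot]$ bound obtained by applying Cauchy to $\mathbf u[\cdot]'$. The only cosmetic difference is that you use discs of radius $\delta$ around $z$, whereas the paper integrates over $\partial\mathcal{D}_{2\delta}$ for functions holomorphic on $\mathcal{D}_{4\delta}$.
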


\begin{proof}
By Lemma \ref{lem:uniform-holomorphic-extension-theta-u} and Corollary \ref{cor:uniform-holomorphic-extensions}, together with the Weierstrass convergence theorem if necessary, there exists $\delta>0$ and $r\in (0,1)$ such that such that, for every $\mathbf{u}\in\mathcal{A}^*$ and $\mathbf{a},\mathbf{b}\in\mathcal{A}^*\cup\mathcal{A}^{\infty}$, the functions $\mathbf{u}[z],\theta_{\mathbf{u}}(z), \theta_{\mathbf{u}\mathbf{a}}(z), \theta_{\mathbf{u}\mathbf{b}}(z)$ are  analytic on $\mathcal{D}_{4\delta}$ satisfying 
\begin{equation}\label{eq:uniform-bound-k-derivative-1}
    |\theta_{\mathbf{a}}(z)|\le2\qquad\text{and}\qquad 
| (\theta_{\mathbf{u}\mathbf{a}} -\theta_{\mathbf{u}\mathbf{b}})(z)  | \le 4 r^{|\mathbf{u}|}\qquad\text{for each }z\in\mathcal{D}_{4\delta}.
\end{equation}
By the chain rule, after decreasing $\delta$ and increasing $r\in(0,1)$ if necessary, we may assume that $|\mathbf{u}[z]'|\le r^{|\mathbf{u}|}$.

Let $k\in\Z_{>0}$ and $z\in \mathcal{D}_{\delta}$.
Let $\gamma$ be
a positively oriented curve whose image is
$\partial\mathcal{D}_{2\delta}$. Then, for every
$\mathbf{a}\in\mathcal{A}^*$, Cauchy's integral formula gives
$$
\theta_{\mathbf{a}}^{(k)}(z)
=
\frac{k!}{2\pi\mathrm{i}}
\oint_{\gamma}
\frac{ \theta_{\mathbf{a}}(\xi) }{(\xi-z)^{k+1}}\,d\xi.
$$
{Since $|\xi-z|\ge\delta$ for every $\xi\in\gamma$, by \eqref{eq:uniform-bound-k-derivative-1}, we obtain}
 $$ |\theta_{\mathbf{a}}^{(k)}(z)|\leq
\frac{k!}{2\pi}
\cdot
\frac{2}{\delta^{k+1}}
|\gamma|, $$  
where $|\gamma|$ denotes the length of the curve $\gamma$.  

{Using \eqref{eq:uniform-bound-k-derivative-1} and $|\mathbf{u}[z]'|\le r^{|\mathbf{u}|}$, and applying the same argument as above,}  we obtain 
$$
 |\mathbf{u}[z]^{(k)}|\leq
\frac{(k-1)!}{2\pi}
\cdot
\frac{|\gamma|r^{|\mathbf{u}|}}{\delta^{k}} 
\qquad\text{and}\qquad
| (\theta_{\mathbf{u}\mathbf{a}} -\theta_{\mathbf{u}\mathbf{b}})^{(k)}(z)  | \le \frac{k!}{2\pi}
\cdot
\frac{4|\gamma|r^{|\mathbf{u}|}}{\delta^{k+1}}.$$

Thus, the lemma follows with $C_k:=\frac{k!}{2\pi}
\cdot
\frac{4}{\delta^{k+1}}
|\gamma|$.
\end{proof}

\begin{corollary}\label{cor:S}
The Solenoid $\mathcal{S} := \bigcap_{n \geq 0} T^n (\mathbb{S}^1 \times U)$ is a union of analytic graphs:
$$ \mathcal{S} = \bigcup_{\mathbf{u} \in \mathcal{A}^\infty} \{ (x,\theta_\mathbf{u}(x)) , \ x \in \mathbb{S}^1 \}. $$
\end{corollary}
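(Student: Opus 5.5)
We prove the two inclusions separately. In both, the key observation is the following description of the $n$-th image. By \eqref{def:theta}, for every word $\mathbf{u}\in\mathcal{A}^n$ and every $(x,y)\in\mathbb{S}^1\times U$, we have $T^n(\mathbf{u}[x],y)=(x,\theta_{\mathbf{u}}(x,y))$. Every point of $\mathbb{S}^1$ has exactly $b^n$ preimages under $f^n$, and these are the points $\mathbf{u}[x]$ mod $1$ for $\mathbf{u}\in\mathcal{A}^n$. Hence
$$
T^n(\mathbb{S}^1\times U)=\bigcup_{\mathbf{u}\in\mathcal{A}^n}\{(x,\theta_{\mathbf{u}}(x,y)):x\in\mathbb{S}^1,\ y\in U\}.
$$
In words, the $n$-th image is a union of $b^n$ "strips". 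By \eqref{eq:theta-common-prefix-distance} applied with the empty continuation, $|\theta_{\mathbf{u}}(x,y)-\theta_{\mathbf{u}}(x,0)|\le 2r_{\max}^n$, so each strip has vertical width at most $2r_{\max}^{n}$.

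\emph{Inclusion $\supseteq$.} Fix $\mathbf{w}\in\mathcal{A}^\infty$ and $x\in\mathbb{S}^1$, and fix $n\ge 0$. By \eqref{eq:theta-concatenation},
$$
\theta_{\mathbf{w}|_m}(x)=\theta_{\mathbf{w}|_n}\bigl(x,\theta_{\mathbf{w}|_{n+1}^m}((\mathbf{w}|_n)[x],0)\bigr)\qquad\text{for } m\ge n,
$$
where $\mathbf{w}|_{n+1}^m=w_{n+1}\cdots w_m$ is the tail of $\mathbf{w}|_m$. This point lies in the $n$-th strip of $\mathbf{w}|_n$, so $(x,\theta_{\mathbf{w}|_m}(x))\in T^n(\mathbb{S}^1\times U)$. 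The set $T^n(\mathbb{S}^1\times U)$ is compact, being the continuous image of a compact set. Letting $m\to\infty$ therefore gives $(x,\theta_{\mathbf{w}}(x))\in T^n(\mathbb{S}^1\times U)$. Since $n$ was arbitrary, the point lies in $\mathcal{S}$.

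\emph{Inclusion $\subseteq$.} Let $(x,y)\in\mathcal{S}$. For each $n$, choose $\mathbf{u}^{(n)}\in\mathcal{A}^n$ and $y_n\in U$ with $y=\theta_{\mathbf{u}^{(n)}}(x,y_n)$. Then
$$
|y-\theta_{\mathbf{u}^{(n)}}(x)|\le 2r_{\max}^n.
$$
The words $\mathbf{u}^{(n)}$ need not be prefixes of one another, so we use compactness. The alphabet $\mathcal{A}$ is finite, so by a diagonal (König-type) argument we extract a subsequence $n_j$ and a $\mathbf{w}\in\mathcal{A}^\infty$ such that, for each fixed $\ell$, $\mathbf{u}^{(n_j)}|_\ell=\mathbf{w}|_\ell$ for all large $j$. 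For such $j$, \eqref{eq:theta-common-prefix-distance} gives $|\theta_{\mathbf{u}^{(n_j)}}(x)-\theta_{\mathbf{w}|_\ell}(x)|\le 2r_{\max}^\ell$. The uniform convergence of $\theta_{\mathbf{w}|_\ell}$ from Corollary~\ref{cor:uniform-holomorphic-extensions} gives $|\theta_{\mathbf{w}|_\ell}(x)-\theta_{\mathbf{w}}(x)|\le 4r^\ell$. Combining the three estimates,
$$
|y-\theta_{\mathbf{w}}(x)|\le 2r_{\max}^{n_j}+2r_{\max}^\ell+4r^\ell .
$$
Letting first $j\to\infty$ and then $\ell\to\infty$ gives $y=\theta_{\mathbf{w}}(x)$.

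\emph{Remaining points.} Analyticity of each graph was established just before the statement. The main point needing care is the bookkeeping between the lifts $\mathbf{u}[x]\in\mathbb{R}$ and the circle. Each $\theta_{\mathbf{u}}$ is defined on $\mathbb{R}$, and it descends to $\mathbb{S}^1$ only for the corresponding points mod $1$. Periodicity of the individual graph fails in general: $\theta_{\mathbf{w}}(x+1)$ is the graph of a shifted itinerary. The union over all $\mathbf{w}\in\mathcal{A}^\infty$, however, is invariant under $x\mapsto x+1$, because shifting $x$ by $1$ relabels the inverse branches. So the identity should be read with $x$ ranging over $[0,1)$, and it is exactly this relabelling we must check so that the first paragraph's description of $T^n(\mathbb{S}^1\times U)$ is correct.
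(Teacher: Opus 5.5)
Your proof is correct. It uses the same two ingredients as the paper's argument: the strip description $T^n(\mathbf{u}[x],y)=(x,\theta_{\mathbf{u}}(x,y))$ and the fiber contraction. It packages both inclusions somewhat differently.

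For $\supseteq$, the paper needs no limit. Writing $\mathcal{G}$ for the union of graphs, the identity $T(f_i(y),\theta_{\mathbf{a}}(f_i(y)))=(y,\theta_{i\mathbf{a}}(y))$ gives $\mathcal{G}\subset T(\mathcal{G})$, hence $\mathcal{G}\subset T^n(\mathbb{S}^1\times U)$ for every $n$. Your truncation argument, using closedness of $T^n(\mathbb{S}^1\times U)$, reaches the same conclusion without recording this invariance.

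For $\subseteq$, the paper shows that $T^n(\mathbb{S}^1\times U)$ lies in an $O(r^n)$-neighborhood of $\mathcal{G}$. It then appeals to compactness of $\mathcal{G}$, which it asserts without proof. Your extraction of a limiting word at a fixed base point is exactly the fiberwise closedness this step requires, so on this side your write-up is the more complete one.

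Two minor remarks. First, the strip-width bound $|\theta_{\mathbf{u}}(x,y)-\theta_{\mathbf{u}}(x,0)|\le r_{\max}^{|\mathbf{u}|}$ follows from the mean value theorem and \eqref{eq:partial-G-formula-Formal}. It does not follow literally from \eqref{eq:theta-common-prefix-distance}, which compares equal fiber arguments. Second, your closing concern about relabelling is unnecessary. For fixed $x\in[0,1)$, the identity $\widetilde{f}^{\,n}(\mathbf{u}[x])=x+\sum_{k=1}^{n}u_kb^{k-1}$ already shows that the points $\mathbf{u}[x]$, $\mathbf{u}\in\mathcal{A}^n$, are exactly the $b^n$ preimages of $x$. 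Your pointwise argument never uses closedness of the union of graphs across $x=1$.
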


\begin{proof}
Denote $\mathcal{G} := \bigcup_{\mathbf{u} \in \mathcal{A}^\infty} \{ (x,\theta_\mathbf{u}(x)) , \ x \in \mathbb{S}^1 \}$ the union of the graphs, and let us show that $\mathcal{S} = \mathcal{G}$.
First of all, $\mathcal{G}$ is compact and $T$-invariant. Indeed, fixing $x=b[y]$ and $\mathbf{a} \in \mathcal{A}^\infty$, we find $T(x,\theta_\mathbf{a}(x)) = T(b[y],\theta_\mathbf{a}(b[y])) = (y,\theta_{b \mathbf{a}}(y)) \in \mathcal{G}$, using \eqref{def:theta} and \eqref{eq:theta-concatenation}. It follows that $ \mathcal{G} \subset \bigcap_{n \geq 0} T^{n}(\mathbb{S}^1 \times {U}) = \mathcal{S}$. Reciprocally, we have $T^n(\mathbb{S}^1 \times U) \subset \mathcal{G}_{\mathcal{O}(r^n)}$,
for some $r \in (0,1)$, where $\mathcal{G}_{\varepsilon} := \{q \in \mathbb{S}^1 \times U, d(q,\mathcal{S})< \varepsilon \}$. Indeed, fixing $x=\mathbf{a}[z] \in \mathbb{S}^1$, $\mathbf{a} \in \mathcal{A}^n$ and $y \in U$ , we have $ T^n(x,y) = (z,\theta_\mathbf{a}(z,y)) $. Extending randomly $\mathbf{a} \in \mathcal{A}^n$ into an infinite word $\mathbf{aw} \in \mathcal{A}^\infty$, Corollary \ref{cor:uniform-holomorphic-extensions} yields $\theta_\mathbf{a}(x,y) = \theta_\mathbf{aw}(x) + \mathcal{O}(r^n) $, which indeed gives $T^n(x,y) \in \mathcal{G}_{\mathcal{O}(r^n)}$. It follows that $\mathcal{S} = \bigcap_n T^n(\mathbb{S}^1 \times U) \subset \mathcal{G}$, and that $\mathcal{G}=\mathcal{S}$.\end{proof}

\begin{remark}
The solenoid $\mathcal{S}$ is a single graph if and only if $\theta_\mathbf{u}=\theta_\mathbf{v}$ for all $\mathbf{u},\mathbf{v} \in \mathcal{A}^\infty$.
\end{remark}

\subsection{Symbolic Addition}
\label{subsec:symbolic-addition}
{
In this subsection, we recall an addition operation on the symbolic space
$\mathcal{A}^{\infty}$ and establish an identity relating
$\theta_{\mathbf{u}}(x+k)$ to $\theta_{\mathbf{u}+k}(x)$. This identity will be used repeatedly in our rigidity and transversality analysis of unstable curves later in the paper. The use of symbolic addition in our transversality analysis is inspired by Gao and Shen \cite{gao2024low}.
}

 For $i\in\mathcal{A}$, recall that
\begin{equation}\label{def:f_i-formal}
  f_i(x):=\widetilde{f}^{-1}(x+i)\qquad \text{for }x\in\R.  
\end{equation}
{Since  $\widetilde{f}$ is a lift of the degree $b$ map $f$, we have $$ \widetilde{f}(x+m)=\widetilde{f}(x)+bm \qquad \text{for all }x\in\mathbb{R}\text{ and }m\in\mathbb{Z}. $$ 
Consequently,}
\begin{equation}\label{eq:f_i-+m-formal}
    \widetilde{f}^{-1}(x+bm)=\widetilde{f}^{-1}(x)+m\qquad\text{ for all }x\in\R\text{ and }m\in\Z.
\end{equation}

We define an addition on $\mathcal{A}^{\infty}$  as follows.
Given $n\in\Z$ and $\mathbf{u}=u_1u_2\ldots\in\mathcal{A}^{\infty}$, let $\mathbf{u}+n$ be the unique element $\mathbf{v}=v_1v_2\ldots\in\mathcal{A}^{\infty}$ such that 
\begin{equation}\label{eq:symbol-space-addition-n-formal}
     n+\sum_{k=1}^m  u_kb^{k-1}=\sum_{k=1}^m  v_kb^{k-1}\pmod{ b^m}\qquad\text{for each }m\in\Z_{>0}.
\end{equation}

Given $m\in\mathbb{Z}_{>0}$ let $q_m\in\mathbb{Z}$ be defined by \begin{equation}\nonumber
n+\sum_{k=1}^m u_kb^{k-1} = \sum_{k=1}^m v_kb^{k-1} + q_mb^m. \end{equation} 
Setting $q_0:=n$, we have \begin{equation}\nonumber
q_{m-1}+u_m=v_m+bq_m \qquad\text{for every }m\in\mathbb{Z}_{>0}. \end{equation}
Hence, by \eqref{def:f_i-formal}, \eqref{def:u[x]}, and \eqref{eq:f_i-+m-formal}, an induction on $m$ gives
$$(\mathbf{u}|_m)[x+n] = ((\mathbf{u}+n)|_m)[x]+q_m.$$
In particular,
 \begin{equation}\label{eq:addition-f_u-formal}
     (\mathbf{u}|_m)[x+n]=((\mathbf{u}+n)|_m)[x]\pmod{1}.
 \end{equation}

Thus we have the following lemma.

\begin{lemma}\label{lem:addition-theta_u-formal}
    For $\mathbf{u}\in\mathcal{A}^{\infty}$, $n\in\Z$, and $m\in\Z_{>0}$, the following holds:
    $$  \theta_{\mathbf{u}|_m}(x+n)\equiv \theta_{(\mathbf{u}+n)|_m}(x)\qquad\text{and}\qquad \theta_{\mathbf{u}}(x+n)\equiv \theta_{\mathbf{u}+n}(x).$$
\end{lemma}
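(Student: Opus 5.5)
We argue directly from the definition \eqref{def:G_u-formal} together with the shift identity for the inverse branches that was just established. Fix $\mathbf{u}\in\mathcal{A}^{\infty}$, $n\in\Z$ and $m\in\Z_{>0}$, and put $\mathbf{v}:=\mathbf{u}+n$. By the induction preceding the lemma, for every $1\le k\le m$ there is an integer $q_k$ with
$$
(\mathbf{u}|_k)[x+n]=(\mathbf{v}|_k)[x]+q_k .
$$
Recall that $g$ is $\Z$-periodic in its first variable. Hence, in each nested layer of
$$
\theta_{\mathbf{u}|_m}(x+n,0)=g\bigl((\mathbf{u}|_1)[x+n],g\bigl((\mathbf{u}|_2)[x+n],\ldots,g\bigl((\mathbf{u}|_m)[x+n],0\bigr)\ldots\bigr)\bigr),
$$
we may replace $(\mathbf{u}|_k)[x+n]$ by $(\mathbf{v}|_k)[x]$. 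We do this from the innermost layer outward: at the $k$-th layer the second argument has already been identified, and the first argument differs from the new one only by the integer $q_k$. This gives $\theta_{\mathbf{u}|_m}(x+n)=\theta_{\mathbf{v}|_m}(x)$ exactly, not merely modulo $1$. The statement's $\equiv$ is therefore an identity of real-valued functions; it is written this way only because the base point is read modulo $1$. Note also that $\mathbf{v}|_m$ depends only on $\mathbf{u}|_m$ and $n$, by \eqref{eq:symbol-space-addition-n-formal}, so the first identity is well posed for finite words.

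For the second identity, we let $m\to\infty$ in the first. For fixed $x$, the left side $\theta_{\mathbf{u}|_m}(x+n)$ converges to $\theta_{\mathbf{u}}(x+n)$, and the right side $\theta_{\mathbf{v}|_m}(x)$ converges to $\theta_{\mathbf{v}}(x)$. Both limits are uniform by Corollary~\ref{cor:uniform-holomorphic-extensions}, since consecutive approximants share the prefix $\mathbf{u}|_m$ (respectively $\mathbf{v}|_m$) and thus differ by $O(r^m)$.

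One point needs care. The convergence estimates were set up on $[0,1]$, or on $\mathcal{D}_\delta$, while $x+n$ may lie outside. This is handled by the periodicity just used. Alternatively, one observes that the estimates in \eqref{eq:theta-common-prefix-distance} only used $r_{\max}$, which is a bound valid on all of $\mathbb{R}\times U$ by periodicity of $g$. So the limit defining $\theta_{\mathbf{u}}$ on $\mathbb{R}$ exists pointwise and is consistent with the real-analytic extension.

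The only real obstacle is the bookkeeping with the integers $q_k$ inside the nested compositions. That is settled by the inner-to-outer induction together with the periodicity of $g$ in $x$.
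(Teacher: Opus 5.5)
Your proof is correct and follows the paper's argument: the shift identity $(\mathbf{u}|_k)[x+n]=((\mathbf{u}+n)|_k)[x]+q_k$, combined with the $\Z$-periodicity of $g$ in $x$, lets you peel off the nested compositions defining $\theta_{\mathbf{u}|_m}$ from the inside out, and the infinite-word identity follows by letting $m\to\infty$. You are more explicit than the paper about the integer offsets $q_k$ and about pointwise convergence off $[0,1]$, which follows, as you note, from the $r_{\max}$ bound holding on all of $\mathbb{R}\times U$. One small correction: the $\equiv$ in the statement simply denotes identity of functions, not a congruence.
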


\begin{proof}
    Let $\mathbf{u}\in\mathcal{A}^{\infty}$. Since $\theta_{\mathbf{u}}(x)=\lim_{m\to\infty}\theta_{\mathbf{u}|_m}(x)$, we only need to show that
    \begin{equation}\label{eq:addition-G_u-formal}
      \theta_{\mathbf{u}|_m}(x+n)\equiv \theta_{(\mathbf{u}+n)|_m}(x) \qquad\text{for all }m\in\Z_{>0}.
    \end{equation}

  By \eqref{eq:addition-f_u-formal}, for each $k\in\{1,\cdots,m\}$, we have
$$(\mathbf{u}|_k)[x+n]=((\mathbf{u}+n)|_k)[x]\mod 1.$$
Thus \eqref{def:G_u-formal} implies that \eqref{eq:addition-G_u-formal} holds by induction. Then our claim holds.
\end{proof}

Since $\theta_{\mathbf{u}}(x)$ is real analytic on $[0,1]$ for each $\mathbf{u}\in\mathcal{A}^{\infty}$,
 it follows from Lemma \ref{lem:addition-theta_u-formal} that 
$\theta_{\mathbf{u}}(x)$ is real analytic on $\R$.

\section{Gibbs Measure on $\mathcal{S}$ and tree theorem}
\label{sec:Gibbs}
In this section, we give a precise characterization of Gibbs measures on
$\mathcal{S}$, together with a disintegration formula and an explicit
finite level expansion of the corresponding conditional measures.
Then, we establish regularity estimates for the weights appearing in this
finite level expansion.

\subsection{Gibbs Measure on $\mathcal{S}$}
Let $\psi:\mathbb{S}^1\to\mathbb{R}$ be a H\"older continuous
function. By the Ruelle--Perron--Frobenius
theorem, there exists a unique Gibbs measure
$\mu_\psi$ on $\mathbb{S}^1$ for the potential $\psi$ with respect to $f$.

Let
$$
\mathcal{L}_{\psi}g(x)
:=
\sum_{f(y)=x}e^{\psi(y)}g(y),\qquad\qquad g\in C(\mathbb{S}^1)
$$
be the Ruelle transfer operator associated with $\psi$.
Applying \cite[Theorem~3.6(ii)]{RuelleExpandingMaps} to the  H\"older continuous function $e^{\psi}$, there exist
$\lambda_\psi\in\mathbb{R}$ and a H\"older continuous function
$h_\psi:\mathbb{S}^1\to\mathbb{R}$ such that
$$
\mathcal{L}_{\psi}e^{h_\psi}
=
e^{\lambda_\psi+h_\psi}.
$$
Identifying $\mathbb{S}^1$ with $\mathbb{R}/\mathbb{Z}$, we also regard
$h_\psi$ as a  H\"older continuous function on $[0,1]$.

Thus the function
\begin{equation}\label{eq:widetilde-psi}
\widetilde{\psi}
:=
\psi+h_\psi-h_\psi\circ f-\lambda_\psi
\end{equation}
is a  H\"older continuous normalized potential, that is,
$\mathcal{L}_{\widetilde{\psi}}\mathbf{1}=\mathbf{1}.$
Therefore we have
\begin{equation}\label{eq:sum-operator}
    \sum_{\mathbf{u}\in\mathcal{A}^n }
    \exp\left( \sum_{k=1}^n\widetilde{\psi}((\mathbf{u}|_k)[x])\right)=1\qquad\text{for all }n\in\Z_{>0}\text{ and }x\in [0,1].
\end{equation}

Moreover, by
\cite[Theorem~3.6(ii) and Corollary~5.2]{RuelleExpandingMaps},
$\mu_\psi$ is the unique Borel probability measure such that
\begin{equation}\label{eq:Ruelle-operator-int}
\int_{\mathbb S^1}
\mathcal L_{\widetilde{\psi}}g\,d\mu_\psi
=
\int_{\mathbb S^1}g\,d\mu_\psi
\qquad
\text{for every }g\in C(\mathbb S^1).
\end{equation}

{
\medskip
\noindent\textbf{Normalization and symbolic coding.}
Define the coding map
$\bar{\pi}:\mathcal{A}^{\Z_{\le 0}}\to[0,1]$ by
$$
\bar{\pi}(\mathbf{u})
:=
\lim_{n\to\infty}(\mathbf{u}|_{-n}^0)[0].
$$
Let $q:[0,1]\to\mathbb{S}^1$ be the canonical quotient map, and define
$$
\pi:=q\circ\bar{\pi}:
\mathcal{A}^{\Z_{\le 0}}\to\mathbb{S}^1.
$$
}

Let $\tau$ be the right shift on $\mathcal{A}^{\Z_{\le 0}}$.
{For a continuous function $\Phi$ on
$\mathcal A^{\mathbb Z_{\leq 0}}$, define
$$
\mathcal L_\Phi G(\mathbf a)
:=
\sum_{i\in\mathcal A}
e^{\Phi(\mathbf a i)}G(\mathbf a i)\qquad\text{for }G\in C(\mathcal{A}^{\mathbb Z_{\leq 0}}).
$$}

Since
$f\circ\pi=\pi\circ\tau,$
it follows from \eqref{eq:widetilde-psi} that
\begin{equation}\label{eq:widetilde-psi-1}
    \widetilde{\psi}\circ\pi
=
\psi\circ\pi+h_{\psi}\circ\pi
-(h_{\psi}\circ\pi)\circ \tau
-\lambda_{\psi}.
\end{equation}
In particular, $\psi\circ\pi$ and $\widetilde{\psi}\circ\pi$ are
cohomologous up to an additive constant, and hence have the same Gibbs
measure on $\mathcal{A}^{\Z_{\le 0}}$.

Let $\mu_{\psi\circ\pi}$ be the unique Gibbs measure on $\mathcal{A}^{\Z_{\le 0}}$ for
the potential $\psi\circ\pi$ with respect to $\tau$.  Notice that
$\widetilde{\psi}\circ\pi$ is normalized.
Therefore, for all $G\in C(\mathcal{A}^{\mathbb Z_{\leq 0}})$,
$$
\int_{\mathcal{A}^{\mathbb Z_{\leq 0}}}
\mathcal L_{\widetilde{\psi}\circ\pi}G\,
d\mu_{\psi\circ\pi}
=
\int_{\mathcal{A}^{\mathbb Z_{\leq 0}}}
G\,d\mu_{\psi\circ\pi}.
$$

For every $g\in C(\mathbb S^1)$, the corresponding Ruelle operators satisfy
$$
\mathcal{L}_{\widetilde{\psi}\circ\pi}(g\circ\pi)
=
(\mathcal{L}_{\widetilde{\psi}}g)\circ\pi.
$$
 For each $g\in C(\mathbb S^1)$, let $G=g\circ\pi$. Thus we have
$$
\int_{\mathbb S^1}
\mathcal L_{\widetilde{\psi}}g\,
d\bigl(\pi\mu_{\psi\circ\pi}\bigr)
=
\int_{\mathbb S^1}
g\,d\bigl(\pi\mu_{\psi\circ\pi}\bigr).
$$
By the uniqueness in \eqref{eq:Ruelle-operator-int}, we conclude that
\begin{equation}\label{eq:nu-equal}
  \mu_{\psi}=  \pi \mu_{\psi\circ\pi}.
\end{equation}

Let $\nu_{\psi}$ be the the unique $T$-invariant ergodic Borel probability measure on
$\mathcal{S}$ whose projection onto $\mathbb{S}^1$ is $\mu_{\psi}$.
{The uniqueness follows from the uniqueness of the natural extension of $\mu_\psi$. Indeed, once a full backward itinerary of the base point is fixed, the uniform contraction in the fibers implies that the diameters of the corresponding iterated fiber images tend to zero, and hence uniquely determines the fiber coordinate.}  

{
\medskip
\noindent\textbf{The invariant lift to $\mathcal S$.}
}
For convenience, we also denote by $\tau$ the right shift on $\mathcal{A}^{\mathbb{Z}}$.
Let $\nu_{\psi\circ\pi}$ be the $\tau$-invariant ergodic Borel probability
measure on $\mathcal{A}^{\mathbb{Z}}$ whose projection onto
$\mathcal{A}^{\mathbb{Z}_{\leq 0}}$ is $\mu_{\psi\circ\pi}$.

Given $\mathbf{u}=(u_k)_{k\in\Z}$, let
$\mathbf{u}_-:=\cdots u_{-1}u_0\in\mathcal{A}^{\Z_{\le 0}}$ and 
$\mathbf{u}_+:= u_{1}u_2\cdots\in\mathcal{A}^{\infty}$.

Define the map $\Pi$ from $\mathcal{A}^{\Z} $  to $\mathbb{S}^1\times U$ by
\begin{equation}\label{def:Pi}
  \Pi(\mathbf{u}):=\left( \pi(\mathbf{u}_-),\theta_{\mathbf{u}_+}\left( \pi(\mathbf{u}_-) \right)  \right).  
\end{equation}
{
Therefore  by \eqref{eq:theta-concatenation} we have}
$$ T\circ\Pi=\Pi\circ \tau,$$
which implies that $\Pi(\nu_{\psi\circ\pi})$  is a $T$-invariant ergodic Borel
probability measure on $\mathcal{S}$.
Since the projection of $\Pi(\nu_{\psi\circ\pi})$ onto $\mathbb{S}^1$
is $\mu_{\psi}$. by the uniqueness of $\nu_{\psi}$, it follows that
\begin{equation}\label{eq:nu-equivalent-formular}
    \nu_{\psi}=\Pi(\nu_{\psi\circ\pi}).
\end{equation}

{
\medskip
\noindent\textbf{Conditional measures on symbolic fibers.}
Given
$\mathbf{a}=\cdots a_{-1}a_0\in\mathcal{A}^{\Z_{\le 0}}$,
let $\nu_{\mathbf{a}}$ be the probability measure on
$\mathcal{A}^{\infty}$ characterized by the following cylinder
probabilities. For any $m\in\Z_{>0}$ and
$\mathbf{z}=z_1\cdots z_m\in\mathcal{A}^m$, let
$$
[\mathbf{z}]
:=
\left\{
\mathbf{u}=u_1u_2\cdots\in\mathcal{A}^{\infty}:
u_j=z_j\ \text{for all }1\le j\le m
\right\}.
$$
Moreover, let $\mathbf a\mathbf z$ denote the left-infinite sequence
obtained by appending $z_1,\ldots,z_m$ to $\mathbf a$ and then
reindexing such that $z_m$ occupies the coordinate $0$.  That is,
$$
\mathbf a\mathbf z
=
\cdots a_{-1}a_0z_1\cdots z_m\in\mathcal{A}^{\Z_{\le 0}}.
$$
We then define
\begin{equation}
\label{eq:conditional-measure-cylinder-nu-formal}
\nu_{\mathbf{a}}([\mathbf{z}])
:=
\exp\left(
\sum_{k=0}^{|\mathbf{z}|-1}
\widetilde{\psi}\circ\pi\circ\tau^k(\mathbf{a}\mathbf{z})
\right).
\end{equation}
}

{
The cylinder probabilities are consistent. Indeed, by \eqref{eq:conditional-measure-cylinder-nu-formal}, for every
$\mathbf z\in\mathcal A^m$,
$$
\sum_{i\in\mathcal A}
\nu_{\mathbf a}([\mathbf z i])
=
\nu_{\mathbf a}([\mathbf z])
\sum_{i\in\mathcal A}
e^{\widetilde\psi(\pi(\mathbf a\mathbf z i))}
=
\nu_{\mathbf a}([\mathbf z]),
$$
where the last equality follows from
$\mathcal L_{\widetilde\psi}\mathbf 1=\mathbf 1$.
Therefore, by the Kolmogorov extension theorem, these cylinder
probabilities determine a unique Borel probability measure
$\nu_{\mathbf a}$ on $\mathcal A^\infty$.
}

{After reversing the order of the coordinates, our right shift on
$\mathcal A^{\mathbb Z_{\leq0}}$ becomes the usual one sided left
shift. Thus the conditional measure construction in
\cite[Lemmas~2.1 and~2.2]{GramaQuintXiao} applies directly in our
notation. Consequently, }
\begin{equation}\label{eq:decom-nu-formal} \nu_{\psi\circ\pi}=\int_{\mathcal{A}^{\Z_{\le0}}}\delta_{\mathbf{a}}\otimes \nu_{\mathbf{a}}\,d \mu_{\psi\circ\pi}(\mathbf{a}).
\end{equation}

{Since we identify $\mathbb S^1$ with $[0,1)$ through the
bijection
$$
q|_{[0,1)}:[0,1)\to\mathbb S^1,
$$
and accordingly regard $\mu_\psi$ as a Borel probability measure on
$[0,1)$.}

{
\medskip
\noindent\textbf{Fiber disintegration and finite-level expansion.}
}
Given $x\in[0,1)$, there exists a unique sequence
$A(x):=\cdots a_{-1}a_0\in\mathcal{A}^{\Z_{\le 0}}$
such that
$$
\widetilde{f}^{\,m}(x)\bmod 1
\in f_{a_{-m}}([0,1))
\qquad\text{for every }m\in\N.
$$
Let $A(1):=\cdots\,(b-1)\,(b-1)\in\mathcal{A}^{\Z_{\le0}}$.
In this way, we obtain a coding map
$A:[0,1]\to\mathcal{A}^{\Z_{\le 0}},$
satisfying
$\bar{\pi}\circ A=\operatorname{id}_{[0,1]}.$ 
{The only points with more than one symbolic representation are the
iterated preimages of the endpoints of the Markov partition.
Since the set of such points is countable,}
there exists a countable set
$\mathcal{E}\subset\mathcal{A}^{\Z_{\le 0}}$ such that
$
A:[0,1)\to\mathcal{A}^{\Z_{\le 0}}\setminus\mathcal{E}
$
is a bijection. 

{Since  $\mu_{\psi\circ\pi}$ is non-atomic, we have
$\mu_{\psi\circ\pi}(\mathcal E)=0$.
Moreover,
$$
A\circ\bar\pi=\operatorname{id}
$$
on $\mathcal A^{\mathbb Z_{\leq0}}\setminus\mathcal E.$
Hence, under the above identification of $\mathbb S^1$ with $[0,1)$, by \eqref{eq:nu-equal}, we have
$$
A\mu_\psi
=
A\bar\pi\mu_{\psi\circ\pi}=
(A\circ\bar\pi)\mu_{\psi\circ\pi}=
\mu_{\psi\circ\pi}.
$$
}

By \eqref{eq:decom-nu-formal} and $\nu_{\psi}=\Pi(\nu_{\psi\circ\pi})$, for any $G\in C(\mathbb{S}^1\times U)$, we have
\begin{equation}\label{eq:decom-nu-Gibbs-formal}   \nu_{\psi}(G)=\int_{\mathcal{A}^{\Z_{\le0}}}\int_{\mathcal{A}^{\infty}}G\left( \pi(\mathbf{a}),\theta_{\mathbf{u}}(\pi(\mathbf{a}))\right)\,d \nu_{\mathbf{a}}(\mathbf{u})\,d \mu_{\psi\circ\pi}(\mathbf{a}).
\end{equation}

For $x\in[0,1]$, define a probability measure $m_x$ on $U$ by
\begin{equation}\label{def:m_x-formal}
    m_x:=\Pi_x(\nu_{A(x)}),
\end{equation}
where the map $\Pi_x:\mathcal{A}^{\infty}\to U$ is defined by
$$\Pi_x(\mathbf{u}):=\theta_{\mathbf{u}}(x).$$

{Using \eqref{eq:decom-nu-Gibbs-formal} and
$A\mu_\psi=\mu_{\psi\circ\pi}$, we obtain
\begin{equation}\label{eq:decom-Gibbs-nu-formal}
    \begin{aligned}
\nu_\psi(G)
&=
\int_{\mathbb S^1}
\int_{\mathcal A^\infty}
G\left(x,\theta_{\mathbf u}(x)\right)
\,d\nu_{A(x)}(\mathbf u)\,d\mu_\psi(x)\\
&=
\int_{\mathbb S^1}
\int_U G(x,y)\,dm_x(y)\,d\mu_\psi(x).
\end{aligned}
\end{equation}
}

Fix $\mathbf{a}\in\mathcal{A}^{\Z_{\le 0}}$. By
\eqref{eq:conditional-measure-cylinder-nu-formal}, for any
$\mathbf{u},\mathbf{v}\in\mathcal{A}^*$, we obtain
\begin{equation}\label{eq:conditional-measure-concatenation}
\nu_{\mathbf{a}}([\mathbf{u}\mathbf{v}])
=
\nu_{\mathbf{a}}([\mathbf{u}])\,
\nu_{\mathbf{a}\mathbf{u}}([\mathbf{v}]).
\end{equation}

Consequently, for any $n\in\Z_{>0}$ and
$F\in C(\mathcal{A}^{\infty})$,
\begin{equation}\label{eq:conditional-measure-finite-decomposition}
\nu_{\mathbf{a}}(F)
=
\sum_{\mathbf{u}\in\mathcal{A}^n}
\nu_{\mathbf{a}}([\mathbf{u}])
\int_{\mathcal{A}^{\infty}}
F(\mathbf{u}\mathbf{v})\,
d\nu_{\mathbf{a}\mathbf{u}}(\mathbf{v}).
\end{equation}

For $x\in[0,1]$ and $\mathbf{u}\in\mathcal{A}^*$, define
\begin{equation}\label{def:p-u-x}
p_{\mathbf{u}}(x)
:=
\nu_{A(x)}([\mathbf{u}]).
\end{equation}
Thus by \eqref{eq:conditional-measure-cylinder-nu-formal}, we have
\begin{equation}\label{eq:p_u}
p_{\mathbf{u}}(x)
=\exp\left(
\sum_{k=1}^{|\mathbf{u}|} \widetilde{\psi}( (\mathbf{u}|_k)[x] )
\right).
\end{equation}
Here, we note that by \eqref{eq:sum-operator}, we have
\begin{equation}\label{eq:sum=1}  \sum_{\mathbf{u}\in\mathcal{A}^m}p_{\mathbf{u}}(x)=1\qquad\text{for each }x\in [0,1]\text{ and }m\in\Z_{>0}.
\end{equation}

{Moreover, by \eqref{eq:p_u}, for any $\mathbf u,\mathbf v\in\mathcal A^*$,
\begin{equation}\label{eq:p-formula}
p_{\mathbf u\mathbf v}(x)
=
p_{\mathbf u}(x)
p_{\mathbf v}(\mathbf u[x]).
\end{equation}}

{
For all $\mathbf u,\mathbf v\in\mathcal A^*$, by
\eqref{eq:conditional-measure-concatenation} and
\eqref{eq:p-formula},
$$
\begin{aligned}
\nu_{A(x)\mathbf u}([\mathbf v])
=
\frac{\nu_{A(x)}([\mathbf u\mathbf v])}
{\nu_{A(x)}([\mathbf u])}=
\frac{p_{\mathbf u\mathbf v}(x)}
{p_{\mathbf u}(x)}=
p_{\mathbf v}(\mathbf u[x])=
\nu_{A(\mathbf u[x])}([\mathbf v]).
\end{aligned}
$$
Thus we obtain
\begin{equation}\label{eq:conditional-shift-identity}
\nu_{A(x)\mathbf u}
=
\nu_{A(\mathbf u[x])}.
\end{equation}}

{Using \eqref{eq:conditional-measure-finite-decomposition},
\eqref{eq:conditional-shift-identity}, and 
$\theta_{\mathbf u\mathbf v}(x)
=
\theta_{\mathbf u}
\left(x,\theta_{\mathbf v}(\mathbf u[x])\right),$
we obtain, for every $H\in C(U)$,
$$
\begin{aligned}
\int_U H(y)\,dm_x(y)
&=
\int_{\mathcal A^\infty}
H(\theta_{\mathbf w}(x))
\,d\nu_{A(x)}(\mathbf w)\\
&=
\sum_{\mathbf u\in\mathcal A^n}
p_{\mathbf u}(x)
\int_{\mathcal A^\infty}
H(\theta_{\mathbf u\mathbf v}(x))
\,d\nu_{A(x)\mathbf u}(\mathbf v)\\
&=
\sum_{\mathbf u\in\mathcal A^n}
p_{\mathbf u}(x)
\int_{\mathcal A^\infty}
H\left(
\theta_{\mathbf u}
\left(x,\theta_{\mathbf v}(\mathbf u[x])\right)
\right)
\,d\nu_{A(\mathbf u[x])}(\mathbf v)\\
&=
\sum_{\mathbf u\in\mathcal A^n}
p_{\mathbf u}(x)
\int_U
H(\theta_{\mathbf u}(x,y))
\,dm_{\mathbf u[x]}(y).
\end{aligned}
$$}
Thus
\begin{equation}\label{eq:fiber-measure-finite-decomposition}
\int_U H(y)\,dm_x(y)
=
\sum_{\mathbf{u}\in\mathcal{A}^n}
p_{\mathbf{u}}(x)
\int_U
H\bigl(\theta_{\mathbf{u}}(x,y)\bigr)\,
dm_{\mathbf{u}[x]}(y).
\end{equation}

{Combining \eqref{eq:decom-Gibbs-nu-formal} with
\eqref{eq:fiber-measure-finite-decomposition}, we obtain, for every
$n\in\mathbb Z_{>0}$ and
$G\in C(\mathbb S^1\times U)$,}
\begin{equation}\label{eq:int-omega}
    \nu_{\psi}(G)=\int_{\mathbb{S}^1 }
    \left(\sum_{\mathbf{u}\in\mathcal{A}^n} p_{\mathbf{u}}(x)\int_{U} G(x,\theta_{\mathbf{u}}(x,y))\,d m_{\mathbf{u}[x]}(y)     \right)\,d\mu_{\psi}(x).
\end{equation}

%\subsection{SRB measure}\label{subsec:SRB}
%Let $\mu$ be the unique absolutely continuous invariant probability measure
%of $f$ on $\mathbb{S}^1$. Its density $\rho$ is strictly positive and real analytic
% on $[0,1]$ by \cite[Theorem]{BandtlowJenkinson}. That is 
%\[
%    d\mu(x)=\rho(x)\,dx.
%\]
%Recall that the Perron--Frobenius operator associated with $f$ is given by
%$$
%\mathcal{L}g(x)
%:=
%\sum_{f(y)=x}
%\frac{g(y)}{|f'(y)|}\qquad\text{for %}g\in C([0,1]).
%$$
%Here we take $\psi(x)=\log |f'(x)|$.

%By \cite[Proposition~5.4, Proposition~7.1, and Lemma~10.2]
%{BandtlowJenkinson}, $\rho$ is the normalized strictly positive fixed point of
%$\mathcal{L}$. In particular,
%\begin{equation}\label{eq:invariant-%density}
%\sum_{f(y)=x}
%\frac{\rho(y)}{|f'(y)|}
%=
%\rho(x).
%\end{equation}

%Thus, by the definition of $\widetilde{\psi}$ and $h_{\psi}=\log \rho$ and $\lambda_{\psi}=0$, we obtain
%$$
%\widetilde{\psi}
%=
%\psi+\log \rho\circ f-\log \rho.
%$$

%Hence, combining this and $\psi(x)=\log|f'(x)|$ with
% \eqref{eq:p_u} together with  the chain rule,  we obtain
%\begin{equation}\label{eq:SRB-nu-mass}
%   p_{\mathbf{u}}(x)
%=
%\frac{\rho(\mathbf{u}[x])}{\rho(x)}
%\cdot \mathbf{u}[x]'.
%\end{equation}
%for every $x\in [0,1]$ and  %$\mathbf{u}\in\mathcal{A}^*$.

\subsection{Uniform Regularity of $p_{\mathbf{u}}$}

\begin{lemma}\label{lem:sum-p_u-Holder}
Let $\psi:\mathbb{S}^1\to\mathbb{R}$ be a H\"older continuous
function. Then there exist $C>1$ and $\alpha\in(0,1]$ such that the following holds for any $n\in\Z_{>0}$ and $\mathbf{u}\in\mathcal{A}^*$, each $x,y\in [0,1]$,
$$ \left| \frac{ p_{ \mathbf{u} }(x) }{p_{ \mathbf{u} }(0)}- \frac{ p_{ \mathbf{u} }(y) }{p_{ \mathbf{u} }(0)} \right|  \le C|x-y|^{\alpha}. $$
\end{lemma}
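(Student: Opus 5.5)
We will prove the lemma by a standard bounded-distortion argument applied to the explicit product formula \eqref{eq:p_u}. By \eqref{eq:p_u}, for $\mathbf u\in\mathcal A^n$ and $x\in[0,1]$,
$$
\log\frac{p_{\mathbf u}(x)}{p_{\mathbf u}(0)}
=
\sum_{k=1}^{n}\Bigl(\widetilde\psi\bigl((\mathbf u|_k)[x]\bigr)-\widetilde\psi\bigl((\mathbf u|_k)[0]\bigr)\Bigr).
$$
The plan is to bound this Birkhoff-type sum uniformly in $n$ and $\mathbf u$ by a H\"older modulus of $|x-0|$. We then pass from logarithms to the ratios themselves, and finally compare $x$ and $y$ through the triangle inequality.

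\textbf{Step 1: Hölder regularity of $\widetilde\psi$.} We note that $\widetilde\psi$ is H\"older on $[0,1]$, say with exponent $\alpha\in(0,1]$ and constant $C_\psi$. This follows from \eqref{eq:widetilde-psi}, since $\psi$ and $h_\psi$ are H\"older and $f$ is Lipschitz. One caveat is that $\widetilde\psi$ is defined on $\mathbb S^1$, so we must check that it is H\"older along the real points $(\mathbf u|_k)[x]$. Since these are lifts, we evaluate $\widetilde\psi$ through its $\Z$-periodic lift, which is H\"older on $\R$ with respect to the usual distance because the circle distance is bounded by the real one.

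\textbf{Step 2: Contraction along the inverse branches.} By \eqref{eq:kappa_min-max} and the chain rule, for all $x,y\in[0,1]$,
$$
\bigl|(\mathbf u|_k)[x]-(\mathbf u|_k)[y]\bigr|\le \kappa_{\max}^{k}|x-y|.
$$
This requires some care, because the branch $f_{u_j}$ maps $[0,1]$ into $[0,1]$ only up to integer translation. The cleanest route is to use that $\kappa_{\max}$ bounds $|f_i'|$ on all of $\R$. This holds because $f_i'(x)=1/\widetilde f'(\widetilde f^{-1}(x+i))$ and $\widetilde f'$ is $1$-periodic, so the bound is uniform on $\R$; the same periodicity also justifies the claim in \eqref{eq:kappa_min-max}.

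\textbf{Step 3: Summing and exponentiating.} Combining the two steps gives
$$
\Bigl|\log\frac{p_{\mathbf u}(x)}{p_{\mathbf u}(y)}\Bigr|
\le
C_\psi\sum_{k\ge1}\kappa_{\max}^{k\alpha}|x-y|^{\alpha}
=:C_1|x-y|^\alpha .
$$
In particular, taking $y=0$, we get $p_{\mathbf u}(x)/p_{\mathbf u}(0)\le e^{C_1}$ uniformly. Writing
$$
\frac{p_{\mathbf u}(x)-p_{\mathbf u}(y)}{p_{\mathbf u}(0)}
=
\frac{p_{\mathbf u}(y)}{p_{\mathbf u}(0)}\Bigl(e^{\log(p_{\mathbf u}(x)/p_{\mathbf u}(y))}-1\Bigr)
$$
and using $|e^t-1|\le |t|e^{|t|}$ with $|t|\le C_1$, we obtain the bound $e^{2C_1}C_1|x-y|^\alpha$. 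This proves the lemma with $C:=\max\{2,\,C_1e^{2C_1}\}$. The constants do not depend on $n$, which appears in the statement only through $\mathbf u$.

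The main obstacle is only bookkeeping: making sure the H\"older estimate of $\widetilde\psi$ is applied to the real lifts $(\mathbf u|_k)[x]$, which may lie outside $[0,1]$, rather than on the circle. Periodicity of $\widetilde\psi\circ q$ and of $\widetilde f'$ handles this.
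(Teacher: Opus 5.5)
Your proof is correct and matches the paper's argument. Both bound the log-ratio by a geometric series, using the H\"older continuity of $\widetilde\psi$ and the contraction $\kappa_{\max}^k$ of the inverse branches, then factor out $p_{\mathbf u}(y)/p_{\mathbf u}(0)$ and apply $|e^t-1|\le |t|e^{|t|}$; the paper bounds that factor by $C_1+1$ where you use $e^{C_1}$. Your bookkeeping about lifts is harmless but unnecessary, and your stated worry is inaccurate: since $\widetilde f(0)=0$ and $\widetilde f(1)=b$, each $f_i$ with $i\in\mathcal A$ maps $[0,1]$ into itself, so every point $(\mathbf u|_k)[x]$ already lies in $[0,1]$.
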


{
Here $p_{\mathbf{u}}(x)$ is defined as in \eqref{eq:p_u} with respect to the normalized potential function $\widetilde{\psi}$ associated with the given potential function $\psi$.
}

\begin{proof}
  Let $\widetilde{\psi}$ be the H\"older continuous function defined by
\eqref{eq:widetilde-psi}. Fix $M>1$ and $\alpha\in(0,1]$ such that
$$
|\widetilde{\psi}(s)-\widetilde{\psi}(t)|
\leq M|s-t|^\alpha
\qquad\text{for all }s,t\in[0,1].
$$

Let $n\in\Z_{>0}$ and $\mathbf{u}
\in\mathcal{A}^n$, let $x,y\in [0,1]$. Given, we have
\begin{equation}\nonumber
\begin{split}
    \left| \sum_{k=1}^{n} \widetilde{\psi}( (\mathbf{u}|_k)[x] )-\widetilde{\psi}( (\mathbf{u}|_k)[y] )\right|&\le\sum_{k=1}^{n} M\left| (\mathbf{u}|_k)[x] -(\mathbf{u}|_k)[y] \right|^{\alpha}
\\&\le\sum_{k=1}^{n} M (\kappa_{\max})^{k\alpha} \left| x-y \right|^{\alpha}\\&\le
\frac{M}{1-(\kappa_{\max})^{\alpha}}| x-y |^{\alpha}.
\end{split}
\end{equation}
{
where $\kappa_{\max}$ is as defined in \eqref{eq:kappa_min-max}.
}

Let $C_1:= \frac{M}{1-(\kappa_{\max})^{\alpha}}\exp(\frac{M}{1-(\kappa_{\max})^{\alpha}})$. 
{Therefore, using the inequality $|e^t-1|\le |t|e^{|t|}$,}
\begin{equation}\label{eq:divide-bound}
    \left|\exp\left(\sum_{k=1}^{n} \widetilde{\psi}( (\mathbf{u}|_k)[x] )-\widetilde{\psi}( (\mathbf{u}|_k)[y] ) \right) -1  \right|\le C_1 | x-y |^{\alpha}.
\end{equation}
In particular, applying this and \eqref{eq:p_u}, we have
$$ \frac{p_{\mathbf{u}}(y)}{ p_{\mathbf{u}}(0)}   \le  \left| \frac{p_{\mathbf{u}}(y)}{ p_{\mathbf{u}}(0)} -1 \right|+1\le C_1+1. $$
Thus, by \eqref{eq:divide-bound} and \eqref{eq:p_u}, we have
\begin{equation}\label{eq:bound-distosion-p_u}
    \begin{split}
         \left| \frac{ p_{ \mathbf{u} }(x) }{p_{ \mathbf{u} }(0)}- \frac{ p_{ \mathbf{u} }(y) }{p_{ \mathbf{u} }(0)} \right| &=\frac{p_{\mathbf{u}}(y)}{ p_{\mathbf{u}}(0)}  \left| \frac{ p_{ \mathbf{u} }(x) }{p_{ \mathbf{u} }(y)}- 1\right|\\&\le
         (C_1+1)\left|\exp\left(
         \sum_{k=1}^{n} \widetilde{\psi}( (\mathbf{u}|_k)[x] )-\widetilde{\psi}( (\mathbf{u}|_k)[y] ) \right) -1  \right|\\&\le
         C_1(C_1+1)| x-y |^{\alpha}.
    \end{split}
\end{equation}
Therefore our claim holds for $C:=C_1(C_1+1)$.
\end{proof}

\subsection{Tree Theorem}
Given $k\in\Z_{>0}$ and $\mathbf{a}\in\mathcal{A}^*$, define the vector valued function
\begin{equation}\label{eq:Theta_k}
    \Theta_{\mathbf{a},k}(x):=\left( \theta_{\mathbf{a}}^{(1)}(x),\ldots,\theta_{\mathbf{a}}^{(k)}(x)  \right)\qquad\text{for }x\in [0,1].
\end{equation}

The following tree theorem is the main tool of this paper.

%$\max_{t\in \mathbb{S}^1}|f'(t)|^{-1}$

\begin{theorem}
\label{thm:tree-theorem-general-case}
Assume that 
$0 <|\partial_yg(x,y) |<1$ for all $(x,y)\in   [0,1]\times U$. 
Suppose further that $\mathcal{S}$ does not consist of a single graph.

Let $\psi:\mathbb{S}^1\to\R$ be  a H\"older continuous function.
Then there exist $K\in\Z_{>0}$ and constants $C_0,\gamma,\alpha>0$ such that, for every $n\in\Z_{>0}$ and $\sigma>0$, the following holds.

For $x\in [0,1]$ and $t\in\R^K$, we have
$$\sum_{\mathbf{a}\in\mathcal{G}} 
p_{\mathbf{a}}(x)\le C_0(\sigma^{\gamma}+e^{-\alpha n}), $$
where the set $\mathcal{G}$ is defined by 
$$ \mathcal{G}:=\left\{ \mathbf{a} \in \mathcal{A}^n\::\: \Theta_{\mathbf{a},K}(x) \in \mathbf{B}(t,\sigma) \right\}. $$
\end{theorem}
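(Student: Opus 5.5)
The plan is to derive the tree theorem from the quantitative separation property, Proposition~\ref{pro:Separation}, by a mass-discarding argument along the symbolic tree. I take as input the following form of that proposition. There exist $K\in\Z_{>0}$, $N\in\Z_{>0}$, $c_1>0$ and $\rho\in(0,1)$ such that for every $x\in[0,1]$ and every prefix $\mathbf u\in\mathcal A^*$ there are words $\mathbf a,\mathbf b\in\mathcal A^N$ satisfying
$$
\max_{1\le j\le K}\bigl|(\theta_{\mathbf u\mathbf a\mathbf c}-\theta_{\mathbf u\mathbf b\mathbf d})^{(j)}(x)\bigr|\ \ge\ c_1\rho^{|\mathbf u|}
$$
for all continuations $\mathbf c,\mathbf d$. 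Here $\rho^{|\mathbf u|}$ is the natural scale suggested by Lemma~\ref{lem:uniform-bound-k-derivative}, where differences with a common prefix are of size at most $C_k r^{|\mathbf u|}$.

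Given $\sigma$, set $L$ to be the largest integer with $c_1\rho^{LN}>2\sigma$; then $L\asymp\log\sigma^{-1}$. Also require $LN\le n$; if $n$ is smaller, use $L=\lfloor n/N\rfloor$, which produces the $e^{-\alpha n}$ term. The mechanism is the observation from the introduction: if two words $\mathbf u\mathbf a\mathbf c$ and $\mathbf u\mathbf b\mathbf d$ in $\mathcal A^n$ both lie in $\mathcal G$, then $\Theta_{\mathbf u\mathbf a\mathbf c,K}(x)$ and $\Theta_{\mathbf u\mathbf b\mathbf d,K}(x)$ both lie in $\mathbf B(t,\sigma)$. Their difference then has norm at most $2\sigma$, which contradicts the separation at level $|\mathbf u|\le LN$. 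Hence, for each prefix $\mathbf u$ of length $\ell N$ with $\ell<L$, at least one of the two children cylinders $[\mathbf u\mathbf a]$ or $[\mathbf u\mathbf b]$ contains no element of $\mathcal G$.

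Next I bound the weights. By \eqref{eq:p-formula}, $p_{\mathbf u\mathbf a}(x)=p_{\mathbf u}(x)p_{\mathbf a}(\mathbf u[x])$. By \eqref{eq:p_u} and Hölder continuity of $\widetilde\psi$, $p_{\mathbf a}(y)\ge \beta:=e^{-N\|\widetilde\psi\|_\infty}>0$ uniformly in $y$ and in $\mathbf a\in\mathcal A^N$. So the discarded child carries at least a fraction $\beta$ of the mass $p_{\mathbf u}(x)$. Define $M_\ell:=\sum\{p_{\mathbf u}(x): \mathbf u\in\mathcal A^{\ell N},\ [\mathbf u]\cap\mathcal G\neq\emptyset\}$. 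Using \eqref{eq:sum=1} and \eqref{eq:p-formula} to sum over children, I get $M_{\ell+1}\le(1-\beta)M_\ell$. Since $\sum_{\mathbf a\in\mathcal G}p_{\mathbf a}(x)\le M_L$, this gives
$$
\sum_{\mathbf a\in\mathcal G}p_{\mathbf a}(x)\le(1-\beta)^{L}\le C_0\bigl(\sigma^{\gamma}+e^{-\alpha n}\bigr),
$$
with $\gamma=\log(1-\beta)^{-1}/(N\log\rho^{-1})$ and $\alpha=\log(1-\beta)^{-1}/N$.

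Two technical points need care. First, \emph{finite versus infinite words}: the separation should be stated for finite continuations $\mathbf c,\mathbf d$ with total length $n$. Alternatively, one can pass between $\theta_{\mathbf a}$ for $\mathbf a\in\mathcal A^n$ and infinite extensions, at a cost $C_K r^{n}$ by Lemma~\ref{lem:uniform-bound-k-derivative}; this error is absorbed by shrinking to the scale $\max(\sigma,r^n)$, which is again covered by the $e^{-\alpha n}$ term. Second, \emph{uniformity}: $N$, $c_1$ and $\rho$ must be independent of $x$ and of $\mathbf u$.

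The main obstacle is therefore Proposition~\ref{pro:Separation} itself, namely uniform higher-order transversality without any transversality assumption. For the purposes of this theorem, though, I assume it. The hardest part within this proof is matching the scale $\rho^{|\mathbf u|}$ so that the number of iterations is genuinely of order $\log\sigma^{-1}$.
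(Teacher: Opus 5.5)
Your argument is correct and is essentially the paper's proof. You iterate the separation of Proposition~\ref{pro:Separation} (with $\rho=r_{\min}\kappa_{\min}$) along blocks of length $N$, discard one child of relative mass at least $p_{\min}\ge e^{-N\|\widetilde\psi\|_\infty}$ at each surviving prefix, and stop after roughly $\min\{n/N,\ \log\sigma^{-1}/(N\log\rho^{-1})\}$ steps. One adjustment: Proposition~\ref{pro:Separation} only applies when $|\mathbf u|\ge N$, so for the empty prefix the paper instead invokes Lemma~\ref{lem:G_a^(k)-G_b^(k)-lower-bound-formal}; alternatively you can simply skip that step, which only changes the constant $C_0$.
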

Here $\mathbf{B}(t,\sigma)$ is the closed ball in $\R^K$ with center at $t$ and radius $\sigma$.
The positive probability vector $\{p_{\mathbf{a}}(x)\}_{\mathbf{a}\in\mathcal{A}^n } $ is defined by \eqref{eq:p_u}. This will be established in the last section.

\section{Reduction of Fourier decay to the tree theorem}\label{sec:Reduction-Fourier-decay}

In this section, we reduce Fourier decay to the tree Theorem \ref{thm:tree-theorem-general-case}. Recall that $\omega := \omega_\psi$ is a Gibbs measure on $\mathcal{S}$. In general, $\omega$ may be the measure of maximal entropy or the SRB measure, but if $f$ is not smoothly conjugated to a linear map, we can assume that $\omega$ is any Gibbs state for some Hölder potential. First of all, recall that the Gibbs measure $\omega$ on $\mathcal{S}$ is of the form $\omega = \int_{\mathbb{S}^1} \delta_x \otimes m_x \ d\mu(x) $ \eqref{eq:decom-Gibbs-nu-formal} where $d\mu(x)$  is the Gibbs state $\mu_\psi$ for $f$, and where $(m_x)_{x \in \mathbb{S}^1}$ is a family of probability measures, each supported on $\mathcal{S} \cap (\{x\} \times U)$. In particular, if $f(x,y)=g(x)$, we have $$\int_{\mathcal{S}} fd\omega = \int_{\mathbb{S}^1} g d\mu .$$
Furthermore, we have the following autosimilarity relation (\ref{eq:int-omega}), for any continuous $f \in C^0(\mathcal{S},\mathbb{C})$, and for any $n \geq 0$:
\begin{equation}\label{eq:autosim}\int_{\mathcal{S}} f d\omega =  \sum_{\mathbf{u} \in \mathcal{A}^n} \int_{\mathbb{S}^1}  p_\mathbf{u}(x) \int_{U} f(x,\theta_\mathbf{u}(x,y)) d m_x(y) d\mu(x) .\end{equation}
where the familly $(p_\mathbf{u}/p_\mathbf{u}(0))_{\mathbf{u} \in \mathcal{A}^*}$ is uniformly Hölder by Lemma \ref{lem:sum-p_u-Holder}. \\
 
First of all, let us establish a non-concentration bound for smooth functions, that will be helpful to reformulate the Van Der Corput Lemma in a convenient way.

\begin{lemma}\label{lem:deri}
Let $k \geq 0$ and $\varphi \in C^{k+1}(\mathbb{S}^1,\mathbb{R})$. \ Recall that $\mu$ is upper regular: there exists $\mathfrak{s} \in (0,1)$ such that for any small enough interval $I$, we have $\mu(I) \leq |I|^\mathfrak{s}$. The following holds, for every $k \geq 0$, $\sigma \in (0,1)$: $$\mu(x \in \mathbb{S}^1 \ | \ |\varphi(x)| \leq \sigma) \lesssim_k \sigma^{A^{-k}} + \mu(x \in \mathbb{S}^1 \ | \ \forall 0 \leq j \leq k, \ |\varphi^{(j)}(x)| \leq \sigma^{A^{-k}})$$
where $A := 1+2\mathfrak{s}^{-1}$.
\end{lemma}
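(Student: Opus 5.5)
We argue by induction on $k$, where the only input about $\mu$ is the upper regularity $\mu(I)\le |I|^{\mathfrak s}$ for short intervals. For $k=0$ the statement is trivial: the set $\{|\varphi|\le\sigma\}$ is contained in the set on the right with $\sigma^{A^0}=\sigma$. Throughout, implicit constants may depend on $k$ and on $\|\varphi\|_{C^{k+1}}$, which is how I read the subscript $\lesssim_k$.

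The key one-step estimate is the following. For $\rho\in(0,1)$ and $\tau:=\rho^{1/A}\ge\rho$, we want
$$\mu\bigl(|\varphi|\le\rho,\ |\varphi'|>\tau\bigr)\lesssim \tau.$$
To prove it, cover $\mathbb S^1$ by $\asymp\tau^{-1}$ arcs $J$ of length $c\tau$, with $c$ small relative to $\|\varphi''\|_\infty$.

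On any arc $J$ containing a point where $|\varphi'|>\tau$, we have $|\varphi'|>\tau/2$ on all of $J$. Hence $\varphi$ is monotone on $J$ with slope at least $\tau/2$, and $\{|\varphi|\le\rho\}\cap J$ is an interval of length $\le 4\rho/\tau$. Upper regularity gives mass at most $(4\rho/\tau)^{\mathfrak s}$ on each such arc. Summing over the $\asymp\tau^{-1}$ arcs gives
$$\lesssim \tau^{-1}(\rho/\tau)^{\mathfrak s}=\tau^{-1}\tau^{(A-1)\mathfrak s}=\tau^{-1}\tau^{2}=\tau,$$
using $\rho=\tau^A$ and $(A-1)\mathfrak s=2$. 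This is exactly where the choice $A=1+2\mathfrak s^{-1}$ enters.

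Next we iterate this estimate along the derivatives. Set $\sigma_j:=\sigma^{A^{-j}}$, so that $\sigma_0=\sigma$ and the $\sigma_j$ increase to $\sigma_k$. Peel off one derivative at a time, applying the one-step bound to $\varphi^{(j)}$ (which is $C^2$ as long as $j\le k-1$) with $\rho=\sigma_j$ and $\tau=\sigma_{j+1}$:
$$\mu\Bigl(\bigcap_{i\le j}\{|\varphi^{(i)}|\le\sigma_i\}\Bigr)\le C\sigma_{j+1}+\mu\Bigl(\bigcap_{i\le j+1}\{|\varphi^{(i)}|\le\sigma_i\}\Bigr).$$
Chaining these from $j=0$ to $k-1$ bounds $\mu(|\varphi|\le\sigma)$ by $\sum_{j\le k}C\sigma_j\lesssim_k\sigma_k$ plus the measure of the set where $|\varphi^{(j)}|\le\sigma_j\le\sigma_k$ for all $j\le k$. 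That set is contained in the set on the right-hand side of the lemma, which finishes the proof.

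The main care point is the bookkeeping of exponents. The one-step bound must produce exactly $\tau$, so that the losses telescope into $\sigma^{A^{-k}}$; this is what forces $A=1+2\mathfrak s^{-1}$. One also needs the arcs of length $c\tau$ to be short enough for upper regularity to apply, which holds for $\sigma$ small; large $\sigma$ is absorbed into the constant. Finally, $\varphi\in C^{k+1}$ is exactly what makes $\varphi^{(j)}$ have a bounded second derivative for every $j\le k-1$.
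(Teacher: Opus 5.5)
Your proof is correct and follows the paper's argument. Both cover $\mathbb{S}^1$ by arcs of length comparable to $\tau$ (the paper takes length $\eta\,(2+2\|\varphi\|_{C^{k+2}})^{-1}$), use monotonicity on each arc to get an interval of length $\lesssim \rho/\tau$, and use the exponent count $(A-1)\mathfrak{s}=2$. The only difference is bookkeeping: you unroll the induction into an explicit chain with graded thresholds $\sigma_j$, whereas the paper inducts on $k$ with one common threshold $\sigma^{A^{-k}}$ for all derivatives. This is cosmetic.
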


\begin{proof}
The proof goes by induction on $k$. If $k=0$, the bound is trivial. Suppose then that the bound holds for some $k$ and let us prove it for $k+1$. Fix then $\varphi \in C^{k+2}(\mathbb{S}^1,\mathbb{R})$. Set $\eta := \sigma^{A^{-(k+1)}}$, $\eta^A = \sigma^{A^{-k}}$ so that:
$$ \mu(x \in \mathbb{S}^1 \ | \ \max_{0 \leq j \leq k} |\varphi^{(j)}(x)| \leq \sigma^{A^{-k}}  ) = $$ $$ \mu(x \in \mathbb{S}^1 \ | \ \max_{0 \leq j \leq k} |\varphi^{(j)}(x)| \leq \eta^A, |\varphi^{(k+1)}(x)| \leq \eta  ) + \mu(x \in \mathbb{S}^1 \ | \ \max_{0 \leq j \leq k} |\varphi^{(j)}(x)| \leq \eta^A, |\varphi^{(k+1)}(x)| \geq \eta  )  $$
$$ \leq \mu(x \in \mathbb{S}^1 \ | \ \max_{0 \leq j \leq k+1} |\varphi^{(j)}(x))| \leq \eta) + \mu(x \in \mathbb{S}^1 \ | \ |\varphi^{(k)}(x)| \leq \eta^A, \ |\varphi^{(k+1)}(x)| \geq \eta ). $$
To conclude, we bound the second probability. Let us introduce a partition $(S_i(\eta))_{i \in \mathcal{I}(\eta)}$ of $\mathbb{S}^1$ of diameter $|S_i(\eta)| \leq \eta \cdot (2+2\|\varphi\|_{C^{k+2}})^{-1}$, and with cardinality $\# \mathcal{I}(\eta) \lesssim \eta^{-1}$. {
(Notice that the number of elements in the partition depends on
$\|\varphi\|_{C^{k+2}}$.)
} Set $\tilde{\varphi} := \varphi^{(k)}$. We have:
$$ \mu(x \in \mathbb{S}^1 \ | \ |\tilde{\varphi}(x)| \leq \eta^A, \ |\tilde{\varphi}'(x)| \geq \eta ) $$
$$ = \sum_{i \in \mathcal{I}(\eta)} \mu(x \in S_i(\eta) \ | \ |\tilde{\varphi}(x)| \leq \eta^A, \ |\tilde{\varphi}'(x)| \geq \eta   ).$$
Now, either the set $(x \in S_i(\eta) \ | \ |\tilde{\varphi}(x)| \leq \eta^A, \ |\tilde{\varphi}'(x)| \geq \eta   )$ is empty, or because the diameter of each $S_i(\eta)$ is small enough, we have
$$(x \in S_i(\eta) \ | \ |\tilde{\varphi}(x)| \leq \eta^{A}, \ |\tilde{\varphi}'(x)| \geq \eta   ) \subset (x \in S_i(\eta) \ | \ |\tilde{\varphi}(x)| \leq \eta^A, \ \inf_{S_i(\eta)} |\tilde{\varphi}'| \geq \eta/2   ).$$ In any case, $(x \in S_i(\eta) \ | \ |\tilde{\varphi}(x)| \leq \eta^A, \ \inf_{S_i(\eta)} |\tilde{\varphi}'| \geq \eta/2   )$ is included in an interval $J_i(\eta) \subset S_i(\eta)$ of diameter at most $|J_i(\eta)| \leq \eta^{A}/(\eta/2) \leq 2\eta^{A-1}. $ Hence:
$$ \mu(x \in \mathbb{S}^1 \ | \ |\tilde{\varphi}(x)| \leq \eta^A, 
\ |\tilde{\varphi}'(x)| \geq \eta ) $$
$$ = \sum_{i \in \mathcal{I}(\eta)} \mu(x \in S_i(\eta) \ | \ |\tilde{\varphi}(x)| \leq \eta^{A}, \ |\tilde{\varphi}'(x)| \geq \eta   )$$
$$ \leq \sum_{i \in \mathcal{I}(\eta)} \mu(J_i(\eta)) \lesssim \# \mathcal{I}(\eta) \cdot \eta^{(A-1) \mathfrak{s}} \lesssim \eta^{(A-1) \mathfrak{s}-1} \lesssim \eta . $$
This provides the desired bound:
$$ \mu(x \in \mathbb{S}^1 \ | \ \max_{0 \leq j \leq k} |\varphi^{(j)}(x)| \leq \sigma^{A^{-k}}  ) \lesssim \sigma^{A^{-(k+1)}} + \leq \mu(x \in \mathbb{S}^1 \ | \ \max_{0 \leq j \leq k+1} |\varphi^{(j)}(x)| \leq \sigma^{A^{-(k+1)}}).  $$
\end{proof}

\begin{lemma}\label{lem:vdc-gibbs}
Let $K \geq 1$ and let $\alpha >0$ be small enough. Then there exists $\rho(K,\alpha,\mu) >0$ such that the following holds. Let $\phi \in C^{K+1}([0,1],\mathbb{R})$ and $\chi \in C^\alpha(\mathbb{S}^1,\mathbb{C})$. Denote $\Phi(x) := (\phi'(x),\dots, \phi^{(K)}(x)) \in \mathbb{R}^K$. Then, there exists $C \geq 1$ that only depends on $K$, $\alpha$, $\|\Phi\|_{C^{K+1}}$, and $\|\chi\|_{C^\alpha}$ such that
$$\forall t \geq 1, \quad   \Big| \int_0^1 e^{i t \phi(x)} \chi(x) d\mu(x) \Big| \leq C \Big( t^{-\rho} + \mu\big( x \in \mathbb{S}^1, \ \|\Phi(x)\| \leq t^{-\rho}  \big)\Big).$$
\end{lemma}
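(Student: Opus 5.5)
The plan is to localize to small intervals, apply a van der Corput estimate adapted to $\mu$ on the intervals where some derivative of $\phi$ is large, and put everything else into the exceptional set $\{\|\Phi\|\le t^{-\rho}\}$. The measure-theoretic input is the upper regularity of $\mu$, $\mu(I)\le |I|^{\mathfrak s}$, together with the Hölder regularity of $\chi$. I expect to use Lemma~\ref{lem:deri} only implicitly, through the same bookkeeping device of ladders of exponents $\eta_j=t^{-\rho A^{j}}$.

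\textbf{Step 1: partition.} Fix a small scale $\delta=t^{-\beta}$ and partition $[0,1]$ into $\asymp\delta^{-1}$ intervals $I$ of length $\delta$. On each $I$, replacing $\chi$ by its value at the left endpoint costs $O(\delta^\alpha)$ in total, since $\mu$ is a probability measure. It therefore suffices to bound $\sum_I |\int_I e^{it\phi}\,d\mu|$.

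\textbf{Step 2: bad intervals.} Call $I$ \emph{bad} if $\|\Phi(x)\|\le 2t^{-\rho}$ for some $x\in I$. Since $\|\Phi'\|\lesssim 1$ and $\delta\ll t^{-\rho}$, every point of a bad $I$ satisfies $\|\Phi\|\le 3t^{-\rho}$. The total contribution of bad intervals is then at most $\mu(\|\Phi\|\le 3t^{-\rho})$. Running the argument with $\rho$ replaced by a slightly smaller exponent and adjusting constants turns this into the stated set $\{\|\Phi\|\le t^{-\rho}\}$.

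\textbf{Step 3: good intervals.} On a good interval some $|\phi^{(j)}|\ge t^{-\rho}/K$ with $1\le j\le K$. Choose the smallest such $j$ at the left endpoint, using the ladder of thresholds $\eta_1\gg\eta_2\gg\dots$ so that the choice is stable across $I$. Then $|\phi^{(j)}|\gtrsim\eta_j$ on $I$ while the lower derivatives are small compared to the scale. To estimate $\int_I e^{it\phi}\,d\mu$, approximate $\mu$ at a finer scale $\delta'$. The phase then moves by $t\eta_j\delta'^{j}$ across a subinterval, and since $\mu$ is not Lebesgue we cannot integrate by parts directly.

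\textbf{Main obstacle.} The key difficulty is exactly this: upper regularity alone does not give decay of $\int e^{it\phi}\,d\mu$ when $\phi'$ is merely nondegenerate. If it did, that would itself be a one-dimensional Fourier decay statement for $\mu$. I therefore expect to use the genuine Fourier decay of nonlinear Gibbs measures under nondegenerate phases, or equivalently the self-similarity of $\mu$ through the transfer operator, which is where the dependence on $\mu$ in $\rho(K,\alpha,\mu)$ enters.

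Concretely, I would first try the Cauchy--Schwarz/van der Corput differencing trick. Write $\mu$ at scale $\delta$ through \eqref{eq:Ruelle-operator-int} as $\sum_{\mathbf u}$ of pushforwards by the inverse branches $\mathbf u[\cdot]$, reducing to integrals $\int e^{it\phi(\mathbf u[y])}\,d\mu(y)$ with rescaled phases $\phi\circ\mathbf u[\cdot]$. Linearize and apply the large-$j$-derivative sublevel bound through $\mu$-regularity, obtaining $|\,\cdot\,|\lesssim (t\eta_j\delta^{j})^{-c}+\delta^{\alpha}$. Finally, choose $\beta,\rho$ small so that every error term is at most $t^{-\rho}$. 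If that route fails, I would fall back on the sublevel-set structure: the set where $t\phi$ is nearly constant modulo $2\pi$ has small $\mu$-mass by upper regularity, as in the proof of Lemma~\ref{lem:deri}.
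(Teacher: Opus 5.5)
\textbf{There is a genuine gap.} Your localization and your disposal of intervals where $\|\Phi\|$ is small are fine, and your ladder of thresholds is essentially a re-derivation of Lemma~\ref{lem:deri}. You also correctly locate the obstacle. But the one step that must produce cancellation, the bound on $\int_I e^{it\phi}\,d\mu$ over good intervals, is never supplied, and neither of your two routes can supply it.

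The ingredients you allow yourself are upper regularity $\mu(I)\le |I|^{\mathfrak s}$, H\"older continuity of $\chi$, and self-similarity of $\mu$ through the transfer operator. All three hold for \emph{every} Gibbs measure, including non-Lebesgue Gibbs measures of $f(x)=bx \bmod 1$, and for those the statement is false. Take $K=1$, $\phi(x)=x$, $\chi\equiv 1$ and $t=2\pi k b^n$. Then the exceptional set $\{\|\Phi\|\le t^{-\rho}\}$ is empty, while invariance gives $\bigl|\int e^{it\phi}\,d\mu\bigr|=|\widehat{\mu}(kb^n)|=|\widehat{\mu}(k)|$. This is nonzero for some $k\ge 1$ because $\mu$ is not Lebesgue. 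So no argument built from these ingredients alone can work.

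Concretely, here is where each route breaks.
\begin{itemize}
\item Your ``linearize and apply the sublevel bound'' step, after pulling back by the branches $\mathbf u[\cdot]$, leaves you with Fourier coefficients of $\mu$ itself at frequencies of size $t\,|\phi'|\,|(\mathbf u[\cdot])'|$. A Frostman-type bound says nothing about these, so the claimed $(t\eta_j\delta^j)^{-c}$ is unjustified.
\item The fallback fails for the same reason. Small $\mu$-mass of the set where $t\phi$ lies in a short arc modulo $2\pi$ is not cancellation. You would need equidistribution of $t\phi \bmod 2\pi$ under $\mu$, which is itself a Fourier decay statement.
\item Fourier decay of nonlinear Gibbs measures is not ``equivalent'' to self-similarity. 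It uses the non-linearity of $f$ in an essential way.
\end{itemize}

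\textbf{What the paper does.} It splits into two cases, which your proposal is missing, and imports a theorem in one of them.
\begin{itemize}
\item If $f$ is smoothly conjugate to a linear map, $\mu$ is by hypothesis the SRB measure, which equals the MME and has a smooth (indeed analytic) density. The classical van der Corput lemma together with Lemma~\ref{lem:deri} then gives the bound. This is the only case in which your outline goes through essentially as written.
\item If $f$ is not conjugate to a linear map, the paper invokes Fourier decay of Gibbs measures for nonlinear expanding maps \cite{SS20}, in the form of Lemma~E.5 of \cite{Le25}:
$$\Bigl|\int e^{it\phi}\chi\,d\mu\Bigr|\le C\bigl(t^{-\rho_1}+\mu(|\phi'|\le t^{-\rho_2})\bigr).$$
It then applies Lemma~\ref{lem:deri} to $\phi'$ with $k=K-1$ and $\sigma=t^{-\rho_2}$. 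This replaces the sublevel set of $\phi'$ by that of the $K$-jet $\Phi$, with $\rho:=\min\bigl(\rho_1,\rho_2A^{-(K-1)}\bigr)$.
\end{itemize}
Once that first-derivative estimate is imported, your Steps~1--3 become unnecessary; without it, the proposal does not prove the lemma.
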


Notice that, $K$ being fixed and finite, the bound on $\|F(x)\|$ is independent on the choice of the norm used in $\mathbb{R}^K$, up to a multiplicative constant.

\begin{proof}
In the setting where $f$ is smoothly conjugated to a linear map, then by hypothesis, $\mu$ is either the SRB measure or the measure of maximal entropy, and these are actually the same measure. In this case there exists an analytic density $\rho$ such that $d\mu(x)=\rho(x)dx$ \cite{AST_1977__50__205_0}, %\href{https://www.numdam.org/item/AST_1977__50__205_0.pdf}{(this link)}
and the desired bound is a direct consequence of Van Der Corput Lemma (and our non-concentration Lemma \ref{lem:deri}). Now, in the setting where $f$ is not smoothly conjugated to a linear map, $\mu$ can be any equilibrium state. \\
In this setting, Fourier decay is known to hold \cite{SS20} (see also \cite{Le22,AHW23,BS23}). We will use the formulation of Lemma E.5 in \cite{Le25}, which gives a bound
$$ \Big| \int_0^1 e^{i t \phi(x)} \chi(x) d\mu(x) \Big| \leq C \Big( t^{-{\rho_1}} + \mu\big( x \in \mathbb{S}^1, \ \|\varphi'(x)\| \leq t^{-\rho_2} \big) \Big). $$ 
for some exponents $\rho_1,\rho_2 \in (0,1)$ that only depends on $\mu$, $\alpha$, and for $C$ that depends only on $K,\alpha,\|\phi\|_{C^2},\|\chi\|_{C^\alpha}$. The desired statement follows directly by Lemma \ref{lem:deri}, remembering that any norm in $\mathbb{R}^k$ is equivalent to the max norm $\|(x_1,\dots,x_k)\| := \max_j |x_j|$.
\end{proof}

We are ready to reduce Fourier decay to a non-concentration statement. Notice first the following easy observation.

\begin{proposition}
Suppose that $\mathcal{S}$ is a single graph: $\mathcal{S} = \{(x,\theta(x)), \ x \in \mathbb{S}^1 \}$ for some analytic and 1-periodic $\theta \in C^\omega(\mathbb{S}^1,\mathbb{R})$. Then there exists a constant $C>1$ and $\gamma \in (0,1)$ such that
$$ |\widehat{\omega}(k,\xi)| \leq C (1+|k|+|\xi|)^{-\gamma} \quad \text{ for all } (k,\xi) \in \mathbb{Z}\times \mathbb{R}$$
if and only if $\theta$ is not constant.
\end{proposition}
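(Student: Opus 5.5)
The proof splits into the two directions of the equivalence. In both, the single-graph structure turns the Fourier transform into a one-dimensional oscillatory integral. If $\mathcal{S}$ is the graph of $\theta$, then each fiber measure is a Dirac mass, $m_x=\delta_{\theta(x)}$. The disintegration \eqref{eq:decom-Gibbs-nu-formal} then gives
$$
\widehat{\omega}(k,\xi)=\int_{\mathbb{S}^1} e^{-2\pi i (kx+\xi\theta(x))}\,d\mu(x).
$$

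\emph{Necessity.} If $\theta\equiv y_0$ is constant, then $|\widehat\omega(0,\xi)|=|e^{-2\pi i\xi y_0}|=1$ for every $\xi$. This is incompatible with $C(1+|\xi|)^{-\gamma}\to 0$, so the decay bound forces $\theta$ to be non-constant.

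\emph{Sufficiency.} Assume $\theta$ is non-constant. Set $t:=2\pi(|k|+|\xi|)$; it suffices to treat $t\ge 1$. Write the phase as $t\phi_{k,\xi}(x)$ with
$$
\phi_{k,\xi}(x)=-\frac{2\pi(kx+\xi\theta(x))}{t}.
$$
Here $(a,b):=2\pi(k,\xi)/t$ ranges over the compact set $\{|a|+|b|=1\}$, and $\phi_{k,\xi}=-(ax+b\theta(x))$. Its derivatives satisfy $\phi'=-(a+b\theta')$ and $\phi^{(j)}=-b\theta^{(j)}$ for $j\ge2$. The $C^{K+1}$ norms of $\Phi$ are therefore bounded uniformly in $(k,\xi)$. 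Apply Lemma~\ref{lem:vdc-gibbs} with $\chi\equiv 1$ and a value of $K$ fixed below. This gives
$$
|\widehat\omega(k,\xi)|\lesssim t^{-\rho}+\mu\{x:\|\Phi_{a,b}(x)\|\le t^{-\rho}\},
$$
with constants uniform in $(a,b)$. The proof is complete once we show there exist $K$ and $c>0$ such that $\|\Phi_{a,b}(x)\|\ge c$ for all $x\in\mathbb{S}^1$ and all $(a,b)$ on the compact set. Indeed, the measure term then vanishes for $t$ large, and we get decay $\lesssim t^{-\rho}$. (Periodicity of $\theta$ is consistent with $\phi$ only being defined on $[0,1]$, as the lemma allows.)

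\emph{Main obstacle: the uniform lower bound.} Since $\theta$ is analytic, periodic and non-constant, $\theta'$ is not identically zero. Hence for each $x$ some derivative $\theta^{(j)}(x)$ with $j\ge 2$ is nonzero: otherwise $\theta'$ would be locally constant near $x$, hence globally constant by analyticity, and that constant is $0$ by periodicity. By compactness of $\mathbb{S}^1$ and continuity, we can fix $K$ and $c_1>0$ with $\max_{2\le j\le K}|\theta^{(j)}(x)|\ge c_1$ for all $x$.

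Now split by the size of $|b|$:
\begin{itemize}
\item If $|b|\ge\varepsilon$, then $\|\Phi\|\ge\varepsilon c_1$.
\item If $|b|<\varepsilon$, then $|a|>1-\varepsilon$, so $|\phi'|\ge 1-\varepsilon-\varepsilon\|\theta'\|_\infty\ge\tfrac12$ once $\varepsilon$ is chosen small.
\end{itemize}
This gives $\|\Phi\|\ge c:=\min(\varepsilon c_1,\tfrac12)$ uniformly. The only care needed is to check that the constant $C$ in Lemma~\ref{lem:vdc-gibbs} is uniform over the family $\phi_{a,b}$; this holds because $C$ depends only on $K$, $\alpha$, $\|\Phi\|_{C^{K+1}}$ and $\|\chi\|_{C^\alpha}$, all bounded uniformly here.
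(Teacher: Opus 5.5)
Your proof is correct and follows essentially the same route as the paper. The Dirac fibers reduce $\widehat{\omega}$ to a one-dimensional oscillatory integral against $\mu$, and the constant case fails at $k=0$. In the non-constant case you split by the size of the $\xi$-component: one regime uses $|\phi'|\ge 1/2$, the other the compactness/analyticity bound $\inf_x\max_{2\le j\le K}|\theta^{(j)}(x)|>0$ together with Lemma~\ref{lem:vdc-gibbs}. The only differences, an $\ell^1$ rather than $\ell^2$ normalization of $(k,\xi)$ and packaging both regimes into one uniform lower bound on $\|\Phi\|$, are cosmetic.
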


\begin{proof}
Suppose that $\mathcal{S}$ is a single graph, of an analytic function $\theta \in C^\omega(\mathbb{S}^1,\mathbb{R})$. Recall then that there exists probability measures $m_x$ supported on $\mathcal{S} \cap (\{x\} \times U) $ such that $\omega = \int_{\mathbb{S}^1} \delta_x \otimes m_x d\mu(x)$. Since $\mathcal{S} \cap (\{x\} \times U) = \{(x,\theta(x))\}$, we have $m_x = \delta_{(x,\theta(x))}$. It follows that $\omega$ is the pushforward of $\mu$ to the graph of $\theta$ under the map $x \mapsto (x,\theta(x))$. In particular:
$$ \widehat{\omega}(k,\xi) = \int_{\mathcal{S}} e^{2i \pi (x k + \xi y) } d\omega(x,y) = \int_{\mathbb{S}^1} e^{2i \pi (x k + \xi \theta(x)) } d\mu(x). $$
Now, if $\theta=\theta_0$ is constant, then $\widehat{\omega}(0,\xi) = e^{2 i \pi \xi \theta_0}$ doesn't decay to zero. Reciprocally, suppose that $\theta$ is not constant. We will conclude by applying the Van Der Corput Lemma in different "regimes": when $k$ dominates, and when $\xi$ dominates. Write $(k,\xi)=t(\alpha,\beta)$ with $\alpha^2+\beta^2=1$. If $|\beta|\leq \beta_0$ with $\beta_0$ small enough, we have for all $x \in \mathbb{S}^1$,
$$ |\partial_x(\alpha x + \beta \theta(x))| \geq 1/2 .$$
In this case, Van Der Corput Lemma \ref{lem:vdc-gibbs} directly yields
$$ |\widehat{\omega}(k,\xi)| = \Big| \int_{\mathbb{S}^1} e^{2 i \pi t (\alpha x + \beta \theta(x))} d\mu(x) \Big| \lesssim t^{-\rho} + \mu(x, |\alpha+\beta \theta'(x)| \leq t^{-\rho}  ) =   C t^{-\rho}$$
for $t$ large enough. In the opposite case where $\beta \geq \beta_0 >0$, applying the Van Der Corput Lemma \ref{lem:vdc-gibbs} again, we find
$$ |\widehat{\omega}(k,\xi)| \leq C \Big( t^{-\rho} + \mu(x \in \mathbb{S}^1, \ \max_{1 \leq j \leq K} |\partial_x^j(\alpha x + \beta \theta)(x)| \leq t^{-\rho} ) \Big) $$ $$ \leq C \Big( t^{-\rho} + \mu(x \in \mathbb{S}^1, \ \max_{2 \leq j \leq K} |\theta^{(j)}(x)| \leq \beta_0^{-1} t^{-\rho} ) \Big).$$
Now notice that for each $x \in \mathbb{S}^1$, there exists $k_x \geq 2$ such that $|\theta^{(k_x)}(x)| >0$ (Indeed, in the opposite case, $\theta$ would be affine and 1-periodic, so constant). By compactness of $\mathbb{S}^1$, it follows that there exists $\sigma_0>0$ and $K \geq 1$ such that $\inf_{x \in \mathbb{S}^1} \max_{2 \leq j \leq K} |\theta^{(j)}(x)| > \sigma_0$. It follows that, for $t$ large enough, we have
$|\widehat{\omega}(k,\xi)| \lesssim C t^{-\rho}$, and the proof is done.
\end{proof}

In the case where $\mathcal{S}$ is not a single graph, it is a superposition of graphs of multiple maps $\theta_\mathbf{a}$. Fourier decay is then established under a weak transversality condition. Before heading into the proof, we need a preliminary lemma.

\begin{lemma}{\label{lem:zoom}}
Let $\phi \in C^\omega(\mathbb{S}^1,\mathbb{R})$ be an analytic map. Let $u \in C^\infty([0,1],\mathbb{R})$ be a contraction satisfying $u' \in (0,1)$. Denote $\phi_u := \phi \circ u$. Let $k \geq 1$, and set $\Phi_k = (\phi^{(j)})_{1 \leq j \leq k}$, and $\Phi_{u,k} := (\phi_ u^{(j)})_{1 \leq j \leq k}$ There exists $C_k$, that depends only on $k$ and $\|u\|_{C^k}$, such that for all $x \in \mathbb{S}^1$:
$$ \| \Phi_k(u(x)) \| \leq \frac{C_k}{ |u'(x)|^{\frac{k(k+1)}{2}}} \|\Phi_{u,k}(x)\|. $$
\end{lemma}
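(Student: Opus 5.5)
Since $u$ has $u'\in(0,1)$ on $[0,1]$, it is a $C^\infty$ diffeomorphism onto its image. The plan is to express derivatives of $\phi$ at $u(x)$ in terms of derivatives of $\phi_u=\phi\circ u$ at $x$. This inverts the Faà di Bruno formula triangularly.

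By Faà di Bruno, for each $1\le j\le k$ we have
\begin{equation*}
\phi_u^{(j)}(x)=\sum_{m=1}^{j}\phi^{(m)}(u(x))\,B_{j,m}\bigl(u'(x),\dots,u^{(j-m+1)}(x)\bigr),
\end{equation*}
where $B_{j,m}$ are the Bell polynomials. In particular $B_{j,j}=u'(x)^j$. In matrix form, $\Phi_{u,k}(x)=M(x)\,\Phi_k(u(x))$, where $M(x)$ is lower triangular with diagonal entries $u'(x),u'(x)^2,\dots,u'(x)^k$. Every entry of $M(x)$ is a polynomial in $u'(x),\dots,u^{(k)}(x)$, so it is bounded by a constant depending only on $k$ and $\|u\|_{C^k}$. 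Here we use $|u'|\le 1$.

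The key step is to bound $M(x)^{-1}$. Write $M=D(I+N)$, where $D=\operatorname{diag}(u'^j)$ and $D^{-1}M=I+N$ has $N$ strictly lower triangular. The entries of $N$ are $B_{j,m}/u'^j$, each bounded by $C\,|u'|^{-j}$. Since $N$ is nilpotent of order $k$,
\begin{equation*}
(I+N)^{-1}=\sum_{\ell<k}(-N)^\ell .
\end{equation*}
Hence $M^{-1}=(I+N)^{-1}D^{-1}$. Its entries are sums of products of at most $k$ factors, each carrying a power $|u'|^{-j}$ for distinct row indices. The worst total exponent is at most $\sum_{j=1}^k j=k(k+1)/2$.

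A cleaner route is to solve the triangular system by forward substitution, which I expect to be the main obstacle (the bookkeeping of exponents). One shows by induction on $j$ that
\begin{equation*}
|\phi^{(j)}(u(x))|\le C_j\,|u'(x)|^{-j(j+1)/2}\,\|\Phi_{u,k}(x)\|.
\end{equation*}
Indeed, rearranging the $j$-th equation gives
\begin{equation*}
\phi^{(j)}(u(x))=u'(x)^{-j}\Bigl(\phi_u^{(j)}(x)-\sum_{m<j}\phi^{(m)}(u(x))B_{j,m}\Bigr).
\end{equation*}
By the inductive hypothesis each term in the sum is at most $C\,|u'|^{-m(m+1)/2}\|\Phi_{u,k}\|$. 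Since $m(m+1)/2\le j(j-1)/2$ for $m<j$, we get the exponent $j+j(j-1)/2=j(j+1)/2$. Here $|u'|\le1$ lets us dominate smaller negative powers by larger ones.

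Taking the maximum over $j\le k$, and using $|u'|\le 1$ again, gives the claimed bound with exponent $k(k+1)/2$ in the max norm. By equivalence of norms on $\mathbb{R}^k$, this holds for any fixed norm, with $C_k$ depending only on $k$ and $\|u\|_{C^k}$.
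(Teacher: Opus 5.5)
Your proposal is correct and follows essentially the same route as the paper, which also solves the triangular Fa\`a di Bruno system by induction: it isolates $\phi^{(k)}(u(x))\,u'(x)^k$, bounds the lower-order terms through the inductive estimate with exponent $k(k-1)/2$, and uses $|u'|\le 1$ to reach the total exponent $k+k(k-1)/2=k(k+1)/2$. Your Neumann-series remark on $(I+N)^{-1}$ is a valid alternative bookkeeping of the same triangular inversion, but it is not needed.
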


\begin{proof}
The proof is done by induction on $k$, and correspond to bounding the norm of the inverse of an explicit triangular matrix. If $k=1$, then $\phi_{u}'=\phi'(u(x)) u'(x)$ and the bound holds choosing $C_1 := 1$. Now suppose the bound holds {for all $j < k$}, and let us establish it for $k$. Faa-Di-Bruno formula gives
$$ (\phi \circ u)^{(k)}(x) = \phi^{(k)}(u(x)) u'(x)^k + \sum_{j=1}^{k-1} \phi^{(j)}(u(x)) P_{k,j}(u'(x),u^{(2)}(x),\dots,u^{(k-j+1)}(x)) $$
where $P_{k,j}(x_1,\dots,x_{k-j+1})$ are universal polynomials. Denoting $$U_j := \|u\|_{C^j} \qquad\text{and}\qquad M_k := 1+\max_{1 \leq j < k}\|P_{k,j}\|_{L^\infty([-U_{k-j+1},U_{k-j+1}]^{k-j+1})} \geq 1,$$ we find
$$ |(\phi \circ u)^{(k)}(x) - \phi^{(k)}(u(x)) u'(x)^k| \leq \sum_{j=1}^{k-1} \| \Phi_{k-1}(u(x))\| \sup_{x} |P_{k,j}(u'(x),u^{(2)}(x),\dots,u^{(k-j+1)}(x))| $$
$$ \leq k M_k \| \Phi_{k-1}(u(x)) \|. $$
Hence, using the induction hypothesis and by definition of $\Phi_{u,k}$:
$$ |\phi^{(k)}(u(x))| \leq |u'(x)|^{-k} \Big( |(\phi \circ u)^{(k)}(x)| + k M_k \| \Phi_{k-1}(u(x)) \| \Big) $$
$$ \leq |u'(x)|^{-k}\Big(1+ \frac{kM_k C_{k-1}}{|u'(x)|^{k(k-1)/2}} \Big) \|\Phi_{u,k}(x) \|  $$
$$ \leq  \frac{{\|u\|_{C^1}^{\frac{k(k-1)}{2}}+k M_k C_{k-1}}}{|u'(x)|^{k(k+1)/2} } \cdot \| \Phi_{u,k}(x) \|.$$
The induction hypothesis then gives
$$ \| \Phi_{k}(u(x)) \| \leq C_k |u'(x)|^{-k(k+1)/2} \| \Phi_{u,k}(x) \| $$
where $C_{k} := \|u\|_{C^1}^{k(k-1)/2}+ k M_k C_{k-1} \geq C_{k-1}$. The sequence of constants $C_k$ only depends on $\|u\|_{C^k}$, which concludes the proof.  
\end{proof}

\begin{proposition}
Define for each $u \in \mathcal{A}^*$ and $x \in [0,1]$,  $\Theta_{\mathbf{u},K}(x) := (\theta_\mathbf{u}'(x),\dots,\theta_\mathbf{u}^{(K)}(x)) \in \mathbb{R}^K$. Suppose that there exists $K \geq 1$ such that the following holds: $$ \exists C_0 \geq 1, \forall \sigma >0, \forall n \geq 0, \  \sup_{x \in [0,1], t \in \mathbb{R}^K} \sum_{\mathbf{u} \in \mathcal{A}^n} p_\mathbf{u}(x) \mathbf{1}_{ \mathbf{B}(t,\sigma) }(\Theta_{\mathbf{u},K}(x)) \leq C_0(\sigma^{\gamma} + e^{-\alpha n}) .$$ Then there exists a constant $C>1$ and $\gamma \in (0,1)$ such that
$$ |\widehat{\omega}(k,\xi)| \leq C (1+|k|+|\xi|)^{-\gamma} \quad \text{ for all } (k,\xi) \in \mathbb{Z}\times \mathbb{R}.$$
\end{proposition}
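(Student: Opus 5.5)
Write $(k,\xi)=t(\alpha,\beta)$ with $\alpha^2+\beta^2=1$ and $t\geq 1$ large. I would split into the two regimes used in the single-graph proposition.

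\textbf{Regime $|\beta|\le\beta_0$.} By Lemma \ref{lem:uniform-bound-k-derivative}, $\sup_{\mathbf a}\|\theta_{\mathbf a}'\|_\infty\le C_1$. Take $\beta_0$ with $\beta_0C_1\le 1/2$. Then, starting from \eqref{eq:decom-Gibbs-nu-formal} and writing the inner integral via $\nu_{A(x)}$, I would not exchange integrals, since the weights depend on $x$ only through $A(x)$, which is not regular. Instead I would use \eqref{eq:int-omega} at level $n$:
$$\widehat\omega(k,\xi)=\sum_{\mathbf u\in\mathcal A^n}\int_{\mathbb S^1}\int_U p_{\mathbf u}(x)e^{2i\pi t(\alpha x+\beta\theta_{\mathbf u}(x,y))}\,dm_{\mathbf u[x]}(y)\,d\mu(x).$$
The dependence of $m_{\mathbf u[x]}$ on $x$ is again irregular. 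However, $\theta_{\mathbf u}(x,y)=\theta_{\mathbf u}(x,0)+O(r^n)$ uniformly in $y$ by Corollary \ref{cor:uniform-holomorphic-extensions}, and the same holds for derivatives via Cauchy estimates. Replacing the phase by $\phi_{\mathbf u}(x)=\alpha x+\beta\theta_{\mathbf u}(x)$ costs $O(tr^n)$ and removes the $y$-integral. We then have
$$|\widehat\omega(k,\xi)|\le\sum_{\mathbf u}p_{\mathbf u}(0)\Big|\int e^{2i\pi t\phi_{\mathbf u}}\,\tfrac{p_{\mathbf u}}{p_{\mathbf u}(0)}\,d\mu\Big|+O(tr^n).$$
Here $p_{\mathbf u}/p_{\mathbf u}(0)$ is uniformly $C^\alpha$ by Lemma \ref{lem:sum-p_u-Holder}, and $\sum_{\mathbf u}p_{\mathbf u}(0)=1$. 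I would choose $n=\lceil c\log t\rceil$ with $c$ large so that the error term is $\le t^{-1}$. In this regime $|\phi_{\mathbf u}'|\ge 1/2$, so Lemma \ref{lem:vdc-gibbs} gives $O(t^{-\rho})$, with constants uniform in $\mathbf u$ by Lemma \ref{lem:uniform-bound-k-derivative}.

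\textbf{Regime $|\beta|\ge\beta_0$.} This is the main step. The same reduction gives a sum over $\mathbf u$ of $\mu(x:\|\Phi_{\mathbf u}(x)\|\le t^{-\rho})$, where $\Phi_{\mathbf u}=(\phi_{\mathbf u}^{(j)})_{j\le K}$. Using the tree hypothesis directly is a problem: the event is in $x$, with $\mathbf u$ weighted by $p_{\mathbf u}(0)$, while the hypothesis controls $\mathbf u$-mass at fixed $x$. I would fix this by applying Lemma \ref{lem:vdc-gibbs} in zoomed coordinates, which requires one more layer of splitting. Write $n=n_1+n_2$ and use \eqref{eq:int-omega} at level $n_1$ first, with the decomposition $\mathbf u=\mathbf v\mathbf w$ and the identity $p_{\mathbf v\mathbf w}(x)=p_{\mathbf v}(x)p_{\mathbf w}(\mathbf v[x])$ from \eqref{eq:p-formula}. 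The simplest fix, though, is to swap sum and integral before applying van der Corput. We have
$$\sum_{\mathbf u}p_{\mathbf u}(0)\,\mu\big(\|\Phi_{\mathbf u}\|\le t^{-\rho}\big)\lesssim\int_{\mathbb S^1}\sum_{\mathbf u}p_{\mathbf u}(x)\mathbf 1\{\|\Phi_{\mathbf u}(x)\|\le t^{-\rho}\}\,d\mu(x),$$
using $p_{\mathbf u}(0)\asymp p_{\mathbf u}(x)$ from bounded distortion, i.e. Lemma \ref{lem:sum-p_u-Holder}. For fixed $x$, the condition $\|\Phi_{\mathbf u}(x)\|\le t^{-\rho}$ says $\Theta_{\mathbf u,K}(x)\in\mathbf B(t_x,\beta_0^{-1}t^{-\rho})$, where $t_x=(-\alpha/\beta,0,\dots,0)$ and we work in the max norm. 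The hypothesis then bounds the inner sum by $C_0(\beta_0^{-\gamma}t^{-\rho\gamma}+e^{-\alpha n})$. With $n\asymp\log t$, this gives $t^{-\gamma'}$.

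The expected obstacle is the bookkeeping that keeps all van der Corput constants uniform over $\mathbf u$ while $n\sim\log t$. Here Lemma \ref{lem:uniform-bound-k-derivative} handles the $C^{K+1}$ norms of $\theta_{\mathbf u}$, and Lemma \ref{lem:sum-p_u-Holder} handles the Hölder norms of the amplitudes. Lemma \ref{lem:zoom} would serve as a fallback if one instead works with the pieces $\theta_{\mathbf v}\circ\mathbf u[\cdot]$. Combining both regimes with the trivial bound for bounded $t$ gives the claim.
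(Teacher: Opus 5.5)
Your argument is correct, and in the regime $|\beta|\ge\beta_0$ it takes a genuinely different and shorter route than the paper.

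\textbf{The paper's route.} It first removes the linear phase $kx$ by Cauchy--Schwarz in $x$. This reduces the problem to the double sum $\sum_{\mathbf u,\mathbf v}p_{\mathbf u}(0)p_{\mathbf v}(0)\int e^{2i\pi\beta t\Delta_{\mathbf u,\mathbf v}}\chi_{\mathbf u,\mathbf v}\,d\mu$. It then zooms once more using the self-similarity of $\mu$ and Lemma~\ref{lem:zoom}, and applies van der Corput to the phases $\Delta_{\mathbf u,\mathbf v}\circ\mathbf w[\cdot]$. Finally it invokes the tree hypothesis with the $\mathbf v$-dependent center $\Theta_{\mathbf v,K}(\mathbf w[x])$.

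\textbf{Your route.} You apply Lemma~\ref{lem:vdc-gibbs} directly to $\alpha x+\beta\theta_{\mathbf u}$. You observe that the linear term only moves the center of the ball to $(-\alpha/\beta,0,\dots,0)$. This is harmless because the hypothesis is uniform over all centers $t\in\mathbb R^K$. It avoids the double sum, the extra zoom and Lemma~\ref{lem:zoom}. It also gives the exponent $\rho\gamma$, compared with $\rho\gamma/4$ in the paper, which bounds $|\widehat\omega|^2$ by $t^{-\rho\gamma/2}$.

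\textbf{What the Cauchy--Schwarz route buys.} It only uses non-concentration of $\Theta_{\mathbf u,K}(y)$ around the points $\Theta_{\mathbf v,K}(y)$ themselves. That is an energy-type bound on the differences, which is the natural output of transversality of $\Delta_{\mathbf u,\mathbf v}$. It also decouples the frequency $k$ entirely. Your route genuinely needs the Frostman-type form of the hypothesis, with centers off the support. That form is available here, since the tree theorem is stated with a supremum over all $t$.

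\textbf{Minor points.}
\begin{itemize}
\item The swap of sum and integral happens after van der Corput, as your display shows, not before as your text says.
\item The lower bound $p_{\mathbf u}(x)\gtrsim p_{\mathbf u}(0)$ that you need comes from the distortion estimate in the proof of Lemma~\ref{lem:sum-p_u-Holder}, not from its statement.
\item In the regime $|\beta|\le\beta_0$, take $\beta_0$ slightly smaller to get a uniform lower bound on $|\phi_{\mathbf u}'|$; $\beta_0C_1\le 1/2$ alone does not give $|\phi_{\mathbf u}'|\ge 1/2$.
\item The bound holds for every $n$ with constants independent of $n$, so you can let $n\to\infty$ at fixed $t$ instead of tuning $n\asymp\log t$.
\end{itemize}
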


\begin{proof}
Let us compute the Fourier transform of $\omega$. Applying Lemma \ref{eq:autosim} to $f(x,y) := e^{2i \pi (k x+\xi y)}$ gives
$$ \widehat{\omega}(k,\xi) = \sum_{\mathbf{u} \in \mathcal{A}^n} \int_{\mathbb{S}^1}   \int_{U} p_\mathbf{u}(x)  e^{2 i \pi (k x + \xi \theta_\mathbf{u}(x,y))} d m_x(y) d\mu(x) .$$

Now fix $(\alpha,\beta) \in [0,1]^2$ with $\alpha^2+\beta^2=1$ and write $(k,\xi) = t(\alpha,\beta)$ for some $t >0$. Fourier decay needs to be proved again in "two direction": when $\xi$ dominates, and when $k$ dominates. \\

Suppose first that $\beta \leq \varepsilon$ and $\alpha \geq (1-\varepsilon^2)^{1/2}$, for some $\varepsilon$ small enough to be fixed soon, and let us establish Fourier decay in this regime. Recall that $|\theta_\mathbf{u}(x,y) - \theta_{\mathbf{u}}(x,0)| \leq r_{max}^n$, so that
$$ \widehat{\omega}(k,\xi) = \sum_{\mathbf{u} \in \mathcal{A}^n} \int_{\mathbb{S}^1}   \int_{U} p_\mathbf{u}(x)  e^{2 i \pi (k x + \xi \theta_\mathbf{u}(x,0))} d m_x(y) d\mu(x) + \mathcal{O}(t \ r_{min}^n) $$
$$ =  \sum_{\mathbf{u} \in \mathcal{A}^n} p_\mathbf{u}(0) \int_{0}^1 e^{2 i \pi t \big(\alpha x + \beta \theta_{\mathbf{u}}(x,0)\big)} \Big(\frac{p_\mathbf{u}(x)}{p_\mathbf{u}(0)}\Big)d\mu(x) + \mathcal{O}(t \ r_{min}^n). $$
Now, the phases $\varphi_{\mathbf{u}}(x) := \alpha x + \beta \theta_{\mathbf{u}}(x,0)$ are analytic, and satisfy for all $\mathbf{u} \in \mathcal{A}^*$
$$ |\varphi_{\mathbf{u}}'(x)| = |\alpha + \beta \ \partial_x \theta_{\mathbf{u}}(x,0)| \geq 1/2 $$ if $\varepsilon>0$ is chosen small enough, by Lemma \ref{lem:uniform-bound-k-derivative}. The constant $\varepsilon$ is then fixed until the end of the proof so that the previous bound holds. Furthermore, the maps $p_\mathbf{u}/p_\mathbf{u}(0)$ are uniformly Hölder. The Van Der Corput Lemma \ref{lem:vdc-gibbs} then yields, uniformly in $\mathbf{u} \in \mathcal{A}^n$:
$$ \Big| \int_{\mathbb{S}^1} e^{2 i \pi t \varphi_\mathbf{u}(x)} \frac{p_\mathbf{u}(x)}{p_\mathbf{u}(0)} d\mu(x) \Big| \leq C t^{-\rho}. $$
Averaging over $\mathbf{u} \in \mathcal{A}^n$ with weights $p_\mathbf{u}(0)$, and choosing $n=n(t)$ so that $r_{min}^n \simeq t^2$, yields $\widehat{\omega}(k,\xi) \leq C t^{-\rho}$ in the regime where $\beta \leq \varepsilon$. \\

Let us now turn into the regime $\beta \geq \varepsilon$. We still write $(k,\xi) = t(\alpha,\beta)$. We start by canceling the linear term, using Cauchy-Schwarz as follow:
$$ | \widehat{\omega}(k,\xi) |^2 = \Big| \int_{\mathbb{S}^1} e^{2 i \pi k x}  \int_{U} \sum_{\mathbf{u} \in \mathcal{A}^n} p_\mathbf{u}(x)  e^{2 i \pi \xi \theta_\mathbf{u}(x,y)} d m_x(y) d\mu(x)\Big|^2  $$
$$ = \Big| \int_{\mathbb{S}^1} e^{2 i \pi k x}  \sum_{\mathbf{u} \in \mathcal{A}^n} p_\mathbf{u}(x)  e^{2 i \pi \xi \theta_\mathbf{u}(x,0)} d\mu(x)\Big|^2 + \mathcal{O}\big((t \ r_{max}^n)^2\big) $$
$$ \leq \int_{\mathbb{S}^1} \Big| \sum_{\mathbf{u} \in \mathcal{A}^n} p_\mathbf{u}(x) e^{2 i \pi \xi \theta_{\mathbf{u}}(x,0)} \Big|^2 d\mu(x) + \mathcal{O}\big((t \ r_{max}^n)^2\big)$$
$$ =  \sum_{\mathbf{u},\mathbf{v} \in \mathcal{A}^n} p_\mathbf{u}(0) p_\mathbf{v}(0) \int_{0}^1  e^{2 i \pi \beta t \Delta_{\mathbf{u},\mathbf{v}}(x) } \chi_{\mathbf{u},\mathbf{v}}(x) d\mu(x) + \mathcal{O}\big((t \ r_{max}^n)^2\big)$$
where $\chi_{\mathbf{u},\mathbf{v}}(x) = \frac{p_\mathbf{u}(x) p_\mathbf{v}(x)}{p_\mathbf{u}(0) p_\mathbf{v}(0)}$ (are uniformly Hölder maps) and $\Delta_{\mathbf{u},\mathbf{v}}(x) := \theta_\mathbf{u}(x,0)-\theta_\mathbf{v}(x,0)$ (are uniformly analytic maps).  \\

Recall that the measure $\mu$ satisfies the following autosimilarity formula, for any continuous function $h \in C^0(\mathbb{S}^1,\mathbb{R})$:
$$ \int h d\mu = \sum_{\mathbf{a} \in \mathcal{A}^m} \int_{\mathbb{S}^1} h(\mathbf{a}[x]) p_\mathbf{a}(x) d\mu(x).$$

In particular, fixing $\mathbf{u},\mathbf{v}$, we find, for all $m \geq 1$:
$$ \int_{\mathbb{S}^1} e^{2 i \pi \beta t \Delta_{\mathbf{u,v}}(x)} \chi_{\mathbf{u},\mathbf{v}}(x) d\mu(x) = \sum_{\mathbf{w} \in \mathcal{A}^m} p_\mathbf{w}(0) \int_{\mathbf{S}^1} e^{2 i \pi \beta t \overline{\Delta}_{\mathbf{u,v,w}}(x)} \chi_{\mathbf{u,v,w}}(x) dx,$$
where $\overline{\Delta}_{\mathbf{u,v,w}}(x) := \Delta_{\mathbf{u},\mathbf{v}}(\mathbf{w}[x])$ (uniformly analytic in $\mathbf{u,v,w}$), and $\chi_{\mathbf{u,v,w}}(x) := \chi_{\mathbf{u,v}}(\mathbf{w}[x]) \frac{p_\mathbf{w}(x)}{p_\mathbf{w}(0)}$ (uniformly Hölder in \textbf{u,v,w}). Van Der Corput Lemma \ref{lem:vdc-gibbs} then yields, uniformly in $\mathbf{u,v,w}$:
$$ \Big| \int_{\mathbf{S}^1} e^{2 i \pi \beta t \overline{\Delta}_{\mathbf{u,v,w}}(x)} \chi_{\mathbf{u,v,w}}(x) d\mu(x) \Big| \leq C\Big( t^{-\rho} + \mu(x \in \mathbb{S}^1\mid \ \max_{1 \leq j \leq K}| \overline{\Delta}_{\mathbf{u,v,w}}^{(j)}(x) | \leq t^{-\rho}) \Big). $$ 
Now, Lemma \ref{lem:zoom} ensure that if $ \max_{1 \leq j \leq K}| \overline{\Delta}_{\mathbf{u,v,w}}^{(j)}(x) | \leq t^{-\rho}$, then 
$ \max_{1 \leq j \leq K}| \Delta_{\mathbf{u,v}}^{(j)}(\mathbf{w}[x]) | \leq C_K r_{\text{min}}^{-m k^2} t^{-\rho}$.  Notice that the constant $C_K$ is uniform in $\mathbf{w}$ thanks to uniform bounds of the $C^k$ norms of $\mathbf{w}[\cdot]$ given by Lemma \ref{lem:uniform-bound-k-derivative}. We will now fix $m(t)$ such that $r_{min}^{m k^2} \simeq t^{-\rho/2}$. We then find:
$$ \Big| \int_{\mathbb{S}^1} e^{2 i \pi \beta t \overline{\Delta}_{\mathbf{u,v,w}}(x)} \chi_{\mathbf{u,v,w}}(x) d\mu(x) \Big| \leq C\Big( t^{-\rho} + \mu(x \in \mathbb{S}^1, \ \max_{1 \leq j \leq K}| \overline{\Delta}_{\mathbf{u,v,w}}^{(j)}(x) | \leq t^{-\rho}) \Big)$$
$$\leq C\Big( t^{-\rho} + \mu(x \in \mathbb{S}^1, \ \max_{1 \leq j \leq K}| {\Delta}_{\mathbf{u,v}}^{(j)}(\mathbf{w}[x]) | \leq C_K t^{-\rho/2}) \Big)  $$
$$ \leq C\Big( t^{-\rho} + \mu(x \in \mathbb{S}^1, \ \|\Theta_{\mathbf{u},K}(\mathbf{w}[x]) - \Theta_{\mathbf{v},K}(\mathbf{w}[x]) \| \leq C_K t^{-\rho/2}) \Big) $$

We average all of these bounds in $\mathbf{u,v,w}$ to find, using the Tree bound:

$$| \widehat{\omega}(k,\xi) |^2 \leq  \Big|\underset{\mathbf{w} \in \mathcal{A}^m}{\sum_{\mathbf{u},\mathbf{v} \in \mathcal{A}^n}} p_\mathbf{u}(0) p_\mathbf{v}(0) p_\mathbf{w}(0) \int_{\mathbb{S}^1}  e^{2 i \pi \beta t \Delta_{\mathbf{u},\mathbf{v},\mathbf{w}}(x) } \chi_{\mathbf{u},\mathbf{v},\mathbf{w}}(x) d\mu(x) \Big| + \mathcal{O}\big((t \ r_{max}^n)^2\big)$$
$$ \lesssim \underset{\mathbf{w} \in \mathcal{A}^m}{\sum_{\mathbf{u},\mathbf{v} \in \mathcal{A}^n}} p_\mathbf{u}(0) p_\mathbf{v}(0) p_\mathbf{w}(0) \mu(x \in \mathbb{S}^1, \ \|\Theta_{\mathbf{u},K}(\mathbf{w}[x]) - \Theta_{\mathbf{v},K}(\mathbf{w}[x]) \| \leq C_K t^{-\rho/2}) + \mathcal{O}\big((t \ r_{max}^n)^2 + t^{-\rho}\big) $$
$$ = \underset{\mathbf{w} \in \mathcal{A}^m}{\sum_{\mathbf{v} \in \mathcal{A}^n}} p_\mathbf{v}(0) p_\mathbf{w}(0) \int_{\mathbb{S}^1} \Big(\sum_{\mathbf{u} \in \mathcal{A}^n} p_\mathbf{u}(0) \mathbf{1}_{\mathbf{B}(\Theta_{\mathbf{v},K}(\mathbf{w}[x]),C_K t^{-\rho/2})}( \Theta_{\mathbf{u},K}(\mathbf{w}[x]) ) \Big) d\mu(x) + \mathcal{O}\big((t \ r_{max}^n)^2 + t^{-\rho}\big) $$
$$ \leq \mathcal{O}(t^{-\rho \gamma/2} + e^{-\alpha n} + t r_{max}^n + t^{-\rho}). $$
Letting $n \rightarrow \infty$ yields the polynomial bound $\mathcal{O}(t^{-\rho \gamma/2})$, which concludes the proof.
\end{proof}

The rest of the paper is devoted to the proof of the non-concentration estimate we need on $\Theta_{\mathbf{a},K}$. We will prove it under the condition that $\mathcal{S}$ is not a single graph.

\section{Rigidity of unstable curves}
\label{sec:Rigidity}
In this section, we establish several preliminary rigidity properties of the unstable
curves $\{(x,\theta_{\mathbf{a}}(x))\}_{x\in [0,1]}$.

\begin{lemma}\label{lem:rigidity-derivatives}
$\mathcal{S}$ is a single graph if and only if for all $\mathbf{a},\mathbf{b} \in \mathcal{A}^\infty$, $\theta_\mathbf{a}'\equiv\theta_{\mathbf{b}}'$ .
\end{lemma}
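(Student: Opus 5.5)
The forward direction is immediate. If $\mathcal{S}$ is a single graph, then by the Remark following Corollary~\ref{cor:S} we have $\theta_{\mathbf a}=\theta_{\mathbf b}$ for all $\mathbf a,\mathbf b\in\mathcal{A}^\infty$, hence $\theta_{\mathbf a}'\equiv\theta_{\mathbf b}'$.

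For the converse, assume $\theta_{\mathbf a}'\equiv\theta_{\mathbf b}'$ for all infinite words. Then $\theta_{\mathbf a}-\theta_{\mathbf b}\equiv c_{\mathbf a,\mathbf b}$ is a constant on $\mathbb R$, and the task is to show that every such constant vanishes.

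First we exploit the common-prefix contraction. Write $\mathbf a=\mathbf u\mathbf a'$ and $\mathbf b=\mathbf u\mathbf b'$ with $|\mathbf u|=n$. Using \eqref{eq:common-prefix-difference} (passing to the limit along finite truncations, which is legitimate by Corollary~\ref{cor:uniform-holomorphic-extensions}), we get
\[
c_{\mathbf u\mathbf a',\mathbf u\mathbf b'}=\bigl(\theta_{\mathbf a'}(\mathbf u[x])-\theta_{\mathbf b'}(\mathbf u[x])\bigr)\,J_{\mathbf u}(x)=c_{\mathbf a',\mathbf b'}\,J_{\mathbf u}(x),
\]
where $J_{\mathbf u}(x)=\int_0^1\partial_y\theta_{\mathbf u}(x,\cdot)\,ds$ is a nonvanishing factor with $|J_{\mathbf u}|\le r_{\max}^n$. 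So it suffices to show that $c_{\mathbf a',\mathbf b'}=0$ for all pairs $\mathbf a',\mathbf b'$.

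Second, we use symbolic addition. By Lemma~\ref{lem:addition-theta_u-formal}, $\theta_{\mathbf a}(x+1)=\theta_{\mathbf a+1}(x)$. Periodicity of a lifted function is not available, but the constant offsets are translation invariant: $c_{\mathbf a+1,\mathbf b+1}=c_{\mathbf a,\mathbf b}$. Iterating, $c_{\mathbf a+m,\mathbf b+m}=c_{\mathbf a,\mathbf b}$ for every integer $m$. Now take $\mathbf a,\mathbf b$ agreeing in their first $N$ letters with arbitrary tails; the first step gives $|c_{\mathbf a,\mathbf b}|\le 4r_{\max}^N$.

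The key step, and the expected obstacle, is to connect an arbitrary pair to pairs with long common prefixes through additions. Fix $\mathbf a,\mathbf b$ and choose $m\in\mathbb Z$ with $m\equiv -\sum_{k\le N} a_k b^{k-1}\pmod{b^N}$. Then $\mathbf a+m$ starts with $N$ zeros, but $\mathbf b+m$ need not. To handle this, I would instead use the affine structure. Since $x\mapsto\theta_{\mathbf a}(x)$ is defined on all of $\mathbb R$ and every $\theta$ differs from $\theta_{0^\infty}$ by a constant, write $\theta_{\mathbf a}=\theta_{0^\infty}+c(\mathbf a)$. The addition identity gives
\[
\theta_{0^\infty}(x+m)-\theta_{0^\infty}(x)=c(0^\infty+m)-c(0^\infty),
\]
so $\theta_{0^\infty}'$ is $1$-periodic. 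The map $c:\mathcal{A}^\infty\to\mathbb R$ is continuous and satisfies $c(\mathbf a+m)=c(\mathbf a)+D(m)$, with $D$ additive in $m$. The integers are dense in $\mathcal{A}^\infty$ (the $b$-adic integers), and $\mathcal{A}^\infty$ is compact, so $D$ is bounded. A bounded additive function on $\mathbb Z$ vanishes, so $c$ is constant on the dense orbit $\{0^\infty+m\}$. By continuity (each $\theta_{\mathbf a}$ depends continuously on $\mathbf a$, by \eqref{eq:theta-common-prefix-distance}), $c$ is constant everywhere. Hence all $\theta_{\mathbf a}$ coincide, and by Corollary~\ref{cor:S} $\mathcal S$ is a single graph.
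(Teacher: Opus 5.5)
Your argument is correct and is essentially the paper's proof. The paper also uses Lemma~\ref{lem:addition-theta_u-formal} to get $\theta_{\mathbf 0+n}(0)-\theta_{\mathbf 0}(0)=n\Delta$, which is your additivity $D(m)=mD(1)$; it forces $\Delta=0$ by the boundedness $|\theta_{\mathbf a}|\le 1$, and concludes by the density of $\{\mathbf 0+n\}$ in $\mathcal{A}^\infty$ together with continuity in $\mathbf a$. Your common-prefix contraction step and the abandoned prefix-matching attempt are not needed, and the boundedness of $D$ follows directly from $|\theta_{\mathbf a}|\le 1$ without invoking density or compactness.
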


\begin{proof}
We only prove the sufficiency, since the necessity is immediate. Suppose that $\theta_\mathbf{a}'\equiv\theta_{\mathbf{b}}'$ for all $\mathbf{a},\mathbf{b}\in \mathcal{A}^\infty$. 

Let $\mathbf{0}:=00\cdots\in\mathcal{A}^{\infty}$.
Let $\Delta:= \theta_{\mathbf{0}+1}(0)-\theta_{\mathbf{0}}(0)$.
By Lemma~\ref{lem:addition-theta_u-formal} and $\theta_{\mathbf{0}+1}'\equiv\theta_{\mathbf{0}}'$, we have
$$ \theta_{\mathbf{0}+2}(0)-\theta_{\mathbf{0}+1}(0)=\theta_{\mathbf{0}+1}(1)-\theta_{\mathbf{0}}(1) =\theta_{\mathbf{0}+1}(0)-\theta_{\mathbf{0}}(0)=\Delta. $$
By induction, for every $n\in\Z_{>0}$, we have
$$  \theta_{\mathbf{0}+n+1}(0)-\theta_{\mathbf{0}+n}(0)=\theta_{\mathbf{0}+n}(1)-\theta_{\mathbf{0}+n-1}(1)=\theta_{\mathbf{0}+n}(0)-\theta_{\mathbf{0}+n-1}(0) =\Delta.  $$
Summing these identities over $0,\ldots,n-1$, we obtain
\begin{equation}\label{eq:unstable_curve-degeneration-x=0}
    \theta_{\mathbf{0}+n}(0)-\theta_{\mathbf{0}}(0)=n\Delta. 
\end{equation}
Since $\theta_{\mathbf{0}+n}'\equiv\theta_{\mathbf{0}}'$ , we obtain 
\begin{equation}\label{eq:unstable_curve-degeneration}
    \theta_{\mathbf{0}+n}(x)\equiv  \theta_{\mathbf{0}}(x)+n\Delta. 
\end{equation}
 Since 
$|\theta_{\mathbf{0}+n}(0)|\le 1$ for all $n\in\Z$, by \eqref{eq:unstable_curve-degeneration-x=0}, we have
$\Delta=0$.

Therefore \eqref{eq:unstable_curve-degeneration} implies that
\begin{equation}\label{eq:rigidity-derivatives-1}
     \theta_{\mathbf{0}+n}(x)\equiv  \theta_{\mathbf{0}}(x) \qquad\text{for all }n\in\N.
\end{equation}
 Since the set $\{\mathbf{0}+n\}_{n\in\N}$ is a dense subset of $\mathcal{A}^{\infty}$, then  the set
$$\{(x,\theta_\mathbf{\mathbf{0}+n}(x)): x\in [0,1),n\in\mathbb{N}\}$$
is a dense subset of $\mathcal{S}$. Therefore
by \eqref{eq:rigidity-derivatives-1}, $\mathcal{S}$ is a single graph.
\end{proof}

Given $\mathbf{a},\mathbf{b}\in\mathcal{A}^{*}\cup\mathcal{A}^{\infty}$, define the function 
$$  \Delta_{\mathbf{a},\mathbf{b}}(x):=\left(\theta_{\mathbf{a}}-\theta_{\mathbf{b}}\right)(x)\qquad\text{for }x\in\R.$$

\begin{lemma}\label{lem:Dleta-nonzero-formal}
Assume that $\mathcal{S}$ is not a single graph. Then
   for all $x\in[0,1]$ and $\mathbf{w}\in\mathcal{A}^*$, there exists  $\mathbf{a},\mathbf{b}\in\mathcal{A}^{\infty}$ such that
 $ \Delta_{\mathbf{w}\mathbf{a},\mathbf{w}\mathbf{b}}(x)\neq 0. $
\end{lemma}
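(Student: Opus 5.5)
We argue by contradiction. Suppose there are $x_0\in[0,1]$ and $\mathbf{w}\in\mathcal{A}^*$ with $\Delta_{\mathbf{w}\mathbf{a},\mathbf{w}\mathbf{b}}(x_0)=0$ for all $\mathbf{a},\mathbf{b}\in\mathcal{A}^\infty$. We will show that $\theta_{\mathbf{a}}(\mathbf{w}[x_0])$ is independent of $\mathbf{a}$, and then propagate this single-point coincidence, via analyticity and the symbolic addition of Lemma~\ref{lem:addition-theta_u-formal}, until it contradicts the assumption that $\mathcal{S}$ is not a single graph.

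\emph{Step 1 (remove the prefix).} By \eqref{eq:common-prefix-difference}, letting the finite truncations tend to infinity,
$$\Delta_{\mathbf{w}\mathbf{a},\mathbf{w}\mathbf{b}}(x_0)=\Delta_{\mathbf{a},\mathbf{b}}(\mathbf{w}[x_0])\int_0^1\partial_y\theta_{\mathbf{w}}\bigl(x_0,s\theta_{\mathbf{a}}(\mathbf{w}[x_0])+(1-s)\theta_{\mathbf{b}}(\mathbf{w}[x_0])\bigr)\,ds.$$
By \eqref{eq:partial-G-formula-Formal}, $\partial_y\theta_{\mathbf{w}}$ is a product of factors $\partial_y g$. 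Each factor is nonzero and continuous on the connected set $[0,1]\times U$, so it has constant sign there. Hence the integral is nonzero, and we get $\Delta_{\mathbf{a},\mathbf{b}}(x_1)=0$ for all $\mathbf{a},\mathbf{b}$, where $x_1:=\mathbf{w}[x_0]$. In other words, every unstable curve passes through the single point $(x_1,y_1)$.

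\emph{Step 2 (propagate the pinch).} Fix $\mathbf{a},\mathbf{b}$ and a letter $i$. Applying Step 1 with $\mathbf{w}=i$ in reverse gives
$$\Delta_{i\mathbf{a},i\mathbf{b}}(y)=\Delta_{\mathbf{a},\mathbf{b}}(i[y])\cdot(\text{nonzero factor}),$$
so $\Delta_{i\mathbf{a},i\mathbf{b}}$ vanishes wherever $i[y]=x_1$, i.e.\ at $y=f(x_1)$ modulo integer shifts. Iterating and using Lemma~\ref{lem:addition-theta_u-formal} to move between integer translates, we obtain that all differences $\Delta_{\mathbf{u}\mathbf{a},\mathbf{u}\mathbf{b}}$ vanish along the forward orbit of $x_1$. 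This alone gives no contradiction, since each difference only vanishes on a set that depends on the prefix.

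\emph{Step 3 (the contradiction; main obstacle).} The key point is to use all pairs at once. Take arbitrary $\mathbf{c},\mathbf{d}\in\mathcal{A}^\infty$ and write $\mathbf{c}=\mathbf{u}\mathbf{a}$, $\mathbf{d}=\mathbf{u}'\mathbf{b}$ with $|\mathbf{u}|=|\mathbf{u}'|=n$. By hypothesis every tail $\theta_{\mathbf{e}}$ passes through $(x_1,y_1)$. Therefore, at any point $x$ with $\mathbf{u}[x]=x_1$, the value $\theta_{\mathbf{c}}(x)=\theta_{\mathbf{u}}(x,y_1)$ depends only on the prefix $\mathbf{u}$. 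So $\theta_{\mathbf{c}}(x)$ takes at most $b^n$ values at each such point. Meanwhile, the points $x$ with $\mathbf{u}[x]=x_1$ for some $\mathbf{u}$ of some length form the backward-inverse-branch images, which are dense in $[0,1]$. The first approach to try is to pick a point where this finite-valuedness forces coincidences: use $\mathbf{u}$ and $\mathbf{u}+m$ from Lemma~\ref{lem:addition-theta_u-formal}, so that both branches hit $x_1$ at integer-shifted points. This should give $\theta_{\mathbf{c}}(x)=\theta_{\mathbf{c}'}(x)$ for all $\mathbf{c},\mathbf{c}'$ sharing the first $n$ letters, on a set with an accumulation point. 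Analyticity then gives identical equality, and since $n$ can be made arbitrary, taking the limit shows all $\theta_{\mathbf{c}}$ coincide. This contradicts $\mathcal{S}$ not being a single graph, via Corollary~\ref{cor:S}. The delicate part is arranging the accumulation point uniformly in the tails. If that fails, the fallback is the fixed-point argument of Lemma~\ref{lem:rigidity-derivatives}: apply it with base point $x_1$ replaced by its orbit, showing the derivatives $\theta_{\mathbf{a}}'$ agree, and conclude by that lemma.
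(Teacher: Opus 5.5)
Your Step 1 is correct; it is the key identity in the paper's proof. Steps 2 and 3, however, propagate the pinching in the wrong direction, and the argument does not close.

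\textbf{Why Steps 2--3 fail.} Since $\mathbf{u}[\cdot]$ is a composition of inverse branches, the equation $\mathbf{u}[x]=x_1$ means $x\equiv f^{|\mathbf{u}|}(x_1)\pmod 1$. So the points you use in Step 3 form the \emph{forward} orbit of $x_1$, not its backward images. This orbit need not be dense: for $\mathbf{w}=\emptyset$ and $x_0=0$ it is the single point $0$, because $f(0)=0$. The density claim therefore fails, and there is no accumulation point to feed into analyticity.

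The final limiting step is also logically reversed. The statement ``$\theta_{\mathbf{c}}\equiv\theta_{\mathbf{c}'}$ whenever $\mathbf{c},\mathbf{c}'$ share their first $n$ letters'' gets \emph{weaker} as $n$ grows, since it still allows up to $b^n$ distinct curves. Letting $n\to\infty$ therefore cannot produce a single graph. The fallback via Lemma~\ref{lem:rigidity-derivatives} has no argument showing that the derivatives $\theta_{\mathbf{a}}'$ agree.

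\textbf{The missing idea.} Your hypothesis quantifies over \emph{all} tails in $\mathcal{A}^\infty$. In particular it covers tails of the form $\mathbf{u}\mathbf{a},\mathbf{u}\mathbf{b}$ with $\mathbf{u}\in\mathcal{A}^*$ arbitrary.
\begin{enumerate}
\item Run your Step 1 identity with prefix $\mathbf{w}\mathbf{u}$ in place of $\mathbf{w}$. The left side $\Delta_{\mathbf{w}\mathbf{u}\mathbf{a},\mathbf{w}\mathbf{u}\mathbf{b}}(x_0)$ vanishes by hypothesis, and $\partial_y\theta_{\mathbf{w}\mathbf{u}}$ has constant sign. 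Using $(\mathbf{w}\mathbf{u})[x_0]=\mathbf{u}[\mathbf{w}[x_0]]$, you get $\Delta_{\mathbf{a},\mathbf{b}}(\mathbf{u}[x_1])=0$ for all $\mathbf{a},\mathbf{b}\in\mathcal{A}^\infty$ and all $\mathbf{u}\in\mathcal{A}^*$.
\item The points $\mathbf{u}[x_1]$ are genuine backward images. The images of $[0,1]$ under $\mathbf{u}[\cdot]$ with $|\mathbf{u}|=n$ cover $[0,1]$ and have length at most $\kappa_{\max}^n$. Hence $\{\mathbf{u}[x_1]\}_{\mathbf{u}\in\mathcal{A}^*}$ is dense in $[0,1]$.
\item Continuity alone then gives $\Delta_{\mathbf{a},\mathbf{b}}\equiv 0$ for all $\mathbf{a},\mathbf{b}$. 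So $\mathcal{S}$ is a single graph, a contradiction.
\end{enumerate}
No analyticity, symbolic addition, or counting of values is needed. This is exactly the paper's proof.
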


\begin{proof}
Assume that our claim fails. Then there exists $x_0\in [0,1]$ and $\mathbf{w}\in\mathcal{A}^*$ such that
$$\Delta_{\mathbf{w}\mathbf{a},\mathbf{w}\mathbf{b}}(x_0)=0\qquad\text{for all }\mathbf{a},\mathbf{b}\in\mathcal{A}^{\infty}.$$

Fix $\mathbf{a},\mathbf{b}\in\mathcal{A}^{\infty}$, and let
$\mathbf{u}\in\mathcal{A}^*$. By
\eqref{eq:theta-concatenation} and
the mean value theorem, we have
\begin{equation}\label{eq:Dleta-nonzero-1}
\begin{split}   |\Delta_{\mathbf{w}\mathbf{u}\mathbf{a},\mathbf{w}\mathbf{u}\mathbf{b}}(x_0)|&= |\theta_{\mathbf{w}\mathbf{u}\mathbf{a}}(x_0)-\theta_{\mathbf{w}\mathbf{u}\mathbf{b}}(x_0)|\\&=
\left|\theta_{\mathbf{w}\mathbf{u}}(x_0,\theta_{\mathbf{a}}(\mathbf{w}\mathbf{u}[x_0])  )- \theta_{\mathbf{w}\mathbf{u}}(x_0,\theta_{\mathbf{b}}(\mathbf{w}\mathbf{u}[x_0])  )    \right|\\&=
|\partial_y\theta_{\mathbf{w}\mathbf{u}}(x_0,s)|\cdot |  \theta_{\mathbf{a}}(\mathbf{w}\mathbf{u}[x_0])-\theta_{\mathbf{b}}(\mathbf{w}\mathbf{u}[x_0])  |
\end{split} 
\end{equation}
for some $s\in U$. By 
\eqref{eq:partial-G-formula-Formal}, we have
$|\partial_y\theta_{\mathbf{w}\mathbf{u}}(x_0,s)|>0$, Thus we have
      $$\Delta_{\mathbf{a},\mathbf{b}}(\mathbf{w}\mathbf{u}[x_0])=0\qquad\text{for all }\mathbf{a},\mathbf{b}\in\mathcal{A}^{\infty}\text{ and  }\mathbf{u}\in\mathcal{A}^*.$$
    By $\mathbf{w}\mathbf{u}[x_0]=\mathbf{u}[\mathbf{w}[x_0]] $, the set $\{\mathbf{w}\mathbf{u}[x_0]\}_{\mathbf{u}\in\mathcal{A}^*}$ is a dense set on $[0,1]$. Thus we have
      $$\Delta_{\mathbf{a},\mathbf{b}}(x)\equiv 0\qquad\text{for all }\mathbf{a},\mathbf{b}\in\mathcal{A}^{\infty},$$
      which contradicts with the condition that $\mathcal{S}$ does not consist of a single graph. Thus our assumption fails and the lemma holds.
\end{proof}

{
The following lemma shows that, if  $\mathcal{S}$ is not a single graph, there are infinitely many unstable curves.
}

\begin{lemma}\label{lem:curve-splitting-formal}
{
Suppose that $\mathcal{S}$ is not a single graph. Then there exist
$\mathbf{c}_1,\mathbf{c}_2\in\mathcal{A}^*$ such that, for every distinct
$\mathbf{u}=u_1u_2\cdots,\mathbf{v}=v_1v_2\cdots\in\{1,2\}^{\infty}$,
}
 we have $$\theta_{w(\mathbf{u})}(x)\not\equiv\theta_{w(\mathbf{v})}(x),$$
   where $ w(\mathbf{u}):=\mathbf{c}_{u_1}\mathbf{c}_{u_2}\cdots, w(\mathbf{v}):=\mathbf{c}_{v_1}\mathbf{c}_{v_2}\cdots \in\mathcal{A}^{\infty}$.
\end{lemma}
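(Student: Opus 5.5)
The plan is to take $\mathbf{c}_1,\mathbf{c}_2$ to be long truncations of two infinite words whose unstable curves differ, and then to check that a common prefix can never make two distinct curves coincide.

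\textbf{Step 1 (choice of $\mathbf{c}_1,\mathbf{c}_2$).} Since $\mathcal{S}$ is not a single graph, there exist $\mathbf{a},\mathbf{b}\in\mathcal{A}^\infty$ with $\theta_{\mathbf{a}}\not\equiv\theta_{\mathbf{b}}$ (by the remark following Corollary~\ref{cor:S}). Fix $x_1\in[0,1]$ with $\eta:=|\Delta_{\mathbf{a},\mathbf{b}}(x_1)|>0$. By Corollary~\ref{cor:uniform-holomorphic-extensions}, applied with $\mathbf{w}=\mathbf{a}|_N$ and passing to limits for infinite continuations, we have
$$\bigl|\theta_{(\mathbf{a}|_N)\mathbf{X}}(x)-\theta_{\mathbf{a}}(x)\bigr|\le 4r^N$$
for every $\mathbf{X}\in\mathcal{A}^\infty$, and similarly for $\mathbf{b}$. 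Choose $N$ with $8r^N<\eta$, and set $\mathbf{c}_1:=\mathbf{a}|_N$ and $\mathbf{c}_2:=\mathbf{b}|_N$. Then for all $\mathbf{X},\mathbf{Y}\in\mathcal{A}^\infty$,
$$|\Delta_{\mathbf{c}_1\mathbf{X},\mathbf{c}_2\mathbf{Y}}(x_1)|\ge\eta-8r^N>0,$$
so $\theta_{\mathbf{c}_1\mathbf{X}}\not\equiv\theta_{\mathbf{c}_2\mathbf{Y}}$.

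\textbf{Step 2 (prefixes preserve distinctness).} For $\mathbf{W}\in\mathcal{A}^*$ and $\mathbf{X},\mathbf{Y}\in\mathcal{A}^\infty$, apply \eqref{eq:common-prefix-difference} to finite truncations and pass to the limit, or argue directly with $y=\theta_{\mathbf{X}}(\mathbf{W}[x])$. This gives
$$\Delta_{\mathbf{W}\mathbf{X},\mathbf{W}\mathbf{Y}}(x)=\Delta_{\mathbf{X},\mathbf{Y}}(\mathbf{W}[x])\int_0^1\partial_y\theta_{\mathbf{W}}\bigl(x,s\theta_{\mathbf{X}}(\mathbf{W}[x])+(1-s)\theta_{\mathbf{Y}}(\mathbf{W}[x])\bigr)\,ds .$$
By \eqref{eq:partial-G-formula-Formal}, $\partial_y\theta_{\mathbf{W}}$ is a product of values of $\partial_y g$, which is continuous and nonvanishing on the connected set $[0,1]\times U$. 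Hence $\partial_y\theta_{\mathbf{W}}$ has constant sign, and the integral is nonzero. Since $\mathbf{W}[\cdot]$ maps $[0,1]$ onto a nondegenerate interval and $\Delta_{\mathbf{X},\mathbf{Y}}$ is real analytic on $\R$, we get that $\theta_{\mathbf{W}\mathbf{X}}\equiv\theta_{\mathbf{W}\mathbf{Y}}$ forces $\Delta_{\mathbf{X},\mathbf{Y}}$ to vanish on an interval, hence identically. So distinct curves stay distinct after prepending a common prefix.

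\textbf{Step 3 (conclusion).} Let $\mathbf{u}\ne\mathbf{v}$ in $\{1,2\}^\infty$, and let $k$ be the first index with $u_k\ne v_k$; say $u_k=1$ and $v_k=2$. Set $\mathbf{W}:=\mathbf{c}_{u_1}\cdots\mathbf{c}_{u_{k-1}}$, which is the common prefix of both words. Then
$$w(\mathbf{u})=\mathbf{W}\mathbf{c}_1\mathbf{X},\qquad w(\mathbf{v})=\mathbf{W}\mathbf{c}_2\mathbf{Y}$$
for some $\mathbf{X},\mathbf{Y}\in\mathcal{A}^\infty$. By Step 1, $\theta_{\mathbf{c}_1\mathbf{X}}\not\equiv\theta_{\mathbf{c}_2\mathbf{Y}}$, and by Step 2 this gives $\theta_{w(\mathbf{u})}\not\equiv\theta_{w(\mathbf{v})}$.

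The only delicate point is Step 2: one needs the nonvanishing of the averaged fiber derivative, which follows from the sign constancy of $\partial_y g$, and the passage from $\mathcal{A}^*$ to infinite words via the uniform convergence in Corollary~\ref{cor:uniform-holomorphic-extensions}. The rest is routine.
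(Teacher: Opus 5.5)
Your proposal is correct and follows essentially the same route as the paper. In both, $\mathbf{c}_1,\mathbf{c}_2$ are long truncations of two codings with distinct curves, chosen via Corollary~\ref{cor:uniform-holomorphic-extensions}. At the first disagreement the common prefix is then removed using nonvanishing of $\partial_y\theta_{\mathbf{W}}$ together with analytic continuation. The only difference is cosmetic: you use the integral form \eqref{eq:common-prefix-difference} with sign constancy of $\partial_y g$, whereas the paper applies the mean value theorem directly.
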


\begin{proof}
Since $\mathcal{S}$ does not consist of a single graph, there exist $\mathbf{a},\mathbf{b}\in\mathcal{A}^{\infty}$ such that $\theta_{\mathbf{a}}(x)\not\equiv\theta_{\mathbf{b}}(x)$.
{Thus applying Corollary \ref{cor:uniform-holomorphic-extensions},} there exist $n\in\Z_{>0}$ such that
\begin{equation}\label{eq:start-defferience-formal}
    \theta_{(\mathbf{a}|_n)\mathbf{w}}(x)\not\equiv\theta_{(\mathbf{b}|_n)\mathbf{t}}(x)\qquad\text{ for all }\mathbf{w},\mathbf{t}\in\mathcal{A}^{\infty}.
\end{equation}
Set $\mathbf{c}_1:=\mathbf{a}|_n$ and $\mathbf{c}_2:=\mathbf{b}|_n$. 

Assume that there exists two different symbols $\mathbf{u},\mathbf{v}\in\{1,2\}^{\infty}$ such that $\theta_{w(\mathbf{u})}(x)\equiv\theta_{w(\mathbf{v})}(x)$.
Without loss generality, suppose  that
$$ w(\mathbf{u})= \mathbf{z}\mathbf{c}_1\mathbf{e}\qquad
\text{and}\qquad w(\mathbf{v})= \mathbf{z}\mathbf{c}_2\mathbf{r}$$
for some  $\mathbf{z}\in \mathcal{A}^*$
and $\mathbf{e},\mathbf{r}\in \mathcal{A}^{\infty}$.
Then $\theta_{w(\mathbf{u})}(x)\equiv\theta_{w(\mathbf{v})}(x)$ implies that
$$ \theta_{\mathbf{z}}(x,\theta_{\mathbf{c}_1\mathbf{e}}(\mathbf{z}[x]) )\equiv \theta_{\mathbf{z}}(x,\theta_{\mathbf{c}_2\mathbf{r}}(\mathbf{z}[x]) ).  $$
Combining this with $|\partial_y\theta_{\mathbf{z}}|>0$ and the mean value theorem, we have
$$ \theta_{\mathbf{c}_1\mathbf{e}}(\mathbf{z}[x]) \equiv \theta_{\mathbf{c}_2\mathbf{r}}(\mathbf{z}[x]).$$
Since the functions $ \theta_{\mathbf{c}_1\mathbf{e}}(x),\theta_{\mathbf{c}_2\mathbf{r}}(x)$ are real analytic, we have
$$ \theta_{\mathbf{c}_1\mathbf{e}}(x) \equiv \theta_{\mathbf{c}_2\mathbf{r}}(x).$$
This contradicts with \eqref{eq:start-defferience-formal}. Thus the assumption fails and this lemma holds.
\end{proof}

In the remainder of this section, we establish several quantitative rigidity results.

\begin{lemma}\label{lem:lower-bound-Delta-formal}
     Assume that $\mathcal{S}$ is not a single graph.  Then, for every $N\in\mathbb{Z}_{>0}$, there exist
     $\delta_N\in(0,1)$ and $M_N\in\mathbb{Z}_{>0}$ such that the following holds.
     
    For every $\mathbf{w}\in\mathcal{A}^N$,  $n\geq M_N$,
    $\mathbf{u}\in\mathcal{A}^*$ with $|\mathbf{u}|\geq M_N$, there exist
    $\mathbf{a},\mathbf{b}\in\mathcal{A}^n$ such that
     $$ |\theta_{\mathbf{w}\mathbf{a}\mathbf{c}}(\mathbf{u}[x])-\theta_{\mathbf{w}\mathbf{b}\mathbf{d}}(\mathbf{u}[x])|\ge\delta_N \qquad\text{for all }\mathbf{c},\mathbf{d}\in\mathcal{A}^*\text{ and }x\in [0,1].$$
\end{lemma}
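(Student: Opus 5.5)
The proof is a compactness argument built on the pointwise non-degeneracy of Lemma \ref{lem:Dleta-nonzero-formal}. That pointwise statement is upgraded to a uniform one using three facts: the equicontinuity of the unstable curves from Lemma \ref{lem:uniform-bound-k-derivative}, the exponential closeness of a finite truncation to the infinite curve from Corollary \ref{cor:uniform-holomorphic-extensions}, and the fact that $\mathbf{u}[\,\cdot\,]$ maps $[0,1]$ into an interval of length at most $\kappa_{\max}^{|\mathbf{u}|}$. Since $\mathcal{A}^N$ is finite, we may fix $\mathbf{w}\in\mathcal{A}^N$ and at the end take the minimum of the $\delta$'s and the maximum of the $M$'s over $\mathbf{w}$.

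\emph{Step 1 (local non-degeneracy).} Fix $y_0\in[0,1]$. By Lemma \ref{lem:Dleta-nonzero-formal} there are $\mathbf{a}',\mathbf{b}'\in\mathcal{A}^\infty$ with $|\Delta_{\mathbf{w}\mathbf{a}',\mathbf{w}\mathbf{b}'}(y_0)|=:4\eta(y_0)>0$. Lemma \ref{lem:uniform-bound-k-derivative} with $k=1$ bounds all $\theta'_{\mathbf{e}}$ by $C_1$, uniformly in $\mathbf{e}$. Hence $|\Delta_{\mathbf{w}\mathbf{a}',\mathbf{w}\mathbf{b}'}(y)|\ge 3\eta(y_0)$ on the interval $I(y_0):=\{y:|y-y_0|<\eta(y_0)/(2C_1)\}$.

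\emph{Step 2 (truncation).} For $n\ge 1$ set $\mathbf{a}:=\mathbf{a}'|_n$ and $\mathbf{b}:=\mathbf{b}'|_n$. For any $\mathbf{c}\in\mathcal{A}^*$, the function $\theta_{\mathbf{w}\mathbf{a}\mathbf{c}}=\theta_{\mathbf{w}\mathbf{a}\mathbf{c}}(\cdot,0)$ and the infinite curve $\theta_{\mathbf{w}\mathbf{a}'}$ share the prefix $\mathbf{w}\mathbf{a}$. Corollary \ref{cor:uniform-holomorphic-extensions} (passing to the limit along $\mathbf{a}'$) then gives $|\theta_{\mathbf{w}\mathbf{a}\mathbf{c}}-\theta_{\mathbf{w}\mathbf{a}'}|\le 4r^{N+n}\le 4r^n$ on $[0,1]$, and likewise for $\mathbf{b},\mathbf{d}$. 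Choose $M(y_0)$ with $8r^{M(y_0)}\le\eta(y_0)$. Then for every $n\ge M(y_0)$, all $\mathbf{c},\mathbf{d}\in\mathcal{A}^*$ and all $y\in I(y_0)\cap[0,1]$,
\[
|\theta_{\mathbf{w}\mathbf{a}\mathbf{c}}(y)-\theta_{\mathbf{w}\mathbf{b}\mathbf{d}}(y)|\ge 3\eta(y_0)-8r^n\ge 2\eta(y_0).
\]
This bound holds for every $n\ge M(y_0)$, not just one $n$, because the same infinite words $\mathbf{a}',\mathbf{b}'$ are truncated at each length.

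\emph{Step 3 (compactness and zooming).} The open intervals $I(y_0)$ cover $[0,1]$. Extract a finite subcover $I(y_1),\dots,I(y_p)$ and let $\lambda>0$ be a Lebesgue number for it. Set $\delta:=\min_i 2\eta(y_i)$ and choose $M_N\ge\max_i M(y_i)$ so large that $\kappa_{\max}^{M_N}<\lambda$. Each lifted inverse branch $f_i$ maps $[0,1]$ into $[0,1]$ with derivative at most $\kappa_{\max}$. Hence for $|\mathbf{u}|\ge M_N$ the set $\mathbf{u}[[0,1]]$ is contained in $[0,1]$ and has diameter less than $\lambda$, so it lies in a single $I(y_i)$. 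Taking the words $\mathbf{a},\mathbf{b}$ attached to that $y_i$ in Step 2 gives the claim for all $x\in[0,1]$, all $\mathbf{c},\mathbf{d}$ and all $n\ge M_N$.

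The main obstacle is uniformity. The chosen pair $\mathbf{a},\mathbf{b}$ must work simultaneously for every $x$, every continuation $\mathbf{c},\mathbf{d}$ and every $n\ge M_N$, while Lemma \ref{lem:Dleta-nonzero-formal} is only pointwise. This is handled by the three ingredients above. The one point to double-check is that the constants in Lemma \ref{lem:uniform-bound-k-derivative} and Corollary \ref{cor:uniform-holomorphic-extensions} are independent of the words involved, including finite words evaluated at $y=0$; both results are stated with this uniformity.
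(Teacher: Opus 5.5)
Your proof is correct and follows essentially the same route as the paper. Both arguments take pointwise non-degeneracy from Lemma \ref{lem:Dleta-nonzero-formal}, upgrade it by continuity and a finite cover of $[0,1]$, use the contraction of $\mathbf{u}[\cdot]$ to place $\mathbf{u}[[0,1]]$ inside a single cover element, and use exponential closeness of finite truncations to the infinite curves. Your choice $\mathbf{a}=\mathbf{a}'|_n$, truncating at the length $n$ the statement requires, is the right one: the paper writes the truncation at the fixed length $M_{\mathbf{w}}$, which does not directly produce words in $\mathcal{A}^n$ for every $n\ge M_N$.
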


\begin{proof}
Fix $N\in\mathbb{Z}_{>0}$ and $\mathbf{w}\in\mathcal{A}^N$.
For each $x\in [0,1]$, by Lemma \ref{lem:Dleta-nonzero-formal}, there exists
$\mathbf{a}_x,\mathbf{b}_x\in\mathcal{A}^{\infty}$ such that $$\delta_x:=|\theta_{\mathbf{w}\mathbf{a}_x}(x)- \theta_{\mathbf{w}\mathbf{b}_x}(x)|>0.$$
Thus there exists $r_x>0$ such that for each $x'\in \mathbf{B}(x,r_x)$, we have
\begin{equation}\label{eq:lower-bound-Delta-formal-1}
    |\theta_{\mathbf{w}\mathbf{a}_x}(x')- \theta_{\mathbf{w}\mathbf{b}_x}(x')|\ge \frac{\delta_x}2.
\end{equation}
By the compactness of $[0,1]$, there exist finitely many open balls
$
\mathbf{B}(x_1,r_{x_1}),\ldots,\mathbf{B}(x_k,r_{x_k})
$
that cover $[0,1]$.
Consequently, there exists $M_{\mathbf{w}}\in\mathbb{N}$ such that, for every
$\mathbf{u}\in\mathcal{A}^*$ with $|\mathbf{u}|\geq M_{\mathbf{w}}$, there exists $t=t(\mathbf{w},\mathbf{u})\in\{1,\ldots,k\}$ such that
$
\mathbf{u}[x]\in \mathbf{B}(x_t,r_{x_t})
$
for all
$x\in[0,1]$.

Set $\delta_{\mathbf{w}}:=\frac14\min_{1\leq t\leq k}\delta_{x_t}. $
Increasing $M_{\mathbf{w}}$ if necessary, we may assume that $4(r_{\max})^{M_{\mathbf{w}}} \leq \frac{\delta_{\mathbf{w}}}{4}.$
Let $\mathbf{u}\in\mathcal{A}^*$ with $|\mathbf{u}|\geq M_{\mathbf{w}}$, let $t=t(\mathbf{w},\mathbf{u})$, and set $\mathbf{a}:=(\mathbf{a}_{x_t})|_{M_{\mathbf{w}}}, \mathbf{b}:=(\mathbf{b}_{x_t})|_{M_{\mathbf{w}}}. $ 
Then, by \eqref{eq:theta-common-prefix-distance} and \eqref{eq:lower-bound-Delta-formal-1}, for all $\mathbf{c},\mathbf{d}\in\mathcal{A}^*$ and $x\in[0,1]$,
\begin{equation}\nonumber
\begin{split}
&\left|
\theta_{\mathbf{w}\mathbf{a}\mathbf{c}}(\mathbf{u}[x])
-
\theta_{\mathbf{w}\mathbf{b}\mathbf{d}}(\mathbf{u}[x])
\right|
\\
&\geq
\left|
\theta_{\mathbf{w}\mathbf{a}_{x_t}}(\mathbf{u}[x])
-
\theta_{\mathbf{w}\mathbf{b}_{x_t}}(\mathbf{u}[x])
\right|
\\
&\quad
-
\left|
\theta_{\mathbf{w}\mathbf{a}\mathbf{c}}(\mathbf{u}[x])
-
\theta_{\mathbf{w}\mathbf{a}_{x_t}}(\mathbf{u}[x])
\right|
-
\left|
\theta_{\mathbf{w}\mathbf{b}\mathbf{d}}(\mathbf{u}[x])
-
\theta_{\mathbf{w}\mathbf{b}_{x_t}}(\mathbf{u}[x])
\right|
\\
&\geq
\frac{\delta_{x_t}}{2}
-
4(r_{\max})^{M_{\mathbf{w}}}
\\
&\geq
\delta_{\mathbf{w}}.
\end{split}
\end{equation}

Thus, the lemma holds with $M_N:=\max_{\mathbf{w}\in\mathcal{A}^N }M_{\mathbf{w}} \text{ and }\delta_N:=\min_{\mathbf{w}\in\mathcal{A}^N }\delta_{\mathbf{w}}.$
\end{proof}

\begin{lemma}\label{lem:G_a^(k)-G_b^(k)-lower-bound-formal}
  Assume that $\mathcal{S}$ is not a single graph. 
  There exist $\delta>0$ and $N,K\in\Z_{>0}$ such that the following holds. For each $n\ge N$  and $x\in [0,1]$, there exists $\mathbf{a},\mathbf{b}\in\mathcal{A}^n$
  and $k\in\{1,2,\ldots,K\}$
  such that 
  $$ \left| (\theta_{\mathbf{a}\mathbf{c}}-\theta_{\mathbf{b}\mathbf{d}})^{(k)}(x)  \right|\ge\delta\qquad\text{for all } \mathbf{c},\mathbf{d}\in\mathcal{A}^*. $$    
\end{lemma}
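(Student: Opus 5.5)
Fix $x_0\in[0,1]$. The plan is to combine Lemma~\ref{lem:rigidity-derivatives} with analyticity to get, at each point, a separation in some derivative between two infinite curves, and then make it uniform by compactness and by the exponential stability of Corollary~\ref{cor:uniform-holomorphic-extensions} and Lemma~\ref{lem:uniform-bound-k-derivative}.

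\emph{Pointwise separation.} Since $\mathcal{S}$ is not a single graph, Lemma~\ref{lem:rigidity-derivatives} gives $\mathbf{a}_0,\mathbf{b}_0\in\mathcal{A}^\infty$ with $\Delta_{\mathbf{a}_0,\mathbf{b}_0}'\not\equiv 0$. By Lemma~\ref{lem:addition-theta_u-formal}, this function is real analytic on $\R$. A nonzero analytic function cannot have all derivatives vanish at a point, so for each $x_0$ there is an order $k_{x_0}\ge 1$ with $\Delta_{\mathbf{a}_0,\mathbf{b}_0}^{(k_{x_0})}(x_0)\neq0$.

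\emph{Uniformity in $x$.} This step needs care. A single pair $(\mathbf{a}_0,\mathbf{b}_0)$ already works at every $x_0$, so only the orders $k_{x_0}$ vary. Set
\[
\delta_{x_0}:=|\Delta_{\mathbf{a}_0,\mathbf{b}_0}^{(k_{x_0})}(x_0)|>0 .
\]
By continuity, $|\Delta_{\mathbf{a}_0,\mathbf{b}_0}^{(k_{x_0})}|\ge\delta_{x_0}/2$ on a neighbourhood of $x_0$. Compactness of $[0,1]$ then yields finitely many points $x_1,\dots,x_m$. Put
\[
K:=\max_i k_{x_i},\qquad \delta_0:=\min_i \delta_{x_i}/2 .
\]
Hence for every $x\in[0,1]$ some $k\le K$ satisfies $|\Delta_{\mathbf{a}_0,\mathbf{b}_0}^{(k)}(x)|\ge \delta_0$.

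\emph{Truncation.} For $n\ge N$, take $\mathbf{a}:=\mathbf{a}_0|_n$ and $\mathbf{b}:=\mathbf{b}_0|_n$; note these do not even depend on $x$. For any $\mathbf{c},\mathbf{d}\in\mathcal{A}^*$, the functions $\theta_{\mathbf{a}\mathbf{c}}$ and $\theta_{\mathbf{a}_0}$ are both of the form $\theta_{\mathbf{a}\,\cdot}$. (Here $\theta_{\mathbf{a}\mathbf{c}}$ means $\theta_{\mathbf{a}\mathbf{c}}(\cdot,0)$, and $\theta_{\mathbf{a}_0}$ is the limit of the finite words $\theta_{\mathbf{a}\mathbf{e}}$.) Lemma~\ref{lem:uniform-bound-k-derivative} therefore gives
\[
|(\theta_{\mathbf{a}\mathbf{c}}-\theta_{\mathbf{a}_0})^{(k)}(x)|\le C_k r^{n}
\]
uniformly in $\mathbf{c}$ and $x$, and similarly for $\mathbf{b}$. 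Choose $N$ with $2\max_{k\le K}C_k r^N\le\delta_0/2$ and set $\delta:=\delta_0/2$. Then for every $x$ and the corresponding $k\le K$,
\[
|(\theta_{\mathbf{a}\mathbf{c}}-\theta_{\mathbf{b}\mathbf{d}})^{(k)}(x)|\ge \delta_0-2C_kr^n\ge\delta .
\]

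The main obstacle is the pointwise step: it needs a pair whose difference has nonconstant behaviour. A pair with $\Delta_{\mathbf{a}_0,\mathbf{b}_0}\not\equiv0$ but constant would fail, since its derivatives of order at least $1$ all vanish. That is exactly why I rely on Lemma~\ref{lem:rigidity-derivatives}, which provides non-identical derivatives, rather than on Lemma~\ref{lem:Dleta-nonzero-formal}. The remaining work is routine bookkeeping of the uniform Cauchy estimates.
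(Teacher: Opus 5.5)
Your proof is correct and follows the paper's argument. You take a pair with $\theta_{\mathbf{a}_0}'\not\equiv\theta_{\mathbf{b}_0}'$ from Lemma~\ref{lem:rigidity-derivatives}, use analyticity plus compactness of $[0,1]$ to get a uniform lower bound on some derivative of order at most $K$, and truncate to $\mathbf{a}:=\mathbf{a}_0|_n$, $\mathbf{b}:=\mathbf{b}_0|_n$ via Lemma~\ref{lem:uniform-bound-k-derivative}; the only cosmetic difference is that the paper obtains the uniform $(\delta,K)$ by a sequential contradiction argument (points $x_n\to x_\infty$ at which all derivatives vanish in the limit), whereas you use a finite subcover directly.
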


\begin{proof}
 Since $\mathcal{S}$ does not consist of a single graph, by Lemma \ref{lem:rigidity-derivatives},  there exists $\mathbf{u},\mathbf{v}\in\mathcal{A}^{\infty}$ with $\theta'_{\mathbf{u}}(x)\not\equiv\theta'_{\mathbf{v}}(x).$
  It suffices to prove the following claim. 
  
  \begin{claim}\label{claim:lower-bound-k-derivative}
      There exist $\delta>0$ and $K\in\Z_{>0}$ such that the following holds. For each $x\in[0,1]$, there exists $k\in\{1,\ldots,K\}$ such that
\begin{equation}\label{eq:theta^(k)_u-lower-bound-formal}
       \left| (\theta_{\mathbf{u}}-\theta_{\mathbf{v}})^{(k)}(x)  \right|\ge2\delta.
   \end{equation}
  \end{claim}

Indeed, assume that the claim holds. By Lemma \ref{lem:uniform-bound-k-derivative}, we may choose $N>1$ sufficiently large such that, for every $n\geq N$, with setting $\mathbf{a}:=\mathbf{u}|_n$ and
$\mathbf{b}:=\mathbf{v}|_n$, the following holds: for every $x\in[0,1]$, every $k\in\{1,\ldots,K\}$, and all $\mathbf{c},\mathbf{d}\in\mathcal{A}^*$,
   $$\left|( \theta_{\mathbf{a}\mathbf{c}}- \theta_{\mathbf{u}})^{(k)}(x)\right|  \le\frac{\delta}2\qquad\text{and}\qquad    \left|( \theta_{\mathbf{b}\mathbf{d}}- \theta_{\mathbf{u}})^{(k)}(x)\right|  \le\frac{\delta}2.$$
   Combining this with \eqref{eq:theta^(k)_u-lower-bound-formal}, we obtain the desired conclusion.

   We now prove Claim \ref{claim:lower-bound-k-derivative}. Suppose that the claim fails. Then, for each $n\in\mathbb{Z}_{>0}$, there exists $x_n\in[0,1]$ such that, for each $k\in\{1,\ldots,n\}$, we have
    $$  \left| (\theta_{\mathbf{u}}-\theta_{\mathbf{v}})^{(k)}(x_n)  \right|\le \frac1n. $$
  Without loss of generality, by passing to a subsequence, we may assume that $x_n\to x_\infty$. Therefore, we get
  $$  (\theta_{\mathbf{u}}-\theta_{\mathbf{v}})^{(k)}(x_\infty)=0\qquad\text{for all }k\ge1.  $$
  Since $ (\theta_{\mathbf{u}}-\theta_{\mathbf{v}})(x)$ is real analytic function on $[0,1]$, we have $(\theta_{\mathbf{u}}-\theta_{\mathbf{v}})'(x)\equiv0$, a contradiction. 
  
  Therefore, our assumption is false, and Claim \ref{claim:lower-bound-k-derivative} follows.
\end{proof}

\section{Renormalization}
\label{Renormalization}

To prove the Tree Theorem, we will  establish a multiscale version of the separation Lemma \ref{lem:G_a^(k)-G_b^(k)-lower-bound-formal} . To do so, we introduce a normalization of $\frac{\Delta_{\mathbf{u}\mathbf{a},\mathbf{u}\mathbf{b}}(x)}{\Delta_{\mathbf{a},\mathbf{b}}(\mathbf{u}[x])}$ that allows us to control the effect of the common prefix $\mathbf{u}$, see Proposition \ref{pro:approximate-H-formal} which is  the main result of this section.

\subsection{The normalization}\label{sub:def-normalization}
 Let
$\mathbf{u},\mathbf{a},\mathbf{b}\in\mathcal{A}^*$. 
We aim to establish a connection between $ \Delta_{\mathbf{u}\mathbf{a},\mathbf{u}\mathbf{b}}$  and $\Delta_{\mathbf{a},\mathbf{b}}$. To this end, if $\Delta_{\mathbf{a},\mathbf{b}}(x)\not\equiv 0$,
we define
$$ \Lambda_{\mathbf{u};\mathbf{a},\mathbf{b}}(x):= 
\frac{\Delta_{\mathbf{u}\mathbf{a},\mathbf{u}\mathbf{b}}(x)}{\Delta_{\mathbf{a},\mathbf{b}}(\mathbf{u}[x])}\qquad\text{for }x\in\R. $$
This is well defined since the function
$$ \frac{\Delta_{\mathbf{u}\mathbf{a},\mathbf{u}\mathbf{b}}(x)}{\Delta_{\mathbf{a},\mathbf{b}}(\mathbf{u}[x])}= \frac{\theta_{\mathbf{u}\mathbf{a}}(x)- \theta_{\mathbf{u}\mathbf{a}}(x) }{\theta_{\mathbf{a}}(\mathbf{u}[x])- \theta_{\mathbf{b}}(\mathbf{u}[x])}=\frac{\theta_{\mathbf{u}}(x,\theta_{\mathbf{a}}( \mathbf{u}[x]))- \theta_{\mathbf{u}}(x,\theta_{\mathbf{b}}( \mathbf{u}[x])) }{\theta_{\mathbf{a}}(\mathbf{u}[x])- \theta_{\mathbf{b}}(\mathbf{u}[x])} $$
admits a holomorphic extension to $\R$. Furthermore, by the uniqueness of analytic continuation, 
\begin{equation}\label{def:Lambda-u-a,b-formal}
\Lambda_{\mathbf{u};\mathbf{a},\mathbf{b}}(x)\equiv\int_0^1\partial_y\theta_{\mathbf{u}}\left(x,  s\theta_{\mathbf{a}}(\mathbf{u}[x])+(1-s) \theta_{\mathbf{b}}(\mathbf{u}[x])\right)\,ds.
\end{equation}
Therefore, if $\Delta_{\mathbf{a},\mathbf{b}}(x)\equiv 0$ holds,
we define
$$  \Lambda_{\mathbf{u};\mathbf{a},\mathbf{b}}(x):=\partial_y\theta_{\mathbf{u}}\left(x,  \theta_{\mathbf{a}}(\mathbf{u}[x])\right)=\partial_y\theta_{\mathbf{u}}\left(x,  \theta_{\mathbf{b}}(\mathbf{u}[x])\right).$$
In all cases, we have
\begin{equation} \label{def:Delta-u-a,b-formal}
\Delta_{\mathbf{u}\mathbf{a},\mathbf{u}\mathbf{b}}(x)\equiv\Delta_{\mathbf{a},\mathbf{b}}(\mathbf{u}[x])\cdot\Lambda_{\mathbf{u};\mathbf{a},\mathbf{b}}(x).
\end{equation}

If $\Delta_{\mathbf{a},\mathbf{b}}(x)\not\equiv 0$, then we have
\begin{equation}\label{eq:Lmabda-decomposition-formal}
\begin{split}
\Lambda_{\mathbf{u};\mathbf{a},\mathbf{b}}(x)&\equiv\frac{\left( \theta_{\mathbf{u}\mathbf{a}}-\theta_{\mathbf{u}\mathbf{b}}\right)(x) }{\left( \theta_{\mathbf{a}}-\theta_{\mathbf{b}}\right)(\mathbf{u}[x]) }\\&\equiv
\prod_{k=1}^{|\mathbf{u}|}  \frac{\left( \theta_{(\mathbf{u}|_{k}^{|\mathbf{u}|})\mathbf{a}}-\theta_{(\mathbf{u}|_{k}^{|\mathbf{u}|})\mathbf{b}}\right)(( \mathbf{u}|_{k-1})[x])}{\left( \theta_{(\mathbf{u}|_{k+1}^{|\mathbf{u}|})\mathbf{a}}-\theta_{(\mathbf{u}|_{k+1}^{|\mathbf{u}|})\mathbf{b}}\right)(( \mathbf{u}|_{k})[x]) }
\\&\equiv\prod_{k=1}^{|\mathbf{u}|}\frac{
g\left( ( \mathbf{u}|_{k})[x],    \theta_{(\mathbf{u}|_{k+1}^{| \mathbf{u}|})\mathbf{a}}(( \mathbf{u}|_{k})[x])  \right)-g\left( ( \mathbf{u}|_{k})[x],    \theta_{(\mathbf{u}|_{k+1}^{| \mathbf{u}|})\mathbf{b}}(( \mathbf{u}|_{k})[x])  \right)
}{\theta_{(\mathbf{u}|_{k+1}^{| \mathbf{u}|})\mathbf{a}}(( \mathbf{u}|_{k})[x]) -\theta_{(\mathbf{u}|_{k+1}^{| \mathbf{u}|})\mathbf{b}}(( \mathbf{u}|_{k})[x])  }\\&
\equiv\prod_{k=1}^{|\mathbf{u}|}\int_0^1
\partial_yg\left(( \mathbf{u}|_{k})[x],
s\cdot\theta_{(\mathbf{u}|_{k+1}^{| \mathbf{u}|})\mathbf{a}}(( \mathbf{u}|_{k})[x])+(1-s)\cdot\theta_{(\mathbf{u}|_{k+1}^{| \mathbf{u}|})\mathbf{b}}(( \mathbf{u}|_{k})[x])
\right)\,ds
\end{split}
\end{equation}
where, in the third equality, we used the following identity:
\begin{equation}\nonumber
    \begin{split}        \theta_{(\mathbf{u}|_{k}^{|\mathbf{u}|})\mathbf{a}}(( \mathbf{u}|_{k-1})[x])&=\theta_{u_k(\mathbf{u}|_{k+1}^{|\mathbf{u}|})\mathbf{a}}(( \mathbf{u}|_{k-1})[x])\\&=
    \theta_{u_k}\left( ( \mathbf{u}|_{k-1})[x],  \theta_{(\mathbf{u}|_{k+1}^{|\mathbf{u}|})\mathbf{a}}(( \mathbf{u}|_{k})[x])  \right)\\&=
     g\left( ( \mathbf{u}|_{k})[x],  \theta_{(\mathbf{u}|_{k+1}^{|\mathbf{u}|})\mathbf{a}}(( \mathbf{u}|_{k})[x])  \right).
    \end{split}
\end{equation}

Given $\mathbf{w}\in\mathcal{A}^{\infty}$ and $n,m\in\Z_{>0}$, we have:
\begin{equation}\label{eq:Lmabda-decomposition-true}
\begin{split}
    &\Lambda_{\mathbf{w}|_n;(\mathbf{w}|_{n+1}^{n+m})\mathbf{a},(\mathbf{w}|_{n+1}^{n+m})\mathbf{b}}(x)\\&\equiv
    \prod_{k=1}^{n}\int_0^1
\partial_yg\left(( \mathbf{w}|_{k})[x],
s\cdot\theta_{(\mathbf{w}|_{k+1}^{m+n})\mathbf{a}}(( \mathbf{w}|_{k})[x])+(1-s)\cdot\theta_{(\mathbf{w}|_{k+1}^{m+n})\mathbf{b}}(( \mathbf{w}|_{k})[x])
\right)\,ds.
\end{split}
\end{equation}

After normalizing by the value at $x=0$ and letting both $n$ and $m$ tend to infinity, we  naturally  consider the following function:

\begin{equation}\label{def:H_v-formal}
    H_{\mathbf{w}}(x):=\prod_{k=1}^{\infty} \frac{\partial_yg\left((\mathbf{w}|_{k})[x],  \theta_{\mathbf{w}|_{k+1}^{\infty}}(( \mathbf{w}|_{k})[x])\right)}
    {\partial_yg\left((\mathbf{w}|_{k})[0],  \theta_{\mathbf{w}|_{k+1}^{\infty}}(( \mathbf{w}|_{k})[0])\right)}.
\end{equation}

The main purpose of this section is to prove the following proposition.

\begin{proposition}\label{pro:approximate-H-formal}
 Assume that $\mathcal{S}$ is not a single graph. Then $H_\mathbf{u}$ is real analytic on $[0,1]$ for every $\mathbf{u} \in \mathcal{A}^\infty$.
 Furthermore, there exists $r\in(0,1)$ such that the following holds.

Let $k\in\mathbb{N}$. Then there exists a constant $C_k>1$ such that, for every $n,m\in\mathbb{Z}_{>0}$, $\mathbf{u}\in\mathcal{A}^{\infty}$, $\mathbf{a},\mathbf{b}\in\mathcal{A}^*$ and $x\in[0,1]$, setting $\mathbf{u}':=\mathbf{u}|_{n+1}^{n+m}$ we have
\begin{equation}\label{eq:H-approximation}
\left|\left(H_{\mathbf{u}}\right)^{(k)}(x)-\left(\frac{\Lambda_{\mathbf{u}|_n;\mathbf{u}'\mathbf{a},\mathbf{u}'\mathbf{b}}}{\Lambda_{\mathbf{u}|_n;\mathbf{u}'\mathbf{a},\mathbf{u}'\mathbf{b}}(0)} \right)^{(k)}(x)  \right|\le C_k(r^n+r^m).
\end{equation}
\end{proposition}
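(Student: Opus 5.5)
The plan is to work with logarithms on a fixed complex neighbourhood $\mathcal{D}_\delta$, where everything is holomorphic and uniformly bounded. Derivative estimates will then follow from $C^0$ estimates via Cauchy's formula, exactly as in Lemma \ref{lem:uniform-bound-k-derivative}.

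\textbf{Analyticity and convergence of $H_{\mathbf u}$.} By \eqref{eq:uniform-complex-contraction-g_y} and Lemma \ref{lem:uniform-holomorphic-extension-theta-u}, after shrinking $\delta$ we may assume that $\partial_yg$ takes values in a compact subset of $\mathbb{C}\setminus\{0\}$ on the relevant neighbourhood $\mathcal{U}_{\delta'}$. Real values there lie in $[r_{\min},r_{\max}]$, so we can fix a holomorphic branch $L:=\log \partial_y g$ on $\mathcal{U}_{\delta'}$. Set
$$ h_k(z):=L\bigl((\mathbf u|_k)[z],\theta_{\mathbf u|_{k+1}^\infty}((\mathbf u|_k)[z])\bigr). $$
By Lemma \ref{lem:uniform-holomorphic-extension-theta-u}, $h_k$ is holomorphic and uniformly bounded on $\mathcal{D}_\delta$. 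Since $L$ is Lipschitz and $(\mathbf u|_k)[\cdot]$ is $\kappa^k$-contracting by \eqref{eq:complex-displacement-u}, we get
$$ |h_k(z)-h_k(0)|\le C|(\mathbf u|_k)[z]-(\mathbf u|_k)[0]| + C|\theta_{\mathbf u|_{k+1}^\infty}((\mathbf u|_k)[z])-\theta_{\mathbf u|_{k+1}^\infty}((\mathbf u|_k)[0])|\le C\kappa^k, $$
using the uniform derivative bound on $\theta$ from Lemma \ref{lem:uniform-bound-k-derivative}. Hence $\log H_{\mathbf u}=\sum_k (h_k-h_k(0))$ converges uniformly on $\mathcal{D}_\delta$. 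This gives that $H_{\mathbf u}$ is holomorphic there, hence real analytic on $[0,1]$, and that it is uniformly bounded above and away from $0$.

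\textbf{Comparison with the finite normalized product.} By \eqref{eq:Lmabda-decomposition-true}, $\Lambda_{\mathbf u|_n;\mathbf u'\mathbf a,\mathbf u'\mathbf b}(z)$ is a product over $1\le k\le n$ of the factors $\int_0^1\partial_y g\bigl((\mathbf u|_k)[z],y_{k,s}(z)\bigr)\,ds$, where $y_{k,s}$ is a convex combination of $\theta_{(\mathbf u|_{k+1}^{n+m})\mathbf a}$ and $\theta_{(\mathbf u|_{k+1}^{n+m})\mathbf b}$ evaluated at $(\mathbf u|_k)[z]$. Each such factor lies near the real interval $[r_{\min},r_{\max}]$, so it has a logarithm $\ell_k(z)$. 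We then compare $\ell_k(z)-\ell_k(0)$ with $h_k(z)-h_k(0)$ and bound the difference in two ways.
\begin{itemize}
\item[(i)] Both increments are $O(\kappa^k)$ by the contraction argument above. This uses that the fiber points move by $O(\kappa^k)$, since the $\theta$'s have uniformly bounded derivatives.
\item[(ii)] By Corollary \ref{cor:uniform-holomorphic-extensions}, the words $(\mathbf u|_{k+1}^{n+m})\mathbf a$ and $\mathbf u|_{k+1}^\infty$ share the prefix $\mathbf u|_{k+1}^{n+m}$, of length $n+m-k\ge m$. Hence $|y_{k,s}-\theta_{\mathbf u|_{k+1}^\infty}|\le 4r^{n+m-k}$ on the neighbourhood, and therefore $|\ell_k(z)-h_k(z)|\le Cr^{n+m-k}$, and likewise at $0$.
\end{itemize}
Since each term is bounded by the minimum of (i) and (ii), summing over $k\le n$ gives
$$ \sum_{k\le n}\min(\kappa^k,r^{n+m-k})\le Cr'^{(n+m)/2}\le C(r'^n+r'^m). $$
The tail $\sum_{k>n}|h_k-h_k(0)|\le C\kappa^n$ is handled by (i). So
$$ \bigl|\log H_{\mathbf u}(z)-\log\bigl(\Lambda(z)/\Lambda(0)\bigr)\bigr|\le C(\tilde r^n+\tilde r^m) $$
uniformly on $\mathcal{D}_\delta$. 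Exponentiating, using that both quantities are uniformly bounded, gives the same $C^0$ bound for the difference itself.

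\textbf{Derivatives and the main difficulty.} Cauchy's integral formula on $\partial\mathcal{D}_{2\delta}$, as in Lemma \ref{lem:uniform-bound-k-derivative}, converts the $C^0$ bound into the estimate \eqref{eq:H-approximation} with constants $C_k$. The main obstacle is ensuring that all holomorphic extensions and logarithm branches live on one fixed complex neighbourhood, uniformly in $n,m,\mathbf u,\mathbf a,\mathbf b$ and in $s\in[0,1]$. This requires the convex combinations $y_{k,s}(z)$ to stay inside $\mathcal U_{\delta'}$ where $\partial_yg\neq0$. I would handle it by applying Lemma \ref{lem:uniform-holomorphic-extension-theta-u} twice, as in the proof of Corollary \ref{cor:uniform-holomorphic-extensions}, and by noting that convex combinations of points within distance $\epsilon$ of the convex set $U$ remain within distance $\epsilon$ of $U$.
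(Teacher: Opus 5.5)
Your proposal is correct and follows essentially the paper's argument. Both proofs pass to holomorphic extensions on a fixed $\mathcal{D}_\delta$, bound the tail factors $k>n$ by contraction, compare the factors $k\le n$ via the shared-prefix estimate of Corollary~\ref{cor:uniform-holomorphic-extensions}, and finish with Cauchy's formula; your logarithms simply replace the paper's multiplicative bookkeeping through Lemmas~\ref{lem:complex-prod-estimate} and~\ref{lem:approximation-each-term}. Two small remarks: taking the minimum with (i) is unnecessary, since $\sum_{k\le n} r^{n+m-k}\le r^m/(1-r)$ already gives the $O(r^m)$ bound directly, as in the paper; and $\partial_y g$ may be negative, so its real values lie in $\pm[r_{\min},r_{\max}]$, which only changes the choice of logarithm branch.
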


\subsection{Proof of Proposition \ref{pro:approximate-H-formal}}
We first introduce the following auxiliary lemma.

\begin{lemma}\label{lem:complex-prod-estimate}
 Let $L>1$ and $\rho\in(0,1)$. Then there exists  $M>1$ so
that the following holds.
Let $m,N\in\mathbb{Z}_{>0}$ and $z_1,\ldots,z_N\in\mathbb{C}$ satisfy $|z_j|\le L\rho^{m+j}$ for each $j\in\{1,\cdots,N\}$. Then
$$
     \left| \prod_{j=1}^{N}(1+z_j)-1 \right|\le M \rho^m.
$$
\end{lemma}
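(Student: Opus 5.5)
We bound the logarithm of the product and then exponentiate. Fix $L>1$ and $\rho\in(0,1)$. First note that $\sum_{j\ge1}|z_j|\le L\sum_{j\ge1}\rho^{m+j}\le \frac{L\rho}{1-\rho}\rho^m\le \frac{L}{1-\rho}\rho^m$. The simplest route avoids logarithms entirely and uses the elementary inequality
$$
\left|\prod_{j=1}^N(1+z_j)-1\right|\le \prod_{j=1}^N(1+|z_j|)-1,
$$
which follows by expanding the product: $\prod(1+z_j)-1$ is a sum of monomials $z_{j_1}\cdots z_{j_p}$ with $p\ge1$. The right-hand side is the same sum with every monomial replaced by its absolute value. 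This is valid for arbitrary complex $z_j$, so no smallness condition such as $|z_j|<1$ is needed. A logarithm-based argument would require such a condition, and it fails for small $m$.

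Next, bound $\prod(1+|z_j|)\le \exp\bigl(\sum_j|z_j|\bigr)\le \exp(S\rho^m)$, where $S:=\frac{L}{1-\rho}$. Then use $e^t-1\le te^t$ for $t\ge0$, together with $\rho^m\le1$, to get
$$
\prod_{j=1}^N(1+|z_j|)-1\le S\rho^m e^{S\rho^m}\le S e^{S}\rho^m .
$$
Hence the statement holds with $M:=\max\{2,\,Se^S\}=\max\{2,\frac{L}{1-\rho}e^{L/(1-\rho)}\}$. This constant depends only on $L$ and $\rho$, and not on $m$, $N$ or the $z_j$.

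There is no real obstacle. The only point to check is that the constant is uniform in $m$ and $N$, and this is guaranteed because the geometric series is summed to infinity and the crude bound $\rho^m\le 1$ is used inside the exponential.
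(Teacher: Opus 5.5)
Your proof is correct and follows the paper's argument essentially step by step. Both use $\bigl|\prod(1+z_j)-1\bigr|\le\prod(1+|z_j|)-1\le e^{\sum|z_j|}-1$, sum the geometric series, and apply $e^t-1\le te^t$, arriving at the same constant $M=\frac{L}{1-\rho}e^{L/(1-\rho)}$; the $\max$ with $2$ is unnecessary, since $L>1$ already forces this to exceed $1$.
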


\begin{proof}
Set $M:= \frac{L e^{L/{(1-\rho)}}}{1-\rho}$. Therefore we have
\begin{equation}\nonumber
\begin{split}
   \left| \prod_{j=1}^{N}(1+z_j)-1 \right|&\le    \prod_{j=1}^{N}(1+|z_j|)-1 \\&\le \prod_{j=1}^{N}(1+L\rho^{m+j})-1 \\&\le
   e^{\sum_{j=1}^{N}L\rho^{m+j}}-1\\&\le
   e^{\frac{L}{1-\rho}\rho^m}-1\\&\le
   M \rho^m,
\end{split}
\end{equation}
where the last inequality is from the fact that $e^t-1\le e^tt$ for each $t\in \R_+$. 
\end{proof}

Given $\mathbf{w},\mathbf{a},\mathbf{b}\in\mathcal{A}^*$, define the function on $\R$ by
$$\lambda_{\mathbf{w};\mathbf{a},\mathbf{b}}(x):=  \int_0^1
\partial_yg\left(\mathbf{w}[x],
s\cdot\theta_{\mathbf{a}}(\mathbf{w}[x])+(1-s)\cdot \theta_{\mathbf{b}}(\mathbf{w}[x])
\right)\,ds.$$
For $\mathbf{v}\in\mathcal{A}^{\infty}$ and $m\in\mathbb{Z}_{>0}$, define the function on $\R$ by
$$ h_{\mathbf{v},m}(x):= \partial_yg\left((\mathbf{v}|_{m})[x],  \theta_{\mathbf{v}|_{m+1}^{\infty}}(( \mathbf{v}|_{m})[x])\right).$$

\begin{lemma}\label{lem:approximation-each-term}
 There exist constants $r,\delta\in(0,1)$ and $C>1$ such that the following holds.

Let $\mathbf{w},\mathbf{a},\mathbf{b}\in\mathcal{A}^*$, $\mathbf{v}\in\mathcal{A}^{\infty}$, and $m,n\in\mathbb{Z}_{>0}$.  Set $\mathbf{v}':=\mathbf{v}|_{n+1}^{n+m}$. Then the functions
$\lambda_{\mathbf{w};\mathbf{a},\mathbf{b}},h_{\mathbf{v},m}, \lambda_{\mathbf{v}|_n;\mathbf{v}'\mathbf{a}, \mathbf{v}'\mathbf{b}}$ admit holomorphic extensions to $\mathcal{D}_{\delta}$ such that the following holds.

Then, for all $z_1,z_2\in\mathcal{D}_{\delta}$, we have
$$ \left|\frac{\lambda_{\mathbf{w};\mathbf{a},\mathbf{b}}(z_1)} {\lambda_{\mathbf{w};\mathbf{a},\mathbf{b}}(z_2)}  -1\right| \le Cr^{|\mathbf{w}|}\qquad\text{and}\qquad \left| \frac{h_{\mathbf{v},m}(z_1)}{h_{\mathbf{v},m}(z_2)} -1\right|\le Cr^m.$$
    Furthermore, we obtain:
$$ \left|\frac{\lambda_{\mathbf{v}|_n;\mathbf{v}'\mathbf{a}, \mathbf{v}'\mathbf{b}}(z_1)} {\lambda_{\mathbf{v}|_n;\mathbf{v}'\mathbf{a},\mathbf{v}'\mathbf{b}}(z_2)}\cdot  \frac{h_{\mathbf{v},m}(z_2)}{h_{\mathbf{v},m}(z_1)}-1   \right| \le C r^m.   $$ 
\end{lemma}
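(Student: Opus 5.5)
The plan is to realize all three functions as compositions of the holomorphic extensions from Lemma~\ref{lem:uniform-holomorphic-extension-theta-u} and Lemma~\ref{lem:uniform-bound-k-derivative}. Each ratio bound will then come from two facts. First, $\partial_y g$ is holomorphic and bounded away from $0$ near $[0,1]\times U$. Second, the arguments fed into it vary by at most $O(r^{\text{length}})$ as the base point ranges over $\mathcal{D}_\delta$. The third estimate will in addition use that $\theta_{\mathbf v'\mathbf a}$ and $\theta_{\mathbf v|_{n+1}^\infty}$ share the prefix $\mathbf v'$ of length $m$.

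\textbf{Step 1 (extensions and non-vanishing).} Since $|\partial_y g|\ge r_{\min}>0$ on the compact set $[0,1]\times U$, we can choose $\delta_1>0$ such that $\partial_{z_2}g$ extends holomorphically to $\mathcal U_{\delta_1}$ with $|\partial_{z_2}g|\ge r_{\min}/2$ there. We also have $|\partial_{z_1}\partial_{z_2} g|,|\partial^2_{z_2}g|\le L$ on $\mathcal U_{\delta_1}$. By Lemma~\ref{lem:uniform-holomorphic-extension-theta-u} (applied with $\epsilon=\delta_1/2$) there is $\delta>0$ with the following properties: $\mathbf w[z]\in\mathcal D_{\delta_1}$, $\theta_{\mathbf a}(\mathbf w[z]),\theta_{\mathbf b}(\mathbf w[z])$ lie within $\delta_1/2$ of $U$, and all convex combinations $s\theta_{\mathbf a}+(1-s)\theta_{\mathbf b}$ stay in that neighbourhood. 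The same holds for $\theta_{\mathbf v|_{m+1}^\infty}$, via the Weierstrass limit. Hence $\lambda_{\mathbf w;\mathbf a,\mathbf b}$ and $h_{\mathbf v,m}$ are holomorphic on $\mathcal D_\delta$, and are bounded below in modulus by $r_{\min}/2$ up to a harmless adjustment. For $\lambda$, an integral of values close to a fixed nonzero real number is handled by shrinking $\delta_1$ so that $\partial_{z_2}g$ stays in a small disc around its real values, which keeps the integral away from $0$.

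\textbf{Step 2 (first two estimates).} For $z_1,z_2\in\mathcal D_\delta$, Lemma~\ref{lem:uniform-bound-k-derivative} gives
$$|\mathbf w[z_1]-\mathbf w[z_2]|\le C r^{|\mathbf w|}.$$
Since $|\theta_{\mathbf a}'|\le C_1$ on $\mathcal D_{\delta}$, the inner arguments also move by $O(r^{|\mathbf w|})$. The Lipschitz bound $L$ on $\partial_{z_2}g$ then gives
$$|\lambda_{\mathbf w;\mathbf a,\mathbf b}(z_1)-\lambda_{\mathbf w;\mathbf a,\mathbf b}(z_2)|\le C r^{|\mathbf w|}.$$
Dividing by the lower bound yields the first estimate. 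The identical argument with $\mathbf w=\mathbf v|_m$ and $\theta_{\mathbf v|_{m+1}^\infty}$ in place of $\theta_{\mathbf a},\theta_{\mathbf b}$ yields the second estimate with $r^m$.

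\textbf{Step 3 (third estimate).} The key pointwise comparison concerns
$$h_{\mathbf v,n}(z)=\partial_yg\bigl((\mathbf v|_n)[z],\theta_{\mathbf v|_{n+1}^\infty}((\mathbf v|_n)[z])\bigr).$$
Since $\mathbf v'\mathbf a$ and $\mathbf v'\mathbf b$ share with $\mathbf v|_{n+1}^\infty$ the prefix $\mathbf v'$ of length $m$, Corollary~\ref{cor:uniform-holomorphic-extensions} gives
$$|\theta_{\mathbf v'\mathbf a}-\theta_{\mathbf v|_{n+1}^\infty}|,\ |\theta_{\mathbf v'\mathbf b}-\theta_{\mathbf v|_{n+1}^\infty}|\le 4r^m$$
on $\mathcal D_\delta$. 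Hence
$$\lambda_{\mathbf v|_n;\mathbf v'\mathbf a,\mathbf v'\mathbf b}(z)=h_{\mathbf v,n}(z)\bigl(1+O(r^m)\bigr)$$
uniformly. It follows that
$$\frac{\lambda_{\mathbf v|_n;\mathbf v'\mathbf a,\mathbf v'\mathbf b}(z_1)}{\lambda_{\mathbf v|_n;\mathbf v'\mathbf a,\mathbf v'\mathbf b}(z_2)}=\frac{h_{\mathbf v,n}(z_1)}{h_{\mathbf v,n}(z_2)}\bigl(1+O(r^m)\bigr).$$
To reach the quotient exactly as written, with $h_{\mathbf v,m}$, I would combine three facts. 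The second estimate gives $h_{\mathbf v,m}(z_2)/h_{\mathbf v,m}(z_1)=1+O(r^m)$. The same estimate with index $n$ gives $h_{\mathbf v,n}(z_1)/h_{\mathbf v,n}(z_2)=1+O(r^n)$. Lemma~\ref{lem:complex-prod-estimate}, or simply multiplying three factors of the form $1+O(\cdot)$, then bounds the quotient by $1+O(r^m+r^n)$.

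This is the main obstacle. The argument produces the stated bound $Cr^m$ in the regime $n\ge m$, where $r^n\le r^m$. If $n<m$ and the $h_{\mathbf v,m}$ in the statement really is meant, then the $\lambda$-ratio by itself is only within $O(r^n)$ of $1$, and I do not see how to absorb it. So I would first check whether the statement intends $h_{\mathbf v,n}$: in that case Step 3 gives exactly $Cr^m$ with no restriction. Otherwise I would record the $n\ge m$ restriction, or equivalently the bound $C(r^n+r^m)$, which is all that Proposition~\ref{pro:approximate-H-formal} needs downstream.
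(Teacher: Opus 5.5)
Your Steps 1--2 are the paper's own argument: a Lipschitz bound for $\partial_{z_2}g$ near $[0,1]\times U$, the contraction $|(\mathbf v|_m)[z_1]-(\mathbf v|_m)[z_2]|\lesssim r^m$ from Lemma~\ref{lem:uniform-bound-k-derivative}, and division by $r_{\min}/2$. The paper writes this out only for $h_{\mathbf v,m}$ and calls the other two inequalities ``similar''. Your Step~3 is exactly the omitted step: you compare $\lambda_{\mathbf v|_n;\mathbf v'\mathbf a,\mathbf v'\mathbf b}$ pointwise with $h_{\mathbf v,n}$ through the common block $\mathbf v'$ and Corollary~\ref{cor:uniform-holomorphic-extensions}. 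Your care about the lower bound for the integral $\lambda$ also settles a point the paper skips; it uses the constant sign of $\partial_y g$ on the connected set $[0,1]\times U$.

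On the index, you should be more decisive, and your fallback is wrong. With $h_{\mathbf v,m}$, the third inequality is simply false for $n<m$. Take $g(x,y)=\lambda(x)y+\phi(x)$ with $\lambda$ nonconstant. Then $\lambda_{\mathbf v|_n;\mathbf v'\mathbf a,\mathbf v'\mathbf b}(z)=\lambda((\mathbf v|_n)[z])$ and $h_{\mathbf v,m}(z)=\lambda((\mathbf v|_m)[z])$. So for $n=1$ and $m\to\infty$, the left-hand side tends to $|\lambda((\mathbf v|_1)[z_1])/\lambda((\mathbf v|_1)[z_2])-1|$, which is positive for suitable $z_1,z_2\in[0,1]$.

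The intended function is $h_{\mathbf v,n}$, and that is the version Proposition~\ref{pro:approximate-H-formal} uses. There the $k$-th factor of $\Lambda_{\mathbf u|_n;\mathbf u'\mathbf a,\mathbf u'\mathbf b}$ is $\lambda_{\mathbf u|_k;\mathbf u|_{k+1}^{n+m}\mathbf a,\mathbf u|_{k+1}^{n+m}\mathbf b}$, and it is compared with $h_{\mathbf u,k}$. In other words, the lemma is applied with prefix length $k$ and common block length $n+m-k$, for every $k=1,\dots,n$.

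In that application your fallback fails on both counts:
\begin{itemize}
\item Your restriction ``$n\ge m$'' becomes $k\ge n+m-k$, which fails for small $k$.
\item A per-factor error $C(r^k+r^{n+m-k})$ sums over $k$ to order $r$, not $r^m$. It would also involve $h_{\mathbf u,n+m-k}$ instead of the $h_{\mathbf u,k}$ appearing in $H$.
\end{itemize}
Lemma~\ref{lem:complex-prod-estimate} needs exactly the per-factor error $Cr^{n+m-k}$, and your Step~3 gives precisely this for the $h_{\mathbf v,n}$ version. So the correct resolution is to read $h_{\mathbf v,m}$ in the third inequality as a typo for $h_{\mathbf v,n}$. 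For that version your proof is complete.
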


\begin{proof}
Since $ r_{\min}\le |\partial_yg(x,y)|\le r_{\max}$ for all $(x,y)\in [0,1]\times U$, thus there exists $\epsilon>0$ such that
$\partial_yg(x,y)$ admit holomorphic extensions to $\mathcal{U}_{\epsilon}$ satisfying $$ \frac{r_{\min}}2\le |\partial_{z_2}g(z_1,z_2)|\le \frac{1+r_{\max}}2\qquad \text{for all}(z_1,z_2)\in \mathcal{U}_{\epsilon}.$$
{
Here  $\mathcal{D}_{\delta}$ and $\mathcal{U}_{\epsilon}$ are defined in
\eqref{eq:def-D-delta} and \eqref{eq:def-U-delta}, respectively.
}
Let $\delta\in(0,1)$ be given by Lemma \ref{lem:uniform-holomorphic-extension-theta-u} for this choice of $\epsilon/2$.
Therefore the functions
$\lambda_{\mathbf{w};\mathbf{a},\mathbf{b}},h_{\mathbf{v},m}, \lambda_{\mathbf{v}|_n;\mathbf{v}'\mathbf{a}, \mathbf{v}'\mathbf{b}}$ admit holomorphic extensions to $\mathcal{D}_{\delta}$.

By decreasing $\delta$ if necessary, there are $r\in(0,1)$ and $C_1>1$ such that the conclusion of Lemma \ref{lem:uniform-bound-k-derivative} holds for $k=1$ with  $\delta$, $r$, and $C_1$. Define $$
M:=\sup_{(z_1,z_2)\in\mathcal{U}_{\epsilon/2}}
\left|\partial^2_{z_2} g(z_1,z_2)\right|+\sup_{(z_1,z_2)\in\mathcal{U}_{\epsilon/2}}
\left|\partial_{z_1}\partial_{z_2} g(z_1,z_2)\right|.
$$

Let $z_1,z_2\in\mathcal{D}_{\delta}$. By Leibniz's rule and Lemma \ref{lem:uniform-bound-k-derivative}, we have
\begin{equation}\nonumber
    \begin{split}
    |h_{\mathbf{v},m}(z_1)-h_{\mathbf{v},m}(z_2)|&\le M\left( \left| (\mathbf{v}|_{m})[z_1]- (\mathbf{v}|_{m})[z_2] \right|+\left|\theta_{\mathbf{v}|_{m+1}^{\infty}}(( \mathbf{v}|_{m})[z_2])-\theta_{\mathbf{v}|_{m+1}^{\infty}}(( \mathbf{v}|_{m})[z_1])  \right| \right)  \\&\le     M(1+C_1)|( \mathbf{v}|_{m})[z_2]-( \mathbf{v}|_{m})[z_1]|\\&\le
    2MC_1^2 r^m |z_2-z_1|\\&\le
    8MC_1^2 r^m.
    \end{split}
\end{equation}
Since $|h_{\mathbf{v},m}(z_2)|\ge\frac{r_{\min}}2$, setting $C:=\frac{16MC_1^2}{r_{\min}}$, we have
$$ \left| \frac{h_{\mathbf{v},m}(z_1)}{h_{\mathbf{v},m}(z_2)}-1  \right| \le Cr^m. $$
By enlarging $C$ if necessary, the remaining two inequalities follow from the similar argument. This completes the proof of the lemma.
\end{proof}

\begin{proof}[Proof of Proposition \ref{pro:approximate-H-formal}]
Let $r,\delta\in(0,1)$ and $C>1$ be the constants provided by Lemma \ref{lem:approximation-each-term}.
By applying Cauchy's integral formula as in the proof of Lemma \ref{lem:uniform-bound-k-derivative}, it suffices to establish the following claim.

\begin{claim}
      There exists  $C'_1>1$ such that, given $n,m\in\mathbb{Z}_{>0}$, $\mathbf{u}\in\mathcal{A}^{\infty}$, $\mathbf{a},\mathbf{b}\in\mathcal{A}^*$ and $z\in\mathcal{D}_{\delta/2}$, setting $\mathbf{u}':=\mathbf{u}|_{n+1}^{n+m}$, we get
    $$ \left|H_{\mathbf{u}}(z)-\frac{\Lambda_{\mathbf{u}|_n;\mathbf{u}'\mathbf{a},\mathbf{u}'\mathbf{b}}(z)}{\Lambda_{\mathbf{u}|_n;\mathbf{u}'\mathbf{a},\mathbf{u}'\mathbf{b}}(0)}   \right|\le C'_1(r^n+r^m).$$
\end{claim}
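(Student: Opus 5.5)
We write $\Lambda_{\mathbf u|_n;\mathbf u'\mathbf a,\mathbf u'\mathbf b}$ as a finite product and $H_{\mathbf u}$ as an infinite product, and compare them factor by factor. The comparison uses Lemma~\ref{lem:approximation-each-term} and Corollary~\ref{cor:uniform-holomorphic-extensions} for the individual factors, and Lemma~\ref{lem:complex-prod-estimate} to sum the errors. The goal is to get $r^n+r^m$ directly, without passing through a square root.

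\emph{Factorization.} By \eqref{eq:Lmabda-decomposition-true}, with $\mathbf w=\mathbf u$, we have
\[
\Lambda_{\mathbf u|_n;\mathbf u'\mathbf a,\mathbf u'\mathbf b}(z)=\prod_{k=1}^n \lambda_k(z),
\qquad
\lambda_k:=\lambda_{\mathbf u|_k;(\mathbf u|_{k+1}^{n+m})\mathbf a,(\mathbf u|_{k+1}^{n+m})\mathbf b}.
\]
By \eqref{def:H_v-formal}, $H_{\mathbf u}(z)=\prod_{k\ge1} h_{\mathbf u,k}(z)/h_{\mathbf u,k}(0)$. For $z\in\mathcal D_{\delta/2}$ set
\[
A(z):=\prod_{k=1}^n\frac{\lambda_k(z)}{\lambda_k(0)},\qquad
B(z):=\prod_{k=1}^n\frac{h_{\mathbf u,k}(z)}{h_{\mathbf u,k}(0)},\qquad
T(z):=\prod_{k>n}\frac{h_{\mathbf u,k}(z)}{h_{\mathbf u,k}(0)}.
\]
Then $H_{\mathbf u}=BT$, and
\[
|H_{\mathbf u}-A|\le |B|\,|T-1|+|B|\,\Bigl|\frac AB-1\Bigr|.
\]

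\emph{Tail and boundedness.} By the second inequality of Lemma~\ref{lem:approximation-each-term}, each factor of $T$ has the form $1+\zeta_k$ with $|\zeta_k|\le Cr^k$. Lemma~\ref{lem:complex-prod-estimate}, applied after reindexing $k=n+j$, gives $|T-1|\le Mr^n$. The same argument with $m=0$ gives $|B|\le 1+M$. These bounds are uniform in $N$, so the infinite product converges uniformly on $\mathcal D_\delta$ by the Weierstrass theorem. Since every factor is holomorphic there, $H_{\mathbf u}$ is holomorphic on $\mathcal D_\delta$ and in particular real analytic on $[0,1]$, which is the first assertion of Proposition~\ref{pro:approximate-H-formal}.

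\emph{Main step: $A/B$.} This ratio is the product over $k\le n$ of
\[
\frac{\lambda_k(z)}{\lambda_k(0)}\cdot\frac{h_{\mathbf u,k}(0)}{h_{\mathbf u,k}(z)},
\]
and I need each such factor to be $1+O(r^{\,n+m-k})$. I will not rely on the exact indexing of the third inequality of Lemma~\ref{lem:approximation-each-term}; instead I re-derive the bound directly.

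The integrand of $\lambda_k$ is $\partial_y g$ evaluated at $(\mathbf u|_k)[z]$ and at a convex combination of $\theta_{(\mathbf u|_{k+1}^{n+m})\mathbf a}$ and $\theta_{(\mathbf u|_{k+1}^{n+m})\mathbf b}$. The words $(\mathbf u|_{k+1}^{n+m})\mathbf a$ and $\mathbf u|_{k+1}^\infty$ share the prefix $\mathbf u|_{k+1}^{n+m}$, of length $n+m-k$. Hence, by Corollary~\ref{cor:uniform-holomorphic-extensions}, this convex combination differs from $\theta_{\mathbf u|_{k+1}^\infty}((\mathbf u|_k)[z])$ by at most $4r^{\,n+m-k}$, uniformly on the complex domain. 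Using the bound $M$ on $\partial_y^2 g$ and the lower bound $|\partial_y g|\ge r_{\min}/2$, this yields
\[
\Bigl|\frac{\lambda_k(z)}{h_{\mathbf u,k}(z)}-1\Bigr|\le C'r^{\,n+m-k},
\]
and the same bound at $z=0$. Therefore the $k$-th factor of $A/B$ is $1+\zeta'_k$ with $|\zeta'_k|\le C''r^{\,m+(n-k)}$.

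Reindexing by $j=n-k+1$, Lemma~\ref{lem:complex-prod-estimate} gives $|A/B-1|\le M'r^m$. Combining the three steps,
\[
|H_{\mathbf u}-A|\le (1+M)\bigl(Mr^n+M'r^m\bigr)\le C_1'(r^n+r^m).
\]
The constants depend only on $g$, $f$, $\delta$ and $r$; they do not depend on $n$, $m$, $\mathbf u$, $\mathbf a$, $\mathbf b$ or $z$. The one point to check carefully is that all evaluation points remain in $\mathcal U_{\epsilon/2}$ for $z\in\mathcal D_\delta$, which holds by the choice of $\delta$ in Lemma~\ref{lem:uniform-holomorphic-extension-theta-u}.
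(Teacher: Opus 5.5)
Your proof is correct and follows essentially the paper's route. The paper also splits $|H_{\mathbf u}-\Lambda/\Lambda(0)|\le |H_{\mathbf u}-H|+|H-\Lambda|$, with $H$ the first $n$ normalized factors of $H_{\mathbf u}$; it bounds the tail by Lemma~\ref{lem:complex-prod-estimate} and controls $\Lambda/H$ factor by factor. The only difference is that you re-derive the per-factor estimate with exponent $n+m-k$, coming from the common prefix $\mathbf u|_{k+1}^{n+m}$ via Corollary~\ref{cor:uniform-holomorphic-extensions}, rather than citing the third inequality of Lemma~\ref{lem:approximation-each-term}. That is exactly the geometric-in-$k$ bound the paper's citation must supply to feed Lemma~\ref{lem:complex-prod-estimate}, and your version makes the indexing explicit (the passage to the ratio at $z$ and $0$ only needs $|\lambda_k(0)|\ge r_{\min}$ at the real point, which is immediate).
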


We now prove the claim. Let $M>1$ be the constant given by Lemma \ref{lem:complex-prod-estimate} with $L=C$ and $\rho=r$. Set
$$ H(z):=\prod_{j=1}^n\frac{h_{\mathbf{u},j}(z)}{h_{\mathbf{u},j}(0)}\qquad\text{and}\qquad \Lambda(z):=\prod_{j=1}^n\frac{\lambda_{\mathbf{u}|_n;\mathbf{u}'\mathbf{a}, \mathbf{u}'\mathbf{b}}(z)} {\lambda_{\mathbf{u}|_n;\mathbf{u}'\mathbf{a},\mathbf{u}'\mathbf{b}}(0)}.$$
Thus we have
\begin{equation}\label{eq:triangle-inequality-2}
  \left|H_{\mathbf{u}}(z)-\frac{\Lambda_{\mathbf{u}|_n;\mathbf{u}'\mathbf{a},\mathbf{u}'\mathbf{b}}(z)}{\Lambda_{\mathbf{u}|_n;\mathbf{u}'\mathbf{a},\mathbf{u}'\mathbf{b}}(0)}   \right|\le |H_{\mathbf{u}}(z)-H(z)|+|H(z)-\Lambda(z)|.
\end{equation}

By the definitions of $H_{\mathbf{u}}(z),H(z)$, we have
\begin{equation}\label{eq:H-minus}
    \begin{split}
      |H_{\mathbf{u}}(z)-H(z)|&\le |H(z)|\cdot \left| \frac{H_{\mathbf{u}}(z)}{H(z)} -1 \right|\\&\le
      \left(\left| \prod_{j=1}^n\frac{h_{\mathbf{u},j}(z)}{h_{\mathbf{u},j}(0)}-1\right|+1\right) \cdot \left| \prod_{j=n+1}^{\infty}\frac{h_{\mathbf{u},j}(z)}{h_{\mathbf{u},j}(0)} -1 \right|.
    \end{split}
\end{equation}
By Lemma \ref{lem:approximation-each-term}, we have
$$  \left| \frac{h_{\mathbf{u},j}(z)}{h_{\mathbf{u},j}(0)} -1 \right|\le C r^j\qquad\text{for each }j\in\Z_{>0}.   $$
Combining this estimate and \eqref{eq:H-minus} with Lemma \ref{lem:complex-prod-estimate}, we obtain
\begin{equation}\label{eq:H-minus-upper-bound}
     |H_{\mathbf{u}}(z)-H(z)|\le (M+1)M r^n.
\end{equation}

By the similar argument, applying Lemmas \ref{lem:approximation-each-term} and \ref{lem:complex-prod-estimate}  again, we obtain
$$ |H(z)-\Lambda(z)|\le (M+1)^2 M r^m. $$
Combining this inequality with \eqref{eq:H-minus-upper-bound} and \eqref{eq:H-minus}, we obtain
$$|H_{\mathbf{u}}(z)-\Lambda(z)|\le (M+1)^2M (r^n+r^m).$$
Thus,  \eqref{eq:H-approximation} holds with $C_1:=(M+1)^2M$.

Finally, applying \eqref{eq:H-minus-upper-bound} and the definition of $H$, and since $n>0$ is arbitrary, the Weierstrass convergence theorem implies that $H_{\mathbf{u}}$ is real analytic on $[0,1]$. Thus our claim holds.
\end{proof}

\section{Transversality I: The case $\partial_y^2 g\not\equiv 0$}
\label{sec:transversality-I}

In this section, we establish a multiscale higher-order transversality bound, under the condition that for all $\mathbf{u}$, $H_\mathbf{u}' \neq 0$. We will see that, since $\mathcal{S}$ is not a single graph, this condition is satisfied as soon as $\partial_y^2 g(x,y) \neq 0$. The case $\partial_y^2 g(x,y)  = 0$ will be dealt with in the next section.

\subsection{Preliminary remarks}

The next lemma explains why the $k$-th derivative of $\Lambda$ (or $H$) drives the separation of derivatives that we search for. In the next lemma, think of $\tau$ as being small.

\begin{lemma}\label{lem:higher-order-product-composition-formal}
    Let an integer $k>0$. Then, for every $1\leq j\leq k$ and  $1\leq i\leq k+1-j$, 
    {
    there exists a 
    homogeneous
    polynomial $P_{i,j,k}\in\mathbb{R}[x_1,\ldots,x_{k+2-j-i}] $ of  degree  $j$  and the following  holds.
    }
    Let $\Lambda,\Delta,\tau$ be  $C^k$ function on $\R$, then 
    \begin{equation}\nonumber
    \begin{split}
        \left(\Lambda \cdot \Delta\circ\tau \right)^{(k)}(x)&=
        \Lambda^{(k)}(x)\cdot \Delta(\tau(x)) \\&+\sum_{j=1}^{k}\Delta^{(j)}(\tau(x))\sum_{i=1}^{k+1-j}\Lambda^{(i-1)}(x)\cdot P_{i,j,k}\left(\tau^{(1)}(x), \ldots,  \tau^{(k+2-j-i)}(x)\right).
    \end{split}
    \end{equation}
\end{lemma}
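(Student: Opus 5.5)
The plan is to apply the Leibniz rule to the product $\Lambda\cdot(\Delta\circ\tau)$ and then the Fa\`a di Bruno formula to each derivative of $\Delta\circ\tau$. Leibniz gives
$$
(\Lambda\cdot\Delta\circ\tau)^{(k)}=\sum_{l=0}^{k}\binom{k}{l}\Lambda^{(k-l)}\,(\Delta\circ\tau)^{(l)}.
$$
The term $l=0$ is exactly $\Lambda^{(k)}(x)\,\Delta(\tau(x))$, the isolated leading term in the statement. For $l\ge1$, Fa\`a di Bruno writes
$$
(\Delta\circ\tau)^{(l)}(x)=\sum_{j=1}^{l}\Delta^{(j)}(\tau(x))\,B_{l,j}\bigl(\tau^{(1)}(x),\ldots,\tau^{(l-j+1)}(x)\bigr),
$$
where $B_{l,j}$ is the partial Bell polynomial. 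The key properties of $B_{l,j}$ are that it has integer coefficients and that each monomial $\prod_m(\tau^{(m)})^{e_m}$ satisfies $\sum_m e_m=j$ and $\sum_m m e_m=l$. In particular $B_{l,j}$ is homogeneous of degree $j$ in the variables $\tau^{(1)},\dots,\tau^{(l-j+1)}$.

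Next I would re-index to match the statement. Set $i:=k-l+1$, so that $\Lambda^{(k-l)}=\Lambda^{(i-1)}$. The range $1\le j\le l\le k$ becomes $1\le j\le k$ and $1\le i\le k+1-j$. The highest derivative of $\tau$ appearing is then of order $l-j+1=k+2-j-i$, which matches the number of variables of $P_{i,j,k}$. Exchanging the order of summation (sum over $j$ first, then $i$), I would define
$$
P_{i,j,k}(x_1,\ldots,x_{k+2-j-i}):=\binom{k}{i-1}\,B_{k+1-i,\,j}(x_1,\ldots,x_{k+2-j-i}).
$$
This is homogeneous of degree $j$, and depends only on $(i,j,k)$, not on $\Lambda$, $\Delta$ or $\tau$. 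Collecting terms then gives exactly the claimed identity. Only $C^k$ regularity is used, since at most $k$ derivatives of each function ever appear.

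The only point needing care is the bookkeeping of indices, namely checking that the variable count $k+2-j-i$ is correct and that the ranges of $i$ and $j$ come out right. I would verify this against the small cases $k=1$ and $k=2$. For $k=1$ the formula should read $\Lambda'\,\Delta\circ\tau+\Delta'(\tau)\,\Lambda\,\tau'$, so $P_{1,1,1}=x_1$. For $k=2$ it should give $P_{1,1,2}=x_2$, $P_{2,1,2}=2x_1$ and $P_{1,2,2}=x_1^2$. If one prefers not to cite Fa\`a di Bruno, the alternative is a direct induction on $k$. Differentiating a term $\Delta^{(j)}(\tau)\,\Lambda^{(i-1)}\,P(\tau',\dots)$ either raises $j$ by one and multiplies $P$ by $\tau'$, or raises $i$ by one, or differentiates $P$ using $\tfrac{d}{dx}\tau^{(m)}=\tau^{(m+1)}$. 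All three operations visibly preserve homogeneity of degree $j$ in the $\tau$-variables and respect the bound on the highest derivative order.
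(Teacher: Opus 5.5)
Your argument is correct and is essentially the paper's: the paper only says the identity follows by induction on $k$ from the Leibniz rule and the chain rule, and it omits the details. That is exactly your fallback induction, and your main route simply packages the same induction into the Fa\`a di Bruno formula. Your explicit choice $P_{i,j,k}=\binom{k}{i-1}B_{k+1-i,j}$ and your index checks supply the omitted bookkeeping: $k+2-j-i$ variables, homogeneity of degree $j\geq 1$, and hence $P_{i,j,k}(0,\ldots,0)=0$, which is what Proposition~\ref{pro:Separation-nozero-case} later relies on.
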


\begin{proof}
  The lemma follows by induction on \(k\), using the Leibniz rule and the chain rule. We omit the details.
\end{proof}

A natural approach is then to first establish transversality when $\Lambda^{(k)} \neq 0$. More formally, the relevant setting here is $H_\mathbf{u}' \neq 0$ for all $\mathbf{u}$.

\begin{lemma}\label{lem:H_u-high-order-derivative-formal}
    Assume that $ H_{\mathbf u}'(x)\not\equiv 0\text{ for each }\mathbf u\in\mathcal{A}^{\infty}. $  Then there exists
     $\delta>0$ and $K\in\Z_{>0}$ such that the following holds.
     For all $\mathbf{u}\in\mathcal{A}^{\infty}$ and $x\in [0,1]$, there exists $k\in\{1,2,\ldots,K\}$ satisfying
     $ | (H_{\mathbf u})^{(k)}(x) |\ge\delta. $
\end{lemma}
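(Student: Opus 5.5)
This is a compactness argument: a contradiction via compactness of $\mathcal{A}^\infty\times[0,1]$, together with continuity of $\mathbf u\mapsto H_{\mathbf u}$ in the $C^k$ topology and analyticity. The model is the proof of Claim \ref{claim:lower-bound-k-derivative}.

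\textbf{Step 1: continuity of $\mathbf u\mapsto H_{\mathbf u}$.} Equip $\mathcal{A}^\infty$ with the product topology, which makes it compact. I will show that $\mathbf u\mapsto H_{\mathbf u}$ is continuous into holomorphic functions on $\mathcal{D}_{\delta/2}$ with the sup norm. Then Cauchy's formula, as in Lemma \ref{lem:uniform-bound-k-derivative}, gives that $(\mathbf u,x)\mapsto H^{(k)}_{\mathbf u}(x)$ is jointly continuous on $\mathcal{A}^\infty\times[0,1]$ for each $k$. The argument has two parts.
\begin{itemize}
\item[(a)] By the proof of Proposition \ref{pro:approximate-H-formal}, the finite products $H^{(n)}_{\mathbf u}(z):=\prod_{j=1}^n h_{\mathbf u,j}(z)/h_{\mathbf u,j}(0)$ approximate $H_{\mathbf u}$ uniformly on $\mathcal{D}_{\delta/2}$, with error at most $(M+1)Mr^n$, uniformly in $\mathbf u$.
\item[(b)] For fixed $n$, the function $H^{(n)}_{\mathbf u}$ depends on the first $n$ letters of $\mathbf u$ and on the tails $\theta_{\mathbf u|_{j+1}^\infty}$ with $j\le n$. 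If $\mathbf u,\mathbf v$ agree on their first $N\ge n$ letters, Corollary \ref{cor:uniform-holomorphic-extensions} gives $|\theta_{\mathbf u|_{j+1}^\infty}-\theta_{\mathbf v|_{j+1}^\infty}|\le 4r^{N-j}$ on the complex domain. Since $\partial_y g$ is holomorphic with modulus bounded below, each factor, and hence $H^{(n)}_{\mathbf u}$, varies continuously in $\mathbf u$.
\end{itemize}
A uniform limit of continuous maps is continuous, which completes Step 1.

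\textbf{Step 2: contradiction.} Suppose the conclusion fails. Then for each $n$ there exist $\mathbf u_n\in\mathcal{A}^\infty$ and $x_n\in[0,1]$ with $|H^{(k)}_{\mathbf u_n}(x_n)|<1/n$ for all $1\le k\le n$. Pass to a subsequence with $\mathbf u_n\to\mathbf u_\infty$ and $x_n\to x_\infty$. Joint continuity from Step 1 gives $H^{(k)}_{\mathbf u_\infty}(x_\infty)=0$ for every $k\ge1$. Since $H_{\mathbf u_\infty}$ is real analytic on a neighborhood of $[0,1]$ (Proposition \ref{pro:approximate-H-formal}, in fact holomorphic on $\mathcal{D}_{\delta/2}$), its Taylor expansion at $x_\infty$ shows $H_{\mathbf u_\infty}$ is constant near $x_\infty$. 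Hence $H_{\mathbf u_\infty}'\equiv0$ on $[0,1]$ by the identity theorem, contradicting the hypothesis. So there exist $\delta>0$ and $K$ as claimed. Uniformity in $\mathbf u$ comes from compactness, and $K$ is the index $n$ at which the contradiction would otherwise occur.

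\textbf{Main obstacle.} The only delicate point is Step 1(b): continuity of $\mathbf u\mapsto H_{\mathbf u}$ in the symbolic topology. The $\mathbf u$-dependence enters both through the inverse branches, which are locally constant in $\mathbf u$, and through the infinite-word curves $\theta_{\mathbf u|_{j+1}^\infty}$, which are continuous by the exponential estimate of Corollary \ref{cor:uniform-holomorphic-extensions}. Combining this with the uniform tail bound from the product estimate (Lemma \ref{lem:complex-prod-estimate}) and the lower bound $|h_{\mathbf u,j}|\ge r_{\min}/2$ makes Step 1 routine.
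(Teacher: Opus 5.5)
Your proposal is correct and follows the paper's own proof: the same contradiction argument, extracting a convergent subsequence $(\mathbf u_n,x_n)\to(\mathbf u_\infty,x_\infty)$, concluding that all derivatives of $H_{\mathbf u_\infty}$ vanish at $x_\infty$, and then using analyticity to get $H'_{\mathbf u_\infty}\equiv 0$. The one addition in your version is Step 1, which proves the joint continuity of $(\mathbf u,x)\mapsto H^{(k)}_{\mathbf u}(x)$ using the uniform product-tail bound from the proof of Proposition~\ref{pro:approximate-H-formal}, Corollary~\ref{cor:uniform-holomorphic-extensions} and Cauchy estimates; the paper uses this continuity in the limit step without justifying it.
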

    
\begin{proof}
    Assume that our claim fails. Then for each $n\in\Z_{>0}$, there exists
    $\mathbf u_n\in\mathcal{A}^{\infty}$
    and $x_n\in [0,1]$ such that 
    $$| (H_{\mathbf u_n})^{(k)}(x_n)|<\frac1n\qquad\text{for all }k\in\{1,2,\ldots,n\}.$$
    By passing to a subsequence, we  assume that
    $\mathbf u_n\to \mathbf u_0\text{ and } x_n\to x_0. $
    Thus for every $k\in\Z_{>0}$, 
    $$(H_{\mathbf u_0})^{(k)}(x_0)=\lim_{n\to\infty}(H_{\mathbf u_n})^{(k)}(x_n)=0.$$
    Since $H_{\mathbf u_0}(x)$ is a real analytic function satisfying
    $$  (H_{\mathbf u_0})^{(k)}(x_0)=0\qquad\forall\,k\in\Z_{>0},$$
    we have $H'_{\mathbf u_0}(x)\equiv 0$. This contradicts with our assumption. Thus the lemma holds.
\end{proof}

\subsection{Transversality when $H_\mathbf{u}' \not \equiv 0$}
The following is the main result of this subsection.

\begin{proposition}\label{pro:Separation-nozero-case}
    Assume that for all $\mathbf{u} \in \mathcal{A}^\infty$, $H_\mathbf{u}'\not\equiv 0$. Then there exists $K,N\in\Z_{>0}$ and $c\in(0,1)$ such that the following holds. For any $x\in [0,1]$ and $\mathbf{u}\in\mathcal{A}^*$ satisfying $|\mathbf{u}|\ge N$, there exists 
    $k\in\{1,\dots,K   \}$ and
    $\mathbf{a},\mathbf{b}\in\mathcal{A}^N$
    such that
    $$ |(\Delta_{\mathbf{u}\mathbf{a}\mathbf{c} ,\mathbf{u}\mathbf{b}\mathbf{d}})^{(k)}(x)|\ge c (r_{\min})^{|\mathbf{u}|}\qquad\text{for all }\mathbf{c},\mathbf{d}\in\mathcal{A}^* .$$   
\end{proposition}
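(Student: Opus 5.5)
The plan is to peel off a long prefix, use the renormalization of Proposition~\ref{pro:approximate-H-formal} to replace the prefix factor by $H_{\mathbf U}$, and let the non-degeneracy of $H_{\mathbf U}$ from Lemma~\ref{lem:H_u-high-order-derivative-formal} produce the large derivative. The constants are fixed in the following order.
\begin{itemize}
\item First fix $\delta_H>0$ and $K$ from Lemma~\ref{lem:H_u-high-order-derivative-formal}.
\item Then fix an auxiliary length $m$.
\item Then take $\delta_m$ and $M_m$ from Lemma~\ref{lem:lower-bound-Delta-formal} with $N=m$.
\item Finally take $N\ge M_m+m$ large.
\end{itemize}
Given $\mathbf u$ with $|\mathbf u|\ge N$, split $\mathbf u=\mathbf u_1\mathbf u_2$ with $|\mathbf u_2|=m$, so that $n:=|\mathbf u_1|\ge M_m$. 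By \eqref{def:Delta-u-a,b-formal} applied with prefix $\mathbf u_1$,
\[
\Delta_{\mathbf u\mathbf a\mathbf c,\mathbf u\mathbf b\mathbf d}(x)=\Lambda(x)\,\Delta_{\mathbf u_2\mathbf a\mathbf c,\mathbf u_2\mathbf b\mathbf d}(\mathbf u_1[x]),\qquad \Lambda:=\Lambda_{\mathbf u_1;\mathbf u_2\mathbf a\mathbf c,\mathbf u_2\mathbf b\mathbf d}.
\]
Extend $\mathbf u$ arbitrarily to an infinite word $\mathbf U$, so that $\mathbf U|_n=\mathbf u_1$ and $\mathbf U|_{n+1}^{n+m}=\mathbf u_2$. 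Proposition~\ref{pro:approximate-H-formal} then gives, uniformly in $\mathbf a,\mathbf b,\mathbf c,\mathbf d$,
\[
\|\Lambda/\Lambda(0)-H_{\mathbf U}\|_{C^K([0,1])}\le C_K(r^n+r^m).
\]
Lemma~\ref{lem:H_u-high-order-derivative-formal} provides $k=k(x)\le K$ with $|H_{\mathbf U}^{(k)}(x)|\ge\delta_H$. This $k$ depends only on $x$ and $\mathbf U$, not on the later choice of $\mathbf a,\mathbf b$, which is essential.

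Next choose the words. Apply Lemma~\ref{lem:lower-bound-Delta-formal} with $\mathbf w=\mathbf u_2\in\mathcal A^m$ and the prefix $\mathbf u_1$ (admissible since $|\mathbf u_1|\ge M_m$), at word length $N\ge M_m$. It yields $\mathbf a,\mathbf b\in\mathcal A^N$ with
\[
|\Delta_{\mathbf u_2\mathbf a\mathbf c,\mathbf u_2\mathbf b\mathbf d}(\mathbf u_1[x])|\ge\delta_m \qquad\text{for all }\mathbf c,\mathbf d.
\]
Differentiate $k$ times using Lemma~\ref{lem:higher-order-product-composition-formal} with $\tau=\mathbf u_1[\cdot]$. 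The leading term is $\Lambda^{(k)}(x)\Delta(\mathbf u_1[x])$. Every other term contains a factor $\tau^{(\ell)}$, which is $O(C_K r^{n})$ by Lemma~\ref{lem:uniform-bound-k-derivative}, times some $\Delta^{(j)}$, which is bounded by the same lemma, times some $\Lambda^{(i-1)}$.

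The main technical point is controlling the derivatives of $\Lambda$ relative to $\Lambda(0)$. By the $C^K$ approximation above, and since $\|H_{\mathbf U}\|_{C^K}$ is bounded uniformly in $\mathbf U$ (Cauchy estimates on $\mathcal D_\delta$, as in the proof of Proposition~\ref{pro:approximate-H-formal}), we get $|\Lambda^{(i)}(x)|\le C'|\Lambda(0)|$ for $i\le K$. We also get
\[
|\Lambda^{(k)}(x)|\ge|\Lambda(0)|\bigl(\delta_H-C_K(r^n+r^m)\bigr).
\]
Hence
\[
|\Delta^{(k)}_{\mathbf u\mathbf a\mathbf c,\mathbf u\mathbf b\mathbf d}(x)|\ge|\Lambda(0)|\Bigl(\delta_m\bigl(\delta_H-C_K(r^n+r^m)\bigr)-C''r^{n}\Bigr).
\]

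The one genuinely delicate point is the order of constants. $\delta_m$ may shrink as $m$ grows, so $m$ cannot be enlarged after $\delta_m$ is known. I would handle this by choosing $m$ only so that $C_Kr^m\le\delta_H/4$, which depends on $\delta_H$ and not on $\delta_m$. After that, $\delta_m$ is fixed, and I take $N$ (hence $n\ge N-m$) large enough that $C_Kr^n\le\delta_H/4$ and $C''r^n\le\delta_m\delta_H/4$. Then the bracket is at least $\delta_m\delta_H/4$.

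Finally, \eqref{eq:Lmabda-decomposition-formal} expresses $\Lambda(0)$ as a product of $n$ mean values of $\partial_y g$, so $|\Lambda(0)|\ge r_{\min}^{n}=r_{\min}^{|\mathbf u|-m}$. The conclusion follows with $c:=\delta_m\delta_H r_{\min}^{-m}/4$, or with $\min(\cdot,1/2)$ if $c<1$ is required.
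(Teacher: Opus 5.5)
Your proposal is correct and follows essentially the same route as the paper. You split $\mathbf u=\mathbf u_1\mathbf u_2$ with $|\mathbf u_2|$ fixed by the $H$-approximation error, pick $\mathbf a,\mathbf b$ via Lemma~\ref{lem:lower-bound-Delta-formal} with $\mathbf w=\mathbf u_2$, take $k$ from Lemma~\ref{lem:H_u-high-order-derivative-formal} for an infinite extension of $\mathbf u$ (the paper uses $\mathbf u\mathbf 0$), expand via Lemma~\ref{lem:higher-order-product-composition-formal}, and use $|\Lambda(0)|\ge r_{\min}^{|\mathbf u_1|}$, with the constants chosen in the same order as the paper. The remaining differences are cosmetic: you keep $\Lambda$ unnormalized and factor out $|\Lambda(0)|$ at the end, you bound $\|H_{\mathbf U}\|_{C^K}$ by Cauchy estimates rather than by compactness, and you keep the factor $r_{\min}^{-m}$ in $c$.
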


{
\noindent\textbf{Idea of the proof of Proposition \ref{pro:Separation-nozero-case}.}
We split the common prefix as $\mathbf{u}=\mathbf{u}_1\mathbf{u}_2$ and use
\begin{equation}\nonumber
\Delta_{\mathbf{u}\mathbf{a}\mathbf{c},
\mathbf{u}\mathbf{b}\mathbf{d}}(x)
=
\Lambda_{\mathbf{u}_1;\mathbf{u}_2\mathbf{a}\mathbf{c},
\mathbf{u}_2\mathbf{b}\mathbf{d}}(x)
\cdot
\Delta_{\mathbf{u}_2\mathbf{a}\mathbf{c},
\mathbf{u}_2\mathbf{b}\mathbf{d}}(\mathbf{u}_1[x]).
\end{equation}
Lemma \ref{lem:higher-order-product-composition-formal} is then applied to the higher-order derivatives of the right-hand side.
When $|\mathbf{u}_1|$ is large, the derivatives of $\mathbf{u}_1[x]$ are small, so the main term is
\begin{equation}\nonumber
\Lambda_{\mathbf{u}_1;\mathbf{u}_2\mathbf{a}\mathbf{c},
\mathbf{u}_2\mathbf{b}\mathbf{d}}^{(k)}(x)
\cdot
\Delta_{\mathbf{u}_2\mathbf{a}\mathbf{c},
\mathbf{u}_2\mathbf{b}\mathbf{d}}(\mathbf{u}_1[x]).
\end{equation}
The second part $\mathbf{u}_2$ of the prefix is also needed to apply Proposition
\ref{pro:approximate-H-formal}. After normalizing by the value at $0$, that proposition gives, when $|\mathbf{u}_2|$ is large, an approximation of
\begin{equation}\nonumber
\left(
\frac{
\Lambda_{\mathbf{u}_1;\mathbf{u}_2\mathbf{a}\mathbf{c},
\mathbf{u}_2\mathbf{b}\mathbf{d}}(x)}
{
\Lambda_{\mathbf{u}_1;\mathbf{u}_2\mathbf{a}\mathbf{c},
\mathbf{u}_2\mathbf{b}\mathbf{d}}(0)}
\right)^{(k)}
\end{equation}
by $H_{\mathbf{u}_1\mathbf{u}_2 00\cdots}^{(k)}(x)$.
Lemma \ref{lem:H_u-high-order-derivative-formal} gives a nontrivial lower bound for
$|H_{\mathbf{u}_1\mathbf{u}_2 00\cdots}^{(k)}(x)|$
for some $k$, while Lemma \ref{lem:lower-bound-Delta-formal} gives the required lower bound for
$\Delta_{\mathbf{u}_2\mathbf{a}\mathbf{c},
\mathbf{u}_2\mathbf{b}\mathbf{d}}(\mathbf{u}_1[x])$.
These estimates give the desired derivative lower bound.
}

\begin{proof}
\noindent\textbf{Step 1: Choice of the parameters $K$, $N$, and $c$.}
Since $\mathcal{S}$ does not consist of a single graph, by Proposition \ref{pro:non-degenecay-H_u-formal} and Lemma \ref{lem:H_u-high-order-derivative-formal}, there exists $\delta_1\in(0,1)$ and $K\in\Z_{>0}$ such that the conclusion of Lemma \ref{lem:H_u-high-order-derivative-formal} holds with $\delta=\delta_1$ and  $K$.  

{
Given $k\in\{1,\ldots,K\}$, we apply Lemma \ref{lem:higher-order-product-composition-formal} to obtain
 homogeneous polynomials
$$
P_{i,j,k}\in\mathbb{R}[x_1,\ldots,x_{k+2-j-i}],
\qquad
1\leq j\leq k,
\qquad
1\leq i\leq k+1-j,
$$
of  degree $j$, such that the conclusion of Lemma \ref{lem:higher-order-product-composition-formal} holds for these polynomials. In particular, $P_{i,j,k}(0,\cdots,0)=0$.
}
Fix  $M_1>1$  such that $M_1$ is greater than both the absolute value of every coefficient and the number of monomials of each of these polynomials.

Choose $r,\delta_2\in(0,1)$ and  $C_k>1$, for $k\in\{1,\ldots,K\}$, such that the conclusion of Lemma \ref{lem:uniform-bound-k-derivative} holds with these choices of $r$, $\delta=\delta_2$, and $C_k$ for every $k\in\{1,\ldots,K\}$.

Let $r'\in(0,1)$ and $C_k'>1$, for $k\in\{1,\ldots,K\}$, be given by Proposition \ref{pro:approximate-H-formal}  so that its conclusion holds with $r=r'$ and $C_k=C_k'$ for any $k\in\{1,\ldots,K\}$.

Since $\mathcal{A}^{\infty}$ is a compact set, and the map $\mathbf{u}\to (H_{\mathbf{u}})^{(k)}$ is continuous for each $k\in\N$, thus
$$ M_2:=\sup_{ \substack{1\le k \le K\\ \mathbf{u}\in\mathcal{A}^{\infty},x\in[0,1] }  } \left|  (H_{\mathbf{u}})^{(k)}(x)  \right|<\infty. $$

Define
$C':=\max_{1\leq k\leq K} C_k'.$
Fix $N_1\in\Z_{>0}$ sufficiently large so that
$4C'(r')^{N_1}\leq \delta_1.$
Let $\delta_{N_1}\in (0,1)$ and $M_{N_1}\in\Z_{>0}$ such that the conclusion of Lemma \ref{lem:lower-bound-Delta-formal} holds for $N=N_1$.

Set
$C:=\max_{1\leq k\leq K} C_k.$
Choose $N_2\in\Z_{>0}$ sufficiently large that
\begin{equation}\label{eq:choose-N_2}
    K^2M_1^2(M_2+2C')C^2r^{N_2}\le \frac{\delta_1\delta_{N_1}}4.
\end{equation}
Set $N= 2N_1+N_2+M_{N_1}$ and $c:=\frac{\delta_1\delta_{N_1}}4$. Write
$\mathbf{0}:=00\cdots\mathcal{A}^{\infty}$.

\noindent\textbf{Step 2: Proof of Proposition \ref{pro:Separation-nozero-case}.}
Let $x\in [0,1]$ and $\mathbf{u}\in\mathcal{A}^*$ satisfying $|\mathbf{u}|\ge N$. Let $\mathbf{u}_1,\mathbf{u}_2\in\mathcal{A}^*$
be such that $$\mathbf{u}=\mathbf{u}_1\mathbf{u}_2\qquad\text{and}\qquad  |\mathbf{u}_2|=N_1.$$
{
Applying Lemma \ref{lem:lower-bound-Delta-formal} with 
$\mathbf{w}=\mathbf{u}_2$, $n=N$ and $\mathbf{u}=\mathbf{u}_1$, we obtain 
$\mathbf{a},\mathbf{b}\in\mathcal{A}^N$
such that the conclusion of  Lemma \ref{lem:lower-bound-Delta-formal} holds. 
}
That is, for all $\mathbf{c},\mathbf{d}\in\mathcal{A}^*$, setting
$$\Delta(x):=\Delta_{\mathbf{u}_2\mathbf{a}\mathbf{c}, \mathbf{u}_2\mathbf{b}\mathbf{d}}(x),$$
we obtain
\begin{equation}\label{eq:lower-bound-Delta}
|\Delta(\mathbf{u}_1[x])|\ge \delta_{N_1}.
\end{equation}

Applying Lemma \ref{lem:H_u-high-order-derivative-formal} with $\mathbf{w}=\mathbf{u}_1\mathbf{u}_2\mathbf{0}$ and $x$, there exists $k=k(\mathbf{w},x)\in\{1,\cdots,K\}$ satisfying
\begin{equation}\label{eq:lower-bound-k-order-H}
|H^{(k)}_{\mathbf{u}_1\mathbf{u}_2\mathbf{0}}(x)|\ge\delta_1.
\end{equation}
Write
$$  \Lambda(x):= \frac{\Lambda_{\mathbf{u}_1;\mathbf{u}_2\mathbf{a}\mathbf{c},\mathbf{u}_2\mathbf{b}\mathbf{d}}(x)}{\Lambda_{\mathbf{u}_1;\mathbf{u}_2\mathbf{a}\mathbf{c},\mathbf{u}_2\mathbf{b}\mathbf{d}}(0)}.$$
Applying Proposition \ref{pro:approximate-H-formal} with $n=|\mathbf{u}_1|$ and $m=|\mathbf{u}_2|$, we have
$$
\left|\left(H_{\mathbf{u}_1\mathbf{u}_2\mathbf{0}}\right)^{(k)}(x)- \Lambda^{(k)}(x)  \right|\le C'_k((r')^{|\mathbf{u}_1|}+(r')^{N_1}).
$$
Thus by $|\mathbf{u}_1|\ge N_1$ and $2C_k'(r')^{N_1}\le\frac{\delta_1}2$, we have
$$  \left|\left(H_{\mathbf{u}_1\mathbf{u}_2\mathbf{0}}\right)^{(k)}(x)- \Lambda^{(k)}(x)  \right|\le  \frac{\delta_1}2.$$
Combining this with \eqref{eq:lower-bound-k-order-H}, we obtain
\begin{equation}\label{eq:lower-bound-k-order-Lambda}
  |\Lambda^{(k)}(x)|\ge  \left|\left(H_{\mathbf{u}_1\mathbf{u}_2\mathbf{0}}\right)^{(k)}(x)  \right| - \left|\left(H_{\mathbf{u}_1\mathbf{u}_2\mathbf{0}}\right)^{(k)}(x)- \Lambda^{(k)}(x)  \right|\ge \frac{\delta_1}2.
\end{equation}

On the other hand, Lemma \ref{lem:higher-order-product-composition-formal} implies that
 \begin{equation}\label{eq:long-k-order-derivative}
    \begin{split}
        \left(\Lambda(x) \cdot \Delta (\mathbf{u}_1[x]) \right)^{(k)}&=
        \Lambda^{(k)}(x)\cdot \Delta(\mathbf{u}_1[x]) \\&+\sum_{j=1}^{k}\Delta^{(j)}(\mathbf{u}_1[x])\sum_{i=1}^{k+1-j}\Lambda^{(i-1)}(x)\cdot P_{i,j,k}\left(\mathbf{u}_1[x]^{(1)}, \ldots,  \mathbf{u}_1[x]^{(k+2-j-i)}\right).
    \end{split}
    \end{equation}

We shall use the following inequality to finish the proof.
\begin{equation}\label{eq:error-upper-bound}
    \left| \sum_{j=1}^{k}\Delta^{(j)}(\mathbf{u}_1[x])\sum_{i=1}^{k+1-j}\Lambda^{(i-1)}(x)\cdot P_{i,j,k}\left(\mathbf{u}_1[x]^{(1)}, \ldots,  \mathbf{u}_1[x]^{(k+2-j-i)}\right) \right|\le\frac{\delta_1\delta_{N_1}}4.
\end{equation}
Assume this inequality holds. Combining  this with \eqref{eq:lower-bound-Delta}, \eqref{eq:lower-bound-k-order-Lambda} and \eqref{eq:long-k-order-derivative}, we obtain
\begin{equation}\label{eq:key-lower-bound}
    \begin{split}
        \left| \left(\Lambda(x) \cdot \Delta (\mathbf{u}_1[x]) \right)^{(k)}  \right|&\ge\left|\Lambda^{(k)}(x)\cdot \Delta(\mathbf{u}_1[x]) \right|-\frac{\delta_1\delta_{N_1}}4\\&\ge\frac{\delta_1}{2}\cdot\delta_{N_1}-\frac{\delta_1\delta_{N_1}}4\\&\ge\frac{\delta_1\delta_{N_1}}4.
    \end{split}
\end{equation}

By the definitions of $\Lambda,\Delta$ and \eqref{def:Delta-u-a,b-formal} with $\mathbf{u}=\mathbf{u}_1\mathbf{u}_2$, we have
\begin{equation}\nonumber
\begin{split}
    \Lambda(x) \cdot \Delta (\mathbf{u}_1[x])&=\frac{\Lambda_{\mathbf{u}_1;\mathbf{u}_2\mathbf{a}\mathbf{c},\mathbf{u}_2\mathbf{b}\mathbf{d}}(x)\cdot\Delta_{\mathbf{u}_2\mathbf{a}\mathbf{c}, \mathbf{u}_2\mathbf{b}\mathbf{d}}(\mathbf{u}_1[x])}{\Lambda_{\mathbf{u}_1;\mathbf{u}_2\mathbf{a}\mathbf{c},\mathbf{u}_2\mathbf{b}\mathbf{d}}(0)}\\&=  \frac{\Delta_{\mathbf{u}\mathbf{a}\mathbf{c}, \mathbf{u}\mathbf{b}\mathbf{d}}(x)}{\Lambda_{\mathbf{u}_1;\mathbf{u}_2\mathbf{a}\mathbf{c},\mathbf{u}_2\mathbf{b}\mathbf{d}}(0)}.
\end{split}
\end{equation}
Thus \eqref{eq:key-lower-bound} implies that
\begin{equation}\label{eq:Delta-lover-bound-2}
\left|(\Delta_{\mathbf{u}\mathbf{a}\mathbf{c}, \mathbf{u}\mathbf{b}\mathbf{d}})^{(k)}(x)\right|\ge \frac{\delta_1\delta_{N_1}}4 \cdot \left|  \Lambda_{\mathbf{u}_1;\mathbf{u}_2\mathbf{a}\mathbf{c},\mathbf{u}_2\mathbf{b}\mathbf{d}}(0)  \right|.
\end{equation}
Combining \eqref{def:Lambda-u-a,b-formal} with \eqref{eq:partial-G-formula-Formal} and  $|\partial_yg(x,y)|\ge r_{\min}$ for all $(x,y)\in [0,1]\times U$, we have
$$ \left|  \Lambda_{\mathbf{u}_1;\mathbf{u}_2\mathbf{a}\mathbf{c},\mathbf{u}_2\mathbf{b}\mathbf{d}}(0)  \right|\ge (r_{\min})^{|\mathbf{u}_1|}.$$
Thus by \eqref{eq:Delta-lover-bound-2} and $c=\frac{\delta_1\delta_{N_1}}4 $, we have
$$ \left|(\Delta_{\mathbf{u}\mathbf{a}\mathbf{c}, \mathbf{u}\mathbf{b}\mathbf{d}})^{(k)}(x)\right|\ge c  (r_{\min})^{|\mathbf{u}|}.$$
Therefore the proposition holds.

\noindent\textbf{Step 3: Proof of \eqref{eq:error-upper-bound}.}
By Lemma \ref{lem:uniform-bound-k-derivative} and our choices of the parameters $r$, $C_k$, and $C$, for every $j\in\{1,\ldots,K\}$, we have
\begin{equation}\label{eq:Delta-upper-bound}
    \left| \Delta^{(j)}(s) \right|\le C_j r^{|\mathbf{u}_2|}\le C\qquad\text{for each }s\in[0,1].
\end{equation}

Applying Proposition \ref{pro:approximate-H-formal} again, for every $i\in\{0,1,\cdots, K \}$, we obtain
$$
\left|\left(H_{\mathbf{u}_1\mathbf{u}_2\mathbf{0}}\right)^{(i)}(x)- \Lambda^{(i)}(x)  \right|\le C'_i((r')^{|\mathbf{u}_1|}+(r')^{|\mathbf{u}_2|})\le 2C'.
$$
Thus by the definition of $M_2$, we obtain
\begin{equation}\label{eq:Lambda-upper-bound}
   \left|\Lambda^{(i)}(x)  \right|\le  \left|\left(H_{\mathbf{u}_1\mathbf{u}_2\mathbf{0}}\right)^{(i)}(x) \right|+\left|\left(H_{\mathbf{u}_1\mathbf{u}_2\mathbf{0}}\right)^{(i)}(x)- \Lambda^{(i)}(x)  \right|\le M_2+2C'.
\end{equation}

Given  $k\in\{1,\ldots,K\}$ and $1\leq j\leq k,
1\leq i\leq k+1-j$, 
By
$P_{i,j,k}(0,\cdots,0)=0$ and the choice of $M_1$,
we obtain
$$\left| P_{i,j,k}\left(\mathbf{u}_1[x]^{(1)}, \ldots,  \mathbf{u}_1[x]^{(k+2-j-i)}\right)    \right| \le M_1^2\cdot\max_{1\le t \le K}\left|\mathbf{u}_1[x]^{(t)}  \right|.$$ 
Combining Lemma \ref{lem:uniform-bound-k-derivative} with $|\mathbf{u}_1|\ge N_2$ and the definition of $C$, we have
$$ \left|\mathbf{u}_1[x]^{(t)}  \right|\le C_t r^{|\mathbf{u}_1|}\le C r^{N_2}.$$
Then we get
$$ \left| P_{i,j,k}\left(\mathbf{u}_1[x]^{(1)}, \ldots,  \mathbf{u}_1[x]^{(k+2-j-i)}\right)    \right| \le M_1^2  C r^{N_2}.$$
Combining this and \eqref{eq:Delta-upper-bound}, \eqref{eq:Lambda-upper-bound} and \eqref{eq:choose-N_2}, we obtain
\begin{equation}\nonumber
\begin{split}
    &\left| \sum_{j=1}^{k}\Delta^{(j)}(\mathbf{u}_1[x])\sum_{i=1}^{k+1-j}\Lambda^{(i-1)}(x)\cdot P_{i,j,k}\left(\mathbf{u}_1[x]^{(1)}, \ldots,  \mathbf{u}_1[x]^{(k+2-j-i)}\right)  \right|\\&\le
    \sum_{j=1}^{k}C\sum_{i=1}^{k+1-j}(M_2+2C') M_1^2Cr^{N_2}\\&\le
    KCK(M_2+2C') M_1^2Cr^{N_2}\\&\le \frac{\delta_1\delta_{N_1}}4.
\end{split}
\end{equation}
   Thus  \eqref{eq:error-upper-bound} holds.
\end{proof}

\subsection{$H_{\mathbf{u}}' \not \equiv 0 $ holds generically.}\label{subsec:H}

The goal of this subsection is to show that our hypothesis on $H_\mathbf{u}$ holds as soon as $\partial_y^2 g(x,y) \not \equiv 0$.

\begin{proposition}\label{pro:non-degenecay-H_u-formal}
     Suppose that $\partial_{y}^2g(x,y)\not\equiv 0$ and recall that $\mathcal{S}$ is not a single graph. Then  $H_{\mathbf{u}}'(x)\not\equiv 0$ for each $\mathbf{u}\in\mathcal{A}^{\infty}$.
\end{proposition}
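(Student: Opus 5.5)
The plan is to argue by contradiction: assume $H_{\mathbf v}'\equiv 0$ for some $\mathbf v\in\mathcal{A}^{\infty}$, and show that then there are only finitely many distinct unstable curves. This contradicts Lemma \ref{lem:curve-splitting-formal}, which produces uncountably many pairwise distinct functions $\theta_{w(\mathbf u)}$ whenever $\mathcal S$ is not a single graph.

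\textbf{Step 1: spread the degeneracy to every $\mathbf w$.} Put
$$\Phi_{\mathbf w}(x):=\prod_{k\ge1}\partial_yg\bigl((\mathbf w|_k)[x],\theta_{\mathbf w|_{k+1}^{\infty}}((\mathbf w|_k)[x])\bigr),$$
so that $H_{\mathbf w}=\Phi_{\mathbf w}/\Phi_{\mathbf w}(0)$. The hypothesis says exactly that $\Phi_{\mathbf v}$ is constant in $x$.
\begin{itemize}
\item Using \eqref{eq:addition-f_u-formal}, Lemma \ref{lem:addition-theta_u-formal} and the $1$-periodicity of $g$ in $x$, I will check the addition identity $\Phi_{\mathbf v}(x+n)=\Phi_{\mathbf v+n}(x)$ for all $n\in\mathbb Z$ (this is the addition formula for $H_{\mathbf u}$ announced in the introduction). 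Hence $H_{\mathbf v+n}'\equiv0$ for every $n$.
\item The set $\{\mathbf v+n\}_{n\in\mathbb N}$ is dense in $\mathcal{A}^{\infty}$, since the odometer is minimal.
\item The map $\mathbf w\mapsto H_{\mathbf w}$ is continuous in the $C^1$ topology, by the uniform convergence in the proof of Proposition \ref{pro:approximate-H-formal}.
\end{itemize}
Together these give $H_{\mathbf w}'\equiv0$ for all $\mathbf w$. Next I use the cocycle relation
$$\Phi_{i\mathbf w}(x)=\partial_yg\bigl(i[x],\theta_{\mathbf w}(i[x])\bigr)\,\Phi_{\mathbf w}(i[x]).$$
Since both $\Phi$'s are constant, $z\mapsto\partial_yg(z,\theta_{\mathbf w}(z))$ is constant on each interval $i[[0,1]]$. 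By analyticity it is constant on all of $\mathbb R$, for every $\mathbf w\in\mathcal A^\infty$.

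\textbf{Step 2: all unstable curves solve one analytic ODE.} Differentiate $\partial_yg(z,\theta_{\mathbf w}(z))\equiv c_{\mathbf w}$ to get
$$\partial_x\partial_yg(z,\theta_{\mathbf w}(z))+\partial_y^2g(z,\theta_{\mathbf w}(z))\,\theta_{\mathbf w}'(z)=0.$$
Since $\partial_y^2g\not\equiv0$ is analytic, its zero set $Z$ is a proper analytic subset of the cylinder. On the complement of $Z$, every $\theta_{\mathbf w}$ solves the same $1$-periodic analytic ODE
$$\theta'=F(z,\theta),\qquad F=-\frac{\partial_x\partial_yg}{\partial_y^2g}.$$
By uniqueness of solutions, together with analytic continuation across the points where a graph meets $Z$, two curves either coincide identically or have disjoint graphs. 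The main technical obstacle is the handling of $Z$: graphs could lie inside $Z$, or be tangent to it. I would first try to show that no graph $\theta_{\mathbf w}$ can be contained in $Z$. If one were, then $\partial_yg(z,\cdot)$ would be locally critical along it; combining this with the level-set equation $\partial_yg=c_{\mathbf w}$ and analyticity should force $\partial_y^2g\equiv0$ near the curve, hence everywhere. After that, I would restrict attention to a subinterval of $z$ where the relevant curves avoid $Z$.

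\textbf{Step 3: the Poincaré map and the odometer.} The curves are naturally defined on all of $\mathbb R$ and satisfy $\theta_{\mathbf w}(x+1)=\theta_{\mathbf w+1}(x)$ (Lemma \ref{lem:addition-theta_u-formal}). So the value map $P:\theta_{\mathbf w}(0)\mapsto\theta_{\mathbf w}(1)=\theta_{\mathbf w+1}(0)$ is the time-one map of the ODE in Step 2.
\begin{itemize}
\item $P$ is an increasing homeomorphism between subsets of $U$, because graphs of distinct solutions are disjoint and therefore ordered.
\item $P$ maps the compact set $E:=\{\theta_{\mathbf w}(0):\mathbf w\in\mathcal{A}^{\infty}\}$ onto itself, and it semi-conjugates the odometer $\mathbf w\mapsto\mathbf w+1$ to $P|_E$.
\item The odometer is minimal, so every point of $E$ is recurrent under $P$.
\item An increasing map of a subset of an interval has only fixed points as recurrent points, since every orbit is monotone. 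Hence $P|_E=\mathrm{id}$, and minimality then forces $E$ to be a single point.
\end{itemize}
Thus all $\theta_{\mathbf w}$ share the same initial value at $0$, and by uniqueness for the ODE they all coincide. This gives finitely many (in fact one) curves, contradicting Lemma \ref{lem:curve-splitting-formal}. Applying the same argument at other base points $x_0$ in place of $0$ avoids any issue with the chosen initial point lying in $Z$.
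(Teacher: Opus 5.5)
Your Step 1 matches the paper's argument: the addition formula, density of odometer orbits, $C^1$-continuity, and the cocycle relation. However, Step 2 has a genuine gap, and all of Step 3 rests on it. The gap is the claim that two distinct curves either coincide or have disjoint graphs.

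ODE uniqueness only holds off $Z$. From $\partial_x\partial_y g+\partial_y^2 g\,\theta'=0$ you can only conclude that two distinct curves meet at points where $\partial_y^2 g=\partial_x\partial_y g=0$, that is, at critical points of $\partial_y g$ on a common level set. Analyticity does not exclude such crossings. For instance, if $\partial_y g=a+\epsilon\,(y-\theta_0(x))(y-\theta_1(x))$, then the level set $\{\partial_y g=a\}$ is the union of two analytic graphs that cross wherever $\theta_0=\theta_1$. These are exactly the points where the graphs meet $Z=\{y=(\theta_0+\theta_1)/2\}$, and neither graph lies in $Z$. At such a crossing the time-one map $P$ is neither well defined nor order preserving, so the monotone-recurrence argument collapses. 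That argument is itself fine once disjointness is known.

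Your proposed remedies do not repair this. The claim that a graph contained in $Z$ forces $\partial_y^2 g\equiv 0$ is false. Take $g=ay+\tfrac{b}{3}(y-\theta_0(x))^3+\phi(x)$: then $\partial_y g(x,\theta_0(x))\equiv a$ and $\partial_y^2 g(x,\theta_0(x))\equiv 0$, while $\partial_y^2 g\not\equiv 0$. Restricting to a subinterval that avoids $Z$ is also incompatible with a map defined over a full period, all the more so with uncountably many curves.

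The missing observation makes the ODE unnecessary: your constants $c_{\mathbf w}$ do not depend on $\mathbf w$.
\begin{itemize}
\item By Lemma \ref{lem:addition-theta_u-formal} and the $1$-periodicity of $g$ in $x$, $\partial_y g(x,\theta_{\mathbf w+1}(x))=\partial_y g(x+1,\theta_{\mathbf w}(x+1))=c_{\mathbf w}$, so $c_{\mathbf w+1}=c_{\mathbf w}$.
\item The map $\mathbf w\mapsto c_{\mathbf w}=\partial_y g(0,\theta_{\mathbf w}(0))$ is continuous and odometer orbits are dense, so $c_{\mathbf w}\equiv c$.
\item All unstable curves then lie in the single level set $\{\partial_y g=c\}$.
\item Pick an interval $J$ on which $\partial_y^2 g(x,\cdot)$ has at most $N$ zeros in $U$ (Lemma \ref{lem:zero-point-formal}).
\item If there were $N+2$ distinct curves, take $x\in J$ where their values are pairwise distinct. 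Rolle's theorem applied to $y\mapsto\partial_y g(x,y)-c$ would then give $N+1$ zeros of $\partial_y^2 g(x,\cdot)$, a contradiction.
\end{itemize}
Hence there are only finitely many curves, contradicting Lemma \ref{lem:curve-splitting-formal}. This is exactly the paper's route, via Lemma \ref{lem:partial_yg-constant-formal} and Corollary \ref{cor:finite-curves-formal}. Crossings are harmless there, because the argument only counts the points of the level set over a fixed $x$.
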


The remainder of this section is devoted to proving this proposition. Accordingly, throughout this section, we assume that $\partial_{y}^2g(x,y)\not\equiv 0$. The strategy is the following: we first show that if
$H_{\mathbf{u}}'(x)\equiv 0$
for some $\mathbf{u}\in\mathcal{A}^{\infty}$, then
\begin{equation}\label{eq:partial-g}
\partial_y g\bigl(x,\theta_{\mathbf{v}}(x)\bigr)
\equiv c
\qquad
\text{for all }
\mathbf{v}\in\mathcal{A}^{\infty};
\end{equation}
see Lemma \ref{lem:partial_yg-constant-formal}. The hypothesis 
 $\partial_y^2 g(x,y)\not\equiv 0$ implies that $\mathcal{S}$ is composed of only finitely many unstable curves, see Corollary \ref{cor:finite-curves-formal}.
But this is impossible by Lemma \ref{lem:curve-splitting-formal}. 

\subsubsection{An equation for unstable curves}

We now establish relation \eqref{eq:partial-g} when $H_\mathbf{u}' \equiv 0$. We start by some remarks on the additive structure of $H_\mathbf{u}$ and some consequences.

\begin{lemma}\label{lem:addtion-H_u-formal}
    For $\mathbf{u}\in\mathcal{A}^{\infty}$  and $n\in\Z$, the following holds:
    $$ H_{\mathbf{u}+n}(x)\equiv\frac{H_{\mathbf{u}}(x+n)}{H_{\mathbf{u}}(n)}. $$
\end{lemma}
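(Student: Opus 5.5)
We will prove the identity factor by factor, directly from the definition \eqref{def:H_v-formal}, using the symbolic addition identities of \S\ref{subsec:symbolic-addition}. Write
$$
h_k^{\mathbf u}(x):=\partial_y g\bigl((\mathbf u|_k)[x],\,\theta_{\mathbf u|_{k+1}^{\infty}}((\mathbf u|_k)[x])\bigr),
$$
so that $H_{\mathbf u}(x)=\prod_{k\ge1}h_k^{\mathbf u}(x)/h_k^{\mathbf u}(0)$. The absolute convergence of this product, locally uniformly on $\R$, follows from Lemma~\ref{lem:approximation-each-term} and Lemma~\ref{lem:complex-prod-estimate}, together with the periodicity argument below. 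The goal is to show that
$$
h_k^{\mathbf u}(x+n)=h_k^{\mathbf u+n}(x)\qquad\text{for every }k,\ x,\ n.
$$
Granting this, take the ratio at $x$ and at $0$:
$$
\frac{H_{\mathbf u}(x+n)}{H_{\mathbf u}(n)}
=\prod_k\frac{h_k^{\mathbf u}(x+n)}{h_k^{\mathbf u}(n)}
=\prod_k\frac{h_k^{\mathbf u+n}(x)}{h_k^{\mathbf u+n}(0)}
=H_{\mathbf u+n}(x).
$$
The rearrangement is legitimate because all three infinite products converge absolutely and none of their factors vanish, since $|\partial_y g|\ge r_{\min}$.

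\textbf{Key step: the factorwise identity.} Recall the integer carries $q_m$ with $q_0=n$, defined by $q_{m-1}+u_m=v_m+bq_m$ where $\mathbf v=\mathbf u+n$. The paper shows
$$
(\mathbf u|_k)[x+n]=((\mathbf u+n)|_k)[x]+q_k .
$$
We also need to identify the tail of $\mathbf u+n$. From the defining congruences, the shifted sequence satisfies
$$
(\mathbf u+n)|_{k+1}^{\infty}=\bigl(\mathbf u|_{k+1}^{\infty}\bigr)+q_k :
$$
adding $n$ and discarding the first $k$ digits leaves exactly the carry $q_k$ to be added to the tail. This can be checked directly from the recursion for the $q_m$, starting the recursion at index $k$ with initial value $q_k$.

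Now apply Lemma~\ref{lem:addition-theta_u-formal} with the word $\mathbf u|_{k+1}^\infty$ and the integer $q_k$:
$$
\theta_{\mathbf u|_{k+1}^\infty}\bigl(((\mathbf u+n)|_k)[x]+q_k\bigr)=\theta_{(\mathbf u+n)|_{k+1}^\infty}\bigl(((\mathbf u+n)|_k)[x]\bigr).
$$
Finally, $g$, and hence $\partial_y g$, is $1$-periodic in its first variable, so the shift by $q_k$ in the base coordinate of $h_k^{\mathbf u}$ disappears. This gives $h_k^{\mathbf u}(x+n)=h_k^{\mathbf u+n}(x)$.

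\textbf{Expected obstacle.} The only delicate point is the tail/carry identity for $(\mathbf u+n)|_{k+1}^\infty$, since Lemma~\ref{lem:addition-theta_u-formal} must be applied with the carry $q_k$ rather than with $n$. One must also confirm that Lemma~\ref{lem:addition-theta_u-formal} holds for negative integers. It does: the addition is defined for all $n\in\Z$, and the inverse-branch identity \eqref{eq:f_i-+m-formal} holds for all $m\in\Z$. The carries may be negative, but this causes no issue.

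The remaining points are routine: convergence of the products at the shifted points $x+n$ (uniform by the exponential estimates), and the fact that $H_{\mathbf u}$ extends analytically to $\R$. The latter follows from the factorwise identity itself, since by it $H_{\mathbf u}$ on $[n,n+1]$ is a constant multiple of $H_{\mathbf u+n}$ on $[0,1]$.
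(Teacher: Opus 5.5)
Your proof is correct and follows the paper's route: you establish the factorwise identity $h_k^{\mathbf u}(x+n)=h_k^{\mathbf u+n}(x)$ from symbolic addition and the $\mathbb{Z}$-periodicity of $g$, then take the ratio of the infinite products. The only cosmetic difference is how you get $\theta_{\mathbf u|_{k+1}^{\infty}}((\mathbf u|_k)[x+n])=\theta_{(\mathbf u+n)|_{k+1}^{\infty}}(((\mathbf u+n)|_k)[x])$: the paper reads it off directly from $(\mathbf u|_{k+j})[x+n]\equiv((\mathbf u+n)|_{k+j})[x] \pmod 1$ at every level $j$, whereas you go through the carry identity $(\mathbf u+n)|_{k+1}^{\infty}=(\mathbf u|_{k+1}^{\infty})+q_k$ and Lemma~\ref{lem:addition-theta_u-formal}; both are valid.
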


\begin{proof}
{Write $\mathbf{v}:=\mathbf{u}+n$.}
 Let $k\in\Z_{>0}$ and $x\in \mathbb{R}$. By \eqref{eq:addition-f_u-formal},  we have
$$ (\mathbf{u}|_{k+j})[x+n]=(\mathbf{v}|_{k+j})[x]\bmod1\qquad\text{for all }j\in\N.$$
{
 Thus, by \eqref{def:u[x]} and the $\Z$-periodicity of $g$ in the first variable, the corresponding finite iterates in the definitions of $\theta_{\mathbf{u}|_{k+1}^{\infty}}$ and $\theta_{\mathbf{v}|_{k+1}^{\infty}}$ agree at every level. We have
 } 
$$ \theta_{\mathbf{u}|_{k+1}^{\infty}}(( \mathbf{u}|_{k})[x+n])=\theta_{\mathbf{v}|_{k+1}^{\infty}}(( \mathbf{v}|_{k})[x]).$$
It follows that
$$ \partial_yg\left((\mathbf{u}|_{k})[x+n],  \theta_{\mathbf{u}|_{k+1}^{\infty}}(( \mathbf{u}|_{k})[x+n])\right)=\partial_yg\left((\mathbf{v}|_{k})[x],  \theta_{\mathbf{v}|_{k+1}^{\infty}}(( \mathbf{v}|_{k})[x])\right).   $$
{
Combining this with the definitions of $H_{\mathbf{u}}$ and
$H_{\mathbf{v}}$, we obtain
\begin{equation}\label{eq:H-addition-relation}
\begin{split}
    H_{\mathbf{v}}(x)
    &=
    \prod_{k=1}^{\infty}
    \frac{
    \partial_y g\left(
        (\mathbf{u}|_k)[x+n],
        \theta_{\mathbf{u}|_{k+1}^{\infty}}
        \bigl((\mathbf{u}|_k)[x+n]\bigr)
    \right)}
    {
    \partial_y g\left(
        (\mathbf{u}|_k)[n],
        \theta_{\mathbf{u}|_{k+1}^{\infty}}
        \bigl((\mathbf{u}|_k)[n]\bigr)
    \right)}
    \\
    &=
    \frac{H_{\mathbf{u}}(x+n)}{H_{\mathbf{u}}(n)}.
\end{split}
\end{equation}
}
Therefore this lemma holds.
\end{proof}

\begin{corollary}
\label{cor:H'_u-uniform-degeneracy-formal}
    Suppose that there exists $\mathbf{u}\in\mathcal{A}^{\infty}$ with $H_{\mathbf{u}}'(x)\equiv 0$. Then we have 
    $$ H_{\mathbf{v}}(x)\equiv 1\qquad\text{for all }\mathbf{v}\in\mathcal{A}^{\infty}. $$
\end{corollary}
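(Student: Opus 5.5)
The plan is to first show that all the $H_{\mathbf v}$ are constant, and then read off the value $1$ from the normalization $H_{\mathbf v}(0)=1$.

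\textbf{Step 1: constancy of $H_{\mathbf u}$ and of its translates.} Assume $H_{\mathbf u}'\equiv 0$. Since $H_{\mathbf u}$ is real analytic on $[0,1]$ (Proposition \ref{pro:approximate-H-formal}), it is constant there. Definition \eqref{def:H_v-formal} makes sense for every $x\in\mathbb R$, and the same argument shows $H_{\mathbf u}$ is real analytic on $\mathbb R$. By the identity theorem, $H_{\mathbf u}$ is then constant on all of $\mathbb R$. Every factor of the product at $x=0$ is the ratio of a quantity with itself, so $H_{\mathbf u}(0)=1$. Hence $H_{\mathbf u}\equiv 1$ on $\mathbb R$.

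\textbf{Step 2: the dense orbit $\{\mathbf u+n\}$.} Apply Lemma \ref{lem:addtion-H_u-formal}. For every $n\in\mathbb Z$ it gives
$$H_{\mathbf u+n}(x)\equiv \frac{H_{\mathbf u}(x+n)}{H_{\mathbf u}(n)}=1.$$
So $H_{\mathbf w}\equiv 1$ for every $\mathbf w$ in the orbit $\{\mathbf u+n:n\in\mathbb N\}$. This orbit is dense in $\mathcal A^\infty$. Indeed, by \eqref{eq:symbol-space-addition-n-formal}, the first $m$ digits of $\mathbf u+n$ run over all of $\mathcal A^m$ as $n$ runs over a complete residue system mod $b^m$. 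The same fact was already used in Lemma \ref{lem:rigidity-derivatives}.

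\textbf{Step 3: continuity in the symbol, and the main obstacle.} It remains to show that $\mathbf v\mapsto H_{\mathbf v}(x)$ is continuous on $\mathcal A^\infty$, uniformly in $x\in[0,1]$. This is where the real work lies. I would argue as follows.

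\begin{itemize}
\item Truncate the product at level $n$. By Lemma \ref{lem:approximation-each-term} and Lemma \ref{lem:complex-prod-estimate}, the tail $\prod_{k>n}$ is within $O(r^n)$ of $1$, uniformly in $\mathbf v$ and $x$. This is exactly estimate \eqref{eq:H-minus-upper-bound}.
\item If $\mathbf v|_{2n}=\mathbf v'|_{2n}$, then for $k\le n$ the points $(\mathbf v|_k)[x]$ coincide. The curves $\theta_{\mathbf v|_{k+1}^\infty}$ and $\theta_{\mathbf v'|_{k+1}^\infty}$ share a prefix of length at least $n$, so by \eqref{eq:theta-common-prefix-distance} they differ by $O(r_{\max}^{\,n})$.
\item Because $\partial_y g$ is bounded below by $r_{\min}$ and is Lipschitz, each finite product of $n$ ratios changes by at most $O(n\,r_{\max}^{\,n})$.
\end{itemize}

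Combining these bounds gives $|H_{\mathbf v}(x)-H_{\mathbf v'}(x)|\to0$ as the common prefix grows, uniformly in $x$.

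\textbf{Conclusion.} Continuity together with density shows that $H_{\mathbf v}\equiv 1$ on $[0,1]$ for every $\mathbf v\in\mathcal A^\infty$. Analyticity on $\mathbb R$ then extends this to all $x$, if that is needed.
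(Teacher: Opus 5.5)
Your proposal is correct and follows the paper's route: the addition formula of Lemma \ref{lem:addtion-H_u-formal}, density of $\{\mathbf u+n\}$, and continuity of $\mathbf v\mapsto H_{\mathbf v}$. It differs only in passing to the limit in the values $H_{\mathbf v}$ rather than in $H_{\mathbf v}'$, and in justifying the continuity in the symbol and the extension of $H_{\mathbf u}'\equiv 0$ from $[0,1]$ to $\mathbb R$, both of which the paper takes for granted. One small fix in Step 3: the uniform boundedness of partial products needed for the $O(n\,r_{\max}^{\,n})$ telescoping bound comes from Lemma \ref{lem:approximation-each-term} (each ratio is $1+O(r^k)$), not from $|\partial_y g|\ge r_{\min}$ alone, which would only bound the products by $(r_{\max}/r_{\min})^n$.
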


\begin{proof}
    {
    Since $H_{\mathbf{v}}(0)=1$ for each $\mathbf{v}\in\mathcal{A}^{\infty}$, it suffices to show that
    $H_{\mathbf{v}}'(x)\equiv 0$.}
    By Lemma \ref{lem:addtion-H_u-formal}, for every $n\in\Z$, we obtain
    $$ H_{\mathbf{u}+n}(x)\equiv\frac{H_{\mathbf{u}}(x+n)}{H_{\mathbf{u}}(n)}. $$
    Thus $H_{\mathbf{u}}'(x)\equiv 0$ implies that
    \begin{equation}\label{eq:H_n'-zero-orbit}
        H_{\mathbf{u}+n}'(x)\equiv 0\qquad\text{for all }n\in\Z.
    \end{equation}
    On the other hand, the following set
    $$\{\mathbf{u}+n:n\in\Z   \}$$
    is dense in the space $\mathcal{A}^{\infty}$. Combining this with \eqref{eq:H_n'-zero-orbit} and the fact that
    $$ \mathbf{u}_n\to \mathbf{u}_0\in\mathcal{A}^{\infty}\qquad \implies\quad  H'_{\mathbf{u}_n}(x)\to H'_{\mathbf{u}_0}(x),$$
    thus  $H'_{\mathbf{v}}(x)\equiv0$ for any $\mathbf{v}\in\mathcal{A}^{\infty}$. Our corollary holds.
\end{proof}

\begin{lemma}
 \label{lem:partial_yg-constant-formal}
 Suppose that there exists $\mathbf{u}\in\mathcal{A}^{\infty}$ with $H_{\mathbf{u}}'(x)\equiv 0$.
 Then there exists  $c\in\R$ satisfying
 $$ \partial_yg(x, \theta_{\mathbf{v}}(x))\equiv c\qquad\text{for all }\mathbf{v}\in\mathcal{A}^{\infty}.$$ 
\end{lemma}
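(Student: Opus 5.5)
By Corollary \ref{cor:H'_u-uniform-degeneracy-formal}, the hypothesis $H_{\mathbf u}'\equiv 0$ for a single $\mathbf u$ upgrades to $H_{\mathbf v}\equiv 1$ for every $\mathbf v\in\mathcal A^\infty$. The plan is to turn this into a one-step functional equation for the factors $h_{\mathbf v,1}$. We then exploit that these are essentially shift-invariant.

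\textbf{Step 1 (one-step factorization).} Write $\sigma\mathbf v:=v_2v_3\cdots$ for the shifted word. By definition \eqref{def:H_v-formal},
\[
H_{\mathbf v}(x)=\frac{h_{\mathbf v,1}(x)}{h_{\mathbf v,1}(0)}\cdot\prod_{k=2}^\infty\frac{\partial_yg\bigl((\mathbf v|_k)[x],\theta_{\mathbf v|_{k+1}^\infty}((\mathbf v|_k)[x])\bigr)}{\partial_yg\bigl((\mathbf v|_k)[0],\theta_{\mathbf v|_{k+1}^\infty}((\mathbf v|_k)[0])\bigr)} .
\]
Since $(\mathbf v|_k)[x]=((\sigma\mathbf v)|_{k-1})[v_1[x]]$, the tail product equals $H_{\sigma\mathbf v}(v_1[x])/H_{\sigma\mathbf v}(v_1[0])$. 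Both $H$'s are identically $1$, so
\[
h_{\mathbf v,1}(x)\equiv h_{\mathbf v,1}(0),
\qquad\text{that is,}\qquad
\partial_y g\bigl(v_1[x],\theta_{\sigma\mathbf v}(v_1[x])\bigr)\equiv c_{\mathbf v}.
\]
Setting $\mathbf w:=\sigma\mathbf v$ and $i:=v_1$, we get $\partial_y g(t,\theta_{\mathbf w}(t))\equiv c_{i\mathbf w}$ for $t$ in the interval $f_i([0,1])$. By analyticity of $t\mapsto \partial_yg(t,\theta_{\mathbf w}(t))$ on $\mathbb R$ (Lemma \ref{lem:addition-theta_u-formal} and periodicity of $g$), this identity extends to all $t\in\mathbb R$. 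Consequently $c_{i\mathbf w}=:c_{\mathbf w}$ depends only on $\mathbf w$, and
\[
\partial_y g(x,\theta_{\mathbf w}(x))\equiv c_{\mathbf w}\qquad\text{for every }\mathbf w\in\mathcal A^\infty .
\]

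\textbf{Step 2 (the constant is independent of $\mathbf w$).} This step uses the addition identity of Lemma \ref{lem:addition-theta_u-formal}, namely $\theta_{\mathbf w}(x+n)=\theta_{\mathbf w+n}(x)$. Combined with $\mathbb Z$-periodicity of $g$ in $x$, it gives
\[
c_{\mathbf w+n}=\partial_yg(x,\theta_{\mathbf w+n}(x))=\partial_yg(x+n,\theta_{\mathbf w}(x+n))=c_{\mathbf w},
\]
so $c_{\mathbf w+n}=c_{\mathbf w}$ for all $n\in\mathbb Z$. The map $\mathbf w\mapsto c_{\mathbf w}=\partial_yg(0,\theta_{\mathbf w}(0))$ is continuous, because $\theta_{\mathbf w}(0)$ depends continuously on $\mathbf w$ by Corollary \ref{cor:uniform-holomorphic-extensions}. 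The orbit $\{\mathbf w+n\}_{n\in\mathbb Z}$ is dense in $\mathcal A^\infty$, exactly as used in Lemma \ref{lem:rigidity-derivatives} and Corollary \ref{cor:H'_u-uniform-degeneracy-formal}. Hence $c_{\mathbf w}$ is a single constant $c$, and $c$ is real since everything is real analytic.

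\textbf{Main obstacle.} The delicate point is Step 1, namely justifying the splitting of the infinite product and the identification of its tail with $H_{\sigma\mathbf v}$ composed with $v_1[\cdot]$. This requires the convergence from Proposition \ref{pro:approximate-H-formal} and nonvanishing of the factors, which holds because $|\partial_yg|\ge r_{\min}$. It also requires care with the letter reversal convention in $\mathbf u[x]$. Once this is in place, the analytic continuation from $f_i([0,1])$ to $\mathbb R$ and the density argument are routine.
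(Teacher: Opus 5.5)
Your proposal is correct and follows essentially the same route as the paper: the one-step factorization $H_{i\mathbf{w}}(x)=\frac{\partial_y g(i[x],\theta_{\mathbf{w}}(i[x]))}{\partial_y g(i[0],\theta_{\mathbf{w}}(i[0]))}\cdot\frac{H_{\mathbf{w}}(i[x])}{H_{\mathbf{w}}(i[0])}$, combined with Corollary \ref{cor:H'_u-uniform-degeneracy-formal}, analytic continuation to $\mathbb{R}$, the addition identity $\theta_{\mathbf{w}}(x+n)=\theta_{\mathbf{w}+n}(x)$, and density of $\{\mathbf{w}+n\}_{n\in\mathbb{Z}}$. The only cosmetic difference is that you run the factorization for every $\mathbf{w}$ and then show the constants $c_{\mathbf{w}}$ coincide (making the needed continuity of $\mathbf{w}\mapsto\theta_{\mathbf{w}}$ explicit), whereas the paper does it for the given $\mathbf{u}$ alone and transports the identity along $\mathbf{u}+n$ before invoking density.
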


\begin{proof}[Proof of Lemma \ref{lem:partial_yg-constant-formal}]
Let $\mathbf{i}\in\mathcal{A}$. Then \eqref{def:H_v-formal} implies that
$$ H_{\mathbf{i}\mathbf{u}}(x)\equiv 
\frac{\partial_yg(\mathbf{i}[x],\theta_{\mathbf{u}}(\mathbf{i}[x])) }{\partial_yg(\mathbf{i}[0],\theta_{\mathbf{u}}(\mathbf{i}[0])) }\cdot \frac{H_{\mathbf{u}}(\mathbf{i}[x])}{H_{\mathbf{u}}(\mathbf{i}[0])}. $$
By Corollary \ref{cor:H'_u-uniform-degeneracy-formal}, we have
$ H_{\mathbf{i}\mathbf{u}}(x)\equiv1 $
and $H_{\mathbf{u}}(\mathbf{i}[x])\equiv1$. Thus we have
$$\partial_yg(\mathbf{i}[x],\theta_{\mathbf{u}}(\mathbf{i}[x]))\equiv c\quad\text{ for }\quad c:= \partial_yg(\mathbf{i}[0],\theta_{\mathbf{u}}(\mathbf{i}[0])). $$
Since $\partial_yg(\mathbf{i}[x],\theta_{\mathbf{u}}(\mathbf{i}[x]))$ is a real analytic function on $\R$, we have
$ \partial_yg(x,\theta_{\mathbf{u}}(x))\equiv c$. 
{
Combining this with $\theta_{\mathbf{u}+n}(x)\equiv\theta_{\mathbf{u}}(x+n)$, we obtain}
$$ \partial_yg(x,\theta_{\mathbf{u}+n}(x))\equiv c\qquad\text{for all }n\in\Z. $$
Since the  set
    $\{\mathbf{u}+n:n\in\Z \}$
    is dense in the space $\mathcal{A}^{\infty}$, our claim holds.
\end{proof}

\subsubsection{The Number of Unstable Curves}
{
We now complete the proof of Proposition \ref{pro:non-degenecay-H_u-formal} by bounding the number of unstable curves $\{(x,\theta_{\mathbf{v}}(x))\}_{x\in[0,1]}$ for $\mathbf{v}\in\mathcal{A}^{\infty}$.
}

{
We begin with the following lemma, which gives a locally uniform upper bound on the number of zeros of a nonzero real analytic function.
}

\begin{lemma}\label{lem:zero-point-formal}
    Let $h(x,y)$ be a real analytic function on $[0,1]\times U$ with $h(x,y)\not\equiv0$. Then there exist $N>1$ and closed interval $J\subset [0,1]$  such that $J$ has nonempty interior and  the following holds.
    For any $x_0\in J$, the number of zero points of the function $h_{x_0}(y):=h(x_0,y)$ on $U$ is no more than $N$.
\end{lemma}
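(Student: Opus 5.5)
The idea is to localize at a point $x_*$ where $h(x_*,\cdot)$ is not identically zero. Around that point, a Weierstrass-preparation or Rolle-type argument bounds the number of zeros uniformly in $x_0$.

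\textbf{Step 1: choose the base point.} Since $h\not\equiv 0$ is real analytic on $[0,1]\times U$, there is $x_*\in(0,1)$ with $h(x_*,\cdot)\not\equiv 0$ on $U$. Otherwise $h$ would vanish on the open square $(0,1)\times U$, hence everywhere.

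\textbf{Step 2: a Rolle-type bound near each $y$.} Fix $y\in U$. Since $y'\mapsto h(x_*,y')$ is a nonzero analytic function, some $y$-derivative is nonzero at $(x_*,y)$. Let $k_y$ be the least integer with $\partial_y^{k_y}h(x_*,y)\neq 0$. By continuity there is an open box $(x_*-\rho_y,x_*+\rho_y)\times(y-\rho_y,y+\rho_y)$ on which $\partial_y^{k_y}h\neq 0$. On this box, for each fixed $x_0$, the function $h_{x_0}$ has at most $k_y$ zeros in $(y-\rho_y,y+\rho_y)$, counted with multiplicity. Indeed, by repeated application of Rolle's theorem, $m$ zeros of $h_{x_0}$ would give $m-k_y$ zeros of $\partial_y^{k_y}h_{x_0}$, so $m\le k_y$.

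\textbf{Step 3: compactness.} Cover the compact set $U=[-1,1]$ by finitely many intervals $(y_i-\rho_{y_i},y_i+\rho_{y_i})$, $i=1,\dots,p$. Set $\rho:=\min_i\rho_{y_i}$ and $N:=1+\sum_i k_{y_i}$, and let $J:=[x_*-\rho/2,x_*+\rho/2]\cap[0,1]$. This $J$ has nonempty interior because $x_*$ is interior to $[0,1]$. For every $x_0\in J$, each zero of $h_{x_0}$ in $U$ lies in one of the covering intervals, so the total number of zeros is at most $\sum_i k_{y_i}<N$.

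\textbf{Main obstacle.} The only delicate point is ensuring that the local derivative bound holds uniformly in $x_0$. This is exactly why we fix $k_y$ at $x_*$ and use the openness of the set where $\partial_y^{k_y}h\neq 0$. Boundary points $y=\pm1$ cause no issue, since the Rolle argument works on any interval. Note that we cannot take $J=[0,1]$ in general, since $h(x_0,\cdot)$ may vanish identically for isolated $x_0$; this is why the statement only asks for some interval $J$.
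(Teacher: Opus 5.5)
Your proof is correct. It has the same architecture as the paper's: fix a base point $x_*$ with $h(x_*,\cdot)\not\equiv 0$, get a zero count that holds uniformly for $x$ near $x_*$ on small $y$-windows, then cover $U$ by compactness. The difference is the local tool.

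\begin{itemize}
\item The paper notes that the zeros of $h_{x_*}$ have finite multiplicity. It then invokes the Weierstrass preparation theorem at each zero, together with an open-covering argument for the rest of $U$.
\item You use the real-variable fact that if $\partial_y^{k}h$ does not vanish on a box, then each slice $h_{x_0}$ has at most $k$ zeros in that window, by iterated Rolle.
\end{itemize}

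Your route is more elementary and fully explicit; the paper's final step is only sketched. It needs no complex-analytic machinery, and it treats zeros and non-zeros of $h_{x_*}$ in the same way, since $k_y=0$ at non-zeros. Analyticity enters only to guarantee that the order $k_y$ is finite. The local count is the same in both approaches: the degree of the Weierstrass polynomial at a zero equals the order $k_y$ you use. So neither argument yields a better $N$.

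The paper also begins by showing that $\{x: h_x\equiv 0\}$ is finite. That is more than the lemma requires, and you rightly observe that a single good base point suffices.
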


\begin{proof}
For $x\in [0,1]$, let $N_x\in\N\cup\{\infty\}$ be the number of zero points of the function $h_{x}(y)$ on $U$. Since $h_x$ is real analytic function on  $U$, then $N_x=\infty$ implies that $h_x(y)\equiv 0$.
Then the set
$$ \{ x\in [0,1]:N_x=\infty  \} $$
is a finite set. Otherwise for every $y\in[-1,1]$, the number of zero points
of the real analytic function $h_y(x):=h(x,y)$ on $[0,1]$ is infinite, so
$h_y(x)\equiv0$. Since \(y\) was arbitrary, then $h(x,y)\equiv0$. Contradictions!

    Fix $x_0$ with $N_{x_0}<\infty$. Since $U$ is compact, the multiplicities of the zeros of $h_{x_0}$ in $U$ are uniformly bounded. Since, otherwise $h_{x_0}$ would have a zero of infinite multiplicity and  would vanish identically, contradicting $N_{x_0}<\infty$. Thus we  use Weierstrass preparation theorem and the open covering theorem to find a interval $J$ such that $x_0\in J$ and
    $J$ satisfies the conclusion the lemma for some uniform constant $N<\infty$.
\end{proof}

The following corollary shows that there are only finitely many unstable curves in the degenerate case.

\begin{corollary}\label{cor:finite-curves-formal}
    Let $w(x,y)$ be a real analytic function on  $[0,1]\times U$ such that $\partial_yw(x,y)\not\equiv0$. 
    Assume that 
    $w(x, \theta_{\mathbf{v}}(x))\equiv c$ for some constant $c\in\R$ and all $\mathbf{v}\in\mathcal{A}^{\infty}$.

    { Then
    there exist finitely many  real analytic functions $q_1,q_2,\ldots,q_M$ on $\R$  such that the following holds:
    }
    For any $\mathbf{u}\in\mathcal{A}^{\infty}$,  there exists $t\in\{1,2,\ldots,M\}$ satisfying
    $\theta_{\mathbf{u}}(x)\equiv q_t(x)$.
\end{corollary}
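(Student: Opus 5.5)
The plan is to use Lemma \ref{lem:zero-point-formal} on the function $h(x,y):=\partial_y w(x,y)$, which is not identically zero. This gives a closed interval $J\subset[0,1]$ with nonempty interior and an integer $N$ such that, for every $x_0\in J$, the function $y\mapsto \partial_y w(x_0,y)$ has at most $N$ zeros in $U$. I will then show that, for each fixed $x_0\in J$, the values $\{\theta_{\mathbf v}(x_0):\mathbf v\in\mathcal A^\infty\}$ take at most $N+1$ distinct values. The argument uses Rolle's theorem on $y\mapsto w(x_0,y)-c$. Its zeros in $U$ are separated by zeros of $\partial_y w(x_0,\cdot)$, as long as $w(x_0,\cdot)-c\not\equiv 0$. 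If $w(x_0,\cdot)\equiv c$, then $\partial_y w(x_0,\cdot)\equiv 0$, which contradicts the finiteness of its zeros. Hence $w(x_0,\cdot)-c$ has at most $N+1$ zeros, and every $\theta_{\mathbf v}(x_0)$ is one of them, since $w(x_0,\theta_{\mathbf v}(x_0))=c$.

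Next I pass from pointwise finiteness to finiteness of curves. Suppose there were $N+2$ pairwise distinct functions $\theta_{\mathbf v_1},\dots,\theta_{\mathbf v_{N+2}}$, meaning not identically equal. Each difference $\theta_{\mathbf v_i}-\theta_{\mathbf v_j}$ is real analytic and not identically zero on $\R$, so it has only finitely many zeros in the compact interval $J$. Since $J$ has nonempty interior, some $x_0\in J$ avoids all these zeros. At that point the $N+2$ values $\theta_{\mathbf v_i}(x_0)$ are distinct, contradicting the bound of $N+1$. Therefore the set $\{\theta_{\mathbf v}:\mathbf v\in\mathcal A^\infty\}$, modulo identity as functions, has at most $M:=N+1$ elements. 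I call these $q_1,\dots,q_M$. They are real analytic on $\R$ by the remark after Lemma \ref{lem:addition-theta_u-formal}.

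The step needing the most care is the use of Lemma \ref{lem:zero-point-formal}. One must check that the zero count there covers all of $U=[-1,1]$, including the endpoints, so that Rolle's theorem applies on the whole interval $U$ containing all values $\theta_{\mathbf v}(x_0)$. The rest is routine.
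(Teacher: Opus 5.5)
Your proposal is correct and follows essentially the same route as the paper. Both arguments apply Lemma~\ref{lem:zero-point-formal} to $\partial_y w$, pick a point $x_0\in J$ where $N+2$ putatively distinct curves $\theta_{\mathbf v_i}$ take pairwise distinct values (possible because their pairwise coincidence sets in $J$ are finite), and use Rolle's theorem on $y\mapsto w(x_0,y)-c$ to produce $N+1$ zeros of $\partial_y w(x_0,\cdot)$, a contradiction; your concern about the endpoints of $U$ is harmless, since these Rolle zeros lie strictly between the values $\theta_{\mathbf v_i}(x_0)\in U$.
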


\begin{proof}
By Lemma \ref{lem:zero-point-formal}, there exists a closed interval $J\subset [0,1]$ and $N\in\Z_{>0}$  such that the conclusion of Lemma \ref{lem:zero-point-formal}  holds for $h(x,y)=\partial_yw(x,y)$. 
{
If the claim is false for $M:=N+1$, then there  exist
}
$\mathbf{u}_1,\ldots,\mathbf{u}_{N+2}\in\mathcal{A}^{\infty}$ such that 
$$ \theta_{\mathbf{u}_i}(x)\not\equiv\theta_{\mathbf{u}_j}(x)\qquad\text{for any }i\neq j.  $$
{
Since the functions
$\theta_{\mathbf{u}_1},\ldots,\theta_{\mathbf{u}_{N+2}}$
are real analytic, the set of points $x\in[0,1]$ for which
$\theta_{\mathbf{u}_i}(x)=\theta_{\mathbf{u}_j}(x)$
for some $i\neq j$ is finite.
}
Then we can find a nonempty interval
$J_1\subset J$ such that
$$ \theta_{\mathbf{u}_i}(x)\neq\theta_{\mathbf{u}_j}(x)\qquad\text{for any }i\neq j\text{ and }x\in J_1.  $$
Let $x\in J_1$.
Since
 $w(x, \theta_{\mathbf{u}_j}(x))\equiv c$
 for all $j\in\{1,\cdots,N+2\}$, applying the mean value theorem,   the number of the zero points
 of the function $p_x(y):=\partial_yw(x,y)$ is at least
 $N+1$. Since $J_1\subset J$, this contradicts the bound on the number of zeros given by Lemma \ref{lem:zero-point-formal}. Thus the assumption fails and the corollary holds.
\end{proof}

\begin{proof}[Proof of Proposition \ref{pro:non-degenecay-H_u-formal}]
 Assume that there exists $\mathbf{u}\in\mathcal{A}^{\infty}$ such that $H_{\mathbf{u}}'(x)\equiv 0$.
Combining Lemma  \ref{lem:partial_yg-constant-formal} and Corollary \ref{cor:finite-curves-formal}, the number of unstable curves is finite. This contradicts the conclusion of Lemma \ref{lem:curve-splitting-formal} since $\mathcal{S}$ is not a single graph.    
\end{proof}

%$\\\\\\\\\\\\\\\\\\\\\\\\\\\\\\$

%$\\\\\\\\\\\\\\\\\\\\\\\\\\\\\\\\\\$

\section{Transversality II: The case $\partial_y^2 g\equiv 0$}
\label{sec:transversality-II}

The previous section established transversality bounds under the condition that $\partial_y^2 g(x,y) \not \equiv 0$, combining Proposition \ref{pro:non-degenecay-H_u-formal} and \ref{pro:Separation-nozero-case}. It remains to prove them in the setting where $\partial_y^2 g(x,y) \equiv 0$. \\

{
In this case, it suffices to study the first derivative of $\Delta_{\mathbf{a},\mathbf{b}}$.
The functions $\theta_{\mathbf{a}}$ have then an explicit series representation; see \eqref{eq:theta-series-lambda(x)}. This gives a simple decomposition of
$\Delta_{\mathbf{u}\mathbf{a},\mathbf{u}\mathbf{b}}'(x)$ in terms of
$\Delta_{\mathbf{a},\mathbf{b}}(\mathbf{u}[x])$ and
$\Delta_{\mathbf{a},\mathbf{b}}'(\mathbf{u}[x])$.
Therefore we should study the  projections of
$\bigl(
\Delta_{\mathbf{a},\mathbf{b}}(\mathbf{u}[x]),
\Delta_{\mathbf{a},\mathbf{b}}'(\mathbf{u}[x])
\bigr)$
and derive a uniform lower bound for
$|\Delta_{\mathbf{u}\mathbf{a},\mathbf{u}\mathbf{b}}'(x)|$; see
\eqref{eq:normalized-derivative-common-prefix} and
\eqref{eq:normalized-derivative-decomposition}.
The following proposition gives the required estimate.
}

\begin{proposition}\label{pro:Separation-zero-case}
Assume that $\mathcal{S}$ does not consist of a single graph and that $\partial_y^2 g(x,y)\equiv 0$.
 Then there exists $N\in\Z_{>0}$ and $c\in(0,1)$ such that the following holds. For all $x\in [0,1]$ and $\mathbf{u}\in\mathcal{A}^*$ with $|\mathbf{u}|\ge N$, there exists 
    $\mathbf{a},\mathbf{b}\in\mathcal{A}^N$
    so that
    $$ |(\Delta_{\mathbf{u}\mathbf{a}\mathbf{c} ,\mathbf{u}\mathbf{b}\mathbf{d}})'(x)|\ge c (r_{\min}\kappa_{\min})^{|\mathbf{u}|}\qquad\text{for each }\mathbf{c},\mathbf{d}\in\mathcal{A}^* .$$   
\end{proposition}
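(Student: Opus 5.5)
In the affine case we write $g(x,y)=\lambda(x)y+\phi(x)$, with $r_{\min}\le|\lambda|\le r_{\max}$. The plan is to make the normalization of Section~\ref{Renormalization} exact and then reduce everything to a compactness statement about first-order jets of $\Delta_{\mathbf a,\mathbf b}$.

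\textbf{Step 1: exact factorization.} Since $g$ is affine in $y$, $\partial_y\theta_{\mathbf u}$ does not depend on $y$. Hence \eqref{def:Lambda-u-a,b-formal} gives
$$\Lambda_{\mathbf u;\mathbf a,\mathbf b}(x)=\Lambda_{\mathbf u}(x):=\prod_{k=1}^{|\mathbf u|}\lambda\bigl((\mathbf u|_k)[x]\bigr),$$
independently of $\mathbf a,\mathbf b$. Differentiating \eqref{def:Delta-u-a,b-formal} gives
$$\Delta_{\mathbf u\mathbf a,\mathbf u\mathbf b}'(x)=\Lambda_{\mathbf u}'(x)\,\Delta_{\mathbf a,\mathbf b}(\mathbf u[x])+\Lambda_{\mathbf u}(x)\,\mathbf u[x]'\,\Delta_{\mathbf a,\mathbf b}'(\mathbf u[x]).$$
Set $R_{\mathbf u}(x):=\bigl(\Lambda_{\mathbf u}'(x)^2+(\Lambda_{\mathbf u}(x)\mathbf u[x]')^2\bigr)^{1/2}$ and $(t_1,t_2):=(\Lambda_{\mathbf u}',\Lambda_{\mathbf u}\mathbf u[x]')/R_{\mathbf u}$, which is a unit vector. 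Then
$$R_{\mathbf u}(x)\ge|\Lambda_{\mathbf u}(x)\mathbf u[x]'|\ge(r_{\min}\kappa_{\min})^{|\mathbf u|}.$$
So it suffices to find $c>0$ and $N$ such that the following holds. For every point $y\in[0,1]$ and every unit vector $(t_1,t_2)$, there are words of length $N$ (placed after a fixed-length piece of the prefix, as in Lemma~\ref{lem:lower-bound-Delta-formal}) with
$$|t_1\Delta(y)+t_2\Delta'(y)|\ge c$$
for all continuations $\mathbf c,\mathbf d$.

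\textbf{Step 2: reduction to a qualitative statement.} The parameter space $[0,1]\times\mathbb S^1$ is compact. The functions $\Delta_{\mathbf a\mathbf c,\mathbf b\mathbf d}$ and their first derivatives converge uniformly as the words are extended, by Lemma~\ref{lem:uniform-bound-k-derivative}. So the argument of Lemma~\ref{lem:lower-bound-Delta-formal} (a finite cover, then truncating the infinite words at a large length) reduces Step 1 to the following qualitative fact: for every $y_0$ and every unit $(t_1,t_2)$ there exist $\mathbf a,\mathbf b\in\mathcal A^\infty$ with
$$t_1\Delta_{\mathbf a,\mathbf b}(y_0)+t_2\Delta_{\mathbf a,\mathbf b}'(y_0)\neq0.$$
Splitting off a block of fixed length $N_1$ from $\mathbf u$, exactly as in the proof of Proposition~\ref{pro:Separation-nozero-case}, handles the prefix bookkeeping.

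\textbf{Step 3: the qualitative fact (main obstacle).} Suppose it fails at $(y_0,t_1,t_2)$. Then the set of jets
$$V(y_0):=\{(\Delta_{\mathbf a,\mathbf b}(y_0),\Delta_{\mathbf a,\mathbf b}'(y_0))\}$$
lies on a fixed line through the origin. The factorization of Step 1, applied with prefix $\mathbf w$, maps $V(\mathbf w[y_0])$ into $V(y_0)$ by an invertible linear map. Indeed, the map is triangular with diagonal entries $\Lambda_{\mathbf w}$ and $\Lambda_{\mathbf w}\mathbf w[y_0]'$, both nonzero. Hence $V$ lies on a line at every point of the dense set $\{\mathbf w[y_0]\}$.

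There are two cases for the line at $y_0$. If $t_2=0$, then $\Delta_{\mathbf a,\mathbf b}(y_0)=0$ for all pairs, which contradicts Lemma~\ref{lem:Dleta-nonzero-formal}. Otherwise the line has a slope $s(y)$ at each point of the dense set, and $\Delta'_{\mathbf a,\mathbf b}=s\,\Delta_{\mathbf a,\mathbf b}$ there for all pairs. By analyticity, the Wronskian $\Delta_{\mathbf a,\mathbf b}\Delta_{\mathbf a',\mathbf b'}'-\Delta_{\mathbf a,\mathbf b}'\Delta_{\mathbf a',\mathbf b'}$ then vanishes identically. So all differences are proportional to a single analytic function: $\Delta_{\mathbf a,\mathbf b}=c_{\mathbf a,\mathbf b}D$.

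I then plan to derive a contradiction from three facts:
\begin{itemize}
\item the functional equation $D(x)\propto\Lambda_i(x)D(i[x])$, coming from prefixing a single letter $i$;
\item the symbolic addition identity $\Delta_{\mathbf a+n,\mathbf b+n}(x)=\Delta_{\mathbf a,\mathbf b}(x+n)$ of Lemma~\ref{lem:addition-theta_u-formal};
\item the explicit series for $\theta_{\mathbf a}$ in the affine case.
\end{itemize}
Together these should force $V(y)$ to be the same line at every point $y$, and then $\theta_{\mathbf a}'\equiv\theta_{\mathbf b}'$ for all $\mathbf a,\mathbf b$, hence a single graph by Lemma~\ref{lem:rigidity-derivatives}. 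Making this last contradiction rigorous is the step I expect to be hardest. My first attempt will be to show that $D$ is $1$-periodic up to a constant factor, and then compare zeros of $D$ using the contraction $\mathbf w[\cdot]$.
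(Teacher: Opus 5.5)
\textbf{Gap: the decisive contradiction in Step 3 is never derived.}

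Steps 1--2 and the first half of Step 3 are correct.
\begin{itemize}
\item Your exact factorization is the paper's \eqref{eq:normalized-derivative-common-prefix}.
\item Your compactness reduction is essentially Lemma~\ref{lem:non-degeneracy-case}. Splitting the prefix is unnecessary there: truncating words chosen at $y=\mathbf u[x]$ already works for every $\mathbf u$.
\item Transporting degenerate jets to the dense set $\{\mathbf w[y_0]\}$ and then using the Wronskian correctly yields $\Delta_{\mathbf a,\mathbf b}=c_{\mathbf a,\mathbf b}D$. This is a clean substitute for the paper's closedness argument showing $E=\R$.
\end{itemize}

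The gap is that you stop exactly where the content of the affine case lies. Nothing in the proposal rules out $\Delta_{\mathbf a,\mathbf b}=c_{\mathbf a,\mathbf b}D$, and ``these should force $V(y)$ to be the same line at every point'' is not an argument.

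The mechanism you sketch also does not work as stated. The one-letter relation $D(x)\propto\lambda(i[x])\,D(i[x])$, together with periodicity of $D$, only gives $\lambda=\lambda_0\,D\circ f/D$. That says $\lambda$ is multiplicatively cohomologous to a constant. This is not contradictory by itself and is compatible with $D'/D$ nonconstant, so it cannot force a single fixed line.

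This is exactly the situation the paper must handle in Lemmas~\ref{lem:fail-formula-lambda(x)}--\ref{lem:reduction-to-constant-lambda}:
\begin{itemize}
\item it integrates $r=\lambda'/\lambda+(r\circ f)f'$;
\item it conjugates, by a fibrewise rescaling, to a system with constant contraction $\lambda_0$;
\item it then uses $\tilde r=f'\cdot\tilde r\circ f$ and the expansion of $f$ to get $\tilde r\equiv0$ (Lemma~\ref{lem:uniform-transversality-constant-lambda}).
\end{itemize}

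\textbf{How to close the gap along your own route.} The contradiction comes from a global telescoping argument using symbolic addition and boundedness, not from the one-letter relation or the series.
\begin{enumerate}
\item Take $D=\Delta_{\mathbf a_0,\mathbf b_0}\not\equiv0$. Every difference is a multiple of $D$, so Lemma~\ref{lem:Dleta-nonzero-formal} shows $D$ has no zero on $[0,1]$.
\item Lemma~\ref{lem:addition-theta_u-formal} gives $D(x+1)=\Delta_{\mathbf a_0+1,\mathbf b_0+1}(x)=\kappa D(x)$ for a constant $\kappa$.
\item Since $|D|\le 2$ on $\R$, we get $|\kappa|=1$. Since $D$ has constant sign on $[0,1]$, we get $\kappa=1$, so $D$ is $1$-periodic.
\item As in Lemma~\ref{lem:rigidity-derivatives}, $\theta_{\mathbf 0+n}-\theta_{\mathbf 0}=\sum_{j=0}^{n-1}\Delta_{\mathbf 0+1,\mathbf 0}(\cdot+j)=n\,c_{\mathbf 0+1,\mathbf 0}\,D$. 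This must stay bounded in $n$, so $c_{\mathbf 0+1,\mathbf 0}=0$.
\item Hence $\theta_{\mathbf 0+n}=\theta_{\mathbf 0}$ for all $n$. Density of $\{\mathbf 0+n\}$ then makes $\mathcal S$ a single graph, a contradiction.
\end{enumerate}

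With this added, your proof would be complete. It would also be shorter than the paper's route through the cohomological equation and the conjugacy, and it shows directly that the quantity $\Delta$ in \eqref{eq:fail-formula-lambda(x)} is always positive. Without it, the decisive step of the proposition is missing.
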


{Recall that $r_{\min}$ and $\kappa_{\min}$ are defined in
\eqref{eq:r_min-r_max} and \eqref{eq:kappa_min-max}, respectively.}

In this section, we assume that $\mathcal{S}$ does not consist of a single graph and that $\partial_y^2 g(x,y)\equiv 0$.
Then there exist real analytic functions $\lambda,\phi:\mathbb{S}^1\to\mathbb{R}$ satisfying
$g(x,y)=\lambda(x)y+\phi(x).$
Since $0<|\partial_y g(x,y)|<1$, we get
$0<|\lambda(x)|<1$
for all $x\in\mathbb{S}^1$. We also regard $\lambda$ and $\phi$ as $\Z$-periodic real analytic functions on $\mathbb{R}$.

\subsection{Basic formulas in the affine case}
For each $\mathbf{u}\in\mathcal{A}^*$, define
$\lambda_{\mathbf{u}}\colon\R\to\mathbb{R}$ by
\begin{equation}\label{eq:lambda_u(x)}
    \lambda_{\mathbf{u}}(x)
:=
\prod_{k=1}^{|\mathbf{u}|}
\lambda\bigl((\mathbf{u}|_k)[x]\bigr).
\end{equation}
We also write $\mathbf{u}|_0:=\emptyset$ and $\lambda_{\emptyset}\equiv 1$.

By \eqref{def:G_u-formal}, for any
$\mathbf{c}\in\mathcal{A}^*\cup\mathcal{A}^\infty$, we have
\begin{equation}\label{eq:theta-series-lambda(x)}
    \theta_{\mathbf{c}}(x)
    =
    \sum_{n=1}^{|\mathbf{c}|}
    \lambda_{\mathbf{c}|_{n-1}}(x)
    \phi\bigl((\mathbf{c}|_n)[x]\bigr).
\end{equation}
where, when $\mathbf{c}\in\mathcal{A}^\infty$, the right hand side is  an infinite series.

{For every $\mathbf{b},\mathbf{u}\in\mathcal{A}^*$, we have 
\begin{equation}\nonumber (\mathbf{b}\mathbf{u})[x] = \mathbf{u}[\mathbf{b}[x]] \qquad\text{and}\qquad \lambda_{\mathbf{b}\mathbf{u}}(x) = \lambda_{\mathbf{b}}(x) \lambda_{\mathbf{u}}(\mathbf{b}[x]). \end{equation} }
Consequently,
\begin{equation}\label{eq:theta-series-two-symbols}
    \theta_{\mathbf{b}\mathbf{c}}(x)
    =
    \theta_{\mathbf{b}}(x)
    +
    \lambda_{\mathbf{b}}(x)
    \theta_{\mathbf{c}}\bigl(\mathbf{b}[x]\bigr).
\end{equation}

Given $\mathbf{d}\in\mathcal{A}^*\cup\mathcal{A}^\infty$, we have
\begin{equation}\label{eq:derivative-minus-two-symbol-same-strart-lambda(x)-2}
 (\theta_{\mathbf{b}\mathbf{c}}-\theta_{\mathbf{b}\mathbf{d}})'(x) 
 =
 \lambda_{\mathbf{b}}'(x)\cdot 
\bigl(\theta_{\mathbf{c}}
-\theta_{\mathbf{d}}\bigr)
\bigl(\mathbf{b}[x]\bigr)+
 \lambda_{\mathbf{b}}(x)\cdot
\bigl(\mathbf{b}[x]\bigr)'
\bigl(\theta_{\mathbf{c}}'
-\theta_{\mathbf{d}}'\bigr)
\bigl(\mathbf{b}[x]\bigr).
\end{equation}

Since $\lambda_{\mathbf{b}}(x)\neq 0$ and
$\bigl(\mathbf{b}[x]\bigr)'\neq 0$, we may define
\begin{equation}\label{eq:def-t1-t2}
    t_1(\mathbf{b})
    :=
    \frac{\lambda_{\mathbf{b}}'(x)}
    {\sqrt{
        |\lambda_{\mathbf{b}}'(x)|^2
        +
        |\lambda_{\mathbf{b}}(x)
        (\mathbf{b}[x])'|^2
    }},
    \qquad
    t_2(\mathbf{b})
    :=
    \frac{
        \lambda_{\mathbf{b}}(x)
        (\mathbf{b}[x])'
    }
    {\sqrt{
        |\lambda_{\mathbf{b}}'(x)|^2
        +
        |\lambda_{\mathbf{b}}(x)
        (\mathbf{b}[x])'|^2
    }}.
\end{equation}
Then $t_1^2+t_2^2=1$, and
{
\begin{equation}\label{eq:normalized-derivative-common-prefix}
    \frac{
        (\theta_{\mathbf{b}\mathbf{c}}
        -\theta_{\mathbf{b}\mathbf{d}})'(x)
    }
    {\sqrt{
        |\lambda_{\mathbf{b}}'(x)|^2
        +
        |\lambda_{\mathbf{b}}(x)
        (\mathbf{b}[x])'|^2
    }}
    =
    t_1(\mathbf{b})\cdot \Delta_{\mathbf{c},\mathbf{d}}
    \bigl(\mathbf{b}[x]\bigr)
    +
    t_2(\mathbf{b})\cdot
    \Delta_{\mathbf{c},\mathbf{d}}'
    \bigl(\mathbf{b}[x]\bigr).
\end{equation}
}

\subsection{The projection non-degeneracy case} Let
\begin{equation}\label{eq:fail-formula-lambda(x)}
   \Delta:=  \min_{x\in [0,1],\eta\in [0,2\pi]}\max_{\mathbf{c},\mathbf{d}\in\mathcal{A}^{\infty} }\left|\cos\eta\cdot 
\Delta_{\mathbf{c},\mathbf{d}}(x)+\sin\eta\cdot \Delta_{\mathbf{c},\mathbf{d}}'(x)  \right|.
\end{equation}

The  following lemma is the main result of this subsection.

\begin{lemma}\label{lem:non-degeneracy-case}
    If $\Delta>0$, then there exists $N\in\Z_{>0}$ and $c\in(0,1)$ such that the following holds. For each $x\in [0,1]$ and $\mathbf{u}\in\mathcal{A}^*$, there exists 
    $\mathbf{a},\mathbf{b}\in\mathcal{A}^N$
    such that
    $$ |(\Delta_{\mathbf{u}\mathbf{a}\mathbf{c} ,\mathbf{u}\mathbf{b}\mathbf{d}})'(x)|\ge c (r_{\min}\kappa_{\min})^{|\mathbf{u}|}\qquad\text{for any }\mathbf{c},\mathbf{d}\in\mathcal{A}^* .$$  
\end{lemma}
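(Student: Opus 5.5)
The plan is to combine the normalized decomposition \eqref{eq:normalized-derivative-common-prefix} with a compactness argument that turns $\Delta>0$ into a finite-word separation statement, uniform in the direction $\eta$.

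\textbf{Step 1: finite-word separation.} For every $(x_0,\eta_0)\in[0,1]\times[0,2\pi]$, the definition of $\Delta$ gives $\mathbf{c},\mathbf{d}\in\mathcal{A}^\infty$ with
\[
\left|\cos\eta_0\,\Delta_{\mathbf c,\mathbf d}(x_0)+\sin\eta_0\,\Delta'_{\mathbf c,\mathbf d}(x_0)\right|\ge\Delta.
\]
By continuity this lower bound, with $\Delta/2$ in place of $\Delta$, persists on a neighbourhood of $(x_0,\eta_0)$. Compactness of $[0,1]\times[0,2\pi]$ then yields finitely many pairs $(\mathbf c_i,\mathbf d_i)$ covering the whole space. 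Let $N$ be large enough that $\max_{k\le1}\|(\theta_{\mathbf{a}\mathbf{e}}-\theta_{\mathbf{a}'})^{(k)}\|_\infty\le C_1r^N\le\Delta/8$ whenever $\mathbf a=\mathbf a'|_N$; this is possible by Lemma \ref{lem:uniform-bound-k-derivative}. Setting $\mathbf a:=\mathbf c_i|_N$ and $\mathbf b:=\mathbf d_i|_N$, we obtain the following: for every $(y,\eta)$ there are $\mathbf a,\mathbf b\in\mathcal A^N$ such that
\[
\left|\cos\eta\,\Delta_{\mathbf{a}\mathbf{c},\mathbf{b}\mathbf{d}}(y)+\sin\eta\,\Delta'_{\mathbf{a}\mathbf{c},\mathbf{b}\mathbf{d}}(y)\right|\ge\Delta/4
\qquad\text{for all }\mathbf c,\mathbf d\in\mathcal A^*.
\]

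\textbf{Step 2: apply at the zoomed point.} Fix $x$ and $\mathbf u$. Write $\mathbf u$ in place of $\mathbf b$ in \eqref{eq:normalized-derivative-common-prefix}, and choose $\eta$ with $(\cos\eta,\sin\eta)=(t_1(\mathbf u),t_2(\mathbf u))$. Apply Step 1 at $y=\mathbf u[x]$ with this $\eta$. The right-hand side of \eqref{eq:normalized-derivative-common-prefix}, with $\mathbf{a}\mathbf{c}$ and $\mathbf{b}\mathbf{d}$ as continuations, then has modulus at least $\Delta/4$. Hence
\[
\left|(\Delta_{\mathbf{u}\mathbf{a}\mathbf{c},\mathbf{u}\mathbf{b}\mathbf{d}})'(x)\right|
\ge\frac{\Delta}{4}\sqrt{|\lambda_{\mathbf u}'(x)|^2+|\lambda_{\mathbf u}(x)(\mathbf u[x])'|^2}
\ge\frac{\Delta}{4}\,|\lambda_{\mathbf u}(x)|\,|(\mathbf u[x])'|.
\]
By \eqref{eq:lambda_u(x)} and the chain rule, $|\lambda_{\mathbf u}(x)|\ge r_{\min}^{|\mathbf u|}$ (recall $\lambda=\partial_y g$) and $|(\mathbf u[x])'|\ge\kappa_{\min}^{|\mathbf u|}$. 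This gives the claim with $c=\Delta/4$; the inequality also holds trivially for short $\mathbf u$, including $\mathbf u=\emptyset$.

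\textbf{Main obstacle.} The delicate point is Step 1. One must make sure the compactness argument is uniform in the direction $\eta$. One must also check that replacing infinite words by finite prefixes with arbitrary tails changes both $\Delta_{\mathbf c,\mathbf d}$ and its first derivative by at most $O(r^N)$, uniformly in the tails. The latter follows from Lemma \ref{lem:uniform-bound-k-derivative} applied with common prefix $\mathbf c_i|_N$ (respectively $\mathbf d_i|_N$).
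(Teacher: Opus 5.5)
Your proof is correct and is essentially the paper's argument: choose $\eta$ with $(\cos\eta,\sin\eta)=(t_1(\mathbf{u}),t_2(\mathbf{u}))$, take an optimal pair of infinite words at $\mathbf{u}[x]$, truncate it to length $N$ using the uniform tail estimates, and conclude via \eqref{eq:normalized-derivative-common-prefix} and $|\lambda_{\mathbf{u}}(x)(\mathbf{u}[x])'|\ge(r_{\min}\kappa_{\min})^{|\mathbf{u}|}$. The compactness step in Step 1 is unnecessary, because $N$ is already uniform in the tails; the paper simply applies the definition of $\Delta$ directly at $(\mathbf{u}[x],\eta)$. Dropping that step also repairs your constants: starting from $\Delta/2$ and losing up to $\Delta/4+\Delta/4$ does not leave $\Delta/4$, whereas starting from $\Delta$ leaves $\Delta/2$.
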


\begin{proof}
Set $c:=\Delta/2$. Choose $r\in(0,1)$, $C_1>1$, $\eta>0$, and $\delta\in(0,1)$ such that the conclusion of Corollary \ref{cor:uniform-holomorphic-extensions} holds with $r$ and $\eta$, and the conclusion of Lemma \ref{lem:uniform-bound-k-derivative} holds with $\delta$, $r$, $k=1$, and $C_1$. Choose $N\in\Z_{>0}$
such that 
$$ 4 r^N\le \frac{\Delta}{8}\qquad\text{and}\qquad C_1 r^N\le \frac{\Delta}{8}.$$

Let $x\in [0,1]$ and $\mathbf{u}\in\mathcal{A}^*$.
Write
$y:=\mathbf{u}[x]$.
Choose $\eta\in [0,2\pi]$ such that
$\cos\eta=t_1(\mathbf{u})$ and $\sin\eta=t_1(\mathbf{u})$ where
$t_1,t_2$ are defined by \eqref{eq:def-t1-t2}. By the definition of $\Delta$, there exist
$\mathbf{a}',\mathbf{b}'\in\mathcal{A}^{\infty} $ satisfying
\begin{equation}\label{eq:lower-bound-1}
    \left|\cos\eta\cdot 
\Delta_{\mathbf{a}' ,\mathbf{b}'}(y)+\sin\eta\cdot \Delta_{\mathbf{a}' ,\mathbf{b}'}'(y)  \right|\ge \Delta.
\end{equation}

Set $\mathbf{a}:=\mathbf{a}'|_N$, $\mathbf{b}:=\mathbf{b}'|_N$. 
Thus for any $\mathbf{c},\mathbf{d}\in\mathcal{A}^*$, by Corollary \ref{cor:uniform-holomorphic-extensions} and $4 r^N\le \frac{\Delta}{8}$, 
\begin{equation}\label{eq:non-degeneracy-case-2}
    |(\Delta_{\mathbf{a}\mathbf{c} ,\mathbf{b}\mathbf{d}}-\Delta_{\mathbf{a}' ,\mathbf{b}'})(y)|\le 8 r^N\le \frac{\Delta}{4}.
\end{equation}
Similarly, applying Lemma \ref{lem:uniform-bound-k-derivative} and $C_1 r^N\le \frac{\Delta}{8}$, we have
$$
    |(\Delta_{\mathbf{a}\mathbf{c} ,\mathbf{b}\mathbf{d}}-\Delta_{\mathbf{a}' ,\mathbf{b}'})'(y)|\le 2C_1 r^N\le \frac{\Delta}{4}.
$$
Combining this with \eqref{eq:lower-bound-1} and
\eqref{eq:non-degeneracy-case-2}, 
\begin{equation}\label{eq:non-degeneracy-case-3}
\begin{split}
    &\left|
    \cos\eta\cdot
    \Delta_{\mathbf{a}\mathbf{c},\mathbf{b}\mathbf{d}}(y)
    +
    \sin\eta\cdot
    \Delta_{\mathbf{a}\mathbf{c},\mathbf{b}\mathbf{d}}'(y)
    \right|
    \\
    &\qquad\ge
    \left|
    \cos\eta\cdot
    \Delta_{\mathbf{a}',\mathbf{b}'}(y)
    +
    \sin\eta\cdot
    \Delta_{\mathbf{a}',\mathbf{b}'}'(y)
    \right|
    \\
    &\qquad\quad
    -
    \left|
    \bigl(
    \Delta_{\mathbf{a}\mathbf{c},\mathbf{b}\mathbf{d}}
    -
    \Delta_{\mathbf{a}',\mathbf{b}'}
    \bigr)(y)
    \right|
    -
    \left|
    \bigl(
    \Delta_{\mathbf{a}\mathbf{c},\mathbf{b}\mathbf{d}}
    -
    \Delta_{\mathbf{a}',\mathbf{b}'}
    \bigr)'(y)
    \right|
    \\
    &\qquad\ge
    \frac{\Delta}{2}.
\end{split}
\end{equation}

Since$
|\lambda_{\mathbf{u}}(x)(\mathbf{u}[x])'|
\ge
(r_{\min}\kappa_{\min})^{|\mathbf{u}|},$
by \eqref{eq:normalized-derivative-common-prefix},
\eqref{eq:non-degeneracy-case-3}, and the choice of $\eta$, we have
$$
\left|
\Delta_{\mathbf{u}\mathbf{a}\mathbf{c},
\mathbf{u}\mathbf{b}\mathbf{d}}'(x)
\right|
\ge
c(r_{\min}\kappa_{\min})^{|\mathbf{u}|}.
$$
Thus the lemma holds.
\end{proof}

\subsection{The projection degeneracy case and proof of Proposition \ref{pro:Separation-zero-case}}
It remains to study the case $\Delta=0$. We will show that in this case $\lambda(x)$ is multiplicatively cohomologous to a constant $\lambda_0$. The dynamics is then  conjugated to the $\lambda(x)=\lambda_0$ setting, in which transversality is easy to characterize.

Set
$$
E
:=
\left\{
x\in\R:
\begin{array}{l}
\text{there exists } r\in\mathbb{R} \text{ satisfying } \\[2mm]
r\Delta_{\mathbf{c},\mathbf{d}}(x)
+
\Delta_{\mathbf{c},\mathbf{d}}'(x)
=0
\quad
\text{for any }
\mathbf{c},\mathbf{d}\in\mathcal{A}^{\infty}
\end{array}
\right\}.
$$

Since $\mathcal{S}$ does not consist of a single graph, for every $x\in\mathbb{R}$, there exist
$\mathbf{c},\mathbf{d}\in\mathcal{A}^{\infty}$ such that
$\theta_{\mathbf{c}}(x)\neq\theta_{\mathbf{d}}(x).$ 
Therefore, for every $x\in\R$, there exists at most one $r(x)\in\mathbb{R}$ satisfying
$$
r(x)\Delta_{\mathbf{c},\mathbf{d}}(x)
+
\Delta_{\mathbf{c},\mathbf{d}}'(x)
=0\qquad\text{for each }\mathbf{c},\mathbf{d}\in\mathcal{A}^{\infty}.
$$
In particular, $r(x)$ is uniquely determined by  $x\in E$.

By Lemma \ref{lem:addition-theta_u-formal} and the uniqueness of $r(x)$ on $E$, if $x\in E$, then 
\begin{equation}\label{eq:r(0)=r(1)}
   x+k\in E\qquad\text{and}\qquad r(x)=r(x+k)\qquad\text{for each }k\in\Z.
\end{equation}

\begin{lemma}
\label{lem:closedness-and-continuity-of-r} 
The set \(E\) is closed in \(\R\), and the function \(r\colon E\to\mathbb{R}\) is continuous.
\end{lemma}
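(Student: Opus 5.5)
The plan is to use a continuity-and-compactness argument. The only inputs are the continuity of $(\mathbf{c},x)\mapsto \Delta_{\mathbf{c},\mathbf{d}}(x)$ and $\Delta'_{\mathbf{c},\mathbf{d}}(x)$, and the non-single-graph hypothesis.

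By Lemma \ref{lem:uniform-bound-k-derivative} and the uniform convergence $\theta_{\mathbf{w}|_n}\to\theta_{\mathbf{w}}$ in $C^1$, the maps $(\mathbf{c},\mathbf{d},x)\mapsto \Delta_{\mathbf{c},\mathbf{d}}(x)$ and $\Delta'_{\mathbf{c},\mathbf{d}}(x)$ are continuous on $\mathcal{A}^\infty\times\mathcal{A}^\infty\times\R$. They are also uniformly bounded: $\theta_{\mathbf{a}}(x+k)=\theta_{\mathbf{a}+k}(x)$ by Lemma \ref{lem:addition-theta_u-formal}, so bounds on $[0,1]$ propagate to all of $\R$.

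\textbf{Key step: a uniform lower bound near a point.} Fix $x_0\in\R$. Since $\mathcal{S}$ is not a single graph, Lemma \ref{lem:Dleta-nonzero-formal} (with $\mathbf{w}=\emptyset$, applied after translating $x_0$ into $[0,1]$ via \eqref{eq:r(0)=r(1)}-type periodicity) gives $\mathbf{c}_0,\mathbf{d}_0$ with $\Delta_{\mathbf{c}_0,\mathbf{d}_0}(x_0)\neq 0$. By continuity there is a neighborhood $V$ of $x_0$ on which $|\Delta_{\mathbf{c}_0,\mathbf{d}_0}|\ge \epsilon>0$. For $x\in E\cap V$ the defining relation with $\mathbf{c}_0,\mathbf{d}_0$ gives
$$r(x)=-\frac{\Delta'_{\mathbf{c}_0,\mathbf{d}_0}(x)}{\Delta_{\mathbf{c}_0,\mathbf{d}_0}(x)}.$$
The right-hand side is a continuous function of $x$ on $V$, so $r$ is continuous on $E\cap V$. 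Since $x_0$ is arbitrary, $r$ is continuous on $E$.

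\textbf{Closedness.} Take $x_n\in E$ with $x_n\to x_\infty$, and let $V$ be the neighborhood of $x_\infty$ from the previous step. For large $n$ we have $x_n\in V$, so $r(x_n)$ is given by the formula above. This formula is bounded on $V$ and converges to
$$r_\infty:=-\frac{\Delta'_{\mathbf{c}_0,\mathbf{d}_0}(x_\infty)}{\Delta_{\mathbf{c}_0,\mathbf{d}_0}(x_\infty)}.$$
Now fix any $\mathbf{c},\mathbf{d}$ and pass to the limit in $r(x_n)\Delta_{\mathbf{c},\mathbf{d}}(x_n)+\Delta'_{\mathbf{c},\mathbf{d}}(x_n)=0$, using continuity in $x$. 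This gives $r_\infty\Delta_{\mathbf{c},\mathbf{d}}(x_\infty)+\Delta'_{\mathbf{c},\mathbf{d}}(x_\infty)=0$ for every pair $\mathbf{c},\mathbf{d}$. Hence $x_\infty\in E$ with $r(x_\infty)=r_\infty$, so $E$ is closed.

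\textbf{Main obstacle.} The only delicate point is boundedness of $r(x_n)$. If $r(x_n)$ could escape to infinity, we could not pass to the limit in the defining relation. The local representation of $r$ through a single pair $(\mathbf{c}_0,\mathbf{d}_0)$ with non-vanishing $\Delta$ rules this out, and it is exactly where the non-single-graph hypothesis enters.
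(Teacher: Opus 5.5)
Your proof is correct. It rests on the same input as the paper's proof: the non-single-graph hypothesis gives, at every point, some pair with $\Delta_{\mathbf{c},\mathbf{d}}\neq 0$. The two arguments differ in how they keep $r(x_n)$ under control.

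The paper takes $x_n\to x_0$ in $E$ and passes to a subsequence with $r(x_n)\to r_0\in\R\cup\{\infty\}$. It rules out $r_0=\infty$ by the non-single-graph hypothesis; implicitly, dividing the defining relation by $r(x_n)$ would force $\Delta_{\mathbf{c},\mathbf{d}}(x_0)=0$ for all pairs. It then passes to the limit and identifies $r(x_0)=r_0$ by uniqueness. Continuity is left to ``the similar argument'' (every subsequence has a further subsequence along which $r$ tends to $r(x_0)$).

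You instead fix one pair $(\mathbf{c}_0,\mathbf{d}_0)$ with $\Delta_{\mathbf{c}_0,\mathbf{d}_0}(x_0)\neq 0$ and use the local formula $r=-\Delta'_{\mathbf{c}_0,\mathbf{d}_0}/\Delta_{\mathbf{c}_0,\mathbf{d}_0}$ on $E\cap V$. This gives boundedness, the limiting value, and continuity in one stroke, and it makes explicit the step the paper states tersely ($r_0\neq\infty$). As a small bonus, it shows that $r$ is locally the restriction of a real-analytic function. So once Lemma~\ref{lem:fail-formula-lambda(x)} gives $E=\R$, $r$ is real analytic, more than the continuity needed in Lemma~\ref{lem:r(x)-fromula}. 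The paper's version is shorter and never has to single out a particular pair.

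Two minor remarks. First, the joint continuity in $(\mathbf{c},\mathbf{d},x)$ and the uniform boundedness in your first paragraph are true but never used; only continuity in $x$ for fixed pairs enters. Second, moving $x_0$ into $[0,1]$ uses Lemma~\ref{lem:addition-theta_u-formal}, namely $\Delta_{\mathbf{a}-k,\mathbf{b}-k}(y+k)=\Delta_{\mathbf{a},\mathbf{b}}(y)$, and not \eqref{eq:r(0)=r(1)}. Alternatively, you can cite directly the observation made right after the definition of $E$.
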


\begin{proof}
We only consider the case $E\neq\emptyset$. Let  \((x_n)_{n\geq 1}\subset E\) be a sequence such that
$x_n\to x_0\in\R$
for some $x_0\in[0,1]$.
Passing to a subsequence,  we can assume that
$r(x_n)\to r_0\in\R\cup\{\infty\}.$
 Since $\mathcal{S}$ does not consist of a single graph, we have $r_0\neq\infty$.
Since the functions $\Delta_{\mathbf{c},\mathbf{d}}(x)$
and 
$\Delta_{\mathbf{c},\mathbf{d}}'(x)$ are continuous on $[0,1]$,  we get
$$
r_0\cdot \Delta_{\mathbf{c},\mathbf{d}}(x_0)
+
\Delta_{\mathbf{c},\mathbf{d}}'(x_0)
=0\qquad\text{for all }\mathbf{c},\mathbf{d}\in\mathcal{A}^{\infty}.
$$
Then $x_0\in E$.
By the uniqueness of $r(x)$, we have  $r(x_0)=r_0$.  Then $E$ is closed set.

By the similar argument as above,  $r(x)$ is continuous function on $E$.
\end{proof}

Since it 
sometimes is convenient to work on $\mathbb{R}$, we also denote by $f$ the lift $\widetilde{f}$ of $f$ to a diffeomorphism of $\mathbb{R}$.

\begin{lemma}\label{lem:fail-formula-lambda(x)}
If $E\neq\emptyset$, then $E=\mathbb{R}$ and 
    \begin{equation}\label{eq:fail-formula-lambda(x)-0}
        r(x)\equiv \frac{\lambda'(x)}{\lambda(x)}+r( f(x))\cdot f'(x).
    \end{equation}
\end{lemma}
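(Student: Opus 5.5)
The proof rests on one identity. Inserting a single letter in front of $\mathbf{c},\mathbf{d}$ turns a relation at a point $y$ into a relation at its preimage $f_i(y)$. I will use this to show that $E$ is invariant under all inverse branches. Once that is done, $E$ is dense, hence equal to $\mathbb R$, and \eqref{eq:fail-formula-lambda(x)-0} comes out of the same computation.

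\textbf{The basic identity.} Apply \eqref{eq:derivative-minus-two-symbol-same-strart-lambda(x)-2} with $\mathbf{b}=i\in\mathcal{A}$ a single letter. Here $\lambda_i(x)=\lambda(f_i(x))$ and $i[x]=f_i(x)$. Write $z:=f_i(x)$, so $x=f(z)-i$ and $f_i'(x)=1/f'(z)$. Then for all $\mathbf{c},\mathbf{d}\in\mathcal{A}^\infty$,
$$
\Delta_{i\mathbf{c},i\mathbf{d}}'(x)=\lambda'(z)f_i'(x)\,\Delta_{\mathbf{c},\mathbf{d}}(z)+\lambda(z)f_i'(x)\,\Delta_{\mathbf{c},\mathbf{d}}'(z),
$$
together with $\Delta_{i\mathbf{c},i\mathbf{d}}(x)=\lambda(z)\Delta_{\mathbf{c},\mathbf{d}}(z)$, which follows from \eqref{eq:theta-series-two-symbols}.

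\textbf{Backward invariance and density.} Suppose $x\in E$. Apply the defining relation of $E$ to the pairs $i\mathbf{c},i\mathbf{d}$ and substitute the identity above. Dividing by $\lambda(z)f_i'(x)\neq0$ gives
$$
\Big(\tfrac{\lambda'(z)}{\lambda(z)}+r(x)f'(z)\Big)\Delta_{\mathbf{c},\mathbf{d}}(z)+\Delta_{\mathbf{c},\mathbf{d}}'(z)=0
\quad\text{for all }\mathbf{c},\mathbf{d}.
$$
Hence $z=f_i(x)\in E$, with $r(z)=\frac{\lambda'(z)}{\lambda(z)}+r(x)f'(z)$, where we use the uniqueness of $r$. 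Reducing mod $1$ via \eqref{eq:r(0)=r(1)}, $E$ is invariant under every inverse branch $f_i$ and under integer translations. Starting from one point $x_0\in E$, the points $\mathbf{u}[x_0]$, $\mathbf{u}\in\mathcal{A}^*$, are dense in $[0,1]$. Adding integer translates, $E$ is dense in $\mathbb R$. By Lemma~\ref{lem:closedness-and-continuity-of-r}, $E$ is closed, so $E=\mathbb R$.

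\textbf{The functional equation.} Since every $z\in\mathbb R$ is $f_i(x)$ with $x=f(z)-i$, the relation above reads
$$
r(z)=\frac{\lambda'(z)}{\lambda(z)}+r(f(z)-i)f'(z).
$$
Using $r(f(z)-i)=r(f(z))$ by \eqref{eq:r(0)=r(1)}, this is exactly \eqref{eq:fail-formula-lambda(x)-0}.

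\textbf{Where care is needed.} The main point to watch is the bookkeeping of letter order. In the convention \eqref{def:u[x]}, prepending a letter $i$ to $\mathbf{c}$ corresponds to composing with $f_i$ first. This is why the new relation sits at $f_i(x)$, and why the invariance is under inverse branches rather than under $f$ itself. That direction is precisely what density requires.
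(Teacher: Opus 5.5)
Your proposal is correct and follows essentially the same route as the paper's proof. It uses the same one-letter identity, proves invariance of $E$ under the inverse branches, gets $E=\mathbb{R}$ from density of $\{\mathbf{u}[x_0]\}$ together with closedness (Lemma~\ref{lem:closedness-and-continuity-of-r}) and periodicity, and reads off the functional equation from the same computation. Your final step, which obtains the equation at every $z$ directly because each $f_i$ is surjective on $\mathbb{R}$ once $E=\mathbb{R}$, is a slightly cleaner replacement for the paper's brief appeal to continuity of $r$.
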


\begin{proof}
Fix
a point $x_0\in E$. By \eqref{eq:r(0)=r(1)}, we can assume that $x_0\in [0,1]$.

Let $\mathbf{c},\mathbf{d}\in\mathcal{A}^{\infty}$, and let $\mathbf{b}\in\mathcal{A}$.
Then
\begin{equation}\label{eq:fail-formula-lambda-1}
r(x_0)\cdot \Delta_{\mathbf{b}\mathbf{c},\mathbf{b}\mathbf{d}}(x_0)
+
\Delta_{\mathbf{b}\mathbf{c},\mathbf{b}\mathbf{d}}'(x_0)
=0
\end{equation}

Let $y_0=\mathbf{b}[x_0]$. Thus $f(y_0)=x_0+k$ for some $k\in\mathbb{Z}$. By \eqref{eq:r(0)=r(1)},  $f(y_0)\in E$ and
$$
r(x_0)=r(x_0+k)=r(f(y_0)).
$$

Combining \eqref{eq:theta-series-two-symbols} with \eqref{eq:derivative-minus-two-symbol-same-strart-lambda(x)-2} and
$\lambda_{\mathbf{b}}(x)=\lambda(\mathbf{b}[x])$, we obtain
$$ \Delta_{\mathbf{b}\mathbf{c},\mathbf{b}\mathbf{d}}(x_0)=\lambda(y_0)\cdot \Delta_{\mathbf{c},\mathbf{d}}(y_0)  $$
and
$$\Delta_{\mathbf{b}\mathbf{c},\mathbf{b}\mathbf{d}}'(x) 
 =
 (\mathbf{b}[x_0])'\cdot\lambda'(y_0)\cdot 
\Delta_{\mathbf{c},\mathbf{d}}
\bigl(y_0\bigr)+
 \lambda(y_0)\cdot
\bigl(\mathbf{b}[x_0]\bigr)'
\Delta_{\mathbf{c},\mathbf{d}}'
\bigl(y_0\bigr).$$
Combining these with \eqref{eq:fail-formula-lambda-1} and $ (\mathbf{b}[x_0])'=1/f'(y_0)$ and $r(x_0)=r(f(y_0))$, we get
$$
    \left(  f'(y_0)\cdot r(f(y_0))+\frac{\lambda'(y_0)}{\lambda(y_0)}  \right)\cdot 
    \Delta_{\mathbf{c},\mathbf{d}}(y_0)+\Delta_{\mathbf{c},\mathbf{d}}'
\bigl(y_0\bigr)=0.
$$
By the arbitrariness of \(\mathbf{c},\mathbf{d}\in\mathcal{A}^{\infty}\) and the uniqueness of \(r(y_0)\), we have
$y_0\in E$  and
\begin{equation}\label{eq:fail-formula-lambda-2}
    r(y_0)=f'(y_0)\cdot r(f(y_0))+\frac{\lambda'(y_0)}{\lambda(y_0)}.
\end{equation}

{
Thus $\mathbf{b}[x_0]\in E$ for every $x_0\in E$ and
$\mathbf{b}\in\mathcal{A}$. By induction, 
$\mathbf{b}[x_0]\in E$ for every $\mathbf{b}\in\mathcal{A}^*$.
Since the set
$\{\mathbf{b}[x_0]\}_{\mathbf{b}\in\mathcal{A}^*}$
is dense in $[0,1]$, Lemma \ref{lem:closedness-and-continuity-of-r}
implies that $[0,1]\subset E$. Combining this with \eqref{eq:r(0)=r(1)}, we get
$E=\mathbb{R}$.
}

Since $r(y)$ is a continuous function on $E$, by the similar argument, \eqref{eq:fail-formula-lambda-2} implies that
$$ r(y)=f'(y)\cdot r(f(y))+\frac{\lambda'(y)}{\lambda(y)}\qquad\text{for all }y\in \R.$$
Thus this lemma holds.
\end{proof}

In the next two lemmas, we will handle the case when \eqref{eq:fail-formula-lambda(x)-0} holds.

{
\begin{lemma}\label{lem:r(x)-fromula}
    If $r:\mathbb{R}\to\mathbb{R}$ is $\mathbb{Z}$-periodic and satisfies
    \begin{equation}\label{eq: degeneracy-formula}
        r(x)\equiv \frac{\lambda'(x)}{\lambda(x)}+r( f(x))\cdot f'(x),
    \end{equation}
    then there exist a $\mathbb{Z}$-periodic $C^1$ function $R$ on $\mathbb{R}$ and a constant $\lambda_0$ with $|\lambda_0|\in(0,1)$ with
$$\lambda(x)\equiv\lambda_0\exp{\left(R(x)-R(f(x))\right)}.$$
\end{lemma}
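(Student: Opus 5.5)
The plan is to integrate the cohomological equation \eqref{eq: degeneracy-formula} along the dynamics. Let $\ell:=\log|\lambda|$, a $\mathbb{Z}$-periodic real analytic function with $\ell'=\lambda'/\lambda$; the equation reads $r=\ell'+(r\circ f)\,f'$. I want a periodic $C^1$ function $R$ and a constant $c$ with $\ell=c+R-R\circ f$. Differentiating this target gives $\ell'=R'-(R'\circ f)f'$. Comparing with the hypothesis, the natural candidate is $R'=r$, i.e.\ $R(x):=\int_0^x r(s)\,ds+\text{(correction)}$. Since $R$ must be periodic, I need $\int_0^1 r=0$, or else a fix.

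Steps, in order. First, I note that $r$ is continuous: in the application it comes from Lemma \ref{lem:closedness-and-continuity-of-r}. If needed, I will assume continuity as part of the hypothesis, since a merely measurable $r$ would not make sense for a $C^1$ conclusion. Second, I set $F(x):=\ell(x)-\int_0^x r(s)\,ds+\int_0^{f(x)} r(s)\,ds$. Differentiating, $F'=\ell'-r+(r\circ f)f'=0$, so $F\equiv c_1$ is constant. Thus $\ell(x)=c_1+P(x)-P(f(x))$ with $P(x):=\int_0^x r$.

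Third, I handle periodicity. Put $m:=\int_0^1 r$; then $P(x+1)=P(x)+m$. Evaluating the identity at $x+1$, and using $f(x+1)=f(x)+b$ and periodicity of $\ell$, gives $0=m-bm$, so $m=0$ because $b>1$. Hence $P$ is $\mathbb{Z}$-periodic and $C^1$, and I set $R:=P$. Exponentiating gives $|\lambda(x)|=e^{c_1}e^{R(x)-R(f(x))}$. Since $\lambda$ is continuous and never zero, it has constant sign $\epsilon$, so $\lambda_0:=\epsilon e^{c_1}$.

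Fourth, I check $|\lambda_0|\in(0,1)$. At the fixed point $0$ of $f$, the relation gives $\lambda(0)=\lambda_0 e^{R(0)-R(0)}=\lambda_0$, and $0<|\lambda(0)|<1$.

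The only delicate point is the periodicity/mean-zero step, which the degree relation $f(x+1)=f(x)+b$ resolves. The continuity of $r$ must also be justified, and it comes from Lemma \ref{lem:closedness-and-continuity-of-r} in the case $E=\mathbb{R}$.
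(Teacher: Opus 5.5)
Your proposal is correct and follows essentially the paper's route. The paper proves $\int_0^1 r=0$ from the degree relation $f(x+1)=f(x)+b$ with $b>1$, by integrating the equation over $[0,1]$ to get $\int_0^1 r=b\int_0^1 r$; your $m=bm$, obtained by shifting the integrated identity, is the same computation. It then takes $R(x)=\int_0^x r(s)\,ds$ and $\lambda_0=\lambda(0)$ via the fixed point $f(0)=0$, exactly as you do. Your explicit continuity assumption on $r$ is also tacitly needed in the paper's proof (to get $R\in C^1$ with $R'=r$), and Lemma \ref{lem:closedness-and-continuity-of-r} supplies it in the application.
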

}

\begin{proof}
{
Since $|\lambda(x)|>0$, we only consider the case $\lambda(x)>0$, the other case is the same.
}

Let $b>1$ be the degree of the expanding map $f$. 
By \eqref{eq: degeneracy-formula},
$$\int_0^1r(x)dx= \int_0^1\frac{\lambda'(x)}{\lambda(x)}dx +\int_0^1 r( f(x))\cdot f'(x)dx.$$
    Combining this with $\lambda(0)=\lambda(1)$, we get
    $$  \int_0^1r(x)dx=b  \int_0^1r(x)dx.$$
Therefore $\int_0^1r(x)dx=0$. We  may define the $\Z$-periodic function $R\in C^1(\R;\R)$ by
$$ R(x):= \int_0^xr(s)ds. $$
Then \eqref{eq: degeneracy-formula} implies that 
$$ R'(x) \equiv (\log\lambda(x))'+(R(f(x)))'. $$
{
Since $f(0)=0$, the claim holds with $\lambda_0=\lambda(0)$.
}
\end{proof}

Since $R$ is $\mathbb{Z}$-periodic function, we also regard $R$ as a function on $\mathbb{S}^1$.

\begin{lemma}\label{lem:reduction-to-constant-lambda}
{
Suppose that there exist a function $R\in C^1(\mathbb{S}^1;\mathbb{R})$ and a constant $\lambda_0$ with $|\lambda_0|\in(0,1)$ satisfying $$\lambda(x)\equiv\lambda_0\exp{\left(R(x)-R(f(x))\right)}.$$
}
    Let $\tilde{\phi}(x):=e^{-R(f(x))}\cdot \phi(x)$.
Then  for each $\mathbf{a}\in\mathcal{A}^\infty$, we obtain:
\begin{equation}
    \theta_{\mathbf{a}}(x)=e^{R(x)}\cdot\tilde{\theta}_{\mathbf{a}}(x),
\end{equation}
where the function $\tilde{\theta}_{\mathbf{a}}:\R\to\R$ is defined by
$$
    \tilde{\theta}_\mathbf{a}(x)=\sum_{n=1}^\infty\lambda_0^{n-1}\cdot \tilde{\phi}((\mathbf{a}|_n)[x]).
$$ 
\end{lemma}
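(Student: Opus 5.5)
The plan is a direct term-by-term computation in the series representation \eqref{eq:theta-series-lambda(x)}. Fix $\mathbf{a}\in\mathcal{A}^\infty$ and $x\in\R$, and write $y_k:=(\mathbf{a}|_k)[x]$ for $k\geq 0$, with $y_0=x$. By \eqref{def:u[x]} and the definition $f_i(x)=\widetilde f^{-1}(x+i)$, we have $\widetilde f(y_k)=y_{k-1}+a_k$. Since $R$ is $\Z$-periodic (it is regarded as a function on $\mathbb{S}^1$), this gives
$$R(f(y_k))=R(y_{k-1})\qquad\text{for all }k\geq 1.$$
This identity is the only dynamical input needed. It turns the product defining $\lambda_{\mathbf{a}|_{n-1}}$ into a telescoping one.

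\textbf{Step 1 (the weights).} Substitute the hypothesis $\lambda=\lambda_0\exp(R-R\circ f)$ into \eqref{eq:lambda_u(x)}:
$$\lambda_{\mathbf{a}|_{n-1}}(x)=\prod_{k=1}^{n-1}\lambda(y_k)=\lambda_0^{\,n-1}\exp\Big(\sum_{k=1}^{n-1}\big(R(y_k)-R(f(y_k))\big)\Big).$$
Using $R(f(y_k))=R(y_{k-1})$, the sum in the exponent telescopes to a boundary term involving only $R(x)=R(y_0)$ and $R(y_{n-1})$. Here I would carefully record the orientation of the telescoping, i.e.\ which endpoint carries which sign.

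\textbf{Step 2 (the fiber terms).} Rewrite each term $\phi(y_n)$ of \eqref{eq:theta-series-lambda(x)} through $\tilde\phi$. By definition $\phi(y_n)=e^{R(f(y_n))}\tilde\phi(y_n)=e^{R(y_{n-1})}\tilde\phi(y_n)$. The factor $e^{R(y_{n-1})}$ is designed to combine with the boundary term $R(y_{n-1})$ produced in Step 1. The intended outcome is that the $n$-th term becomes $\lambda_0^{n-1}$ times the single $x$-dependent factor coming from the endpoint $R(x)$, times $\tilde\phi(y_n)$.

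\textbf{Step 3 (summation and convergence).} Factor the common term depending only on $x$ out of the series to get the claimed identity $\theta_{\mathbf{a}}=e^{R}\tilde\theta_{\mathbf{a}}$. Convergence is immediate: $|\lambda_0|<1$ and $\tilde\phi$ is bounded, since $R$ is continuous and periodic and $\phi$ is bounded. So the series defining $\tilde\theta_{\mathbf{a}}$ converges absolutely and uniformly on $\R$. Hence one may first argue with the finite words $\mathbf{a}|_m$ and then pass to the limit $m\to\infty$, consistent with how $\theta_{\mathbf{a}}$ is defined as $\lim_m\theta_{\mathbf{a}|_m}$.

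The main obstacle is purely bookkeeping. The telescoping in Step 1 and the factor $e^{R(f(y_n))}$ in Step 2 must cancel exactly, leaving only the prescribed factor $e^{R(x)}$ in front. First I would verify this cancellation explicitly on the first two terms, $n=1,2$, before writing the general step. One should also check separately the case $\lambda<0$, i.e.\ $\lambda_0<0$, which is reduced to the positive case as in Lemma \ref{lem:r(x)-fromula}. Everything else is a routine induction on $n$.
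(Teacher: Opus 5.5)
The step you set aside as ``purely bookkeeping'' is exactly where the argument breaks. No bookkeeping can fix it, because the lemma as stated has a sign error. Do the telescoping explicitly. The hypothesis gives $\lambda(y_k)=\lambda_0e^{R(y_k)-R(y_{k-1})}$, hence $\prod_{k=1}^{n-1}\lambda(y_k)=\lambda_0^{n-1}e^{R(y_{n-1})-R(x)}$. On the other hand, $\phi(y_n)=e^{R(f(y_n))}\tilde\phi(y_n)=e^{R(y_{n-1})}\tilde\phi(y_n)$. So the $n$-th term of $\theta_{\mathbf a}(x)$ is
\[
\lambda_0^{n-1}\,e^{-R(x)}\,e^{2R(y_{n-1})}\,\tilde\phi(y_n),
\]
not $e^{R(x)}\lambda_0^{n-1}\tilde\phi(y_n)$. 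The two factors $e^{R(y_{n-1})}$ reinforce each other instead of cancelling, and the surviving $x$-factor is $e^{-R(x)}$.

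The $n=1$ terms agree, but the $n=2$ terms already differ by the factor $e^{2(R(y_1)-R(x))}$. The full sums differ too. For instance, for $\phi\equiv c\neq 0$ one gets $\theta_{\mathbf a}(x)-e^{R(x)}\tilde\theta_{\mathbf a}(x)=-2c\sum_{n\ge 2}\lambda_0^{n-1}\sinh\bigl(R(x)-R(y_{n-1})\bigr)$. This is nonzero whenever $\lambda_0>0$ and $R(y_k)<R(x)$ for all $k\ge 1$. A concrete case is $f(x)=bx$, $\mathbf a=00\cdots$, $R=\epsilon\sin 2\pi x$, $x=1/4$. So the check on $n=1,2$ that you propose would refute the statement rather than confirm it.

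The correct version keeps the hypothesis $\lambda=\lambda_0e^{R-R\circ f}$, which is what Lemma~\ref{lem:r(x)-fromula} actually produces from $R'=r$. It sets $\tilde\phi:=e^{R\circ f}\phi$ and concludes $\theta_{\mathbf a}=e^{-R}\tilde\theta_{\mathbf a}$. Equivalently, replace $R$ by $-R$ in the hypothesis. With that sign your computation closes exactly: the $n$-th term becomes $\lambda_0^{n-1}e^{R(y_{n-1})-R(x)}e^{-R(y_{n-1})}\tilde\phi(y_n)=e^{-R(x)}\lambda_0^{n-1}\tilde\phi(y_n)$. This is also the sign forced by the degeneracy relation $r\Delta_{\mathbf c,\mathbf d}+\Delta_{\mathbf c,\mathbf d}'=0$, which says $(e^{R}\Delta_{\mathbf c,\mathbf d})'=0$.

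The paper's conjugacy proof contains the same slip. For $H(x,y)=(x,e^{R(x)}y)$, the identity $H\circ\tilde T=T\circ H$ requires $\lambda e^{R}=\lambda_0e^{R\circ f}$, not the relation $\lambda e^{R\circ f}=\lambda_0e^{R}$ that the hypothesis supplies. Downstream, the fix only flips the sign of $R$, and hence of $s_1$, in \eqref{eq:normalized-derivative-decomposition}. That leaves the projection argument for Proposition~\ref{pro:Separation-zero-case} intact.

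Once the sign is corrected, your term-by-term telescoping is a valid and more elementary alternative to the paper's conjugacy-and-limit argument. Your separate treatment of $\lambda_0<0$ is unnecessary: the sign sits in $\lambda_0^{n-1}$ and never enters the exponent.
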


\begin{proof}
We shall first consider the maps $\tilde{T},H:\mathbb{S}^1\times\R\to\mathbb{S}^1\times\R$ defined by 
$$  \tilde{T}(x,y):= (f(x),\lambda_0y+ \tilde{\phi}(x))\qquad  H(x,y):=(x, e^{R(x)}y).$$
Recall that $T(x,y)=(f(x),\lambda(x)y+\phi(x))$. Then by $\lambda(x)e^{R(f(x))}=\lambda_0e^{R(x)}$ we have
\begin{equation}
\label{eq:conjugacy-relation}
H\circ \widetilde{T}(x,y)
=
T\circ H(x,y).
\end{equation}

{
Since $H(x,0)=(x,0)$, then for any
$\mathbf{a}\in\mathcal{A}^{\infty}$ and $x\in[0,1]$,  we have
$$
(x,\theta_{\mathbf{a}}(x))
=
\lim_{n\to\infty}
T^n\bigl((\mathbf{a}|_n)[x],0\bigr)
=
\lim_{n\to\infty}
T^n\left(
H\bigl((\mathbf{a}|_n)[x],0\bigr)
\right)
$$
and
$$
(x,\widetilde{\theta}_{\mathbf{a}}(x))
=
\lim_{n\to\infty}
\widetilde{T}^{\,n}
\bigl((\mathbf{a}|_n)[x],0\bigr).
$$
}

Combining these with \eqref{eq:conjugacy-relation}, we  get
$$(x,\theta_{\mathbf{a}}(x))=H(x,\tilde{\theta}_{\mathbf{a}}(x)).$$
Thus $\theta_{\mathbf{a}}(x)=e^{R(x)}\cdot\tilde{\theta}_{\mathbf{a}}(x)$. Therefore this lemma holds.
\end{proof}

{We now reduce the problem once again to a projection estimate for $\widetilde{\theta}_{\mathbf{a}}$.}

Under the assumption of Lemma \ref{lem:reduction-to-constant-lambda}, for $\mathbf{b}\in\mathcal{A}^*$ and $\mathbf{c}, \mathbf{d}\in\mathcal{A}^*\cup\mathcal{A}^\infty$, by \eqref{eq:theta-series-two-symbols},
\begin{equation}\nonumber
    \begin{split}
     \left(\theta_{\mathbf{b}\mathbf{c}}-\theta_{\mathbf{b}\mathbf{d}} \right)'(x)&=  \left(e^{R(x)}\cdot\left(     \tilde{\theta}_{\mathbf{b}\mathbf{c}}-\tilde{\theta}_{\mathbf{b}\mathbf{d}} \right)(x)\right)'\\&=\left(e^{R(x)}\cdot\left(     \tilde{\theta}_{\mathbf{c}}-\tilde{\theta}_{\mathbf{d}} \right)(\mathbf{b}[x])\right)'\\&=
     e^{R(x)}\cdot\left( R'(x)\left(     \tilde{\theta}_{\mathbf{c}}-\tilde{\theta}_{\mathbf{d}} \right)(\mathbf{b}[x])+\mathbf{b}[x]' \cdot \left(     \tilde{\theta}_{\mathbf{c}}-\tilde{\theta}_{\mathbf{d}} \right)'(\mathbf{b}[x])\right).
    \end{split}
\end{equation}
 Define
\begin{equation}\label{eq:s_1-s_2}
     s_1(\mathbf{b}):=\frac{R'(x)}{\sqrt{|R'(x)|^2+|\mathbf{b}[x]'|^2}  }\qquad  s_2(\mathbf{b}):= \frac{\mathbf{b}[x]'}{\sqrt{|R'(x)|^2+|\mathbf{b}[x]'|^2}  }, 
\end{equation}
then we have
\begin{equation}\label{eq:normalized-derivative-decomposition}   \frac{\left(\theta_{\mathbf{b}\mathbf{c}}-\theta_{\mathbf{b}\mathbf{d}} \right)'(x)}{e^{R(x)}\cdot\sqrt{|R'(x)|^2+|\mathbf{b}[x]'|^2}}=s_1\cdot \left(     \tilde{\theta}_{\mathbf{c}}-\tilde{\theta}_{\mathbf{d}} \right)(\mathbf{b}[x])+s_2\cdot  \left(     \tilde{\theta}_{\mathbf{c}}-\tilde{\theta}_{\mathbf{d}} \right)'(\mathbf{b}[x]).
\end{equation}

\begin{lemma}\label{lem:uniform-transversality-constant-lambda}
Under the assumptions of Lemma \ref{lem:reduction-to-constant-lambda}, let
$\tilde{\phi}$ and $\tilde{\theta}_{\mathbf{a}}$ be given in that lemma.
Then 
\begin{equation}\label{eq:def-tilde-Delta}
\tilde{\Delta}
:=
\min_{\substack{x\in[0,1]\\ \eta\in[0,2\pi]}}
\max_{\mathbf{c},\mathbf{d}\in\mathcal{A}^{\infty}}
\left|
\cos\eta\,
\bigl(\tilde{\theta}_{\mathbf{c}}-\tilde{\theta}_{\mathbf{d}}\bigr)(x)
+
\sin\eta\,
\bigl(\tilde{\theta}_{\mathbf{c}}'-\tilde{\theta}_{\mathbf{d}}'\bigr)(x)
\right|
>0.
\end{equation}
\end{lemma}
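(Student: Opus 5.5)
We argue by contradiction. Suppose $\tilde{\Delta}=0$. By compactness of $[0,1]\times[0,2\pi]$ and continuity of $(x,\eta)\mapsto\max_{\mathbf c,\mathbf d}|\cdots|$, the minimum is attained. So there are $x_0\in[0,1]$ and $\eta_0$ with
\[
\cos\eta_0\,\tilde\Delta_{\mathbf c,\mathbf d}(x_0)+\sin\eta_0\,\tilde\Delta_{\mathbf c,\mathbf d}'(x_0)=0\qquad\text{for all }\mathbf c,\mathbf d\in\mathcal A^\infty,
\]
where $\tilde\Delta_{\mathbf c,\mathbf d}:=\tilde\theta_{\mathbf c}-\tilde\theta_{\mathbf d}$. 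The curves $\tilde\theta_{\mathbf a}$ are the unstable curves of the skew product $\tilde T(x,y)=(f(x),\lambda_0y+\tilde\phi(x))$. This system is again affine in the fiber, now with constant $\lambda_0$. Since $\theta_{\mathbf a}=e^{R}\tilde\theta_{\mathbf a}$, its solenoid is a single graph if and only if the original one is, so it is not.

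The whole argument of Lemmas \ref{lem:closedness-and-continuity-of-r} and \ref{lem:fail-formula-lambda(x)} can be rerun for $\tilde T$. Its inputs were Lemma \ref{lem:addition-theta_u-formal}, the not-a-single-graph hypothesis, and the formulas \eqref{eq:theta-series-two-symbols}–\eqref{eq:derivative-minus-two-symbol-same-strart-lambda(x)-2}, which all hold with $\lambda\equiv\lambda_0$. One point needs care when $\sin\eta_0=0$. Then $\tilde\Delta_{\mathbf c,\mathbf d}(x_0)=0$ for all $\mathbf c,\mathbf d$. Lemma \ref{lem:Dleta-nonzero-formal} with $\mathbf w=\emptyset$, applied to $\tilde T$, rules this out. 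Hence $\sin\eta_0\neq0$, and $x_0\in\tilde E$ with $\tilde r(x_0)=\cot\eta_0$.

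Lemma \ref{lem:fail-formula-lambda(x)} now gives $\tilde E=\mathbb R$ and a continuous $\mathbb Z$-periodic $\tilde r$ solving
\[
\tilde r(x)=\frac{\lambda_0'}{\lambda_0}+\tilde r(f(x))f'(x)=\tilde r(f(x))f'(x).
\]
Iterating along inverse branches gives $\tilde r(x)=\tilde r(\mathbf u[x])/(\mathbf u[x])'$ in the reversed form: $\tilde r(\mathbf u[x])=(\mathbf u[x])'\,\tilde r(x)$, after applying the relation at $y=\mathbf u[x]$ repeatedly. Hence $|\tilde r(\mathbf u[x])|\le \kappa_{\max}^{|\mathbf u|}\|\tilde r\|_\infty$. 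Every point of $[0,1]$ has preimages of arbitrarily high order, and $\tilde r$ is bounded. Writing $y=\mathbf u[x]$ with $x=f^{|\mathbf u|}(y)$ and letting $|\mathbf u|\to\infty$ forces $\tilde r\equiv0$. Therefore $\tilde\Delta_{\mathbf c,\mathbf d}'\equiv0$ for all $\mathbf c,\mathbf d$, so all $\tilde\theta_{\mathbf c}'$ coincide. By Lemma \ref{lem:rigidity-derivatives}, applied to $\tilde T$, the solenoid of $\tilde T$ is a single graph, and hence so is $\mathcal S$. This is a contradiction.

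The main obstacle is making the transfer to $\tilde T$ legitimate. The cleanest route would be to work directly with the formula $\tilde\theta_{\mathbf b\mathbf c}(x)=\tilde\phi(\mathbf b[x])+\lambda_0\tilde\theta_{\mathbf c}(\mathbf b[x])$ for $\mathbf b\in\mathcal A$, which is immediate from the series. With it, one redoes the short computation of Lemma \ref{lem:fail-formula-lambda(x)} with the $\lambda'$ term vanishing. This avoids checking that $\tilde T$ maps a bounded strip into itself.
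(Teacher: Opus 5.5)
Your argument is correct and is essentially the paper's proof: assume $\tilde\Delta=0$, rerun Lemma~\ref{lem:fail-formula-lambda(x)} for the constant-$\lambda_0$ system to get a bounded $\tilde r$ with $\tilde r(x)=f'(x)\,\tilde r(f(x))$, use expansion to force $\tilde r\equiv 0$, and conclude via Lemma~\ref{lem:rigidity-derivatives}. Your explicit handling of the attainment of the minimum and of the case $\sin\eta_0=0$ (needed to get $\tilde E\neq\emptyset$) fills in steps the paper leaves implicit. One slip: evaluating the relation at $y=\mathbf u[x]$ gives $\tilde r(\mathbf u[x])=\tilde r(x)/(\mathbf u[x])'$, i.e.\ $\tilde r(x)=(\mathbf u[x])'\,\tilde r(\mathbf u[x])$. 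The correct estimate is therefore $|\tilde r(x)|\le\kappa_{\max}^{|\mathbf u|}\|\tilde r\|_\infty$ for every $x$ and every $\mathbf u$, not a bound on $\tilde r(\mathbf u[x])$; it still gives $\tilde r\equiv0$, and the paper's displayed iterate contains the same inversion.
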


\begin{proof}
Using the same argument as in the proof of Lemma \ref{lem:fail-formula-lambda(x)}, there exists a continuous function
$\widetilde{r}\in C(\mathbb{S}^1,\mathbb{R})$ 
satisfying
\begin{equation}\label{eq:defining-relation-for-r}
\widetilde{r}(x)
\bigl(\widetilde{\theta}_{\mathbf{c}}
-\widetilde{\theta}_{\mathbf{d}}\bigr)(x)
+
\bigl(\widetilde{\theta}_{\mathbf{c}}
-\widetilde{\theta}_{\mathbf{d}}\bigr)'(x)
\equiv 0
\qquad
\text{for each }
\mathbf{c},\mathbf{d}\in\mathcal{A}^{\infty},
\end{equation}
and
$$
\widetilde{r}(x)\equiv f'(x)\widetilde{r}(f(x)).
$$
{
Here the latter equation follows from  that the fiber coefficient of the conjugated system is the constant $\lambda_0$.
}
{
Iterating above, for any $x\in\mathbb{S}^1$ and $n\in\mathbb{N}$, we have
$$
\widetilde{r}(x)
=
(f^n)'(x)\cdot \widetilde{r}(f^n(x)).
$$
}

Then fix a point $y\in\mathbb{S}^1$, we obtain:
    $$ \widetilde{r}(\mathbf{b}[y])=\frac{ \widetilde{r}(y)}{(f^n)'(\mathbf{b}[y])}\qquad\text{for all }\mathbf{b}\in\mathcal{A}^*. $$
Combining this with  that the set $\{\mathbf{b}[y] \}_{\mathbf{b}\in \mathcal{A}^*}$ is dense set on $\mathbb{S}^1$ and
$$|(f^n)'(\mathbf{b}[y])|\ge (s_{\min})^{|\mathbf{b}|}\qquad s_{\min}:=\inf_{t\in \mathbb{S}^1}|f'(t)|>1,$$
we get
$ \widetilde{r}(x)\equiv 0.$ Thus \eqref{eq:defining-relation-for-r} implies that
$$ \bigl(\tilde{\theta}_{\mathbf{c}}
-\tilde{\theta}_{\mathbf{d}}\bigr)'(x)
\equiv 0\qquad\text{for all }\mathbf{c},\mathbf{d}\in\mathcal{A}^{\infty}.$$
{
By Lemma \ref{lem:reduction-to-constant-lambda}, the solenoidal attractor $\widetilde{ \mathcal{S} }$ of the conjugated system does not consist of a single graph. This contradicts Lemma \ref{lem:rigidity-derivatives}.
}
Thus our assumption fails and the lemma holds.
\end{proof}

\begin{proof}[Proof of Proposition \ref{pro:Separation-zero-case}]
By Lemma \ref{lem:non-degeneracy-case}, we only need to prove the case where
$\Delta=0$, equivalently, $E\neq\emptyset$.
By Lemmas \ref{lem:fail-formula-lambda(x)} and \ref{lem:r(x)-fromula},
the assumptions of Lemma \ref{lem:reduction-to-constant-lambda} hold.
Let $\tilde{\phi}$ and $\tilde{\theta}_{\mathbf{a}}$ be defined as in that lemma.
Then, by Lemma \ref{lem:uniform-transversality-constant-lambda}, we have
\begin{equation}\label{eq:uniform-transversality}
    \min_{\substack{x\in[0,1]\\ \eta\in[0,2\pi]}}
\max_{\mathbf{c},\mathbf{d}\in\mathcal{A}^{\infty}}
\left|
\cos\eta\,
\bigl(\tilde{\theta}_{\mathbf{c}}-\tilde{\theta}_{\mathbf{d}}\bigr)(x)
+
\sin\eta\,
\bigl(\tilde{\theta}_{\mathbf{c}}'-\tilde{\theta}_{\mathbf{d}}'\bigr)(x)
\right|
>0.
\end{equation}
Recall that $\tilde{\theta}_{\mathbf{a}}$ is the function associated with the affine fiber map
$$
\tilde{g}(x,y)=\lambda_0 y+\tilde{\phi}(x),
$$
where $\lambda_0$ is a constant with $|\lambda_0|\in (0,1)$.

Therefore, combining the above transversality estimate \eqref{eq:uniform-transversality} with
\eqref{eq:normalized-derivative-decomposition} and \eqref{eq:s_1-s_2},
and applying the similar argument as in the proof of Lemma
\ref{lem:non-degeneracy-case} to the family
$\{\tilde{\theta}_{\mathbf{a}}\}_{\mathbf{a}\in\mathcal{A}^{\infty}}$, 
the claim also holds for the case $\Delta=0$.
\end{proof}

%$\\\\\\\\\\\\\\\\\\\\\\\\\\$

\section{Tree Theorem}\label{sec:proof-tree-theorem}

Given $k\in\Z_{>0}$ and $\mathbf{a}\in\mathcal{A}^*$, define the vector valued function
$$ \Theta_{\mathbf{a},k}(x):=\left( \theta_{\mathbf{a}}^{(1)}(x),\ldots,\theta_{\mathbf{a}}^{(k)}(x)  \right)\qquad\text{for }x\in [0,1].$$

The following tree theorem is the main result of this section.

\begin{theorem}
\label{thm:tree-theorem-general-case-formal}
Suppose that $\mathcal{S}$ does not consist of a single graph.

Let $\psi:\mathbb{S}^1\to\R$ be  a H\"older continuous function.
Then there exist $K\in\Z_{>0}$ and constants $C_0,\gamma,\alpha>0$ such that, for every $n\in\Z_{>0}$ and $\sigma>0$, the following holds.

For $x\in [0,1]$ and $t\in\R^K$, we have
$$\sum_{\mathbf{a}\in\mathcal{G}} 
p_{\mathbf{a}}(x)\le C_0(\sigma^{\gamma}+e^{-\alpha n}), $$
where the set $\mathcal{G}$ is defined by 
\begin{equation}\label{def:G}
    \mathcal{G}:=\left\{ \mathbf{a} \in \mathcal{A}^n\::\: \Theta_{\mathbf{a},K}(x) \in \mathbf{B}(t,\sigma) \right\}.
\end{equation}
\end{theorem}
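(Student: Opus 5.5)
The plan is to combine the two separation results, Proposition \ref{pro:Separation-nozero-case} (once Proposition \ref{pro:non-degenecay-H_u-formal} has been checked) and Proposition \ref{pro:Separation-zero-case}, into a single statement. It reads: there exist $K,N\in\mathbb{Z}_{>0}$, $c\in(0,1)$ and $\rho\in(0,1)$ such that for every $x\in[0,1]$ and every $\mathbf{u}\in\mathcal{A}^*$ with $|\mathbf{u}|\ge N$ there are $\mathbf{a},\mathbf{b}\in\mathcal{A}^N$ with
$$\max_{1\le k\le K}\bigl|(\Delta_{\mathbf{u}\mathbf{a}\mathbf{c},\mathbf{u}\mathbf{b}\mathbf{d}})^{(k)}(x)\bigr|\ge c\rho^{|\mathbf{u}|}\qquad\text{for all }\mathbf{c},\mathbf{d}\in\mathcal{A}^*.$$
We take $\rho=r_{\min}$ in the first case and $\rho=r_{\min}\kappa_{\min}$ with $K=1$ in the second. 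Since $\Theta_{\mathbf{a},K}(x)-\Theta_{\mathbf{b},K}(x)$ is exactly the vector of these derivatives, the following holds whenever $2\sigma<c\rho^{|\mathbf{u}|}$: for each such prefix $\mathbf{u}$, at least one of the two cylinders $[\mathbf{u}\mathbf{a}]$, $[\mathbf{u}\mathbf{b}]$ contains no word of $\mathcal{G}$.

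Next, fix $x$, $t$, $\sigma$ and $n$. Set $L:=\min\bigl(\lfloor n/N\rfloor,\ \lfloor \log(c/(2\sigma))/(N\log\rho^{-1})\rfloor\bigr)-1$, and consider the levels $\ell N$ for $1\le\ell\le L$. At such a level every prefix has length at most $LN$, so $c\rho^{|\mathbf{u}|}>2\sigma$. I handle the first level $|\mathbf{u}|<N$ by starting the iteration at $\mathbf{u}$ of length $N$, which costs only a constant factor. I then define the mass of $\mathcal{G}$ inside each cylinder,
$$F_\ell:=\max_{|\mathbf{u}|=\ell N}\ \frac{1}{p_{\mathbf{u}}(x)}\sum_{\mathbf{a}\in\mathcal{G},\ \mathbf{a}\in[\mathbf{u}]}p_{\mathbf{a}}(x).$$
Using \eqref{eq:p-formula}, $p_{\mathbf{u}\mathbf{w}}(x)=p_{\mathbf{u}}(x)p_{\mathbf{w}}(\mathbf{u}[x])$, together with \eqref{eq:sum=1}, the relative mass inside $[\mathbf{u}]$ splits over the children $\mathbf{w}\in\mathcal{A}^N$. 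At least one child has no contribution at all, and that child carries conditional weight $p_{\mathbf{w}}(\mathbf{u}[x])\ge q$. Here $q>0$ is a uniform lower bound, which holds because $\widetilde\psi$ is bounded, so $p_{\mathbf{w}}\ge e^{-N\|\widetilde\psi\|_\infty}$. This gives the recursion $F_{\ell-1}\le(1-q)F_\ell$. Iterating from the top level down, with the trivial bound $F\le1$, yields
$$\sum_{\mathbf{a}\in\mathcal{G}}p_{\mathbf{a}}(x)\le C(1-q)^{L}.$$
Because $L$ is comparable to $\min(n,\log\sigma^{-1})$, this bound is $\lesssim \sigma^{\gamma}+e^{-\alpha n}$ with $\gamma=\log(1-q)^{-1}/(N\log\rho^{-1})$ and $\alpha=\log(1-q)^{-1}/N$.

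A small point: the words are finite, of length $n$. The separation must therefore apply to $\mathbf{c},\mathbf{d}$ in $\mathcal{A}^*$, including the empty word, and must cover continuations from level $\ell N+N$ up to $n$. Both propositions are stated for all $\mathbf{c},\mathbf{d}\in\mathcal{A}^*$, so this is fine.

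The main obstacle is the uniformity of the separation step at every node of the tree and at every scale. The propositions give this uniformity, but one has to check that their constants are independent of $x$, of $\mathbf{u}$ and of the continuation, and that the decay rate $\rho^{|\mathbf{u}|}$ is matched correctly against $\sigma$. The latter is what forces the number of usable levels to be about $\log\sigma^{-1}$ rather than $n$. The remaining bookkeeping, namely the base levels of length below $N$ and the constant $q$, is routine.
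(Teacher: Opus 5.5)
Your proposal is correct and follows the paper's argument: at every level you use Proposition \ref{pro:Separation} to discard one length-$N$ child per node, so each level loses a fixed fraction $1-p_{\min}$ of the mass. The number of usable levels is about $\min\bigl(n/N,\ \log\sigma^{-1}/(N\log\rho^{-1})\bigr)$. The differences are cosmetic: you write the argument as a recursion on relative cylinder masses instead of the paper's nested sets $\mathcal{G}_j$, and you skip the root level (costing only a constant factor) instead of treating it with Lemma \ref{lem:G_a^(k)-G_b^(k)-lower-bound-formal}.
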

Here $\mathbf{B}(t,\sigma)$ is the closed ball in $\R^K$ with center at $t$ and radius $\sigma$.
The positive probability vector $\{p_{\mathbf{a}}(x)\}_{\mathbf{a}\in\mathcal{A}^n } $ is defined by \eqref{eq:p_u}. \\

Throughout this section, we assume that $\mathcal{S}$ does not consist of a single graph. Before heading into the proof, let us recall the transversality bound that follows from our two previous section. The proof of the Tree theorem is essentially an iterated application of this multiscale separation result.

\begin{proposition}\label{pro:Separation}
    There exists $K,N\in\Z_{>0}$ and $c\in(0,1)$ such that the following holds. For any $x\in [0,1]$ and $\mathbf{u}\in\mathcal{A}^*$ satisfying $|\mathbf{u}|\ge N$, there exists 
    $k\in\{1,\dots,K   \}$ and
    $\mathbf{a},\mathbf{b}\in\mathcal{A}^N$
    such that
    $$ |(\Delta_{\mathbf{u}\mathbf{a}\mathbf{c} ,\mathbf{u}\mathbf{b}\mathbf{d}})^{(k)}(x)|\ge c (r_{\min}\kappa_{\min})^{|\mathbf{u}|}\qquad\text{for all }\mathbf{c},\mathbf{d}\in\mathcal{A}^* .$$   
\end{proposition}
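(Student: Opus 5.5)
This follows by combining the two transversality results, split according to whether $\partial_y^2 g$ vanishes identically. Throughout, $\mathcal S$ is assumed not to be a single graph.

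\emph{Case $\partial_y^2 g\not\equiv 0$.} Proposition \ref{pro:non-degenecay-H_u-formal} gives $H_{\mathbf u}'\not\equiv 0$ for every $\mathbf u\in\mathcal A^\infty$. This is exactly the hypothesis of Proposition \ref{pro:Separation-nozero-case}. That proposition yields $K,N$ and $c$ such that, for every $x\in[0,1]$ and every $\mathbf u$ with $|\mathbf u|\ge N$, there are $k\le K$ and $\mathbf a,\mathbf b\in\mathcal A^N$ with
$$|(\Delta_{\mathbf u\mathbf a\mathbf c,\mathbf u\mathbf b\mathbf d})^{(k)}(x)|\ge c\,r_{\min}^{|\mathbf u|}\quad\text{for all }\mathbf c,\mathbf d\in\mathcal A^*.$$
Since $\kappa_{\min}\le\kappa_{\max}<1$, we have $r_{\min}^{|\mathbf u|}\ge (r_{\min}\kappa_{\min})^{|\mathbf u|}$. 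The claimed bound therefore holds with the same $K,N,c$.

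\emph{Case $\partial_y^2 g\equiv 0$.} Proposition \ref{pro:Separation-zero-case} gives $N$ and $c$ such that, for $|\mathbf u|\ge N$, there are $\mathbf a,\mathbf b\in\mathcal A^N$ with
$$|(\Delta_{\mathbf u\mathbf a\mathbf c,\mathbf u\mathbf b\mathbf d})'(x)|\ge c\,(r_{\min}\kappa_{\min})^{|\mathbf u|}\quad\text{for all }\mathbf c,\mathbf d\in\mathcal A^*.$$
This is the statement with $K=1$ and $k=1$. One could also take any $K\ge1$, since choosing $k=1$ is always allowed.

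Only one of the two cases occurs for a given $g$, so constants from a single case suffice and no uniformity across cases is needed.

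The only step that needs checking is that the word length in the conclusion matches the threshold. In Proposition \ref{pro:Separation-nozero-case}, $N$ is both the threshold on $|\mathbf u|$ and the length of $\mathbf a,\mathbf b$, exactly as in the statement. In Proposition \ref{pro:Separation-zero-case}, which rests on Lemma \ref{lem:non-degeneracy-case}, the conclusion holds for all $\mathbf u$. So in both cases the conclusion is as stated. All the real work lives in the two earlier propositions, so I expect no genuine obstacle here.
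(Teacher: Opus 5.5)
Your proposal is correct and is exactly the paper's argument. The paper's proof just combines Proposition \ref{pro:non-degenecay-H_u-formal} with Proposition \ref{pro:Separation-nozero-case} when $\partial_y^2 g\not\equiv 0$, and uses Proposition \ref{pro:Separation-zero-case} when $\partial_y^2 g\equiv 0$. Your two checks, that $r_{\min}^{|\mathbf u|}\ge (r_{\min}\kappa_{\min})^{|\mathbf u|}$ because $\kappa_{\min}<1$ and that $k=1$ is allowed in the affine case, are the bookkeeping the paper leaves implicit, as is the harmless step of shrinking $c$ if needed so that it lies in $(0,1)$.
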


\begin{proof}
    Combining Proposition \ref{pro:non-degenecay-H_u-formal}, Proposition \ref{pro:Separation-nozero-case} and Proposition \ref{pro:Separation-zero-case}, our claim holds.
\end{proof}

\begin{proof}[Proof of Theorem \ref{thm:tree-theorem-general-case-formal}]
Let $K,N\in\mathbb{Z}_{>0}$ and $c>0$ be such that the conclusions of Lemma \ref{lem:G_a^(k)-G_b^(k)-lower-bound-formal} hold for $K,N$ and $\delta=c$, and the conclusions of Proposition \ref{pro:Separation} hold for $K,N,c$. Define
$$p_{\min}:=\min_{x\in [0,1],\, \mathbf{u}\in\mathcal{A}^N} p_{\mathbf{u}}(x)>0,$$
{
where $p_{\mathbf{u}}(x)$ is defined by \eqref{eq:p_u} with respect to the given potential function $\phi$.
}
Set
$$ \alpha:=\frac{-\log(1-p_{\min} )}{N}\qquad\text{and}\qquad \gamma:=\frac{\log(1-p_{\min})}{N\log r_{\min}\kappa_{\min}}. $$
Let $C_0>1$ be sufficiently large, to be specified later.

Let $x\in[0,1]$, $n\in\mathbb{Z}_{>0}$, and $\sigma>0$.  Let $t=(t_1,\cdots,t_K)\in\R^K$, define our target set as
\begin{equation}\label{eq:G-1}
     \mathcal{G}:=\{ \mathbf{a} \in \mathcal{A}^n\::\: \Theta_{\mathbf{a},K}(x) \in \mathbf{B}(t,\sigma) \},
\end{equation}
where  recall that $ \Theta_{\mathbf{a},K}(x):=\left( \theta_{\mathbf{a}}^{(1)}(x),\ldots,\theta_{\mathbf{a}}^{(K)}(x)  \right).$

We only need to consider the case
$$ n\ge 2N\qquad\text{and}\qquad  c (r_{\min}\kappa_{\min})^{N}>3\sigma,$$
since otherwise the conclusion of this theorem follows by taking $C_0>1$ sufficiently large.

\noindent\textbf{First case.}
We first consider the case $c(r_{\min}\kappa_{\min})^{n-N}\le 3\sigma$, and complete the proof in this case by induction.

\noindent\textbf{Step 1: Eliminating one initial symbol.} By Lemma \ref{lem:G_a^(k)-G_b^(k)-lower-bound-formal}, there exists $\mathbf{a},\mathbf{b}\in\mathcal{A}^N$
  and $k\in\{1,2,\ldots,K\}$
  such that 
  \begin{equation}\label{eq:tree-theorem-1}
      \left| (\theta_{\mathbf{a}\mathbf{c}}-\theta_{\mathbf{b}\mathbf{d}})^{(k)}(x)  \right|\ge c\qquad\text{for all } \mathbf{c},\mathbf{d}\in\mathcal{A}^{n-N}.
  \end{equation}

{
For $\mathbf{a},\mathbf{a}'\in\mathcal{A}^*$, we write
$\mathbf{a}\prec\mathbf{a}'$ if $\mathbf{a}$ is a  prefix of
$\mathbf{a}'$.
}

If there exists $\mathbf{a}'\in\mathcal{A}^n$
such that $\mathbf{a}\prec \mathbf{a}'$ and
$\mathbf{a}'\in\mathcal{G}$,  then for any 
$\mathbf{b}'\in\mathcal{A}^n$
with $\mathbf{b}\prec \mathbf{b}'$,
by \eqref{eq:tree-theorem-1} and $c>3\sigma$, we have
$$ \left| (\theta_{\mathbf{a}'}-\theta_{\mathbf{b}'})^{(k)}(x)  \right|\ge c>3\sigma.$$
{
Since $\mathbf{a}'\in\mathcal{G}$, we have
$\left| \theta_{\mathbf{a}'}^{(k)}(x)-t_k \right| \le \sigma.$ Therefore
}
$$ \left| \theta_{\mathbf{b}'}^{(k)}(x)-t_k \right| \ge 2\sigma.$$
Then by the definition of $\mathcal{G}$, we have $\mathbf{b}'\not\in\mathcal{G}$. In this case,  set
$$ \mathbf{w}:=\mathbf{b}\qquad\text{and}\qquad\mathcal{G}_1:=\mathcal{A}^N\setminus\{\mathbf{b}\}.$$
Therefore we obtain
\begin{equation}\label{eq:tree-theorem-2}
    \mathcal{G}\subset\{ \mathbf{v}\in\mathcal{A}^n: \mathbf{c}\prec\mathbf{v}\text{ for some } \mathbf{c}\in \mathcal{G}_1\}.
\end{equation}

If  $\mathbf{a}'\not\in\mathcal{G}$ for all $\mathbf{a}'\in\mathcal{A}^n$
with $\mathbf{a}\prec \mathbf{a}'$, then let
$$ \mathbf{w}:=\mathbf{a}\qquad\text{and}\qquad\mathcal{G}_1:=\mathcal{A}^N\setminus\{\mathbf{a}\}.$$
Therefore \eqref{eq:tree-theorem-2} holds  by our assumption in this case.

Let $\mathcal{G}_0:=\{\emptyset\}$
and $\mathbf{w}_1:\mathcal{G}_0\to \mathcal{A}^N $ be the map such that $\mathbf{w}_1(\emptyset):=\mathbf{w}$.

\noindent\textbf{Step 2: Eliminating the second remaining symbol in $\mathcal{G}_1$.}
Let $\mathbf{u}\in\mathcal{G}_1$. By Proposition \ref{pro:Separation}, there exist
$k=k(\mathbf{u})\in\{1,\dots,K   \}$ and
    $\mathbf{a}=\mathbf{a}(\mathbf{u}),\mathbf{b}=\mathbf{b}(\mathbf{u})\in\mathcal{A}^N$
    such that
    \begin{equation}\label{eq:tree-theorem-3}
         |(\Delta_{\mathbf{u}\mathbf{a}\mathbf{c} ,\mathbf{u}\mathbf{b}\mathbf{d}})^{(k)}(x)|\ge c (r_{\min}\kappa_{\min})^{|\mathbf{u}|}\qquad\text{for all }\mathbf{c},\mathbf{d}\in\mathcal{A}^{n-2N} .
    \end{equation}
   
Assume that there exists $\mathbf{a}'\in\mathcal{A}^n$ such that $\mathbf{u}\mathbf{a}\prec \mathbf{a}'$ and
$\mathbf{a}'\in\mathcal{G}$.  Let $\mathbf{b}'\in\mathcal{A}^n$ be with $\mathbf{u}\mathbf{b}\prec \mathbf{b}'$. Then
by \eqref{eq:tree-theorem-3} and $  c (r_{\min}\kappa_{\min})^{N}>3\sigma$, we obtain:
$$ \left| (\theta_{\mathbf{a}'}-\theta_{\mathbf{b}'})^{(k)}(x)  \right|\ge c (r_{\min}\kappa_{\min})^{N}> 3\sigma.$$
Combining this with 
$\left| \theta_{\mathbf{a}'}^{(k)}(x)-t_k \right| \le \sigma,$ we have
$$ \left| \theta_{\mathbf{b}'}^{(k)}(x)-t_k \right| > \sigma.$$
Therefore we have $\mathbf{b}'\not\in\mathcal{G}$. In this case, we set
$$ \mathbf{w}_2(\mathbf{u}):=\mathbf{b}\qquad\text{and}\qquad\mathcal{G}_2:=\left\{ \mathbf{u}\mathbf{v}\,:\,\mathbf{u}\in\mathcal{G}_1,\mathbf{v}\in\mathcal{A}^{N}\setminus\{\mathbf{b}\}\right\}.$$

If  $\mathbf{a}'\not\in\mathcal{G}$ for all $\mathbf{a}'\in\mathcal{A}^n$
such that $\mathbf{u}\mathbf{a}\prec \mathbf{a}'$, let
$$ \mathbf{w}_2(\mathbf{u}):=\mathbf{a}\qquad\text{and}\qquad\mathcal{G}_2:=\left\{ \mathbf{u}\mathbf{v}\,:\,\mathbf{u}\in\mathcal{G}_1,\mathbf{v}\in\mathcal{A}^{N}\setminus\{\mathbf{a}\}\right\}.$$

In any case,  we have 
$$   \mathcal{G}\subset\{ \mathbf{v}\in\mathcal{A}^n: \mathbf{c}\prec\mathbf{v}\text{ for some } \mathbf{c}\in \mathcal{G}_2\}.  $$

\noindent\textbf{Step 3: Induction on the $k$-th symbols.} Let $k\ge2$.
Assume that we have defined the
sets $\mathcal{G}_{j}\subset\mathcal{A}^{kN}$ and the functions $\mathbf{w}_j:\mathcal{G}_{j-1}\to\mathcal{A}^N, j\in\{1,\cdots k\}$
such that
\begin{equation}\label{eq:G-including}
    \mathcal{G}\subset\{ \mathbf{v}\in\mathcal{A}^n: \mathbf{c}\prec\mathbf{v}\text{ for some } \mathbf{c}\in \mathcal{G}_j\} 
\end{equation}
and
\begin{equation}\label{def:G_j}
    \mathcal{G}_j:=\left\{
\mathbf{u}\mathbf{v}\::\: \mathbf{u}\in\mathcal{G}_{j-1},\mathbf{v}\in\mathcal{A}^{N}\setminus\{\mathbf{w}_{j}(\mathbf{u})\}
\right\}.
\end{equation}

{
If $c(r_{\min}\kappa_{\min})^{(k+1)N}>3\sigma$, then,  following  the argument in Step 2 and applying Proposition \ref{pro:Separation}, the following holds.
}
For each $\mathbf{u}\in \mathcal{G}_k$, there exists $\mathbf{w}_{k+1}(\mathbf{u})\in\mathcal{A}^N$
such that 
$$ \mathbf{v}'\not\in\mathcal{G}\qquad\text{for each } \mathbf{v}'\in\mathcal{A}^n\text{ with }\mathbf{u} \mathbf{w}_{k+1}(\mathbf{u})\prec\mathbf{v}'.$$
Thus we define
$$ \mathcal{G}_{k+1}:=\left\{ \mathbf{u}\mathbf{v}\,:\,\mathbf{u}\in\mathcal{G}_k,\mathbf{v}\in\mathcal{A}^{N}\setminus\{\mathbf{w}_{k+1}(\mathbf{u})\}\right\}.$$
Then $\mathcal{G}_{k+1}$ satisfies the induction hypothesis for $k+1$.
This process terminates at the $m$-th step, where $m$ is the unique positive integer satisfying 
$$c(r_{\min}\kappa_{\min})^{mN}\le 3\sigma <c(r_{\min}\kappa_{\min})^{(m-1)N}.$$

\noindent\textbf{Step 4: An upper bound for the probability of $\mathcal{G}$.} 
Let $k\in\{1,\cdots,m-2\}$.  
Combining \eqref{def:G_j} with \eqref{eq:p-formula} and the definition of $p_{\min}$, we have
\begin{equation}
    \begin{split}        \sum_{\mathbf{z}\in\mathcal{G}_{k+1}} p_{\mathbf{z}}(x)&=\sum_{\mathbf{u}\in\mathcal{G}_{k}}\sum_{ \mathbf{v}\in\mathcal{A}^N\setminus\{ \mathbf{w}_{k+1}(\mathbf{u}) \} } p_{\mathbf{u}\mathbf{v} }(x) \\&=    \sum_{\mathbf{u}\in\mathcal{G}_{k}} p_{\mathbf{u}}(x)\sum_{ \mathbf{v}\in\mathcal{A}^N\setminus\{ \mathbf{w}_{k+1}(\mathbf{u}) \} }  p_{\mathbf{v}}(\mathbf{u}[x]) \\&\le   (1-p_{\min})\sum_{\mathbf{u}\in\mathcal{G}_{k}} p_{\mathbf{u}}(x).
    \end{split}
\end{equation}
After the induction, we obtain
$$ \sum_{\mathbf{z}\in\mathcal{G}_{m-1}}
 p_{\mathbf{z}}(x)\le  (1-p_{\min})^{m-1}. $$
Combining this and \eqref{eq:G-including}, we have
\begin{equation}\label{eq:key-mass-upper-bound}
    \sum_{\mathbf{a}\in\mathcal{G}}
     p_{\mathbf{a}}(x)\le\sum_{\mathbf{z}\in\mathcal{G}_{m-1}} p_{\mathbf{z}}(x)\le  (1-p_{\min})^{m-1}.
\end{equation}

By $m=\frac{\log\sigma}{N\log r_{\min}\kappa_{\min}}+O(1)$ and $\gamma=\frac{\log(1-p_{\min})}{N\log r_{\min}\kappa_{\min}}$, we have
$$  \sum_{\mathbf{a}\in\mathcal{G}}
     p_{\mathbf{a}}(x)\le (1-p_{\min})^{m-1}=O\left( \sigma^{ \frac{\log (1-p_{\min})}{\log\sigma}\cdot \frac{\log\sigma}{N\log r_{\min}\kappa_{\min}} }\right)=O(\sigma^{\gamma}).  $$

\noindent\textbf{Second case.}
We now consider the case $c(r_{\min}\kappa_{\min})^{n-N}> 3\sigma$.  Following the argument in the first case, we obtain  sets $\mathcal{G}_j\subset\mathcal{A}^{jN}$ and the functions $\mathbf{w}_j:\mathcal{G}_{j-1}\to\mathcal{A}^N, j\in\{1,\cdots m\}$
such that \eqref{eq:G-including} and \eqref{def:G_j} hold,
where $m:=\lfloor\frac{n}{N} \rfloor.$
Thus by $\alpha=\frac{-\log(1-p_{\min})}{N}$,
we obtain:
$$ \sum_{\mathbf{a}\in\mathcal{G}}
    p_{\mathbf{a}}(x)\le (1-p_{\min})^{m}=O\left( e^{ \frac{\log (1-p_{\min})}{N}\cdot n}\right)=O(e^{-\alpha n}).  $$
In any case, we prove this theorem.
\end{proof}

\subsection*{Declaration of AI use}

During the exploratory and manuscript-preparation stages of this work, H.R.
used ChatGPT, developed by OpenAI (GPT-5.6 Sol Pro), through the ChatGPT web
interface during July--September 2026. The tool was used for exploratory
mathematical brainstorming, investigation of candidate formulas and derivative
decompositions, literature searches, and the revision and polishing of selected
mathematical passages, including English and \LaTeX{} editing. It was also used
to explore possible transversality conditions and proof strategies in both the
affine fiber case and the general nonlinear case.

Among the AI-assisted exploratory inputs that informed the final manuscript
were the following. In the special affine-fiber case
$g(x,y)=\lambda(x)y+\phi(x)$ with nonconstant $\lambda$, AI-assisted
discussions contributed to identifying a possible route toward the required
first-order transversality estimate; the resulting argument was subsequently
developed, checked, and rewritten by the authors and is presented in a
separate section. In the study of decompositions separating a common symbolic
prefix, ChatGPT suggested the candidate quantity
$\Lambda_{\mathbf{u};\mathbf{a},\mathbf{b}}$. After the need for an appropriate
normalization had been identified, ChatGPT assisted in exploring a candidate
derivation of a formula for $H_{\mathbf{u}}$. Finally, after the relevant
mathematical framework and the analogy with the method of Gao and Shen
\cite{gao2024low} had been supplied, ChatGPT assisted in formulating a possible
structural consequence of the hypothetical degeneracy
$H_{\mathbf{u}}'\equiv 0$.

The research problem, the overall proof architecture, and the higher-order
derivative strategy were formulated by the authors. All final mathematical
judgments and all proofs in the manuscript were made, checked, and approved by
the authors. Any AI-assisted material retained in the manuscript was
independently rederived or verified and critically evaluated by the authors,
and all references identified through AI-assisted searches were checked
against the original sources. No AI-generated mathematical claim was accepted
without human verification. The authors take full responsibility for the
mathematical content of the manuscript.

\section*{Acknowledgements}
 G.L. is supported by the Mittag-Leffler Institute and the University of Helsinki, and thank Tuomas Sahlsten for his support.
 H.R. was supported by the Israel Science Foundation (grant No.~619/22).
He is grateful to the Department of Mathematics at the Technion for its hospitality, and to Ariel Rapaport for his guidance and support.
 We thank the organizers of the workshop "Focused Workshop on dimension drop for analytic Iterated Function Systems" at Erdös Center in July 2026, where this project was born: Balázs Bárány, Istvan Kolossvary, and Sascha Troscheit.

\bibliographystyle{alpha}
\bibliography{references}

\end{document}